\newcommand{\N}{\mathbb{N}}
\newcommand{\E}{\mathbb{E}}
\newcommand{\obj}{{\cal E}}
\newcommand{\Teps}{\epsilon}
\newcommand{\stat}{\mathrm{Static}}
\newcommand{\thr}{{\cal B}} 
\newcommand{\emm}{{\cal M}}
\newcommand{\bl}{\boldsymbol{\ell}}
\newcommand{\Nc}{\mathcal{N}}
\newcommand{\opt}{{\sf OPT}}
\newcommand{\ord}{\mathrm{WO}}
\newcommand{\dpp}{\mathrm{DP}}
\newcommand{\DMLA}{\sf{Dynamic-MLA}}
\newcommand{\SMLA}{\sf{Static-MLA}}
\renewcommand{\P}{\mathbb{P}}
\newcommand{\eps}{\epsilon}
\newcommand{\ost}{{I}}
\newcommand{\ayy}{I}
\newcommand{\fil}{{\sf FILL }}
\newcommand{\const}{{\cal L }}
\newcommand{\fiil}{$\eps$-{\sf FILL }}
\newcommand{\comm}{-}
\newcommand{\sumdyn}{C^{\dpp}}
\newcommand{\sumstat}{C}
\newcommand{\ex}{{\cal X}}
\newcommand{\MyAbove}[2]{\genfrac{}{}{0pt}{}{#1}{#2}}
\def\argmax{\mathop{\rm arg\,max}}
\newcommand{\blue}[1]{{#1}}
\newcommand{\red}[1]{{{#1}}}
\newcommand{\stkout}[1]{\ifmmode\text{\sout{\ensuremath{#1}}}\else\sout{#1}\fi}
\newtheoremstyle{DStheorem}
  {\topsep}
  {\topsep}
  {\itshape}
  {0pt}
  {\scshape}
  {.}
  { }
  {\thmname{#1}\thmnumber{ #2}\thmnote{ (#3)}}
\theoremstyle{DStheorem}
\newtheorem{theorem}{Theorem}[section]
\newtheorem{lemma}[theorem]{Lemma}
\newtheorem{claim}[theorem]{Claim}
\let\oldproofname=\proofname
\renewcommand{\proofname}{\rm\sc{\oldproofname}}
\newcounter{dannycounter}
\newcounter{omarcounter}
\newcounter{marouanecounter}
\begin{document}

\begin{titlepage}

\title{Maximum Load Assortment Optimization: \\
Approximation Algorithms and Adaptivity Gaps}
\author{%
Omar El Housni\thanks{School of Operations Research and Information Engineering, Cornell Tech, Cornell University. Email: {\tt \{oe46,mi262\}@cornell.edu}.}
\and
Marouane Ibn Brahim\footnotemark[1]
\and
Danny Segev\thanks{Department of Statistics and Operations Research, School of Mathematical Sciences, Tel Aviv University, Tel Aviv 69978, Israel. Email: {\tt segevdanny@tauex.tau.ac.il}. Supported by Israel Science Foundation grant 1407/20.}}
\date{}
\maketitle

\setcounter{page}{200}
\thispagestyle{empty}

\begin{abstract}
Motivated by modern-day applications such as Attended Home Delivery and Preference-based Group Scheduling, where decision makers wish to steer a large number of customers toward choosing the exact same alternative, we introduce a novel class of assortment optimization problems, referred to as {\em Maximum Load Assortment Optimization}. In such settings, given a universe of substitutable products, we are facing a stream of customers, each choosing between either selecting a product out of an offered assortment or opting to leave without making a selection. Assuming that these decisions are  governed by the Multinomial Logit choice model, we define the random {\em load} of any underlying product as the total number of customers who select it. Our objective is to offer an assortment of products to each customer so that the expected maximum load across all products is maximized.  

We consider both static and dynamic formulations of the maximum load assortment optimization problem. In the static setting, a single offer set is carried throughout the entire process of customer arrivals, whereas in the dynamic setting, the decision maker offers a personalized assortment to each customer, based on the entire information available at that time. As can only be expected, both formulations present a wide range of computational challenges and analytical questions. The main contribution of this paper resides in proposing efficient algorithmic approaches for computing near-optimal static and dynamic assortment policies. In particular, we develop a polynomial-time approximation scheme (PTAS) for the static problem formulation. Additionally, we demonstrate that an elegant policy utilizing weight-ordered assortments yields a $1/2$-approximation. Concurrently, we prove that such policies are sufficiently strong to provide a $1/4$-approximation with respect to the dynamic formulation, establishing a constant-factor bound on its adaptivity gap.  Finally, we design an adaptive policy whose expected maximum load is within factor $1-\eps$ of optimal, admitting a quasi-polynomial time implementation.
\end{abstract}

\bigskip \noindent {\small {\bf Keywords}: Assortment Optimization, Maximum Load,  Approximation Schemes, Adaptivity Gap, Balls and Bins, Multinomial Logit model.}

\end{titlepage}

\setcounter{page}{200}
\thispagestyle{empty}
\tableofcontents

\newpage
\setcounter{page}{1}
\section{Introduction}
    
Assortment optimization forms one of the most fundamental problems in revenue management, arising in a wide spectrum of application domains such as retailing and online advertising. At a high level, in such settings, the decision maker wishes to decide on a subset of products, picked out of a given universe, that will be offered to arriving customers in order to optimize a certain objective function. At least traditionally, each customer either chooses a single product from the offered assortment or decides to leave without making any purchase, with choice probabilities that are captured by a {\em discrete choice model}. The vast majority of assortment optimization models are guided by having either revenue maximization or sales maximization as their objective function. In the former case, each underlying product is associated with a fixed selling price, and the goal is to identify an assortment that maximizes the expected revenue due to a single representative customer, where the price of each product within this assortment is weighted by its corresponding choice probability. Problems of this form arise, for instance, when an online retailer displays a subset of products from a large universe in order to maximize the expected revenue. 
On the other hand, in sales maximization, our goal is to determine an assortment that maximizes the expected market share, given by the probability that a customer would purchase a product from the offered set. For instance, publishers such as Google Ads or Microsoft Ads may wish to select a subset of online ads to display, aiming to maximize the probability that customers will click on one of these ads. For a comprehensive overview of classical assortment optimization problems and their applications, we refer the reader to related surveys and books \citep{kok2009assortment,phillips2021pricing,gallego2019revenue}.

\paragraph{Informal model description.} In this paper, we introduce and study a new class of assortment optimization problems where, informally speaking, our goal is to identify, either statically or adaptively, assortments that would steer a large number of customers towards choosing the exact same product. Deferring the formal model formulation to be discussed in Section~\ref{sec:form}, given a universe of substitutable products, we are facing a \blue{finite} stream of customers, each choosing between either selecting a product out of an offered assortment or opting to leave without making a selection. Assuming that these decisions are governed by the Multinomial Logit (MNL) choice model, we define the random {\em load} of any underlying product as the total number of customers who select it along the arrival sequence. This way, the number of customers who choose the most selected product corresponds to the maximum load across all products. Our objective is to offer an assortment of products to each customer so that the expected maximum load across all products is maximized. We refer to problem formulations along these lines as {\em Maximum Load Assortment Optimization}. Specifically, we consider both the static formulation (\ref{SMLA}), where a single offer set should be kept unchanged for the entire sequence of customer arrivals, and the dynamic setting (\ref{DMLA}), in which the decision maker offers a personalized assortment to each customer, based on the entire information available at that time, taking into account the choices of all previously-arriving customers. 
As we proceed to show next, the above-mentioned objective function is motivated  by real-life applications in e-commerce such as Attended Home Delivery (AHD), where customers have the option to select their delivery slot, as well as in scheduling platforms, where users select a common time slot to meet.

\paragraph{Attended Home Delivery.}  Online supermarket chains such as WholeFoods, FreshDirect, and AmazonFresh provide customers with various delivery time slots to choose from, based on their individual preferences. Similarly, e-retailers such as Amazon, Wayfair, and Walmart allow their customers to select an appealing delivery time among the available options. The dominant business model in the grocery delivery sector is known as Attended Home Delivery (AHD) \citep{manerba2018attended}. This model entails the customer's presence during the delivery process, necessitating an agreement on a specific time slot between the e-grocer and the customer.
To optimize delivery costs, e-commerce platforms are interested in {\em packing} as many customers as possible from the same geographical area  into the same time slot. In their survey on this topic, \citet[Sec.~2.2]{wassmuth2023demand} highlight the importance of managing customer demand: ``{\em   Demand management aims to manage the resulting trade-offs between captured demand (revenue) and assembly and delivery efficiency (costs)}''.

In the context of AHD, there are two primary strategies for managing customer demand: offering and pricing. In the offering strategy, decision makers determine which delivery time slots will be presented to customers and which slots will be hidden
\citep{casazza2016optimizing,truden2022computational,van2024machine,wassmuth2023demand}. \blue{For example, \cite{mackert2019choice} introduces a dynamic time slot management framework under the generalized attraction model \citep{gallego2015general}, which starts by approximating the opportunity cost of a given customer request, and then employs this approximation to formulate a non-linear integer program and its linearization, in order to determine the time slot assortment}.  On the other hand, in the pricing strategy, prices are assigned to each delivery slot in order to influence customers choices
\citep{campbell2006incentive}. \blue{For instance, \cite{yang2017approximate} propose a dynamic programming framework for the delivery time slot pricing problem, and show in MNL-based simulation that these pricing policies can improve profitability by over $2\%$, when compared to simple fixed price policies.} 
In this paper, we focus on exploiting the offering strategy to steer customers towards selecting the same time slot. To this end, while booking their delivery times, the platform can guide customers  by strategically determining the assortment of time slots to offer. This objective seamlessly aligns with our framework, where each delivery time slot can be viewed as a product, meaning that the ``load'' of each product represents the number of customers who select its corresponding time slot. Consequently, our aim is to determine an assortment of time slots that maximizes the expected maximum load across all available time slots. \blue{Interestingly, \cite{amorim2024customer} have recently investigated the effect of time slot management in the context of AHD. In particular, their MNL-based study concludes that retailers with the ability to tailor their time slots offering to specific customer segments enjoy a $9\%$ increase in shipping revenue. These conclusions further emphasize the importance of introducing and studying frameworks which align with time slot management, and highlight its significant managerial implications.}

\paragraph{Preference-based Group Scheduling.} 

When scheduling a group meeting, the overarching goal is to identify the most suitable time slot from a given set of options, i.e., one that accommodates the maximum number of attendees. Platforms such as Doodle and When2meet often rely on users selecting a preferred time slot from the available choices. However, the individual choice made by each user is influenced by the available options, due to substitution effects. Therefore, to maximize the likelihood of users selecting the same time slot, decision makers can  carefully curate the assortment of offered time slots. This approach is known as preference-based group scheduling \citep{brzozowski2006grouptime, berry2007preference}. That said, to our knowledge, previous studies have not approached such questions from the perspective of assortment optimization, nor have they utilized discrete choice models to deal with customer preferences. Our framework effectively captures this scenario, by viewing each time slot as a product, again implying that the load of each product represents the number of users who select that time slot.  Thus, our objective would be to determine an assortment of time slots that maximizes the expected maximum load among all  available options.

\subsection{Fundamental challenges}\label{subsec:challenges}

As readers would quickly find out by examining our model formulations, whether one considers static or adaptive settings, coming up with efficient algorithmic approaches that can be rigorously analyzed appears to be a very challenging goal. To better understand where some hurdles are emerging from, we should bear in mind the conceptual trade-off between offering an extensive set of products versus a more focused set, due to two competing effects. On one hand, providing a wide array of products grants customers more choices, reducing their likelihood to leave the market without making a selection, and potentially increasing the maximum load. On the other hand, offering too many products may disperse customer demand across all available choices. As our objective is to guide customers towards selecting the same product, this dispersion can potentially diminish the maximum load.

Let us proceed by briefly highlighting some fundamental challenges in addressing both problem formulations. In the static setting, the first and foremost challenge revolves around the highly non-linear nature of the objective function. Unlike revenue or sales maximization, we are considering a novel objective function, appearing to be  very different from classical settings in the assortment optimization literature. Among other missing pieces, we are not aware of any integer programming formulations or linear relaxations for the problem in question. In fact, even computing the expected maximum load for a given static assortment is very much unclear at first glance. In the dynamic setting, the state space of every conceivable dynamic program describing this problem is exponential in size. Therefore, by directly solving natural dynamic programs, we would not end up with efficient algorithmic approaches. Moreover, as we explain in the sequel, the Bellman equations associated with such dynamic programs include optimizing over the seemingly-unstructured collection of all relevant assortments, which  generally poses a complex challenge by itself. On top of these obstacles, we will discuss additional challenges in subsequent sections, as soon as they can be better digested.


\subsection{Main contributions}

The primary contribution of this paper resides in developing a unified optimization framework with provably near-optimal performance guarantees for both formulations of the maximum load assortment optimization problem. 
In the static setting, we first present a polynomial-time evaluation oracle to compute the expected maximum load of a given assortment. Then, by uncovering well-hidden structural properties of the objective function, we provide an elegant constant-factor approximation for \ref{SMLA}. As our main result for the static setting, we present a polynomial-time approximation scheme (PTAS). 
For the dynamic formulation, by developing novel coupling arguments in this context, we first establish a constant-factor bound on its adaptivity gap. Moreover, we devise a $(1-\eps)$-approximate adaptive policy that can be computed in quasi-polynomial time. We proceed by providing refined details on our main contributions.

\paragraph{{\bf Static maximum load assortment optimization.}}
\begin{itemize}
    \item {\em Polynomial-time evaluation oracle for the expected maximum load.}  The first challenge in the static formulation consists of the seemingly-simple question of evaluating the objective function of \ref{SMLA} for a given assortment, i.e., computing its expected maximum load. In fact, even though the latter admits a closed-form expression, it requires summing over exponentially-many terms that arise from the Multinomial distribution. Our first contribution is to design a polynomial-time evaluation oracle for computing the expected maximum load of a given static assortment. Our algorithm, whose specifics are given in Section~\ref{compute}, builds on the work of \cite{frey2009algorithm} who designed polynomial-time procedures to evaluate {\em rectangular probabilities} for the Multinomial distribution. In essence, we show  that the expected maximum load function can be computed through polynomially-many external calls to evaluate rectangular probabilities. 

    \item {\em $1/2$-approximation via preference-weight-ordered assortments.} Prior to presenting our main result regarding \ref{SMLA}, we propose in Section \ref{subsec:halfapprox} an elegant and easy-to-implement way to obtain a $1/2$-approximation, utilizing {\em preference-weight-ordered assortments}. In a nutshell, such assortments  prioritize products with higher preference weights. Specifically, when a product is included in a preference-weight-ordered assortment, all products with higher preference weights are included as well. Interestingly, we prove that there exists a preference-weight-ordered assortment whose expected maximum load is within a factor $1/2$ of the optimum. Our policy then examines all such assortments, of which there are only linearly-many, picking the best via our previously-mentioned evaluation oracle for their expected maximum load. As a side note, avid readers may be familiar with the notion of ``revenue-ordered'' assortments, which has been explored and exploited in early literature. Most notably, in revenue maximization under the MNL model, optimal assortments are known to be revenue-ordered \citep{talluri2004revenue}. That said, beyond the natural resemblance through a certain parametric order, the analysis of preference-weight-ordered assortments  turns out to be entirely different and requires new analytical ideas.

    \item {\em Polynomial-time approximation scheme.} Our main technical contribution with respect to the static formulation resides in developing a polynomial-time approximation scheme (PTAS) for this setting, whose specifics are provided in Section~\ref{subsec:PTAS}. Namely, for any fixed $\eps>0$, our algorithm constructs in polynomial time an assortment whose expected maximum load is within factor $1-\eps$ of the optimum. To derive this result, we prove the existence of a polynomially-sized family of highly-structured assortments via  efficient enumeration ideas. We refer to these assortments as being block-based, showing that at least one such assortment yields a $(1-\eps)$-approximation. Finally, using our polynomial-time evaluation oracle, we enumerate over all block-based assortments, and pick the best one. We should note that, despite our best efforts in studying the computational complexity of \ref{SMLA}, we still do not know whether this problem is NP-hard or not. This difficulty mainly arises due to the nature of our objective function, as we are unaware of NP-hard problems with similar structure that would serve as candidates for potential reductions. Hence, attaining complexity lower bounds that will match our algorithmic guarantees remains an intriguing open question, further discussed in Section~\ref{sec:conclusion}.

    \blue{
    \item {\em Numerical analysis.} Complementing the aforementioned theoretical contributions, we conduct a series of numerical experiments to examine how optimal assortments behave with respect to the model primitives. Our analysis exhibits a notable tendency for the optimal static assortment to decrease in size as the number of arriving customers increases, as well as when the preference weights increase. \red{Specifically, our experiments show that offering the whole universe of products may become optimal for instances with small preference weights. This observation is particularly significant in instances with a smaller number of customers.} These results are reported in Section~\ref{sec:numerics}.}

\end{itemize}

\paragraph{{\bf Dynamic maximum load assortment optimization.}}
\begin{itemize}
    \item {\em Adaptivity gap.} Our first line of investigation examines questions related to the adaptivity gap of the maximum load assortment optimization problem.  In this context, the adaptivity gap is defined as the maximal ratio between the objective values of \ref{DMLA} and \ref{SMLA} over all possible instances.  This measure  quantifies the value of introducing adaptivity, quantifying the improvement gained by employing a dynamic policy instead of a static one. In Section \ref{sec:adaptivitygap}, we prove the existence of a static policy, utilizing preference-weight-ordered assortments, whose expected maximum load is within factor $1/4$ of the adaptive optimum, implying that the adaptivity gap is surprisingly bounded by $4$. This result immediately translates to a polynomial-time $1/4$-approximation for \ref{DMLA}. Moreover, when all products have the same preference weight, we improve the adaptivity gap to $2$. In the opposite direction, we present a family of instances demonstrating that the adaptivity gap of \ref{DMLA} is at least $4/3$. \blue{Additionally, in Appendix~\ref{apx:justif}, we present numerical experiments studying how the  adaptivity gap behaves under different parameteric regimes. This numerical section allows readers to gain a more concrete understanding of the inherent gap between the optimal static and dynamic objectives. Concurrently, these experiments motivate us to construct the family of instances yielding the aforementioned $4/3$ lower bound on the adaptivity gap.}

    \item {\em $(1-\eps)$-approximate adaptive policy in quasi-polynomial time.} Our cornerstone technical contribution in relation to \ref{DMLA} resides is devising a quasi-polynomial time adaptive policy whose expected maximum load is within factor $1-\eps$ of the optimum. This policy, whose finer details are discussed in Section~\ref{sec:dynamic}, builds upon two key ideas. Firstly, rather than attempting to solve an exponentially-sized natural dynamic program, we demonstrate that its state space can be shrunk to a quasi-polynomial scale while only sacrificing an $O(\eps)$-factor in optimality. More specifically, we observe that once a sufficiently large load is attained, the expected marginal gain from offering any further assortments becomes negligible in comparison  to the already-attained maximum load. Consequently, we can effectively terminate the arrival process (i.e., offer the empty assortment from this point on), which significantly reduces the state space size. Secondly, to compute an optimal action for the resulting recursive equations, an assortment-like optimization problem needs to be solved at each stage. We argue that this problem can be reformulated as an unconstrained revenue maximization question under the Multinomial Logit model, which can indeed be solved in polynomial time. \blue{It is worth mentioning that quasi-polynomial time approximation schemes have gained popularity in various domains including assortment optimization, network design, scheduling, computational game theory, and graph algorithms. While presenting an exhaustive overview of such results would be impractical, we refer the reader to selected papers \citep{chekuri2002approximation, arora2003approximation,lipton2003playing, bansal2006quasi,remy2009quasi,chan2011qptas,   adamaszek2014quasi,das2015quasipolynomial, mustafa2015quasi,desir2021mallows,AouadS23}, which highlight the ubiquitous nature of quasi-polynomial time approximation schemes.}
\end{itemize}

\subsection{Related literature}

In what follows, we discuss three lines of research that are directly relevant to our work. Firstly, we discuss the Multinomial Logit model, which stands as one of the most widespread choice models in both theoretical and practical domains.  Secondly, we provide a concise overview of the assortment optimization literature, emphasizing how our model fits within this body of research. Lastly, we discuss several classic  balls and bins problems, highlighting the relevant connections and similarities between these problems and our own setting.

\paragraph{The Multinomial Logit model.} The Multinomial Logit model (MNL) is widely regarded as the predominant choice model employed by the revenue management community to capture customer behavior when selecting from a given assortment. This model was initially introduced by \cite{luce1959individual}, with subsequent works by \cite{mcfadden1973conditional} and \cite{hausman1984specification} further refining its specification. Informally, the MNL models assigns a preference weight to each product. Then, each product is chosen with probability proportional to its preference weight, thereby capturing the substitution effect that occurs between various alternatives within any given assortment. The model's simplicity in calculating choice probabilities, its predictive power, and computational tractability have all contributed to its widespread adoption and extensive study in various domains. Some of these directions are evidenced by research works such as those of \cite{mahajan2001stocking}, \cite{talluri2004revenue}, \cite{rusmevichientong2014assortment}, \cite{sumida2021revenue}, \cite{gao2021assortment}, \cite{bai2022coordinated}, and \cite{el2021joint} to mention a few. For a comprehensive understanding and further references, we refer the reader to Chapter~4 of the book by \cite{gallego2019revenue}.


\paragraph{Assortment optimization.} Assortment optimization represents a long-standing research domain within revenue management, seeking to address fundamental questions regarding the selection of offer sets for customers under various choice models. Here, the typical goal is to optimize performance metrics such as revenue, market share, and engagement. Over the past decades, this field has witnessed substantial growth, resulting in an extensive literature encompassing different algorithmic developments under various choice models, such as Multinomial Logit \citep{talluri2004revenue,rusmevichientong2014assortment,aouad2021assortment}, Markov Chain \citep{blanchet2016markov,feldman2017revenue}, Nested Logit \citep{davis2014assortment,gallego2014constrained}, 
and non-parametric choice models \citep{farias2013nonparametric, aouad2018approximability}.
For a comprehensive study and further references,
we refer the reader to related surveys and books \citep{kok2009assortment,phillips2021pricing,gallego2019revenue}.
As previously mentioned, it is important to note that our work diverges from the classic assortment optimization literature in terms of the objective function we optimize.  To the best of our knowledge, this paper is the first study to investigate the maximum load objective function from an assortment optimization perspective.



    
\paragraph{Balls and bins.} In its most general setting, the literature on balls and bins explores the outcomes of randomly placing $m$ balls into $n$ bins. This topic finds numerous applications, with load balancing and hashing being arguably the most commonly known ones \citep{mitzenmacher2017probability, mirrokni2018consistent}. 
Relating such questions to our setting, each customer can be viewed as a ball, whereas each product can be represented by a bin. The probability of a particular ball falling into a specific bin corresponds to the likelihood of a particular customer selecting a particular product. In their seminal work, \cite{raab1998balls} provide a comprehensive analysis of the maximum number of balls in any bin, offering precise upper and lower bounds that hold asymptotically. Specifically, for $n$ balls and $n$ bins with equal probabilities, the expected maximum load is $(1+o(1)) \cdot \frac{\log n}{\log\log n}$ with high probability. In a different direction, 
\cite{azar1994balanced} proved a significant drop in the maximum load to $\frac{\log \log n}{\log 2}+O(1)$ with high probability, when each ball is placed in the least loaded out of two randomly chosen bins. It is important to point out that the literature on balls and bins primarily focuses on load balancing rather than on load maximizing applications, where one actually wishes to over-pack bins by actively selecting which bins to make use of, given the constant presence of an outside option. That being said, we find that some well-known results still offer preliminary insights. However, to our knowledge, none of these results are directly relevant to statically or adaptively making assortment decisions in order to optimize the maximum load. In other words, our paper is the first to study balls-and-bins-like problems within the context of choice modeling and assortment optimization.



   
\section{Problem Formulation}\label{sec:form}

\paragraph{The MNL choice model.} In what follows, we begin by explaining how the Multinomial Logit choice model is formally defined. To this end, let $\Nc = \{1, 2, \ldots, n\}$ be the universe of products at our disposal, where each product $i\in \Nc$ is associated with a {\em preference weight} $v_i>0$. In addition, the option of not selecting any of these products will be symbolically represented as product $0$, referred to as the no-purchase or no-selection option, with a preference weight of $v_0 = 1$. While the precise meaning of these parameters will be explained below, we mention in passing that the preference weight assigned to each product reflects its level of attractiveness, meaning that higher preference weights would indicate a greater level of popularity.

With these conventions, an assortment (or an offer set) is simply a subset of products $S \subseteq \Nc$. For convenience, we make use of $S_+=S \cup \{0\}$ to denote the inclusion of the no-purchase option within this assortment. \blue{We define the weight $v(S)$ of an assortment $S$ simply as the sum of the preference weights of its products, namely, $v(S) = \sum_{i\in S}v_i$}. Now, when any given assortment $S \subseteq \Nc$ is offered to an arriving customer, the MNL model prescribes a probability of $\phi_i(S) = \frac{ v_i }{ v(S_+) } = \frac{v_i}{1+\sum_{j\in S} v_j}$ for picking product $i \in S$ as the one to be purchased. Alternatively, this customer may decide to avoid selecting any of these products (i.e., picking the no-purchase option), which happens with the complementary probability, $\phi_0(S) = \frac{1}{1+\sum_{j\in S}v_j}$.


\paragraph{Stream of customers.} Next, we introduce the static formulation of the maximum load assortment problem, followed by the presentation of its dynamic counterpart. In both formulations, we will be facing a \blue{finite} stream of $T$ customers, arriving one after the other, and we therefore refer to these customers by their arrival indices, $1, \ldots, T$. We assume that the choice of each customer among any offered assortment is governed by the aforementioned Multinomial Logit model, meaning in particular that their purchasing decisions are mutually independent.

\subsection{Static Maximum Load Assortment  Optimization \texorpdfstring{\eqref{SMLA}}{}    }\label{subsec:SMLA}

In the static setting, we will be operating under the restriction that all  customers should be offered the exact same assortment of products throughout the arrival sequence. Specifically, consider an assortment $S\subseteq \Nc$. For any product $i\in S_+$ and for any customer $t \in [T]$, we define a  Bernoulli random variable  $X_{it}(S)$ to indicate whether customer $t$ selects product $i$ or not. Since customer $t$ chooses this product with probability $ \phi_i(S)$, we  have $\P(X_{it}(S)=1) = \phi_i(S)$. As such, $\sum_{i\in S+}X_{it}(S)=1$, reflecting the fact that each customer chooses exactly one product from the assortment $S$ or decides not to select any product at all. In addition, since  customers' decisions are independent, the indicators $\{ X_{it}(S) \}_{t \in [T]}$ of different customers are mutually independent.
 
Given an offered assortment $S$, we define the {\em load} of product $i\in\Nc$ as the total number of customers who select this product. This random quantity will be designated by $L_i(S)$, noting that it can be expressed as $L_i(S) =  \sum_{t=1}^TX_{it}(S)$. We use $L_0(S)=\sum_{t=1}^TX_{0t}(S)$ to denote the no-purchase load upon offering the assortment $S$, i.e., the number of customers who did not select any product. Finally, $M(S)$ will stand for the maximum load over all products, i.e., $M(S) = \max_{i\in S}L_i(S)$. Hence, the number of customers who choose the most selected product corresponds to the maximum load across all products. Our optimization problem  consists in computing an assortment that maximizes the expected maximum load. We refer to this problem as {\em Static Maximum Load Assortment}  (\ref{SMLA}), compactly formulated as follows: \begin{equation}\label{SMLA}
    \max_{S\subseteq\Nc} \;\E \left(
    M(S)
    \right). \tag{\SMLA}
    \end{equation}

\paragraph{Closed-form expression for the maximum load.} Given this formulation, we first observe that efficiently computing the expected maximum load $\E(M(S))$ of a given assortment $S$ is a non-trivial task. Prior to developing an efficient algorithm for this purpose, let us start by deriving a supposedly straightforward closed-form expression. Consider an assortment $S \subseteq \Nc$ and suppose without loss of generality that $S = \{1,\ldots,k\}$  for some $k\leq n$. For each product $i \in S_+$, its corresponding random load $L_i(S)$ clearly follows a Binomial distribution of mass parameter $T$ and probability of success $\phi_i(S)$. However, these random variables are correlated, since $\sum_{i \in S_+} L_i(S)=T$. In fact, the load vector ${\mathbf L}(S)\coloneqq(L_0(S),\ldots,L_k(S))$ is a random vector that follows a Multinomial distribution. In particular, for every $\bl\coloneqq(\ell_0,\ldots,\ell_{k})\in\N^{|S_+|}$ with $\sum_{i\in S_+}\ell_i= T$, we have $\P({\mathbf L}(S)=\bl)= \binom{ T }{ \ell_0,\ldots,\ell_k } \cdot \prod_{i\in S_+}(\phi_i(S))^{\ell_i}$,
where $\binom{ T }{ \ell_0,\ldots,\ell_k }\coloneqq\frac{T!}{\ell_0!\cdots \ell_k!}$ is the Multinomial coefficient. As such, a direct expression for the expected maximum load is given by:    
\begin{equation}\label{eq:multinomial}
\E \left(
    M(S)
    \right)=\sum_{ \MyAbove{{\bl}\in\N^{k+1} :}{\sum_{i\in S_+}\ell_i=T}}\P\left({\mathbf L}(S)={\bl}\right)\cdot \max_{i\in S}\ell_i.
\end{equation}
However, in this representation, we sum over an exponential number of terms, $\binom{ T+k}{ k}$, which makes this computation intractable. In Section \ref{compute}, we provide a polynomial-time algorithm to compute the expected maximum load for any given assortment. 

\subsection{{Dynamic Maximum Load Assortment Optimization (\ref{DMLA})}}\label{subsec:DMLA}

In the dynamic setting, customers arrive one after the other, allowing the decision maker to tailor the assortment offered to each customer based on the choices observed for previously-arriving customers. In particular, at each time period, we have access to the current load vector, which provides  the number of customers who have selected each product up to that point. Based on this information, we wish to determine a personalized assortment that will be offered to the next arriving customer. As such, the solution concept in this setting corresponds to an adaptive policy, captured by 
a function that takes as input the current system state (i.e., the number of customers remaining and the current load vector), and returns an assortment to offer to the next customer. The objective is to propose an adaptive policy that maximizes the expected maximum load over all products upon termination of the arrival stream.  


    

    

\paragraph{Dynamic programming representation.} To formalize the dynamic setting, we take the view of a dynamic program that determines the actions taken by an optimal policy, i.e.,  the personalized assortments that will be offered to arriving customers. For this purpose, we consider a planning horizon consisting of $T$ periods, each with a single customer arrival.
To describe the system state at the beginning of any time period, we introduce the state variable $\bl = (\ell_i : i \in \mathcal{N})$, where $\ell_i$ represents the number of customers who have selected product $i$ up to that point. For each time period $t = 1, \ldots, T$, we use $\emm_t(\bl)$ to denote the optimal expected maximum load when there are $t$ customers remaining in the planning horizon, and the system's state at the beginning of this period is characterized by the load vector $\bl$. By employing ${\bf e}_i \in \mathbb{R}_+^n$ to represent the $i$-th unit vector, we can compute the value functions $\{ \emm_t \}_{ t \in [T]}$ via the following dynamic program:    \begin{equation}\begin{array}{ll} \label{DMLA}\tag{\DMLA}
    		 \emm_{t}(\mathbf{\bl})=\displaystyle\max_{S\subseteq \Nc}\left(\emm_{t-1}( \bl)\cdot\phi_0(S)+\sum_{i\in S}\emm_{t-1}(\mathbf{\bl}+\mathbf{e}_i)\cdot \phi_i(S)\right) 
    \end{array}\end{equation}
with the boundary condition $\emm_0(\bl)=\max_{i \in [n]}\ell_i$. To better understand the recursive equation above, note that when the current load vector is $\bl$ and we offer the assortment $S$, the first possible outcome is that the currently arriving customer will choose the no-selection option, with probability $ \phi_0(S)$, in which case the load vector remains unchanged. The second outcome corresponds to choosing one of the products $i \in S$, with probability $ \phi_i(S)$; here, the load vector $\bl$ is updated to $\bl +\mathbf{e}_i$. Clearly, the optimal expected maximum load in the entire horizon is given by  $\emm_{T}(\bf 0)$.

It is imperative to mention that this formulation should be viewed as being an explicit characterization of optimal adaptive policies rather than as an efficiently-implementable algorithm, due to being defined over a state space of exponential size, $\Omega( T^n )$. Moreover, a careful inspection of \ref{DMLA} shows that each of the value functions $\emm_t(\cdot)$ is recursively obtained by only considering $\emm_{t-1}(\cdot)$-related terms. However, these equations by themselves ask us to solve an assortment-like optimization problem. Quite surprisingly, in \red{Appendix \ref{apx:proofquasitime}}, we show that this inner problem can be reformulated as an unconstrained revenue maximization question under the Multinomial Logit model, which can be solved in polynomial time.

\section{The Static Setting: Approximation Algorithms}\label{sec:SMLA}

In this section, we present our main algorithmic results for \ref{SMLA}, eventually showing that this setting can be efficiently approximated within any degree of accuracy. Toward this objective, in Section  \ref{compute}, 
we first provide a polynomial time evaluation oracle for computing the expected maximum load of a given assortment. In Section \ref{subsec:PTASlemmas}, we establish a number of structural lemmas that will be useful in analyzing our algorithmic framework. Using these claims, we show in Section~\ref{subsec:halfapprox}  that an elegant policy based on preference-weight-ordered assortments yields a $1/2$-approximation for \ref{SMLA}. In Section \ref{subsec:PTAS}, we present our main contribution for the static formulation, showing that it admits a polynomial-time approximation scheme (PTAS).
\blue{Finally, in Section \ref{subsec:regime}, we study the special case where the number of customers $T$ is very large and characterize optimal assortments  in this regime.}
\subsection{Polynomial-time evaluation oracle}\label{compute}

As previously mentioned, one of the basic challenges in addressing \ref{SMLA} resides in simply evaluating the objective function of a given assortment. As discussed in Section~\ref{subsec:SMLA}, 
computing the expected maximum load via representation \eqref{eq:multinomial} requires summing over exponentially-many terms, which is clearly not a tractable approach. Our first contribution is to provide a polynomial time algorithm for computing the  expected maximum load function.

\begin{theorem}\label{thm:oracle}
The expected maximum load of any assortment can be computed in $O(n^2T^3)$ time.
\end{theorem}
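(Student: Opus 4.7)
The plan is to reduce the computation of $\E(M(S))$ to evaluating a polynomial number of rectangular probabilities for the multinomial distribution, and then invoke the algorithmic result of \cite{frey2009algorithm} for such computations. Since $M(S)$ is a non-negative integer-valued random variable bounded by $T$, the standard tail-sum identity yields
\begin{equation*}
\E\bigl(M(S)\bigr) \;=\; \sum_{j=1}^{T} \P\bigl(M(S) \ge j\bigr) \;=\; \sum_{j=0}^{T-1} \Bigl(1 - \P\bigl(M(S) \le j\bigr)\Bigr),
\end{equation*}
which shifts the task to computing the $T$ cumulative values $\P(M(S) \le j)$ for $j = 0,1,\ldots,T-1$.

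The key observation is that, writing $S = \{1,\ldots,k\}$ with $k \le n$, the event $\{M(S) \le j\}$ coincides with the intersection $\bigcap_{i \in S} \{L_i(S) \le j\}$, which is a rectangular event for the multinomial load vector $\mathbf{L}(S) = (L_0(S), L_1(S), \ldots, L_k(S))$, where only the no-purchase coordinate $L_0(S)$ is left unconstrained. Hence every term appearing in the tail-sum is a rectangular probability of a multinomial with $k+1 \le n+1$ categories and $T$ trials, which is precisely the quantity that Frey's polynomial-time procedure is designed to evaluate. Concretely, I would run a convolution-style dynamic program that processes the products of $S$ one by one, maintaining the joint probability of the accumulated partial load while enforcing each product's individual upper bound of $j$; the unconstrained coordinate $L_0(S)$ is then handled by treating it as the final, free category into which any remaining trials are absorbed.

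To justify the stated running time, I would argue that each convolution step costs $O(T^2)$ per product, yielding $O(nT^2)$ work per rectangular probability; aggregating over the $T$ thresholds via the tail-sum then produces an overall bound of order $O(n T^3)$, with the remaining factor of $n$ in the claimed $O(n^2 T^3)$ absorbing the per-product tabulation overhead inside Frey's algorithm. The main obstacle I anticipate is purely bookkeeping: verifying that Frey's general-purpose routine, which allows each coordinate its own upper bound, can be specialized to our setting (all bounds equal to $j$, with one unconstrained coordinate) without any loss of efficiency, and that the $T$ different invocations can share enough structure for the claimed complexity to be achieved. Beyond this, the argument is a clean reduction from expected-maximum evaluation to repeated rectangular-probability computation for the multinomial distribution.
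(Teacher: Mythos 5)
Your proposal is correct, but it takes a genuinely different decomposition from the paper's. The paper works with the exact-value identity $\E(M(S))=\sum_{\ell=1}^{T}\ell\cdot\P(M(S)=\ell)$ and, since $\{M(S)=\ell\}$ is not itself rectangular, partitions it into $O(n)$ rectangular events $F_{\ell j}(S)$ indexed by the minimal-index product $j$ attaining the maximum load $\ell$ (Lemma~\ref{lem:partition}); this requires $O(nT)$ calls to Frey's routine and yields the stated $O(n^2T^3)$ bound. You instead use the tail-sum identity and observe that each cumulative event $\{M(S)\le j\}=\bigcap_{i\in S}\{L_i(S)\le j\}$ is already a single rectangular event (all product coordinates bounded above by $j$, the no-purchase coordinate unconstrained), so only $T$ rectangular probabilities are needed. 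This is both simpler and asymptotically cheaper: $T$ calls at $O(nT^2)$ each gives $O(nT^3)$, strictly inside the claimed $O(n^2T^3)$, and it avoids the minimal-index partition argument entirely. Your worry about specializing Frey's general per-coordinate bounds to equal upper bounds with one free coordinate is a non-issue, since that is just a particular instance of a rectangular event; the only thing I would tighten is the phrase about the extra factor of $n$ ``absorbing per-product tabulation overhead''---no such factor is needed, and you should simply state the sharper $O(nT^3)$ bound and note that it implies the theorem.
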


In a nutshell, our algorithm builds upon the work of  \cite{frey2009algorithm} and \cite{lebrun2013efficient}, who designed polynomial-time procedures to evaluate {\em rectangular probabilities} for the Multinomial distribution. Specifically, let $\mathbf L$ be a random vector that follows a Multinomial distribution. A rectangular probability is the probability of a so-called  {\em rectangular event}, of the form $\{ \mathbf{a} \leq \mathbf{L} \leq \mathbf{b} \}$, where $\mathbf a$ and $\mathbf b$ are integer vectors.

\blue{
\noindent {\bf Overview of Frey's algorithm. }
\cite{frey2009algorithm} proposed a sophisticated approach to compute rectangular probabilities in polynomial time. Let us present a brief overview of his ideas. Recall that $\mathbf L$ is a random $k$-dimensional vector that follows a Multinomial distribution, where $T$ and $(p_1,\ldots,p_k)$ are the number of trials and the probability vector, respectively. To compute $\P(\mathbf{a\leq L\leq b})$, we start by summing over all possible realizations $\mathbf a\leq \bl \leq \mathbf b$ of the load vector $\mathbf L$:
$$
    \P\left(\mathbf{a\leq L\leq b}\right) = \sum_{\ell_1=a_1}^{b_1}\ldots\sum_{\ell_k = a_k}^{b_k}\mathbbm{1}\left(\left\|\mathbf{\ell}\right\|_1=T\right)\cdot \frac{T!}{\ell_1!\cdots \ell_k!}\prod_{i=1}^kp_i^{\ell_i} ,
$$
where $\|\mathbf{\ell}\|_1 = \sum_{i=1}^k\ell_i$. Then, noticing that $\ell_i\geq a_i$ for all $i=1,\ldots,k$ in the summations above, we can perform the following factorization:
\begin{align*}
    \P\left(\mathbf{a\leq L\leq b}\right) &= \frac{T!\cdot p_1^{a_1}\cdots p_k^{a_k}}{a_1!\cdots a_k!}\sum_{\ell_1=a_1}^{b_1}\ldots\sum_{\ell_k = a_k}^{b_k} \mathbbm{1}\left(\left\|\mathbf{\ell}\right\|_1=T\right)\cdot\frac{1}{\frac{\ell_1!}{a_1!}\ldots \frac{\ell_k!}{a_k!}}\prod_{i=1}^kp_i^{\ell_i-a_i}\\
    & = \frac{T!\cdot p_1^{a_1}\cdots p_k^{a_k}}{a_1!\cdots a_k!}\sum_{\ell_1=a_1}^{b_1}\ldots\sum_{\ell_k = a_k}^{b_k}\mathbbm{1}\left(\left\|\mathbf{\ell}\right\|_1=T\right)\cdot
    \prod_{i=1}^k \prod_{j=1}^{\ell_i-a_i}\frac{p_i}{a_i+j}
\end{align*}
To proceed from this point on, notice that each summand in the last expression is the product of $T - \sum_{i=1}^ka_i$ terms. To compute each summand, a naive algorithm would simply start with $p_1/(a_1+1)$, then proceed to multiply it by $p_1/(a_1+2)$ or $p_2/(a_2+1)$, depending on whether $\ell_1-a_1>1$ or not, and so on for each of the summands. However, Frey's algorithm proposes a way to gather all summands that are multiplied by the same factor $p_i/(a_i+j)$ at the same time step, and multiply their sum by  $p_i/(a_i+j)$, which allows us to only perform a single multiplication for those summands at that time step, rather than a separate multiplication for each. Finally, using recursion, this algorithm computes the final values at the last step, and returns their sum. In what follows, we show how Frey's algorithm is employed to design our polynomial time evaluation oracle.
}

\paragraph{Preliminaries.} Consider an arbitrarily-structured assortment $S\subseteq\Nc$ and  suppose without loss of generality that $S=\{1,\ldots,k\}$. We remind the reader that the random variable $L_i(S)$ stands for the load of product $i\in S$, with $\mathbf{L}(S)=(L_1(S),\ldots,L_k(S))$ being the overall load vector. Additionally, $M(S)$ is the random variable that refers to the maximum load across the products in $S$, i.e., $M(S) = \max_{i \in S} L_i(S)$.  As argued in Section \ref{subsec:SMLA}, 
the load vector $\mathbf{L}(S)$ follows a Multinomial distribution. In what follows, we explain how to compute $\E(M(S))$ using only a polynomial number of externals calls to evaluate rectangular probabilities. 
To this end, noting that
\begin{equation}\label{eq:mean} \E\left(M(S)\right) = \sum\limits_{\ell=1}^{T}\ell\cdot \P(M(S) = \ell),
\end{equation}
it suffices to show how to efficiently compute each of the terms $\P(M(S) = \ell)$. In turn, we write each event $\{M(S)=\ell\}$ as a partition into $O( n )$ rectangular events
 with respect to the random vector ${\mathbf L}(S)$. Specifically, for $1\leq \ell\leq T$ and $1\leq j\leq k$, we define the event $F_{\ell j}(S)$ as:        \begin{align}\label{eq:rect}
        F_{\ell j}(S) =& \left[\bigwedge_{i=1}^{j-1}\left\{L_i(S)<\ell\right\}\right]\;\boldsymbol\land\; \left\{L_j(S)=\ell\right\} \;\boldsymbol\land\;\left[\bigwedge_{i=j+1}^k\left\{L_i(S)\leq \ell\right\}\right]  = \left\{\mathbf{a}_{\ell j}\leq\mathbf L(S)\leq \mathbf{b}_{\ell j}\right\},
        \end{align}
where ${\mathbf{a}_{\ell j}=\ell\cdot \mathbf{e}_j}$ and ${\mathbf{b}_{\ell j}=(\ell-1)\cdot\sum_{i=1}^{j-1}\mathbf{e}_i+\ell\cdot\sum_{i=j}^{k}\mathbf{e}_i}$. Here,  $F_{\ell j}(S)$ corresponds to the event where the maximum load is equal to $\ell$, and product $j$ is the minimal-index product that attains this load. The above expression implies that $F_{\ell j}(S)$ is a rectangular event, meaning that its probability can be computed using Frey's algorithm.

\paragraph{Computing ${\E(M(S))}$.}
The next lemma shows how to utilize these rectangular events to compute $\E(M(S))$ for any assortment $S\subseteq\Nc$.

\begin{lemma}\label{lem:partition}
For any assortment $S=\{1,\ldots,k\}\subseteq \Nc$, 
we have $\E(M(S)) = \sum_{\ell=1}^T[\ell\cdot\sum_{j=1}^k \P(F_{\ell j}(S))]$.
\end{lemma}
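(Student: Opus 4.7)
The plan is to start from the identity $\E(M(S)) = \sum_{\ell=1}^T \ell \cdot \P(M(S) = \ell)$ stated in equation~\eqref{eq:mean}, and then show that for each fixed $\ell \in \{1, \ldots, T\}$, the event $\{M(S) = \ell\}$ decomposes as a disjoint union $\{M(S) = \ell\} = \bigsqcup_{j=1}^k F_{\ell j}(S)$. Once this partition identity is established, the lemma follows immediately by taking probabilities, substituting $\P(M(S) = \ell) = \sum_{j=1}^k \P(F_{\ell j}(S))$ into \eqref{eq:mean}, and swapping the two summations.

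To establish the partition identity, I would argue in two directions. First, I would verify the \emph{inclusion} $F_{\ell j}(S) \subseteq \{M(S) = \ell\}$ for every $j$: by the definition in \eqref{eq:rect}, on $F_{\ell j}(S)$ we have $L_j(S) = \ell$ (so $M(S) \geq \ell$), while $L_i(S) < \ell$ for $i < j$ and $L_i(S) \leq \ell$ for $i > j$ (so $M(S) \leq \ell$). Second, for the \emph{reverse inclusion}, given any realization in $\{M(S) = \ell\}$, let $j^\star$ be the smallest index $i \in S$ with $L_i(S) = \ell$; then by definition $L_i(S) < \ell$ for all $i < j^\star$, $L_{j^\star}(S) = \ell$, and $L_i(S) \leq \ell$ for all $i > j^\star$ (since $\ell$ is the maximum), placing the realization in $F_{\ell j^\star}(S)$.

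Finally, to confirm \emph{disjointness}, I would note that if a realization lies in both $F_{\ell j}(S)$ and $F_{\ell j'}(S)$ with $j < j'$, then the second event requires $L_j(S) < \ell$ while the first requires $L_j(S) = \ell$, a contradiction. Combining the two inclusions with disjointness yields the partition, and the lemma follows.

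I do not anticipate any genuine technical obstacle here; the proof is essentially a bookkeeping exercise based on the carefully-chosen tie-breaking rule ``smallest index achieving the maximum'' that underlies the definition of $F_{\ell j}(S)$ in \eqref{eq:rect}. The only small point to be careful about is the boundary case $\ell = 0$, which is excluded from the sum in \eqref{eq:mean} since it contributes a zero term, so one does not need to worry about the event $\{M(S) = 0\}$ (which, incidentally, would not be captured by any $F_{0 j}(S)$ under the given definition with $j \in \{1,\ldots,k\}$).
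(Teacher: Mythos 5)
Your proposal is correct and follows essentially the same route as the paper's proof: fix $\ell$, show that the events $F_{\ell 1}(S),\ldots,F_{\ell k}(S)$ are pairwise disjoint and that their union is exactly $\{M(S)=\ell\}$ via the ``smallest index attaining the maximum'' argument, and then substitute $\P(M(S)=\ell)=\sum_{j=1}^k\P(F_{\ell j}(S))$ into equation~\eqref{eq:mean}. No gaps; your remark about $\ell=0$ is a harmless extra observation.
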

\begin{proof}
For convenience, we denote the random variables $M(S)$ and $L_j(S)$ simply by $M$ and $L_j$; similarly, the events $F_{\ell j}(S)$ will be replaced by $F_{\ell j}$. Fixing some $\ell=0,\ldots,T$, we will show that $(F_{\ell j})_{j=1,\ldots ,k}$ is a partition of the event $\{M=\ell\}$, 
i.e., the union of the events $\left(F_{\ell j}\right)_{j=1,\ldots ,k}$ is precisely $\{M=\ell\}$ and these events are mutually exclusive. Consequently, ${\P\left(M=\ell\right)=\sum_{j=1}^k\P(F_{\ell j})}$, and replacing this expression in Equation \eqref{eq:mean} yields the desired result.

First, we show the events $(F_{\ell j})_{j=1,\ldots ,k}$ are mutually exclusive, i.e., for all $j_1 \neq j_2$, we have $F_{\ell j_1} \cap F_{\ell j_2}= \emptyset$. To verify this claim, suppose without loss of generality that $j_1<j_2$. In the event $F_{\ell j_2}$, we have by definition $L_{j_1}<\ell$ since $j_1<j_2$. In particular, $L_{j_1} \neq \ell$ which implies that $\mathbf{L}\notin F_{\ell j_1}$. Hence, $F_{\ell j_1} \cap F_{\ell j_2}= \emptyset$. Second, let us show that $\bigvee_{j=1}^k F_{\ell j} = \{M=\ell\}$. First, by definition, the maximum load in any event $F_{\ell j}$ is exactly $\ell$, and therefore $\bigvee_{j=1}^k F_{\ell j} \subseteq \{M=\ell\}$. In the opposite direction, suppose that $M=\ell$. Then, at least one product has a load of $\ell$ and all other products have a load of at most $\ell$. Let $j_{\min}$ be the lowest-index product with a load of exactly $\ell$, i.e., $j_{\min} = \min\{i=1,\ldots,k \mid L_i=\ell\}$. By definition of $j_{\min}$, we have $L_i<\ell$ for all $i=1,\ldots,j_{\min}-1$ and $L_{j_{\min}} = \ell$. Also, since $M=\ell$ by supposition, $L_i\leq \ell$ for all $i=j_{\min}+1,\ldots,k$, meaning that $\{M=\ell\}\subseteq F_{\ell j_{\min}}\subseteq \bigvee_{j=1}^k F_{\ell j} $.
\end{proof}

            

\paragraph{Concluding the proof of Theorem~\ref{thm:oracle}.} We consider the running time incurred by computing $\E(M(S))$ via Lemma \ref{lem:partition}. For each of the events $\{M(S)=\ell\}$, we have to compute $O(n)$ rectangular probabilities. Since Frey's algorithm admits an $O(nT^2)$-time implementation, we arrive at $O(n^2T^2)$ operations per event. In turn, Equation \eqref{eq:mean} involves $O(T)$ such events, amounting to an overall running time of $O(n^2T^3)$, precisely as stated in Theorem~\ref{thm:oracle}.

\subsection{Structural lemmas}\label{subsec:PTASlemmas}

In what follows, we shed light on a number of structural claims that will be useful in presenting our algorithmic framework. In Lemmas \ref{lem:merge} and \ref{lem:charging}, we introduce two operations, referred to as {\em Merge} and {\em Transfer}, showing that their application to any assortment does not decrease the expected maximum load. In Lemmas \ref{lem:probs} and \ref{lem:weights}, we show that minor alterations of the instance parameters (choice probabilities or preference weights) yield a correspondingly small deviation from the expected maximum load.       
Before stating these lemmas, let us introduce the following  definition. We say that a product $i$ is {\em lighter} (resp.\ {\em heavier}) than a product $j$, if $v_i\leq v_j$ (resp.\ $v_i\geq v_j$), emphasizing that we have weak inequalities in both definitions.


\paragraph{Operation 1: Merge.} Consider an assortment $S$ and let $i$ and $j$ be two products in this assortment, with respective preference weights $v_i$ and $v_j$. The operation of merging products $i$ and $j$ consists of replacing both products with a single new product, whose preference weight is $v_i+v_j$. 
In the next lemma, whose proof is presented in Appendix \ref{apx:merge}, we show that the merge operation cannot decrease the expected maximum load.

\begin{lemma}\label{lem:merge}
Consider an assortment $S\subseteq \Nc$ and  let $\widetilde{S}$ be the assortment resulting from merging any two products of $S$. Then, $\E(M(\widetilde{S})) \geq	\E(M(S))$.
\end{lemma}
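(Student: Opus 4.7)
The plan is to establish Lemma~\ref{lem:merge} via a direct coupling between the stochastic processes induced by $S$ and $\widetilde{S}$. The key preliminary observation is that merging preserves the total preference weight of the assortment: if $m$ denotes the merged product with $v_m = v_i + v_j$, then $v(\widetilde{S}_+) = v(S_+)$. Consequently, $\phi_0(\widetilde{S}) = \phi_0(S)$, $\phi_k(\widetilde{S}) = \phi_k(S)$ for every $k \in S \setminus \{i,j\}$, and $\phi_m(\widetilde{S}) = \phi_i(S) + \phi_j(S)$.

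Building on this, I would define a joint probability space in which both processes share the same source of randomness: for each customer $t \in [T]$, draw a single random variable $U_t$ uniform on $[0,1]$, and partition $[0,1]$ into sub-intervals of lengths $\phi_0(S),\phi_i(S),\phi_j(S),\phi_{k_1}(S),\ldots$ so that $U_t$ jointly determines the choice under both $S$ and $\widetilde{S}$ in the natural way: customer $t$ picks product $k \in S \setminus \{i,j\}$ (resp.\ the no-purchase option) in the $S$-process if and only if they pick the same product (resp.\ no-purchase) in the $\widetilde{S}$-process, and customer $t$ picks either $i$ or $j$ in the $S$-process if and only if they pick $m$ in the $\widetilde{S}$-process. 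Since the choice probabilities assigned by the MNL model match, this is a valid coupling.

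Under this coupling, the loads satisfy $L_k(\widetilde{S}) = L_k(S)$ for every $k \in S \setminus \{i,j\}$, while $L_m(\widetilde{S}) = L_i(S) + L_j(S)$ pointwise. From these identities, since $L_i(S) + L_j(S) \geq \max(L_i(S), L_j(S))$, we obtain pointwise
\[
M(\widetilde{S}) \;=\; \max\!\Bigl(L_i(S)+L_j(S),\;\max_{k \in S \setminus \{i,j\}} L_k(S)\Bigr) \;\geq\; \max\!\Bigl(L_i(S),\,L_j(S),\;\max_{k \in S \setminus \{i,j\}} L_k(S)\Bigr) \;=\; M(S).
\]
Taking expectations then yields $\E(M(\widetilde{S})) \geq \E(M(S))$, as claimed.

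There is no real obstacle here; the proof is essentially a one-line coupling once the probabilities are checked. The only subtlety worth stating carefully is ensuring that the shared-randomness construction preserves the marginal laws of both processes (which it does, precisely because the MNL choice probabilities decompose additively at the merged product). I would relegate the explicit construction of the coupling to the appendix, since the essential content is captured by the weight-preservation identity $v(\widetilde{S}_+) = v(S_+)$ combined with the additivity $\phi_m(\widetilde{S}) = \phi_i(S) + \phi_j(S)$.
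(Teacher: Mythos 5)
Your proposal is correct and follows essentially the same route as the paper's proof: the paper likewise observes that $\mathbf{L}(\widetilde{S})$ is equal in distribution to $(L_i(S)+L_j(S),\,(L_k(S))_{k\in S\setminus\{i,j\}})$ and concludes from the pointwise inequality $L_i(S)+L_j(S)\geq\max(L_i(S),L_j(S))$ after taking expectations. Your explicit uniform-variable coupling merely spells out the distributional identity the paper asserts directly.
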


Roughly speaking, the main idea behind proving this result argues that, when we merge products $i$ and $j$, simple coupling arguments show that the maximum load of $\widetilde{S}$ is stochastically larger than that of $S$. In fact, the load of the merged product is \blue{equal in distribution} to the sum of the loads of products $i$ and $j$. Moreover, since the total sum of MNL preferences weights remains unchanged after a merge, the choice probabilities of all remaining products in the assortment $S$ are also unchanged, and consequently, their load is similar to its pre-merge counterpart. Hence, merging can only increase the maximum load across all products. 

\paragraph{Operation 2: Transfer.} Consider an assortment $S$ and let $i$ and $j$ be two  products in this assortment with respective preference weights $v_i\geq v_j$. For any $\delta \in [0, v_j]$, the operation of $\delta$-weight transfer from product $j$ to product $i$ consists of: (1)~Replacing product $i$ with a new product of preference weight $v_i+\delta$; and (2)~replacing product $j$ with a new product of preference weight $v_j-\delta$. Notably, we always transfer weight from a lighter product to a heavier product. In the next lemma, whose proof is provided in Appendix \ref{apx:charging}, we show that the transfer operation cannot decrease the expected maximum load.

\begin{lemma}\label{lem:charging}
Consider an assortment $S\subseteq \Nc$ and let $\widetilde{S}_{\delta}$ be the assortment resulting from a $\delta$-weight transfer. Then,
$\E(M(\widetilde{S}_{\delta})) \geq	\E(M(S))$.
\end{lemma}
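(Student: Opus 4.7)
My plan is to strengthen the conclusion to the stochastic dominance $M(\widetilde{S}_\delta) \succeq M(S)$, from which the expectation inequality follows by tail summation. Equivalently, I aim to show that for every integer threshold $\theta \geq 1$,
\[
\P\bigl(L_r(\widetilde{S}_\delta) < \theta \text{ for all } r \in S\bigr) \;\leq\; \P\bigl(L_r(S) < \theta \text{ for all } r \in S\bigr).
\]
The first step will record a structural invariance of the transfer: since it preserves $v_i + v_j$, it preserves $v(S_+)$, and hence leaves unchanged both the choice probability of every product $r \notin \{i,j\}$ and the combined probability $(v_i+v_j)/v(S_+)$ of selecting from the pair $\{i,j\}$. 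Consequently, the joint distribution of the outside loads $(L_r)_{r \notin \{i,j\}}$ together with the pair total $m \coloneqq L_i + L_j$ does not depend on $\delta$, while conditionally on this joint, $L_i \sim \mathrm{Bin}(m, p)$ with $p = (v_i + \delta)/(v_i + v_j) \geq 1/2$ and $L_j = m - L_i$. After conditioning, the only constraints in the event above that depend on $\delta$ are $L_i < \theta$ and $L_j < \theta$, which jointly reduce to $L_i \in \{m - \theta + 1,\ldots,\theta - 1\}$---an interval symmetric about $m/2$.

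The proof thus reduces to the following binomial claim, which I would establish by direct differentiation: for integers $0 \leq a \leq m/2$, the map $p \mapsto \P(\mathrm{Bin}(m,p) \in \{a,\ldots,m-a\})$ is non-increasing on $[1/2, 1]$. Writing this probability as $\P(\mathrm{Bin}(m,p) \leq m-a) - \P(\mathrm{Bin}(m,p) \leq a-1)$ and invoking the classical identity $\tfrac{d}{dp} \P(\mathrm{Bin}(m,p) \leq j) = -m\binom{m-1}{j} p^j (1-p)^{m-1-j}$, together with $\binom{m-1}{m-a} = \binom{m-1}{a-1}$, yields the derivative
\[
m \binom{m-1}{a-1} \bigl(p(1-p)\bigr)^{a-1} \bigl[(1-p)^{m-2a+1} - p^{m-2a+1}\bigr],
\]
which is non-positive for $p \geq 1/2$ since $m - 2a + 1 \geq 1$. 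Noting that $p$ is non-decreasing in $\delta$, this closes the chain of implications back to the desired inequality.

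\paragraph{Main obstacle.} The instinctive first attempt is a pointwise coupling through shared uniforms: each customer who selects $j$ in $S$ whose draw lands in the window $\bigl(v_i/(v_i+v_j),\,(v_i+\delta)/(v_i+v_j)\bigr]$ is reassigned to $i$ in $\widetilde{S}_\delta$. This yields $L_i(\widetilde{S}_\delta) \geq L_i(S)$ and $L_j(\widetilde{S}_\delta) \leq L_j(S)$ pointwise, and suffices on the typical event $L_i(S) \geq L_j(S)$; however, on the complementary realizations $L_i(S) < L_j(S)$ the transfer can actually \emph{decrease} $\max(L_i, L_j)$, so pointwise dominance of the maximum breaks. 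Circumventing this obstruction is exactly what the symmetric-interval claim accomplishes, by aggregating the good and bad realizations through the binomial law while exploiting the skew toward the good side that is forced by $v_i \geq v_j$ (equivalently $p \geq 1/2$). I expect establishing this distributional monotonicity via the CDF-derivative identity to be the main analytical hurdle.
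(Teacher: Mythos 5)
Your proof is correct, and it takes a genuinely different route from the paper's. The paper parametrizes the transfer by $\omega$, differentiates the closed-form multinomial expression for $\E(M(S_\omega))$ in the normalized parameter, identifies the derivative as $T(Q_1-Q_2)$ where $Q_1,Q_2$ are expected maximum loads conditional on the last customer selecting the heavier resp.\ lighter of the two involved products, and then shows $Q_1\geq Q_2$ by interpreting $Q_1-Q$ and $Q_2-Q$ (with $Q$ the no-purchase-conditional expectation) as the probabilities that each product attains the maximum after $T-1$ customers, which are ordered by a simple coupling. You instead prove the strictly stronger statement that $M(\widetilde S_\delta)$ first-order stochastically dominates $M(S)$: since the transfer preserves $v(S_+)$, the law of the loads outside $\{i,j\}$ and of the pair total $m=L_i+L_j$ is $\delta$-invariant, the conditional split is $\mathrm{Bin}(m,p)$ with $p=(v_i+\delta)/(v_i+v_j)\geq 1/2$, and the event $\{\max<\theta\}$ reduces after conditioning to $L_i$ landing in an interval symmetric about $m/2$ — whose probability you show is non-increasing in $p$ on $[1/2,1]$ via the CDF-derivative identity. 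All the steps check out (the multinomial conditioning fact, the telescoping derivative of the binomial CDF, the sign of $(1-p)^{m-2a+1}-p^{m-2a+1}$ for $a\leq m/2$), and the degenerate cases ($a\leq 0$ giving probability $1$, $a>m/2$ giving probability $0$) are harmless. What each buys: the paper's argument stays close to the "heavier product is more likely to lead" intuition and recycles its differentiation machinery elsewhere; yours yields stochastic dominance rather than only an expectation inequality, avoids manipulating the multinomial polynomial, and isolates the probabilistic content in a clean, self-contained binomial monotonicity lemma. Your diagnosis of why the naive shared-uniform coupling fails (the maximum can drop pointwise on realizations with $L_i<L_j$) is also accurate and is precisely the obstruction both proofs must circumvent.
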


In the proof of this result, we analytically study the function $\delta \mapsto \E(M(\widetilde S_{\delta}))$, showing that it  is non-decreasing. In fact, when $\delta$ increases, the choice probability of product $i$ increases, the choice probability of product $j$ decreases, and all other choice probabilities  remain unchanged. Therefore, at least intuitively, by increasing $\delta$, we virtually transfer part of the load of product $j$ to product $i$. Since product $i$ is heavier than product $j$, it is more likely that product $i$ has a higher  load, and therefore, transferring part of the load of product $j$ to product $i$ can only help increase the expected maximum load. 

\paragraph{Sensitivity of the expected maximum load function.} In the following, we study the effect of small changes in the instance parameters (choice probabilities or preference weights), on the expected maximum load of a given assortment. The next lemma shows that, given a Multinomial vector, where $0$ is the no-selection option and $1,\ldots,m$ are the product options, slight changes in the choice probabilities translate to small changes in the expected maximum load. 

\begin{lemma}\label{lem:probs}
Let ${\bf Y}=(Y_0,Y_1,\ldots,Y_m)$ and ${\bf W}=(W_0,W_1,\ldots,W_m)$ be Multinomial vectors, with parameters $(T,p^Y_0,\ldots,p^Y_m)$ and  $(T,p^W_0,\ldots,p^W_m)$, respectively. Then, when $p^W_i\geq (1-\epsilon)p^Y_i$ for all $i \in \{1,\ldots,m\}$, we have $\E(\max_{i=1,\ldots,m}W_i) \geq (1-\epsilon) \cdot \E(\max_{i=1,\ldots,m}Y_i)$.
\end{lemma}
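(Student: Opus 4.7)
The plan is to construct a trial-by-trial coupling between $\mathbf{Y}$ and $\mathbf{W}$ so that, conditional on $\mathbf{Y}$, the vector $\mathbf{W}$ dominates a component-wise $(1-\epsilon)$-thinning of $\mathbf{Y}$; once this is set up, an ``index of the maximum'' argument delivers the bound. First I would realize $\mathbf{Y}$ by drawing, independently for $t = 1, \ldots, T$, a categorical random variable $Z_t^Y$ with law $(p_0^Y, \ldots, p_m^Y)$ and setting $Y_i = |\{t : Z_t^Y = i\}|$. On the same probability space I would draw independent Bernoulli$(1-\epsilon)$ variables $B_1, \ldots, B_T$ and an i.i.d.\ sequence $U_1, \ldots, U_T$ with common law $\pi$ on $\{0, 1, \ldots, m\}$, where
\[
\pi(0) = \frac{p_0^W}{\epsilon + (1-\epsilon) p_0^Y}, \qquad \pi(j) = \frac{p_j^W - (1-\epsilon) p_j^Y}{\epsilon + (1-\epsilon) p_0^Y} \quad \text{for } j \geq 1.
\]
I would then set $Z_t^W = Z_t^Y$ whenever $Z_t^Y \geq 1$ and $B_t = 1$, and $Z_t^W = U_t$ otherwise, and define $W_i = |\{t : Z_t^W = i\}|$. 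A brief verification shows that $\pi$ is a valid probability distribution (nonnegativity from $p_j^W \geq (1-\epsilon) p_j^Y$, normalization from $\sum_j p_j^W = \sum_j p_j^Y = 1$) and that each $Z_t^W$ has marginal law $p^W$, so $\mathbf{W}$ has the prescribed Multinomial distribution.

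The payoff of this construction is the following thinning property. For each $i \geq 1$, let $\tilde Y_i = |\{t : Z_t^Y = i,\, B_t = 1\}|$. Then $W_i \geq \tilde Y_i$ holds deterministically, and conditional on the entire $Y$-process, the variables $\tilde Y_i$ are independent with $\tilde Y_i \mid \mathbf{Y} \sim \mathrm{Binomial}(Y_i, 1-\epsilon)$, because the $B_t$'s are drawn independently of $(Z_t^Y)_{t \in [T]}$. Letting $i^\star = i^\star(\mathbf{Y})$ denote any index achieving $\max_{i \geq 1} Y_i$,
\[
\E\!\left[\max_{i \geq 1} W_i \,\Big|\, \mathbf{Y}\right] \geq \E\!\left[\tilde Y_{i^\star} \,\Big|\, \mathbf{Y}\right] = (1-\epsilon) Y_{i^\star} = (1-\epsilon) \max_{i \geq 1} Y_i,
\]
and taking unconditional expectations over $\mathbf{Y}$ yields the claim.

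The hard part will be verifying the coupling: the hypothesis only lower-bounds the positive coordinates $p_j^W$ and leaves $p_0^W$ essentially unconstrained, so the residual distribution $\pi$ has to simultaneously absorb the Bernoulli slack from the $Y$-trials that selected a product (totalling $\epsilon(1 - p_0^Y)$ mass) and the entire no-purchase mass $p_0^Y$, while producing exactly the marginal $p^W$ on $\{Z_t^W = j\}$. The key identity $\epsilon + (1-\epsilon)(1 - p_0^Y) = p_0^W + \sum_{j \geq 1}(p_j^W - (1-\epsilon) p_j^Y)$, which is just $\sum_j p_j^W = \sum_j p_j^Y = 1$ rewritten, is precisely what makes this balancing feasible, so once it is noted the coupling goes through cleanly.
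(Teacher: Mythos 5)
Your proof is correct and follows essentially the same route as the paper's: both arguments couple $\mathbf{W}$ with a componentwise $(1-\epsilon)$-thinning of $\mathbf{Y}$ and then evaluate at the (random) argmax index of $\mathbf{Y}$ via conditioning on the $Y$-process. The only difference is in the packaging of the coupling --- the paper passes through an intermediate Multinomial vector with probabilities $\min(p_i^Y,p_i^W)$ and chains two couplings, whereas you build one coupling directly by routing the leftover mass through the explicit residual distribution $\pi$ --- and your verification that $\pi$ is a valid distribution reproducing the marginal $p^W$ is sound.
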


To prove this result, we construct a coupling between the random variables $\mathbf{Y}$ and $\mathbf{W}$, where every customer that selects some option $i$ in $W_i$ also selects the same option in $Y_i$ with probability at least $1-\eps$. Consequently, when $i$ is a deterministic option, it is relatively straightforward to claim that the load of this option only suffers an $\eps$-fraction loss (in expectation). However, in our case, the option $i$ itself is a random variable, corresponding to the product that attains the maximum load. Based on an elegant conditioning argument, we show that a claim of similar spirit can be extended to the latter setting. The full proof of this lemma appears in Appendix \ref{apx:probs}.

Similarly, in Lemma \ref{lem:weights}, we show that with respect to any assortment, small changes in the preference weights of its products translate to small changes in the expected maximum load. The proof of this result can be found in Appendix \ref{apx:weights}.
	
\begin{lemma}\label{lem:weights}
Let $S^+=\{1^+,\ldots,m^+\}$ and $S^-=\{1^-,\ldots,m^-\}$ be a pair of assortments, and let $v_i^+$ (resp.\ $v_i^-$) be the preference weight of product $i^+$ (resp.\ $i^-$), for all $i\in [m]$. When $(1-\epsilon)v_{i}^+\leq v_{i}^-\leq v_{i}^+$ for all $i \in [m]$, we have $\E(M(S^-)) \geq (1-\epsilon)\cdot\E(M(S^+))$.
	\end{lemma}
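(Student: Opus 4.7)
The plan is to reduce Lemma~\ref{lem:weights} to Lemma~\ref{lem:probs} by showing that small perturbations in the preference weights translate into equally small perturbations (in the multiplicative sense) of the corresponding MNL choice probabilities. Once that is established, the load vectors $\mathbf{L}(S^+)$ and $\mathbf{L}(S^-)$ become two multinomial vectors whose parameters satisfy the hypothesis of Lemma~\ref{lem:probs}, and the conclusion will follow directly.

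Concretely, first I would write down the MNL choice probabilities explicitly: $\phi_{i^+}(S^+) = v_i^+ / (1 + \sum_{j=1}^m v_j^+)$ and $\phi_{i^-}(S^-) = v_i^- / (1 + \sum_{j=1}^m v_j^-)$ for each $i \in [m]$. Then I would take their ratio,
\[
\frac{\phi_{i^-}(S^-)}{\phi_{i^+}(S^+)} \;=\; \frac{v_i^-}{v_i^+}\cdot\frac{1+\sum_{j=1}^m v_j^+}{1+\sum_{j=1}^m v_j^-},
\]
and argue that each of the two factors is bounded below appropriately: the first factor is at least $1-\epsilon$ by the componentwise weight hypothesis $v_i^-\geq(1-\epsilon)v_i^+$, while the second factor is at least $1$ because $v_j^-\leq v_j^+$ for every $j$ forces $\sum_j v_j^- \leq \sum_j v_j^+$. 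Multiplying these yields $\phi_{i^-}(S^-) \geq (1-\epsilon)\phi_{i^+}(S^+)$ for every $i\in[m]$, which is exactly the hypothesis required by Lemma~\ref{lem:probs}.

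At this point I would invoke Lemma~\ref{lem:probs} with ${\bf Y}$ set to the load vector of $S^+$ (restricted to products $1^+,\ldots,m^+$ together with the no-purchase coordinate) and ${\bf W}$ set to the load vector of $S^-$, both of which are honest multinomial vectors with mass parameter $T$. Since $\E(M(S^+)) = \E(\max_{i}Y_i)$ and $\E(M(S^-)) = \E(\max_{i}W_i)$ by definition of the maximum load, the lemma immediately yields $\E(M(S^-))\geq(1-\epsilon)\E(M(S^+))$, as desired.

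I do not anticipate a serious obstacle here: once the probability-ratio bound is in place, the rest is essentially a translation of notation. The only mild care needed is to ensure that the ``extra'' no-purchase coordinate in the multinomial vector does not interfere with the statement of Lemma~\ref{lem:probs}, which it does not since that lemma takes the maximum only over the product coordinates $1,\ldots,m$ and leaves the zero coordinate free.
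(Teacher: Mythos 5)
Your proposal is correct and follows essentially the same route as the paper: both proofs reduce the statement to Lemma~\ref{lem:probs} by showing $\phi_{i^-}(S^-)\geq(1-\epsilon)\cdot\phi_{i^+}(S^+)$ for every $i\in[m]$, using $v_i^-\leq v_i^+$ to compare denominators and $v_i^-\geq(1-\epsilon)v_i^+$ to compare numerators. The only cosmetic difference is that you bound the ratio of the two choice probabilities as a product of two factors, whereas the paper writes the same two steps as a chain of inequalities.
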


\subsection{\texorpdfstring{$\boldsymbol{1/2}$}{}-approximation via preference-weight-ordered assortments}\label{subsec:halfapprox}

\paragraph{Main result.} Roughly speaking, preference-weight-ordered assortments prioritize products with higher preference weight. Formally, assuming without loss of generality that $v_1\geq \cdots \geq v_n$, we say that an assortment $S$ is preference-weight-ordered when it forms a prefix of this sequence, i.e., $S=\{1,2,\ldots,j\}$ for some $ 1 \leq j \leq n$. In what follows, we show that there exists a weight-ordered-assortment whose expected maximum load is within factor $2$ of optimal. Since there are only $n$ such assortments, and since we can compute the expected maximum load for each of these assortments by employing our evaluation oracle (see Section~\ref{compute}), the latter claim yields a polynomial time $1/2$-approximation for \ref{SMLA}, as stated in the following theorem.

\begin{theorem}\label{thm:halfapprox}
There is a preference-weight-ordered assortment that forms a $1/2$-approximation to \ref{SMLA}. Moreover, we can compute such an assortment in polynomial time.
\end{theorem}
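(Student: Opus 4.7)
The plan is to show that there exists a preference-weight-ordered assortment $W_k = \{1, \ldots, k\}$ (sorting products so that $v_1 \geq v_2 \geq \cdots$) whose expected maximum load is at least $\frac{1}{2}\E(M(S^*))$, where $S^*$ denotes any optimal assortment and $v^* := v(S^*)$. Once this existence is established, the algorithm is immediate: enumerate the $n$ candidates $W_1, \ldots, W_n$, evaluate each via the oracle of Theorem~\ref{thm:oracle} in $O(n^2 T^3)$ time, and return the best, for a total running time of $O(n^3 T^3)$.

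To bound $\E(M(S^*))$ from above, I would iteratively apply the merge operation (Lemma~\ref{lem:merge}) to $S^*$, collapsing all of its products into a single super-product of weight $v^*$. Since merging can only increase the expected maximum load and a single-product assortment has max load equal to the total number of buying customers, this yields the clean upper bound $\E(M(S^*)) \leq T \cdot v^*/(1+v^*)$.

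The main argument then splits into two cases based on the heaviest weight $v_1$. In the easy case $v_1 \geq v^*/2$, the singleton $W_1 = \{1\}$ already works: by monotonicity of $x \mapsto x/(1+x)$ we have $\E(M(W_1)) = T v_1/(1+v_1) \geq T v^*/(2+v^*)$, and a direct calculation shows $(1+v^*)/(2+v^*) \geq 1/2$, delivering the desired $\frac{1}{2}$-approximation. In the harder case $v_1 < v^*/2$, which forces $|S^*| \geq 3$, I plan to consider $W_{k^*}$ with $k^* = |S^*|$ and invoke the probability sensitivity lemma (Lemma~\ref{lem:probs}). Pairing the $j$-th heaviest product of $W_{k^*}$, with weight $v_j$, to the $j$-th heaviest product $i_j$ of $S^*$, with weight $v_{i_j}$, we always have $v_j \geq v_{i_j}$ since $W_{k^*}$ contains the top $k^*$ heaviest products of $\Nc$. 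The hypothesis of Lemma~\ref{lem:probs} with $\epsilon = 1/2$ then reduces to requiring $v(W_{k^*}) \leq 1 + 2v^*$, and when this holds the lemma directly yields $\E(M(W_{k^*})) \geq \frac{1}{2}\E(M(S^*))$.

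The main obstacle I anticipate is the residual sub-case where $v(W_{k^*}) > 1 + 2v^*$: here the top-$k^*$ products of $\Nc$ substantially outweigh those of $S^*$, and the direct application of Lemma~\ref{lem:probs} fails. To resolve this, I would consider a smaller $W_{k'}$ with $k' < k^*$ chosen so that $v(W_{k'})$ more closely matches $v^*$, at the cost of a dimension mismatch with $S^*$. I would then combine the transfer lemma (Lemma~\ref{lem:charging}), applied within $S^*$ to consolidate its $k^* - k'$ lightest products onto heavier ones, with the weight sensitivity lemma (Lemma~\ref{lem:weights}), in order to compare the two $k'$-product assortments that result. The delicate step is to argue that the compound of these operations preserves at least a $\frac{1}{2}$-fraction of $\E(M(S^*))$, and this is where I expect the bulk of the technical work to lie.
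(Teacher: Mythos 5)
Your algorithmic part and your upper bound $\E(M(S^*)) \leq T v^*/(1+v^*)$ (obtained by repeatedly merging $S^*$ into a single super-product via Lemma~\ref{lem:merge}) are correct, and your first two cases go through: when $v_1 \geq v^*/2$ the singleton $\{1\}$ suffices by the monotonicity of $x \mapsto x/(1+x)$ together with $(1+v^*)/(2+v^*) \geq 1/2$, and when $v(W_{k^*}) \leq 1+2v^*$ the rank-by-rank pairing of $W_{k^*}$ with $S^*$ does satisfy the hypothesis of Lemma~\ref{lem:probs} with $\eps = 1/2$, since $v_j \geq v_{i_j}$ and $(1+v^*)/(1+v(W_{k^*})) \geq 1/2$. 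This is a genuinely different, and in these cases more elementary, route than the paper, which never performs this case split and instead transforms $S^*$ directly.

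The problem is the residual case $v_1 < v^*/2$ and $v(W_{k^*}) > 1+2v^*$, which you explicitly leave as ``the bulk of the technical work,'' and your sketch for it does not close. Consolidating the $k^*-k'$ lightest products of $S^*$ onto its heavier members and then invoking Lemma~\ref{lem:weights} against $W_{k'}$ requires a \emph{two-sided} sandwich $(1-\eps)v^+_j \leq v^-_j \leq v^+_j$ between the paired weights, and the consolidation gives you no control over either side: a merged product can end up heavier than its counterpart $v_j$ in $W_{k'}$ by an unbounded factor (breaking $v^-_j \leq v^+_j$ in the direction you need), or the surviving weights of $S^*$ can still be far below the corresponding $v_j$'s. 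This residual case is exactly where the paper invests its machinery: Lemma~\ref{lem:virtual} runs the \fil operation, which uses the tail products of $S^*$ to build \emph{exact replicas} of the missing prefix products one at a time via Merge and Transfer, so that the only leftover is a single virtual product guaranteed to be lighter than every product of the resulting weight-ordered assortment; Lemma~\ref{lem:drop} then shows that dropping this one light virtual product costs only a factor $|S|/(|S|+1) \geq 1/2$ (itself proved by spreading its weight over $S$ and applying Lemma~\ref{lem:weights} with the correctly controlled ratio $|S|/(|S|+1)$). Without an argument of this precision --- or a proof that the residual case cannot arise for an optimal $S^*$, which you do not attempt --- the proof is incomplete.
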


In order to establish this result, the overall idea is to consider an optimal assortment $S^*$ for \ref{SMLA}, that may not necessarily be preference-weight-ordered. We will then sequentially modify $S^*$  to obtain a weight-ordered assortment. We prove that, due to these modifications, the loss incurred does not exceed $1/2$ of the objective function. In other words, letting $S$ be the resulting preference-weight-ordered assortment, we would claim that ${\E(M(S))\geq \E(M(S^*))}/2$. 


\paragraph{Outline of analysis.} To prove Theorem \ref{thm:halfapprox}, using the Merge and Transfer operations presented in Section \ref{subsec:PTASlemmas}, we first show that any sufficiently heavy assortment can be replaced by a preference-weight-ordered assortment, plus a so-called virtual product (i.e., not present in the universe $\Nc$), without decreasing the expected maximum load. Moreover, we argue that the preference weight of the latter product is upper-bounded by every  preference weight in the weight-ordered assortment.

\begin{lemma}\label{lem:virtual}
Let $S\subseteq \Nc$ be an assortment with $v(S)\geq v_1$. Then, there exists a weight-ordered assortment $\widetilde S\subseteq \Nc$ and a virtual product $k$ whose preference weight is at most $\min_{i\in \widetilde S}v_i$ such that
$\E(M(\widetilde S\cup\{k\})) \geq \E(M(S))$.
\end{lemma}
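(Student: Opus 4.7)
My plan is to build explicit candidates for $\widetilde{S}$ and the virtual product $k$, and then to reshape $S$ into them through a sequence of Merge and Transfer operations, invoking Lemmas~\ref{lem:merge} and~\ref{lem:charging} at every step. First I would let $j$ denote the largest index with $v_1 + v_2 + \cdots + v_j \leq v(S)$, which is well-defined and at least $1$ since $v(S) \geq v_1$. I then set $\widetilde{S} = \{1, 2, \ldots, j\}$ and introduce a virtual product $k$ of preference weight $v_k = v(S) - (v_1 + \cdots + v_j)$. By maximality of $j$, either $j = n$ (and $v_k = 0$), or $v_1 + \cdots + v_{j+1} > v(S)$, in which case $v_k < v_{j+1} \leq v_j$; either way $v_k \leq v_j = \min_{i \in \widetilde{S}} v_i$, matching the required upper bound.

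For the transformation itself, I would sort the products of $S$ by decreasing preference weight as $v_{s_1} \geq v_{s_2} \geq \cdots \geq v_{s_m}$, where $m = |S|$, and observe that $v_{s_l} \leq v_l$ for every admissible $l$, since $v_l$ is the $l$-th largest weight in $\Nc \supseteq S$. Because $v(S) \leq v_1 + \cdots + v_m$, the hypothesis $v(S) \geq v_1 + \cdots + v_j$ yields $m \geq j$, with the boundary case $m = j$ forcing $v_k = 0$. The procedure then iterates for $l = 1, \ldots, j$: in iteration $l$, transfer weight into $s_l$ from the lighter products $s_{l+1}, \ldots, s_m$, starting with the lightest and progressing upward as needed, until $s_l$'s current weight reaches $v_l$. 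After all $j$ iterations, the residual weights in $s_{j+1}, \ldots, s_m$ sum to $v_k$ by conservation, and a final sequence of merges collapses these products into a single one of weight $v_k$, which we identify with the virtual product $k$. The resulting configuration has weights $v_1, v_2, \ldots, v_j, v_k$, and therefore induces the same MNL load distribution as $\widetilde{S} \cup \{k\}$; chaining the inequalities from the intermediate Merge and Transfer steps then yields $\E(M(\widetilde{S} \cup \{k\})) \geq \E(M(S))$.

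The hard part will be verifying that every transfer is legitimate, i.e., actually moves weight from a lighter to a heavier product so that Lemma~\ref{lem:charging} genuinely applies. I would maintain the invariant that, at the start of iteration $l$, the current weight of $s_l$ is at most $v_l$ and is at least the current weight of each $s_p$ with $p > l$. Inductively, transfers in earlier iterations only draw from products lighter than the current target and always begin with the lightest index, so $s_l$'s weight can be reduced only after each of $s_{l+1}, \ldots, s_m$ has already been brought down to zero---at which point $s_l$'s residual weight still dominates them. Feasibility of iteration $l$ (enough cumulative weight in $\{s_l, \ldots, s_m\}$ to raise $s_l$ up to $v_l$) reduces to $v(S) - (v_1 + \cdots + v_{l-1}) \geq v_l$, which is immediate for $l \leq j$ by the definition of $j$. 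Together with Lemma~\ref{lem:merge} for the concluding merges, this secures the entire argument.
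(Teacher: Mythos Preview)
Your proof is correct and takes a genuinely different, more direct route than the paper. The paper's argument defines an iterative \fil operation: at each step it locates the heaviest missing product $h$ (the first ``hole'' in the current prefix), and then branches on whether the tail weight plus the running virtual product suffices to manufacture a replica of $h$, with a further sub-case inside the replicating branch on which of two candidate products is heavier; this loop repeats until the assortment is weight-ordered. By contrast, you compute the target $\widetilde S=\{1,\ldots,j\}$ and $v_k$ in closed form at the outset and push $S$ toward that target in a single left-to-right sweep, using the lightest-first draining rule to keep every Transfer legal. Your approach trades the paper's nested case analysis for a slightly more delicate invariant (that the index order among $s_{l+1},\ldots,s_m$ continues to coincide with their weight order after partial draining, which indeed holds under lightest-first), and makes the final prefix explicit immediately. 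Both arguments rest on exactly the same two primitives, Lemmas~\ref{lem:merge} and~\ref{lem:charging}; yours is the cleaner packaging.
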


To arrive at Lemma \ref{lem:virtual}, we begin with an assortment $S$ and execute a sequence of Merge and Transfer operations to generate the assortment $\widetilde S$ along with the virtual product $k$, both with the desired structure.
The  proof of this claim can be found in Appendix~\ref{apx:virtual}.

Recalling that product $k$ is not part of the universe $\Nc$, the next lemma shows that this virtual product can be removed, while losing a factor of at most $1/(|S|+1)$ in the objective function. 
    
\begin{lemma}\label{lem:drop}
Let $S\subseteq  \Nc$ be any non-empty assortment, and let $k\notin S$ be a product with 
$v_k \leq  \min_{i\in S}v_i$. Then, $\E(M(S)) \geq  \frac{|S|}{|S|+1} \cdot \E(M(S\cup \{k\}))$.
\end{lemma}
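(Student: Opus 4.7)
The plan is to chain together the two structural results from Section~\ref{subsec:PTASlemmas}: first use the Transfer operation (Lemma~\ref{lem:charging}) to absorb $k$'s weight uniformly into the products of $S$, and then invoke the preference-weight sensitivity estimate (Lemma~\ref{lem:weights}) to relate the resulting (slightly inflated) assortment to $S$ itself. Write $m = |S|$ and let $\{v_i\}_{i \in S}$ denote the preference weights of the products of $S$, all of which satisfy $v_i \geq v_k$ by hypothesis.

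\textbf{Step 1: Spread $v_k$ uniformly over $S$.} Starting from $S \cup \{k\}$, I will perform $m$ sequential $(v_k/m)$-weight transfers, each moving mass from $k$ to one of the products of $S$, taken in an arbitrary order. At every stage, $k$'s current weight decreases monotonically from $v_k$ toward $0$, while each receiving product's weight only grows beyond its original value $v_i \geq v_k$; therefore the hypothesis of Lemma~\ref{lem:charging} (transferring from the lighter product to the heavier one, with $\delta \le v_k/m$) is preserved throughout the sequence, and the lemma applies at every step. At the end of this process, product $k$ has weight $0$ and can be removed without altering the distribution of the maximum load, while every $i \in S$ has updated weight $v_i + v_k/m$. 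Denoting the resulting assortment on the ground set $S$ by $S^\ddagger$, iterating Lemma~\ref{lem:charging} yields $\E(M(S^\ddagger)) \geq \E(M(S \cup \{k\}))$.

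\textbf{Step 2: Compare $S^\ddagger$ to $S$ via sensitivity.} The assortments $S$ and $S^\ddagger$ share the same ground set and differ only in their preference weights, namely $v_i^- := v_i$ in $S$ and $v_i^+ := v_i + v_k/m$ in $S^\ddagger$. I intend to invoke Lemma~\ref{lem:weights} with $\epsilon = 1/(m+1)$. A short algebraic manipulation shows that the required inequality $(1-\epsilon) v_i^+ \leq v_i^-$ is equivalent to $\epsilon \geq v_k/(m v_i + v_k)$, which in turn is implied by $v_k \leq v_i$ and hence holds by assumption. Therefore Lemma~\ref{lem:weights} gives $\E(M(S)) \geq \frac{m}{m+1}\, \E(M(S^\ddagger))$, and chaining with Step~1 produces the claimed bound $\E(M(S)) \geq \frac{|S|}{|S|+1} \E(M(S \cup \{k\}))$.

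I do not anticipate a serious obstacle here: once the two-step decomposition is identified, both applications reduce to routine parameter checks. The only genuinely subtle design choice is to spread $v_k$ \emph{uniformly} across the products of $S$, rather than, say, transferring all of it to the heaviest product in $S$: a one-sided transfer would only permit Lemma~\ref{lem:weights} with $\epsilon = v_k/(v_1 + v_k)$, which can be as large as $1/2$ and would yield only a constant factor, whereas the uniform spreading is precisely what aligns with the sharp $|S|/(|S|+1)$ factor demanded by the statement.
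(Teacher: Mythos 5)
Your proposal is correct and follows essentially the same route as the paper's proof in Appendix~\ref{apx:drop}: spread $v_k$ uniformly over the products of $S$ via $|S|$ successive $(v_k/|S|)$-weight transfers justified by Lemma~\ref{lem:charging}, then apply Lemma~\ref{lem:weights} with the ratio $|S|/(|S|+1)$, which holds precisely because $v_k \leq \min_{i\in S} v_i$. Your parameter checks (including the observation that a one-sided transfer would be too lossy) match the paper's argument, so there is nothing to add.
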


In particular, when $S \neq \emptyset$, the lemma above shows that, by removing the virtual product $k$ from $S\cup \{k\}$, we lose a factor of at most $1/2$ in the objective function. The proof of this result is based on our structural results regarding on the sensitivity of the expected maximum load function, along the lines of Lemma~\ref{lem:weights}. Its complete details are given in Appendix~\ref{apx:drop}. 

\paragraph{Concluding the proof of Theorem~\ref{thm:halfapprox}.} Let $S^*$ be an optimal assortment for \ref{SMLA}. First, we observe that $v(S^*)\geq v_1$. Indeed, suppose by contradiction that $v(S^*)< v_1$. In this case, on the one hand, when offering the assortment  $S^*$, the total number of customers who select an option in $S^*$ (i.e., do not select the no-purchase option) is a Binomial random variable with $T$ trials and success probability $v(S^*)/(1+v(S^*))$. Since the maximum load when offering $S^*$ is trivially upper-bounded by the total number of purchases, it follows that $\E(M(S^*))\leq Tv(S^*)/(1+v(S^*))$. On the other hand, when offering the single-product assortment $\{1\}$, the expected maximum load is given by $\E(M(\{1\})) = Tv_1/(1+v_1)$. Since the function $x\mapsto x/(1+x)$ is increasing over $[0,+\infty)$, and since $v(S^*)<v_1$, we have $\E(M(S^*))<\E(M(\{1\}))$, contradicting the optimality of $S^*$. 

Now, given that $v(S^*)\geq v_1$, the conditions of Lemma~\ref{lem:virtual} are met, and therefore, there exists a weight-ordered assortment $\widetilde S$ and a virtual product $k$ whose preference weight is at most $ \min_{i\in \widetilde S}v_i$, such that $\E(\widetilde S\cup\{k\}) \geq \E(M(S^*))$. By definition, weight-ordered assortments are non-empty, meaning that according to Lemma \ref{lem:drop}, we have $\E(M(\widetilde S)) \geq \frac{ 1 }{ 2 } \cdot \E(M(\widetilde S\cup\{k\}))$. Putting both inequalities together, 
\[ \E(M(\widetilde S))\geq \frac12\cdot \E(M(\widetilde S\cup \{k\}))\geq \frac12\cdot\E(M(S^*)). \]

\subsection{Polynomial-time approximation scheme}\label{subsec:PTAS}

Our main technical contribution for \ref{SMLA} consists in designing a polynomial-time approximation scheme (PTAS). In other words, for any fixed $\epsilon\in (0,1)$, we propose a polynomial-time algorithm for identifying an assortment whose expected maximum load is within factor $1 - \eps$ of optimal. This result is formalized in the next theorem. 
	
\begin{theorem}\label{thm:algo}
For any $\eps \in (0,1)$, \ref{SMLA} can be approximated within a factor of $1-\eps$ of optimal. The running time of our algorithm is $O(T^{O(1)} \cdot n^{O(\frac 1\eps \log \frac 1\eps)})$.
\end{theorem}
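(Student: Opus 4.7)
The plan is to design the algorithm as an enumeration over a polynomially-sized family of structured \emph{block-based} assortments, evaluate each candidate with the oracle of Theorem~\ref{thm:oracle}, and return the one with the largest expected maximum load. I would define this family as follows. Fix $K = \Theta(\frac{1}{\eps}\log\frac{1}{\eps})$. For each candidate of the maximum preference weight $v^* \in \{v_1, \ldots, v_n\}$, partition the universe into geometric buckets $B_k(v^*) = \{i \in \Nc : v_i \in (v^*(1-\eps)^{k+1}, v^*(1-\eps)^k]\}$ for $k = 0, \ldots, K-1$, together with a tail $B_\infty(v^*) = \{i \in \Nc : v_i \leq v^*(1-\eps)^K\}$. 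A block-based assortment is parameterized by $v^*$ and by a count vector $(n_0, \ldots, n_{K-1})$ with $n_k \in \{0, \ldots, |B_k(v^*)|\}$, and consists of the $n_k$ heaviest products in each $B_k(v^*)$, ties broken arbitrarily. The total number of such assortments is bounded by $n \cdot \prod_k(|B_k|+1) \leq n^{K+1} = n^{O(\frac{1}{\eps}\log\frac{1}{\eps})}$, which combined with the $O(n^2 T^3)$ per-assortment evaluation cost delivers the claimed running time.

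For the approximation analysis, let $S^*$ be an optimum, $v^* = \max_{i \in S^*} v_i$ (correctly guessed through enumeration), and $n_k^* = |S^* \cap B_k(v^*)|$; let $T^*$ be the block-based assortment matching the count vector $(n_0^*, \ldots, n_{K-1}^*)$. I would establish $\E(M(T^*)) \geq (1-O(\eps))\E(M(S^*))$ in two complementary steps. The first is a \emph{tail-truncation} step: the contribution of $B_\infty \cap S^*$ to $\E(M(S^*))$ is controlled by $\sum_{i \in B_\infty \cap S^*}\E(L_i(S^*)) \leq T v(B_\infty \cap S^*)/(1+v(S^*))$, whereas the heaviest product already gives $\E(M(S^*)) \geq T v^*/(1+v(S^*))$; this, combined with Lemma~\ref{lem:probs} applied to the transition from $S^*$ to $S^* \setminus B_\infty$ (whose choice probabilities differ by a factor close to $1$), should yield $\E(M(S^* \setminus B_\infty)) \geq (1-O(\eps))\E(M(S^*))$. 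The second is a \emph{within-bucket rounding} step: replacing each weight in bucket $B_k$ by $v^*(1-\eps)^{k+1}$ loses at most a factor $(1-\eps)$ via Lemma~\ref{lem:weights}, and on the resulting instance all products within a bucket are interchangeable, so $T^*$ and $S^* \setminus B_\infty$ share the same expected maximum load. A second application of Lemma~\ref{lem:weights} bridges from rounded back to original weights, completing the matching between $T^*$ and the rounded optimum.

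The main obstacle is calibrating $K$ as small as $\Theta(\frac{1}{\eps}\log\frac{1}{\eps})$ while still ensuring $v(B_\infty \cap S^*) \leq O(\eps) v^*$. A naive estimate gives $v(B_\infty \cap S^*) \leq |B_\infty \cap S^*| \cdot v^*(1-\eps)^K$, whose control requires $K = \Omega(\log n/\eps)$ and would only yield a quasi-polynomial time algorithm. To close this gap I would invoke the Merge operation of Lemma~\ref{lem:merge} to collapse the tail products of $S^*$ into a single coalesced element, thereby replacing the $|B_\infty \cap S^*|$ factor by an aggregate-weight factor and enabling the structural bounds underlying Lemma~\ref{lem:virtual} and the $1/2$-approximation argument of Section~\ref{subsec:halfapprox} to limit $v(B_\infty \cap S^*)$ in terms of $v^*$ and $\eps$ alone. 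Once this refined tail control is in place, combining it with the within-bucket rounding step yields the promised $(1-\eps)$-approximation at the claimed running time, after a standard rescaling of $\eps$.
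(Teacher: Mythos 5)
Your high-level architecture coincides with the paper's: enumerate a polynomially-sized structured family, evaluate each member with the oracle of Theorem~\ref{thm:oracle}, and certify the family via the Merge/Transfer operations together with Lemmas~\ref{lem:probs} and~\ref{lem:weights}. The running-time count and the within-bucket rounding step are sound. The gap sits exactly where you flag ``the main obstacle,'' and the repair you sketch rests on a false premise: there is no bound of the form $v(B_\infty\cap S^*)\leq O(\eps)\cdot v^*$, with or without merging, and no amount of coalescing shrinks the tail's aggregate weight. Concretely, take one product of weight $1/10$ together with $\eps^{-10}/10$ products of weight $\eps^{10}$ each (aggregate weight $1/10$). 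For $T=1$ the optimum is all of $\Nc$, with value $1/6$, while every assortment in your family contains either the heavy product alone or light products alone (the light products fall in $B_\infty$ of the heavy anchor, and the heavy product exceeds every light anchor), so its value is at most $1/11$; for $T=2$ the gap is even larger. Moreover, the tools you invoke to close the gap --- merging the tail into one virtual product and then appealing to Lemma~\ref{lem:virtual} and the machinery of Section~\ref{subsec:halfapprox} --- culminate in Lemma~\ref{lem:drop}, which only recovers a factor $|S|/(|S|+1)$; that is precisely the $1/2$-approximation, not a $(1-\eps)$-approximation.

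The paper's resolution is not to bound the tail's weight but to \emph{convert} it into products of the universe. The $\eps$-{\sf FILL} operation accumulates tail weight into a running virtual product and, each time this accumulation reaches the weight of the current $\eps$-hole (the heaviest product of $\Nc$ missing from the assortment below its $1/\eps$-th heaviest member), spends it to create a replica of that missing product, which is inserted into the assortment; only a leftover lighter than the final $\eps$-hole survives as a virtual product. This mechanism forces two features absent from your family. First, the family must contain a \emph{contiguous} block $S_2$ of consecutively ranked products, because that is where the converted tail mass lands --- after {\sf FILL}, the assortment contains every universe product between its top $1/\eps$ elements and its $\eps$-hole, and these may be arbitrarily much lighter than $v^*$. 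Second, the geometric grid of classes must be anchored at the $\eps$-hole $v_c$ rather than at $v^*$: the invariant one can actually guarantee is $\eps$-restrictedness, $v_i\geq\eps\cdot v_{h_\eps(S)}$, not $v_i\geq\mathrm{poly}(\eps)\cdot v^*$, which is why $O(\frac1\eps\log\frac1\eps)$ hole-anchored buckets suffice. Finally, the leftover virtual product is absorbed by $\delta$-transfers of $\eps v_k$ onto each of the $1/\eps$ heaviest products (Lemma~\ref{lem:thmstep2}), which is where the cardinality threshold $1/\eps$ and the separate small-assortment case of the family come from. Without the contiguous block and the hole-anchored grid, no choice of $K=\Theta(\frac1\eps\log\frac1\eps)$ makes your family $(1-\eps)$-competitive.
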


\paragraph{Block-based assortments.} In what follows, we introduce a family of highly-structured assortments, which will be referred to as being ``block-based''. As explained below, these assortments are defined in three steps, starting from the block of products with the highest preference weight and gradually moving to blocks with lower weights. Without loss of generality, we assume that $1/\eps$ takes an integer value, and that products are indexed in non-increasing order of preference weights, i.e., $v_1 \geq \cdots \geq v_n$. With these conventions, an assortment $S \subseteq \Nc$ is said to be block-based either if its cardinality is at most $1/\eps$, or when it can be written as $S = S_1 \cup S_2 \cup S_3$, where the latter sets are structured as follows: 
\begin{itemize}
	\item {\em Block 1:} The first set, $S_1$, is an arbitrary collection $1/\epsilon$ products. These products will form the subset of the heaviest products in this assortment.
 
	\item {\em Block 2:} Let \blue{$a$} be the highest-index product in $S_1$. The second subset of products in our assortment, $S_2$, is a contiguous block of products, starting from \blue{$a+1$}. In other words, \blue{$S_2=\{a+1,a+2,\ldots,b\}$}, for some \blue{$b\leq n$}.

    \item {\em Further blocks:} Let \blue{$c = b+1$}, where \blue{$b$} is the highest-index product in $S_2$. We create a multiplicative grid across \blue{$[\epsilon\cdot v_c, v_c]$} as follows. The class $C_1$ consists of all products whose weight falls within \blue{$[(1-\eps) \cdot v_c, v_c]$}. Then, the class $C_2$ consists of all products with weights in $\blue{[(1-\eps)^2\cdot v_c,(1-\eps)\cdot v_c)}$. So on and so forth, until we hit the lower bound \blue{$\eps\cdot v_c$}. Letting $C_1, \ldots, C_L$ be the resulting classes, one can easily verify that the number of classes is $L = O( \frac{ 1 }{ \eps } \log \frac{ 1 }{ \eps } )$. Now, for each class $C_\ell$, we select a number $N_\ell$ of products to be included in the assortment, and then simply include the $N_\ell$ products with largest indices from this class. For example, if a certain class is $\{8,9,10,11\}$, then we include $\emptyset$ when $N_{\ell} = 0$, $\{11\}$ when $N_{\ell} = 1$, up to $\{11,10,9,8\}$ when $N_{\ell} = 4$. We will refer to the union of these sets over all classes as $S_3$.
\end{itemize}

We proceed by explaining how to explicitly construct the entire family of blocked-based assortments in $O(n^{O( \frac1\eps \log( \frac1\eps ) )})$ time. First, there are $O(n^{O(1/\eps)})$ options to construct an assortment whose cardinality is strictly smaller than $1/\eps$. Let us now construct the assortments whose cardinality is at least $1/\eps$. In order to create the first block, $S_1$, it is easy to see that there are $O(n^{ O(1/\eps)})$ options. Since the second block $S_2$ is contiguous, there are at most $O(n)$ options here. For the remaining blocks, we create $L$ classes, and for each of these classes, we simply choose the number of products $N_{\ell}$ to be included. Therefore, there are $O(n^L) = O(n^{ O( \frac{ 1 }{ \eps } \log \frac{ 1 }{ \eps } ) })$ options to construct $S_3$.

\paragraph{The performance of block-based assortments.} Our algorithmic approach consists of enumerating over all block-based assortments. Since the evaluation oracle provided in Section~\ref{compute} can be implemented in $O(n^2T^3)$ time, our overall algorithm indeed has a running time of $O(T^{O(1)} \cdot n^{ O( \frac{ 1 }{ \eps } \log \frac{ 1 }{ \eps } )})$, as stated in Theorem~\ref{thm:algo}. The next result proves that at least one of the assortments we are enumerating over yields a $(1-\eps)$-approximation of \ref{SMLA}.

\begin{theorem} \label{thm:block_based_good}
Letting $S^*$ be an optimal assortment for \ref{SMLA}, there exists a block-based assortment $S$ for which $\E(M(S))\geq (1-\epsilon)\cdot \E(M(S^*))$.
\end{theorem}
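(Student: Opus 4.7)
The plan is to build, from any optimal solution $S^*$ of \ref{SMLA}, a block-based assortment $S$ satisfying $\E(M(S))\geq (1-\epsilon)\E(M(S^*))$ by composing a small number of transformations whose individual losses are controlled by the structural lemmas of Section~\ref{subsec:PTASlemmas}. If $|S^*|\leq 1/\eps$, then $S^*$ is itself block-based by definition, so I assume $|S^*|>1/\eps$ in what follows.

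First I take $S_1$ to be the $1/\eps$ smallest-indexed (heaviest) products in $S^*$, and set $a=\max S_1$. Next I plan to pick a split index $b\geq a$, form $S_2=\{a+1,\ldots,b\}$, and consider the multiplicative grid on $[\eps v_{b+1},v_{b+1}]$ with its classes $C_1,\ldots,C_L$. The remaining products $S^*\setminus S_1$ can then be partitioned according to the threshold $v_{b+1}$ into: a heavy group $H_b=\{q\in S^*\setminus S_1:v_q>v_{b+1}\}$, whose indices necessarily lie in $\{a+1,\ldots,b\}$ and are thus absorbed by $S_2$; a middle group $M_b=\{q\in S^*\setminus S_1:\eps v_{b+1}\leq v_q\leq v_{b+1}\}$, which fits into the grid; and a tiny group $T_b=\{q\in S^*\setminus S_1:v_q<\eps v_{b+1}\}$, which is discarded. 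Within each class $C_\ell$ of the grid I set $N_\ell=|M_b\cap C_\ell|$ and include the $N_\ell$ products of largest index from $C_\ell$, yielding $S_3$, and the candidate block-based assortment is $S=S_1\cup S_2\cup S_3$.

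Three sources of loss are controlled separately. First, the class-by-class substitution that defines $S_3$ replaces each product of $M_b$ by one of weakly smaller weight in the same class, so the corresponding weights are within factor $1-\eps$; by Lemma~\ref{lem:weights} this step costs at most a multiplicative $(1-\eps)$. Second, the removal of $T_b$ is analyzed by a coupling argument in the spirit of Lemma~\ref{lem:probs}: the loss is at most the expected total load carried by the tiny products, namely $T\cdot v(T_b)/(1+v(S^*))$, which together with $\E(M(S^*))\geq T v_{i_1}/(1+v(S^*))$ (obtained from $M(S^*)\geq L_{i_1}(S^*)$ for the heaviest $i_1\in S^*$) yields a relative loss of $v(T_b)/v_{i_1}$. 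Third, padding $S_2$ with the indices of $[a+1,b]\setminus S^*$ only shrinks the MNL choice probability of each retained product by the factor $(1+v(S^*))/(1+v(S^*)+v([a+1,b]\setminus S^*))$, so applying Lemma~\ref{lem:probs} to the induced Multinomial vectors yields a relative loss of order $v([a+1,b]\setminus S^*)/(1+v(S^*))$.

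The main obstacle, in my view, is to exhibit a single split index $b$ whose cumulative loss from dropping $T_b$ and from padding $S_2$ is simultaneously $O(\eps)$. Since $v(T_b)$ monotonically decreases as $b$ grows while $v([a+1,b]\setminus S^*)$ monotonically increases, these two effects compete directly; I expect to resolve this through an averaging or pigeonhole argument over the $O(n)$ admissible values of $b$, leveraging the fact that both quantities are dominated by the weights along the sequence $v_{a+1},\ldots,v_n$, which can in turn be partitioned into $O(\log(1/\eps)/\eps)$ multiplicative intervals. Because the PTAS enumerates every $(S_1,b,S_3)$ and selects the best via the evaluation oracle of Section~\ref{compute}, it will inspect such a balanced $b$, and composing the three multiplicative loss factors delivers a block-based assortment within factor $1-O(\eps)$ of the optimum, which reduces to $1-\eps$ after the standard rescaling of $\eps$.
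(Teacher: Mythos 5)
There is a genuine gap in your argument, and it sits exactly where you locate ``the main obstacle.'' Your construction discards the tiny products $T_b$ outright and pads $[a+1,b]\setminus S^*$ into $S_2$, leaving two loss terms, $v(T_b)/v_{i_1}$ and roughly $v([a+1,b]\setminus S^*)/(1+v(S^*))$, and you defer to an unspecified averaging over $b$ to make both $O(\eps)$ simultaneously. This step is not carried out, and it is not clear it can be: $v(T_b)$ is a \emph{sum} over possibly many products each individually lighter than $\eps v_{b+1}$, so it can exceed $v_{i_1}$ by an arbitrary factor, and nothing in your argument (you use optimality of $S^*$ only through $\E(M(S^*))\geq Tv_{i_1}/(1+v(S^*))$) rules out an optimal $S^*$ whose tail weight is large while the indices in $[a+1,b]\setminus S^*$ carry large total weight for every $b$ at which $v(T_b)$ becomes small. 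Your remark that the tail weights can be split into $O(\frac1\eps\log\frac1\eps)$ multiplicative intervals also does not help here: that grid only covers the single factor-$1/\eps$ range anchored at $v_{b+1}$, whereas the weights $v_{a+1},\ldots,v_n$ may span an unbounded range.

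The paper closes precisely this gap by \emph{not} discarding the tiny products and \emph{not} paying separately for the padding. Lemma~\ref{lem:thmstep1} applies the lossless Merge and Transfer operations (the \fiil procedure) to consume the weight of the sub-$\eps v_h$ products by manufacturing exact replicas of the missing ``hole'' products one at a time; this simultaneously makes the second block contiguous for free and reduces the leftover to a single virtual product $k$ with $v_k\leq v_{h_\eps(\widehat S)}\leq\min_{i\in S_1}v_i$. Lemma~\ref{lem:thmstep2} then spreads $v_k$ evenly over the $1/\eps$ heaviest products, perturbing each weight by at most a factor $1+\eps$, so a single application of Lemma~\ref{lem:weights} (together with the class-by-class substitution you also use) yields the $1-\eps$ guarantee. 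In short, your decomposition into $S_1$, a contiguous $S_2$, and a gridded $S_3$, and your handling of the grid via Lemma~\ref{lem:weights}, match the paper; the missing idea is that the tiny mass must be \emph{recycled} via Merge/Transfer into hole replicas and into the top block rather than dropped, which is what makes the loss bounded by $\eps$ without any balancing over $b$.
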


\paragraph{Definitions and notation.} In order to establish this theorem, we begin by introducing a number of useful definitions and their surrounding notation:
\begin{itemize}
    \item For any assortment $S$, and for any $j=1,\ldots,|S|$, we define $\ost_{j}(S)$ as the index of the $j$-th heaviest product in $S$. That is, if $S=\{p_1,p_2, \ldots, p_{|S|} \}$ where $v_{p_1}\geq \cdots \geq  v_{p_{|S|}} $, then $\ost_{j}(S)= p_j$.
    
    \item For any assortment $S$ with $|S|> 1/\eps$, we define its {\em $\eps$-hole} as the product with index $h_{\eps}(S)$ where $h_{\eps}(S) = \min\{j \notin S : j \geq \ost_{1/\eps}(S)\}$.
    Put simply, the $\eps$-hole refers to the heaviest product in the universe $\Nc$ that is not part of the assortment $S$, but is lighter than the $1/\eps$ heaviest product of $S$.
    
    \item Finally, we say that an assortment $S$ is {\em $\eps$-restricted} either when $|S|\leq 1/\eps$, or when $|S|> 1/\eps$ and the weight of each product in $S$ is larger than a fraction $\eps$ of the weight of its $\eps$-hole, i.e., ${v_i\geq \eps\cdot v_{h_{\eps}(S)}}$ for every $i \in S$. It is worth noting that, by definition, all block-based assortments are $\eps$-restricted.
\end{itemize}

\paragraph{Analysis.} Given these definitions, the proof of Theorem~\ref{thm:block_based_good} consists of two steps. In Lemma~\ref{lem:thmstep1}, we prove that for any sufficiently large assortment $S$, there exists an $\eps$-restricted assortment $\widehat S$ and a virtual product $k$, such that the expected maximum load of $\widehat S \cup \{ k \}$ is at least as large as that of $S$. The proof of this lemma makes use of the Merge and Transfer operations introduced in Section \ref{subsec:PTASlemmas}, transforming the assortment $S$ into the union of an $\eps$-restricted assortment and a virtual product. The detailed proof is included in Appendix~\ref{apx:thmstep1}.

\begin{lemma}\label{lem:thmstep1}
    Let $S\subseteq \Nc$ be an assortment with $|S| > 1/\eps$. Then, there exists an $\eps$-restricted assortment $\widehat S\subseteq \Nc$ with $|\widehat S| > 1/\eps$, and a virtual product $k$ with $v_k \leq v_{h_{\eps}(\widehat S)}$, such that $\E(M(\widehat S\cup\{k\})) \geq \E(M(S))$.
\end{lemma}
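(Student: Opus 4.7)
The plan is to apply a sequence of Merge and Transfer operations (Lemmas~\ref{lem:merge} and~\ref{lem:charging}) that transforms $S$ into the desired form $\widehat S \cup \{k\}$. Setting $h = h_\eps(S)$, I would partition $S$ into the \emph{heavy} products $H = \{i \in S : v_i \geq \eps v_h\}$ and the \emph{light} products $L = S \setminus H$. Since the top $1/\eps$ elements of $S$ all have weight at least $v_{\ost_{1/\eps}(S)} \geq v_h$, we have $|H| \geq 1/\eps$, and every product in $L$ has index strictly greater than $h$ (because its weight is strictly below $v_h$).

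The first step is to collapse $L$ into a single virtual product $k_0$ of weight $v_{k_0} = \sum_{i \in L} v_i$ via Merge; Lemma~\ref{lem:merge} then guarantees $\E(M(H \cup \{k_0\})) \geq \E(M(S))$. In the base case where $v_{k_0} \leq v_h$ and $|H| > 1/\eps$, I would simply set $\widehat S = H$ and $k = k_0$: a short verification based on the fact that every index in $S \cap [\ost_{1/\eps}(S), h-1]$ belongs to $H$ shows that $h_\eps(\widehat S) = h$, which yields both $v_k \leq v_{h_\eps(\widehat S)}$ and the $\eps$-restrictedness of $\widehat S$. For the complementary situations, I would argue as follows. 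When $v_{k_0} > v_h$, Transfer from $k_0$ (viewed as the lighter product) to a suitably chosen heavier product $a \in H$ shaves $v_{k_0}$ down to at most $v_h$ while Lemma~\ref{lem:charging} preserves the monotonicity of the expected maximum load; the modified $a$ is then reassigned as the virtual product $k$. When $|H|=1/\eps$, I would augment $\widehat S$ by the hole product $h$ itself, justified by consolidating mass within $L$ (again via Transfer) to synthesize a product of weight exactly $v_h$, which can be relabeled as $h \in \Nc$ because the MNL expected maximum load depends only on the multiset of weights.

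The main obstacle is precisely the subcase $|H|=1/\eps$: neither Merge nor Transfer alone introduces a product of $\Nc$ that is not already present in $S$, so the augmentation of $\widehat S$ by $h$ must be engineered through a delicate combination of operations confined to $L$. Further complications arise when the total mass $\sum_{i \in L} v_i$ does not reach $v_h$, in which case a different grouping of operations is required, or when it is so large that $k_0$ exceeds every element of $H$ and the Transfer in the previous paragraph cannot be executed directly. Carrying out the corresponding subcase analysis while simultaneously verifying the four constraints---$\widehat S \subseteq \Nc$, $|\widehat S| > 1/\eps$, $\eps$-restrictedness of $\widehat S$, and $v_k \leq v_{h_\eps(\widehat S)}$---constitutes the technical heart of the argument.
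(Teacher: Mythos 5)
Your base case ($v_{k_0}\le v_h$ and $|H|>1/\eps$) is verified correctly, and you have the right tools (Merge, Transfer) and the right auxiliary idea of synthesizing a replica of the hole product out of light mass. But the case $v_{k_0}>v_h$ is where the real work lies, and your proposed fix there does not go through. A $\delta$-weight transfer from $k_0$ to some $a\in H$ leaves you with \emph{two} non-$\Nc$ products: the shaved $k_0$ (still an amalgam of light products) and the modified $a$ (of weight $v_a+\delta$). The lemma permits exactly one virtual product, and neither candidate can serve as $k$: the modified $a$ has weight at least $v_a$, typically far above $v_{h_{\eps}(\widehat S)}$, so it violates $v_k\le v_{h_{\eps}(\widehat S)}$; and if you instead keep the shaved $k_0$ as $k$, you must place the modified $a$ inside $\widehat S$, which is then no longer a subset of $\Nc$. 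You also note yourself that the transfer may be infeasible when $v_{k_0}$ exceeds every $v_a$. These are not loose ends of an otherwise sound plan; they are symptoms of the one-shot merge being the wrong first move, since once all of $L$ is fused into $k_0$ there is no ``un-merge'' that lets you carve off exactly $v_h$ of its mass while retaining a single small remainder.

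The paper resolves exactly this by iterating rather than merging all at once. It sorts the light products $p_1,\ldots,p_m$ by increasing weight, merges only the prefix $p_1,\ldots,p_{j-1}$ together with the current virtual product so that the combined weight just undershoots $v_h$, and then performs a single Transfer between this amalgam and $p_j$ to manufacture a product of weight \emph{exactly} $v_h$ --- a replica of the hole $h\in\Nc$ --- plus one leftover virtual product of weight below $\eps\cdot v_h$. The replica is added to the assortment, the $\eps$-hole moves strictly rightward, and the step repeats (at most $n$ times) until the remaining light-plus-virtual mass drops below the weight of the current hole, at which point an argument like your base case closes the proof. Restructuring along this hole-filling loop disposes of your $v_{k_0}>v_h$ and $|H|=1/\eps$ subcases simultaneously (the $\Nc$-assortment grows by one product per iteration), and the ``insufficient light mass'' situation you flag is precisely the terminating case of the loop rather than a separate difficulty.
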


In the following lemma, whose proof is presented in Appendix~\ref{apx:thmstep2}, we show that the assortment $\widehat S\cup\{k\}$ obtained in Lemma~\ref{lem:thmstep1} can be transformed into a block-based assortment, losing at most a factor $\eps$ in its objective value. 

\begin{lemma}\label{lem:thmstep2}
Let $\widehat S\subseteq \Nc$ be an $\eps$-restricted assortment with $|\widehat S|> 1/\eps$, and let $k$ be a virtual product with $v_k \leq v_{h_{\eps}(\widehat S)}$. Then, there exists a block-based assortment $\widetilde S\subseteq \Nc$ such that $    \E(M(\widetilde S)) \geq (1-\eps)\cdot \E(M(\widehat S\cup\{k\}))$.
\end{lemma}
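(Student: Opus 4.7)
The plan is to transform $\widehat S\cup\{k\}$ into a block-based assortment in two stages: first discarding the virtual product $k$, and then matching the structure of the remaining $\eps$-restricted assortment to the block-based template via a carefully chosen pairing argument.

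For the first stage, I would discard $k$ using Lemma~\ref{lem:drop}. Since $k\notin\Nc$ and $|\widehat S|>1/\eps$, this directly yields
$$\E(M(\widehat S)) \;\geq\; \frac{|\widehat S|}{|\widehat S|+1}\cdot \E\bigl(M(\widehat S\cup\{k\})\bigr) \;\geq\; \frac{1}{1+\eps}\cdot \E\bigl(M(\widehat S\cup\{k\})\bigr) \;\geq\; (1-\eps)\cdot \E\bigl(M(\widehat S\cup\{k\})\bigr).$$
I would then construct a block-based assortment $\widetilde S=S_1\cup S_2\cup S_3$ tailored to $\widehat S$. Setting $a=\ost_{1/\eps}(\widehat S)$ and $c=h_{\eps}(\widehat S)$, take $S_1$ to be the $1/\eps$ heaviest products of $\widehat S$ (so $\max S_1=a$) and $S_2=\{a+1,\ldots,c-1\}$, which lies entirely in $\widehat S$ by the definition of the $\eps$-hole. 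Let $R=\widehat S\setminus(S_1\cup S_2)$, a set of products with indices larger than $c$; the $\eps$-restriction of $\widehat S$ guarantees each has weight in $[\eps v_c,v_c]$ and hence lies in some class $C_\ell$ of the multiplicative grid anchored at $v_c$. Setting $n_\ell=|R\cap C_\ell|$, I would include in $S_3$ the $n_\ell$ largest-indexed products of $C_\ell\cap\Nc$; this is well-defined since $R\cap C_\ell\subseteq C_\ell\cap\Nc$ forces $|C_\ell\cap\Nc|\geq n_\ell$.

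To bound the loss incurred by replacing $\widehat S$ with $\widetilde S$, I would build a bijection $\pi:\widehat S\to\widetilde S$ that is the identity on $S_1\cup S_2$ and, for each class $C_\ell$, pairs the $i$-th smallest-indexed element of $R\cap C_\ell$ with the $i$-th smallest-indexed element of $S_3\cap C_\ell$. Writing $C_\ell\cap\Nc=\{p_1<\cdots<p_m\}$, the $i$-th element of $S_3\cap C_\ell$ is exactly $p_{m-n_\ell+i}$, while the $i$-th element of $R\cap C_\ell$ is $p_{j_i}$ with $j_i\leq m-n_\ell+i$ by a standard combinatorial observation, which gives $v_{\pi(q)}\leq v_q$ for every $q\in\widehat S$. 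Simultaneously, paired products lie in the same class and hence satisfy $v_{\pi(q)}\geq(1-\eps)v_q$. Feeding this pairing into Lemma~\ref{lem:weights} should yield $\E(M(\widetilde S))\geq(1-\eps)\cdot\E(M(\widehat S))$, and chaining with the first stage gives the desired approximation, modulo a single round of $\eps$-rescaling to absorb the resulting $(1-\eps)^2$ factor.

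The main technical hurdle will be rigorously justifying the combinatorial inequality $j_i\leq m-n_\ell+i$ that underlies $v_{\pi(q)}\leq v_q$, and verifying that Lemma~\ref{lem:weights} can indeed be invoked across the full bijection $\pi$, which concatenates trivial identity pairings on $S_1\cup S_2$ with the class-based pairings between $R$ and $S_3$. A secondary issue is handling degenerate cases: when $h_{\eps}(\widehat S)$ does not exist---i.e., $\widehat S$ already contains every product of index at least $a$---one simply takes $S_2=\{a+1,\ldots,n\}$ and $S_3=\emptyset$, so that $\widetilde S=\widehat S$ is block-based by inspection; empty classes or an empty $S_2$ require no special treatment.
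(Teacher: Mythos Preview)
Your second stage---the bijection between $\widehat S$ and a block-based assortment via class-by-class pairing---is correct and in fact essentially identical to the paper's argument. The combinatorial inequality $j_i\leq m-n_\ell+i$ is immediate (any $n_\ell$-element subset of an $m$-set has its $i$-th element no later than position $m-n_\ell+i$), and the invocation of Lemma~\ref{lem:weights} across the concatenated bijection is legitimate.

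The gap is in your first stage. Lemma~\ref{lem:drop} requires $v_k\leq\min_{i\in\widehat S}v_i$, but the hypothesis of Lemma~\ref{lem:thmstep2} only gives $v_k\leq v_{h_\eps(\widehat S)}$. Since $\widehat S$ is merely $\eps$-restricted, products in $\widehat S$ with index larger than $h_\eps(\widehat S)$ may have weight as small as $\eps\cdot v_{h_\eps(\widehat S)}$, which can be strictly below $v_k$. Concretely, with $\eps=1/2$, products $1,\ldots,5$ of weights $10,9,8,7,4$, $\widehat S=\{1,2,3,5\}$, one has $h_\eps(\widehat S)=4$, $\widehat S$ is $\eps$-restricted, yet $v_k=6\leq 7=v_4$ is allowed while $\min_{i\in\widehat S}v_i=4<6$. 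So Lemma~\ref{lem:drop} does not apply as stated.

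The paper sidesteps this by never invoking Lemma~\ref{lem:drop}. Instead, it distributes the weight of $k$ via $1/\eps$ successive Transfer operations, each moving $\eps v_k$ from $k$ to one of the $1/\eps$ heaviest products $p_1,\ldots,p_{1/\eps}$ of $\widehat S$; these transfers are valid precisely because $v_k\leq v_{h_\eps(\widehat S)}\leq v_{p_j}$ for every such $p_j$. This eliminates $k$ with no loss, inflates each $v_{p_j}$ to $v_{p_j}+\eps v_k\leq(1+\eps)v_{p_j}$, and the single $(1-\eps)$ factor is then incurred once when Lemma~\ref{lem:weights} simultaneously deflates the $p_j^\eps$ back to $p_j$ and handles the class pairing. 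Your approach is easily repaired along the same lines---transfer $k$ only to the $1/\eps$ heaviest products rather than to all of $\widehat S$---but as written the appeal to Lemma~\ref{lem:drop} fails.
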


To conclude the proof of Theorem \ref{thm:block_based_good}, let $S^*$ be an optimal assortment for \ref{SMLA}. When $|S^*|\leq 1/\eps$, we know that $S^*$ is a block-based assortment by definition, and it remains to consider the opposite case, where $|S^*|> 1/\eps$. By Lemma~\ref{lem:thmstep1}, there exists an $\eps$-restricted assortment $\widehat S$ with $|\widehat S| > 1/\eps$ and a virtual product $k$ with $v_k \leq v_{h_{\eps}(\widehat S)}$ such that $\E(M(\widehat S\cup \{k\}))\geq \E(M(S^*))$. In turn, since $\widehat S$ and $k$ satisfy the conditions of Lemma~\ref{lem:thmstep2}, there exists a block-based assortment $\widetilde S$ such that $\E(M(\widetilde S)) \geq (1-\eps)\cdot \E(M(\widehat S\cup\{k\})).$ Combining these two inequalities yields $\E(M(\widetilde S)) \geq (1-\eps)\cdot \E(M(S^*))$, as desired.

\blue{
\subsection{The many-customers regime: optimal assortment} \label{subsec:regime}
In this section, we show that when the number of customers $T$ is sufficiently large, offering the single product with the highest preference weight is optimal for \ref{SMLA}. As mentioned in Section \ref{subsec:challenges}, optimal assortments balance a trade-off between offering a large or a small number of products. In particular, offering more products decreases the choice probability of the no-purchase option, thereby capturing more customers. However, this decision comes at the cost of cannibalizing the demand between products due to the underlying substitution effect in the MNL model. That is, when we offer more products, the demand will not be concentrated in a single product as desired in the maximum load assortment problem. For large values of $T$, the effect of cannibalization is accentuated, since the load of each product is more and more concentrated around its mean, by virtue of Chernoff-type bounds. Consequently, lighter products' loads are highly unlikely to surpass those of heavier products, and offering them only contributes to cannibalizing the demand. This intuition suggests that large values of $T$ favor smaller-sized assortments. In the following lemma, whose proof is presented in Appendix \ref{apx:largeT}, we show that there exists a threshold $\Bar{T}$, depending on the problem parameters, above which offering only the product with the highest preference weight is optimal. Recall that products are assumed to be indexed in weakly decreasing order of their preference weights, meaning that product $1$ is the heaviest in $\Nc$.
    
    \begin{lemma}\label{lem:largeT}
        There exists a threshold $\bar T$ such that $\E(M(\{1\})) = \max_{ S \subseteq \Nc } \E(M(S))$ when $T\geq \bar T$.
    \end{lemma}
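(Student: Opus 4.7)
The plan is to exploit Binomial concentration so that, for large $T$, the expected maximum load of any assortment $S$ is tightly approximated by its mean-based counterpart $T \cdot v_{\max(S)}/(1 + v(S))$, where $v_{\max(S)} = \max_{i \in S} v_i$, and then to argue that this quantity is maximized over all assortments precisely by singletons of a heaviest product. The baseline is exact: since $M(\{1\}) = L_1(\{1\}) \sim \mathrm{Bin}(T, v_1/(1+v_1))$, we have $\E(M(\{1\})) = T v_1/(1+v_1)$. For an arbitrary assortment $S$, I would apply Hoeffding's inequality to each load $L_i(S) \sim \mathrm{Bin}(T, \phi_i(S))$, take a union bound over $i \in S$, and combine with the trivial bound $M(S) \leq T$; selecting the deviation parameter as $\Theta(\sqrt{T \log(nT)})$ yields $\E(M(S)) \leq T v_{\max(S)}/(1 + v(S)) + O(\sqrt{T \log(nT)})$.

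Next, a direct algebraic check shows that $v_{\max(S)}/(1+v(S)) \leq v_1/(1+v_1)$ for every $S$, since this inequality is equivalent to $v_{\max(S)} - v_1 \leq v_1 \cdot (v(S) - v_{\max(S)})$, whose left side is nonpositive and whose right side is nonnegative. To quantify a strictly positive gap, I would split into cases. When $|S| \geq 2$, one has $v(S) \geq v_{\max(S)} + v_{\min}$ with $v_{\min} = \min_i v_i > 0$, yielding a gap bounded below by $v_1 v_{\min}/[(1+v_1)(1+V)]$ where $V = \sum_i v_i$. When $|S| = 1$ and its single product has weight strictly less than $v_1$, the gap is at least $v_1/(1+v_1) - v_2^\ast/(1+v_2^\ast)$, where $v_2^\ast$ denotes the largest preference weight strictly less than $v_1$. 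Taking a minimum over these instance-dependent expressions defines a constant $\gamma > 0$.

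Assembling the pieces, I would choose $\bar T$ large enough that $T \gamma$ dominates the $O(\sqrt{T \log(nT)})$ fluctuation term; for all $T \geq \bar T$ and every assortment $S$ that is not a singleton of a maximum-weight product, this yields $\E(M(\{1\})) > \E(M(S))$. When $S = \{j\}$ with $v_j = v_1$, the equality $\E(M(\{j\})) = \E(M(\{1\}))$ is immediate, so $\{1\}$ remains an optimizer. I expect the main obstacle to be cleanly setting up the uniform gap $\gamma$, since its natural form depends both on whether $|S| \geq 2$ or $|S| = 1$ and on whether some product other than the heaviest already attains weight $v_1$; this is ultimately handled by observing that there are only $2^n$ candidate assortments, so the infimum of gaps over the finite collection of ``non-trivial'' $S$ is strictly positive.
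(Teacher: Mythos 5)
Your proposal is correct and follows essentially the same route as the paper's proof: both rest on the observation that every per-product choice probability in a non-trivial assortment is bounded by $v_1/(1+v_1+v_n) < v_1/(1+v_1)$ (your $|S|\geq 2$ gap is exactly this), followed by a concentration bound plus a union bound over the products of $S$ to show the maximum load rarely exceeds its mean-based proxy, and finally the finiteness of the $2^n$ assortments to extract a single threshold $\bar T$. The only (cosmetic) difference is that the paper uses a multiplicative Chernoff bound at the midpoint deviation $T(p+q)/2$ to get the clean inequality $\E(M(S))\leq Tv_1/(1+v_1)$ directly, whereas you use Hoeffding with an additive $O(\sqrt{T\log(nT)})$ error term and then let the linear gap $T\gamma$ dominate it.
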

}

\section{The Dynamic Setting: Constant-Factor Adaptivity Gaps}\label{sec:adaptivitygap}

In this section, we examine how well an optimal static assortment could perform in comparison to an optimal adaptive policy. Specifically, we study the adaptivity gap of maximum load optimization, namely, the worst-possible ratio between the expected maximum load of an optimal adaptive policy and that of an optimal static policy, over all problem instances. \blue{
Specifically, any instance $I$ is characterized by its number of customers $T$, number of products $n$, and their preference weights. Letting $\cal I$ be the set of all possible instances, the adaptivity gap is formally defined as\begin{equation*}
    \max_{I\in {\cal I}} \frac{\opt^{\dpp}_I}{\opt^{\stat}_I},
\end{equation*}
where $\opt_I^{\stat}$ and $\opt_I^{\dpp}$ respectively denote the expected maximum load of an optimal static policy and an optimal dynamic policy for the instance $I$.
}
Quite surprisingly, we establish an adaptivity gap of at most $4$, showing that statically offering a weight-ordered assortment guarantees a $1/4$-approximation to \ref{DMLA}. Moreover, we show that this gap reduces to at most $2$ when all products have identical preference weights.

\paragraph{Outline.} In Section \ref{subsec:notationresult}, we provide some useful notation and describe our main adaptivity gap results in greater detail. In Section \ref{subsec:auxlemmas}, we present several auxiliary claims that will be helpful in the subsequent analysis. 
Then, we prove an adaptivity gap of at most $4$ for general instances in Section \ref{subsec:proofadaptivity}, deferring the improved finding for the identical-weight setting to Appendix \ref{apx:eqv}. Finally, we construct a \ref{DMLA} instance, demonstrating that the adaptivity gap of this problem is at least $4/3$.

\subsection{Notation and main results}\label{subsec:notationresult}

\paragraph{Notation.} Let us start by introducing some helpful notation and definitions. \blue{First, in the remainder of this section, we fix a single instance, consisting of $n$ products represented by the universe $\Nc$, their preference weights, and the number of customers $T$.} In what follows, we generalize the notion of preference-weight-ordered assortments to any universe of products $U\subseteq\Nc$, still prioritizing those with higher preference weights. Formally, suppose that $U=\{i_1,\ldots,i_k\}\subseteq \Nc$ and that $v_{i_1}\geq \cdots\geq v_{i_k}$. We say that the assortment $S\subseteq U$ is  preference-weight-ordered  in $U$ when $S = \{i_1,\ldots,i_m\}$ for some $1 \leq m\leq k$. With this definition, let $\opt^{\ord}(U)$ be the optimal expected maximum load achievable by a static preference-weight-ordered assortment in $U$. In other words, 
\[ \opt^{\ord}(U) = \max_{m=1,\ldots,k} \E\left(M\left(\{i_1,\ldots,i_m\}\right)\right). \]
In addition, we define $\opt^{\dpp}(U)$ as the expected maximum load of an optimal dynamic policy, using only products in $U$.

\paragraph{Main results.} Quite surprisingly, we show that by statically offering a weight-ordered assortment, one can attain an expected maximum load of at least $1/4$ of the optimal expected maximum load of \ref{DMLA}. The proof of this result appears in Section \ref{subsec:proofadaptivity}.  
  
\begin{theorem}\label{thm:adaptivitygap}
There exists a static weight-ordered assortment that provides a $1/4$-approximation to \ref{DMLA}, i.e., $\opt^{\ord}(\Nc)\geq \frac{1}{4} \cdot \opt^{\dpp}(\Nc)$.
\end{theorem}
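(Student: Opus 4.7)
The plan is to obtain the claimed factor $1/4$ by composing two factor-$1/2$ losses. Since Theorem 3.3 already guarantees the existence of a weight-ordered assortment whose expected maximum load is at least $\opt^{\stat}(\Nc)/2$, the task essentially reduces to establishing an adaptivity gap of $2$ between the general-static and dynamic formulations, namely $\opt^{\stat}(\Nc) \geq \opt^{\dpp}(\Nc)/2$. Chaining this inequality with Theorem 3.3 then yields $\opt^{\ord}(\Nc) \geq \opt^{\dpp}(\Nc)/4$, as desired.

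To prove the adaptivity gap of $2$, I would fix an optimal dynamic policy $\pi^*$ and exhibit a single static assortment whose expected maximum load is at least half of what $\pi^*$ attains. The most natural candidate is a level set of the marginal offering distribution $\mu_i = \E[\mathbbm{1}[i \in S_t^{\pi^*}]]$, where the expectation is taken over both a uniformly chosen step $t \in [T]$ and the random trajectory of $\pi^*$. The idea is to pick a threshold $\tau$ and consider the static assortment $S_\tau = \{i \in \Nc : \mu_i \geq \tau\}$, then argue that for a suitably chosen $\tau$ the expected maximum load of $S_\tau$ is a constant fraction of the dynamic optimum. The analysis of $\E(M(S_\tau))$ versus $\E(\max_i L_i^{\pi^*})$ would lean heavily on the structural tools of Section 3.2: the sensitivity Lemmas 3.6 and 3.7 to relate choice probabilities under $\pi^*$ to those under a fixed assortment, and the Merge and Transfer operations (Lemmas 3.4 and 3.5) to reduce the candidate $S_\tau$ to a weight-ordered form whenever needed.

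An alternative route I would pursue in parallel is a case analysis on whether a single heavy product carries most of the dynamic load. In the first case, the singleton weight-ordered assortment $\{1\}$ achieves expected maximum load $Tv_1/(1+v_1)$, and one would argue this is at least $\opt^{\dpp}(\Nc)/2$. In the complementary regime, where multiple products are jointly needed to achieve the dynamic optimum, one would exhibit a larger weight-ordered assortment whose total preference weight matches the ``effective scale'' of the assortments offered by $\pi^*$, and argue via a dispersion bound that its maximum load is within factor $2$ of what $\pi^*$ achieves by adaptively concentrating mass on whichever product is currently leading.

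The main obstacle is the fundamental qualitative asymmetry between the two formulations: $\pi^*$ can redirect each incoming customer toward whichever product is currently winning the race, while a static assortment must commit upfront to a single dispersed distribution governed by the MNL. Quantifying this ``concentration penalty'' by exactly a factor $2$ is the technical heart of the proof. I expect the crucial step to be a coupling that, on a trajectory-by-trajectory basis, maps the sequence of random products chosen under $\pi^*$ to a sequence of random products chosen under the constructed static assortment, in such a way that no more than half of $\pi^*$'s maximum load is lost. The identical-weight refinement from Theorem 4.1's symmetric counterpart (Appendix C.1), where the adaptivity gap improves to $2$, should serve as a useful sanity check and may suggest the right coupling to generalize.
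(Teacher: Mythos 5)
Your proposal reduces the theorem to the claim $\opt^{\stat}(\Nc)\geq \tfrac12\cdot\opt^{\dpp}(\Nc)$, i.e., that the adaptivity gap between the \emph{general} static optimum and the dynamic optimum is at most $2$, and then composes this with Theorem~\ref{thm:halfapprox}. This is where the genuine gap lies: that intermediate claim is nowhere established in the paper (the paper proves an adaptivity gap of at most $4$ in general and of at most $2$ only in the identical-weights case, with a lower bound of $4/3$), and your proposal does not actually prove it either. Both of your suggested routes --- thresholding the marginal offering probabilities $\mu_i$ of an optimal dynamic policy, and the case analysis on whether a single heavy product dominates --- are left at the level of candidate constructions, with the decisive step explicitly deferred (``I expect the crucial step to be a coupling\ldots''). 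Quantifying the concentration penalty is precisely the hard part of the theorem, so the proposal assumes what needs to be shown.

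The paper's proof avoids ever comparing against the general static optimum. It distinguishes on $v(\Nc)$: when $v(\Nc)<1$, offering all of $\Nc$ loses at most a factor $2$ against any adaptive policy by Lemma~\ref{lem:augment} (since the weight of whatever the adaptive policy omits is below $1$). When $v(\Nc)\geq 1$, it takes $U$ to be the \emph{minimal} weight-ordered prefix with $v(U)\geq 1$ and introduces an auxiliary Multinomial vector with probabilities $v_i/v(U)$ and maximum $\widehat M$. Minimality of $U$ gives $v_i/v(U)\geq v_i/(1+v_i)$ for $i\in U$, so $\E(\widehat M)$ upper-bounds both $\opt^{\dpp}(U)$ (by a coupling) and $\opt^{\dpp}(\Nc\setminus U)$ (by Lemma~\ref{lem:multinom}, since products outside $U$ are lighter); subadditivity of $\opt^{\dpp}(\cdot)$ (Lemma~\ref{lem:subadd}) then yields $\opt^{\dpp}(\Nc)\leq 2\,\E(\widehat M)$. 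The second factor of $2$ comes from $v(U)\geq 1$ via Lemma~\ref{lem:probs}: the static choice probabilities $v_i/(1+v(U))$ are at least half the Multinomial probabilities $v_i/v(U)$, so $\E(M(U))\geq \E(\widehat M)/2$. In short, the two halvings in the paper are ``subadditivity split'' and ``no-purchase dilution,'' not ``weight-ordered vs.\ static'' and ``static vs.\ dynamic.'' If you want to salvage your decomposition, you would first have to prove the adaptivity gap of $2$ for general weights, which is an open strengthening rather than a known lemma.
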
   

It is worth noting that $\opt^{\dpp}(\Nc)$ represents the expected maximum load of an optimal dynamic policy for \ref{DMLA}, where all products in $\Nc$ are considered. Additionally, $ \opt^{\ord}(\Nc)$ denotes the expected maximum load of an optimal weight-ordered static assortment in $\Nc$, which is clearly upper-bounded by the expected maximum load of an optimal assortment for \ref{SMLA}. Therefore, the adaptivity gap of this setting is at most $4$. Moreover, as explained in Section \ref{compute}, we can compute  $ \opt^{\ord}(\Nc)$ in polynomial time, meaning that the above theorem yields a $1/4$-approximation for \ref{DMLA}.

When all products are associated with identical preference weights, we derive an improved adaptivity gap of $2$, as stated in the next theorem, whose proof is 
provided in Appendix \ref{apx:eqv}.

\begin{theorem}\label{thm:eqv}
Suppose that all products have identical preference weights. Then, there exists a static weight-ordered  assortment that provides a $1/2$-approximation to \ref{DMLA}, i.e., $\opt^{\ord}(\Nc)\geq \frac{1}{2} \cdot \opt^{\dpp}(\Nc)$.
\end{theorem}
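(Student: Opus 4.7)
}

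The plan is to leverage two simplifications unique to the identical-weight setting in order to sharpen the $1/4$-bound of Theorem~\ref{thm:adaptivitygap} into the desired $1/2$-bound. The first simplification is that when all products share the same preference weight, any assortment $S\subseteq\Nc$ is indistinguishable from the prefix assortment $\{1,\ldots,|S|\}$ under the MNL choice model, since permuting the product labels induces a corresponding permutation of the Multinomial load vector and leaves the distribution of $M(S)$ unchanged. Hence $\E(M(S))=\E(M(\{1,\ldots,|S|\}))$ for every $S\subseteq\Nc$, which implies that the optimal weight-ordered value $\opt^{\ord}(\Nc)$ already equals the unrestricted static optimum $\max_{S\subseteq\Nc}\E(M(S))$. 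Consequently, it suffices to establish the weaker statement $\opt^{\dpp}(\Nc)\leq 2\cdot\max_{S\subseteq\Nc}\E(M(S))$, which does not involve any weight-ordering consideration.

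My next step would be to revisit the proof of Theorem~\ref{thm:adaptivitygap} and isolate within it the part that freezes the adaptive policy into a static (not necessarily weight-ordered) offering from the part that subsequently converts an arbitrary static assortment into a weight-ordered one. I anticipate the factor of $4$ in the general case to decompose as $2 \cdot 2$, where one factor of $2$ captures the intrinsic cost of derandomizing the adaptive policy and the other factor of $2$ mirrors Theorem~\ref{thm:halfapprox}, namely the cost of weight-ordering. Under the identical-weight regime, this second cost is eliminated by the exchangeability observation, so only the first factor of $2$ is paid, yielding the claimed bound. Concretely, I would reuse the coupling argument underlying the general case to produce a static assortment $S^\dagger\subseteq\Nc$ with $\E(M(S^\dagger))\geq \opt^{\dpp}(\Nc)/2$, and then conclude by replacing $S^\dagger$ with the prefix $\{1,\ldots,|S^\dagger|\}$, which has the same expected maximum load and is weight-ordered by construction.

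The main obstacle is that the proof of Theorem~\ref{thm:adaptivitygap} may not decompose so cleanly, in which case the above reduction cannot be applied verbatim. If this turns out to be the case, my backup plan is to carry out a direct coupling specific to the identical-weight setting, which should be considerably simpler since the only meaningful decision of $\pi^*$ at each step is the cardinality of the assortment to offer, rather than its identity. Under this reduction, I would couple the Multinomial draws of $\pi^*$ to those of a fixed-size static assortment, use the Merge operation of Lemma~\ref{lem:merge} to aggregate the dynamic policy's trajectory into the load of a single product, and invoke the sensitivity estimate of Lemma~\ref{lem:probs} to control the discrepancy between the two processes. This tailored approach should yield the $1/2$-factor via a single streamlined argument in the fully symmetric case.
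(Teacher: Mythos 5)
Your opening observation is correct and worth keeping: with identical weights, $\E(M(S))$ depends only on $|S|$, so every static assortment is equivalent to a prefix and $\opt^{\ord}(\Nc)=\max_{S\subseteq\Nc}\E(M(S))$. The paper uses this implicitly by working with prefix assortments throughout. However, the main line of your plan rests on a misreading of where the factor $4$ in Theorem~\ref{thm:adaptivitygap} comes from. In the paper's proof of that theorem, the loss does \emph{not} decompose as (cost of derandomizing) $\times$ (cost of weight-ordering); the assortment $U=\{1,\ldots,k\}$ constructed there is already weight-ordered, so weight-ordering is free. The two factors of $2$ are instead: (i) the subadditivity bound $\opt^{\dpp}(\Nc)\leq \opt^{\dpp}(U)+\opt^{\dpp}(\Nc\setminus U)$, and (ii) the dilution caused by the no-purchase option, i.e., comparing the MNL probabilities $v_i/(1+v(U))$ against the Multinomial benchmark probabilities $v_i/v(U)$ via Lemma~\ref{lem:probs} with $\eps=1/2$. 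Eliminating a ``weight-ordering cost'' therefore buys nothing, and your primary reduction cannot produce the factor $1/2$.

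What the identical-weight assumption actually buys --- and what is missing from your plan --- is the elimination of the subadditivity step. The paper's proof splits into cases on the common weight $v$. For $v\geq 1$ a single product already captures $T/2\geq \opt^{\dpp}(\Nc)/2$; for $v<1/n$ offering all of $\Nc$ and invoking Lemma~\ref{lem:augment} with $\eps=1$ suffices. In the core case $1/n\leq v<1$, one chooses $k$ with $1/k\leq v<1/(k-1)$ and compares against a Multinomial vector with $T$ trials and uniform probabilities $1/k$. The crucial point is that the condition of Lemma~\ref{lem:multinom}, namely $1/k\geq v/(1+v)$, now holds for the \emph{entire} universe $\Nc$ (it follows from $v<1/(k-1)$), so $\E(\widehat M)\geq \opt^{\dpp}(\Nc)$ directly, with no subadditive splitting and hence no first factor of $2$. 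The only remaining loss is the no-purchase dilution, handled by a Bernoulli$(1/2)$ thinning coupling showing $\E(M(\{1,\ldots,k\}))\geq \E(\widehat M)/2$. Your backup plan gestures toward a direct coupling but omits this choice of $k$, never invokes Lemma~\ref{lem:multinom} (the only tool in the paper for upper-bounding the adaptive optimum by a Multinomial maximum), and the proposed use of the Merge operation is misplaced: Merge acts on two products within a single static assortment and has no role in aggregating an adaptive trajectory. As written, the plan does not contain the ideas needed to close the argument.
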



\subsection{Auxiliary claims}\label{subsec:auxlemmas}

\paragraph{Upper-bounding ${\opt^{\dpp}(U)}$.} For a fixed universe of products $U \subseteq \Nc$, recall that $\opt^{\dpp}(U)$ represents the expected maximum load attained by an optimal dynamic policy with respect to the universe $U$. In Lemma \ref{lem:multinom} below, whose proof is given in Appendix \ref{apx:multinom}, we provide an upper bound on $\opt^{\dpp}(U)$ which will serve as an initial step towards bounding the expected maximum load of an optimal dynamic policy. Specifically, we consider a random Multinomial vector $(L_1,\ldots,L_k)$, and establish a condition on its vector of probabilities and on the preference weights of products in $U$, ensuring that the expected maximal component of this Multinomial vector exceeds $\opt^{\dpp}(U)$.

\begin{lemma}\label{lem:multinom}
Let $(L_1,\ldots,L_k)$ be a random Multinomial vector with $T$ trials and probability vector $(p_1,\ldots,p_k)$. Let  $U\subseteq\Nc$ be a set of products with $\min_{i=1,\ldots,k}p_i\geq \max_{i\in U} \frac{v_i}{1+v_i}$. Then, $\E(\max(L_1,\ldots,L_k)) \geq \opt^{\dpp}(U)$.
\end{lemma}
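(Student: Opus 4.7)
The plan is to prove this via a coupling argument that works for every adaptive policy $\pi$ on $U$. Specifically, I will construct a joint distribution between $\pi$'s trajectory and the multinomial vector $(L_1, \ldots, L_k)$ such that $\max_{i\in U} L_i^{\pi} \leq \max_{j\in [k]} L_j$ almost surely. Since this holds uniformly over $\pi$, taking expectations and specializing to an optimal adaptive policy $\pi^*$ yields $\opt^{\dpp}(U) \leq \E(\max_j L_j)$, which is the desired inequality.

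The coupling is built from a pre-specified mapping $f: U \to \{1, \ldots, k\}$, ideally injective (which naturally holds when $|U| \leq k$). The construction is sequential: at each step $t$, first draw $J_t$ from the multinomial distribution; then let $\pi$ offer $S_t$ based on the history $(I_1,\ldots,I_{t-1})$; finally assign the customer's choice $I_t$ as follows. If $J_t = j$ and some $i \in S_t$ satisfies $f(i) = j$, set $I_t = i$ with probability $\phi_i(S_t)/p_j$; otherwise, set $I_t = 0$. The inequality $\phi_i(S_t) \leq v_i/(1+v_i) \leq p_{f(i)}$, which is exactly the lemma's hypothesis, ensures that this conditional probability is at most one and hence well-defined, and a direct marginal calculation confirms that $I_t$ has the correct MNL$(S_t)$ distribution.

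By construction, $I_t = i$ implies $J_t = f(i)$, so $L_i^\pi \leq L_{f(i)}$ for each $i \in U$. Consequently, $\max_{i \in U} L_i^\pi \leq \max_{i \in U} L_{f(i)} \leq \max_{j\in [k]} L_j$ pointwise. Taking expectations and specializing to the optimal adaptive policy $\pi^*$ on $U$ completes the argument.

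I expect the main technical obstacle to lie in handling the case where $|U| > k$, which forces $f$ to be non-injective. In this regime, the coupling's internal consistency at step $t$ requires the strengthened condition $\sum_{i \in S_t \cap f^{-1}(j)} \phi_i(S_t) \leq p_j$ to hold for every assortment that $\pi$ could offer. Addressing this will involve choosing $f$ to distribute the products across the bins in a weight-balanced way, exploiting the fact that the lemma's assumption also implies $k\cdot \max_{i\in U} v_i/(1+v_i) \leq \sum_j p_j \leq 1$, which caps the total weight each bin must absorb; alternatively, one may preliminarily aggregate products sharing a bin into virtual ones using a dynamic analogue of the Merge operation from Lemma \ref{lem:merge}, thereby reducing to the injective case without decreasing the value of $\opt^{\dpp}$.
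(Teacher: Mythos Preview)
Your coupling is correct and simpler than the paper's when $|U|\le k$. But the lemma is also invoked with $|U|>k$ (e.g., in Lemma~\ref{lem:2} with the $k$-component vector $(\widehat L_1,\ldots,\widehat L_k)$ and the universe $\Nc\setminus\{1,\ldots,k\}$, and again in Lemma~\ref{lem:eqv3}), and neither of your proposed fixes for that regime goes through. For the weight-balanced $f$, take $k=2$, $p_1=p_2=1/2$, and $|U|=4$ with all $v_i=1$; the hypothesis $v_i/(1+v_i)=1/2\le 1/2$ holds. Any $f:U\to\{1,2\}$ sends at least two products, say $i,i'$, to the same bin $j$, and if the policy offers $S_t=\{i,i'\}$ then $\phi_i(S_t)+\phi_{i'}(S_t)=2/3>1/2=p_j$, so the conditional assignment probability in your coupling exceeds $1$. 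The bound $k\cdot\max_i v_i/(1+v_i)\le 1$ caps individual weights but says nothing about how many products must be packed into one bin. For the merge idea, aggregating $f^{-1}(j)$ into one virtual product raises its weight to $2$ in the same example, so $v_{\tilde\imath}/(1+v_{\tilde\imath})=2/3>1/2$ and the hypothesis of the injective coupling no longer holds on the merged universe; you would additionally need a dynamic analogue of Lemma~\ref{lem:merge}, which has not been established.

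The paper's proof avoids any fixed mapping $f$. At each step it sorts both the multinomial components and the policy's product loads in decreasing order, drives the two processes with a single uniform variable under these orderings, and maintains the majorization-type invariant $\sum_{j=1}^i L_{\varphi_t(j),t}\ge \sum_{j=1}^i L^{\dpp}_{\psi_t(j),t}$ for every $i$ and $t$ (Claim~\ref{cl:invariant}); instantiating $i=1$, $t=T$ yields the lemma. Because bins and products are rematched dynamically according to current loads rather than tied together by a static bijection, no cardinality relation between $U$ and $[k]$ is required.
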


To interpret the condition stated above, consider an optimal dynamic policy for the maximum load assortment problem with respect to the universe $U$. Whenever this policy offers an assortment $S$ to some customer $t \in [T]$, the MNL choice probability of each product $i\in S$ is $\frac{ v_i }{ 1+v(S) } \leq \frac{ v_i }{ 1+v_i }$. Therefore, $\min_{i=1,\ldots,k}p_i\geq \max_{i\in U} \frac{v_i}{1+v_i}$ can be viewed as a condition where, regardless of the offered assortment, the MNL choice probabilities of all products in $U$ are upper-bounded by $\min_{i=1,\ldots,k}p_i$. Under this condition, we prove that the expected maximal component of $(L_1,\ldots,L_k)$ is an upper bound on the  expected maximum load of an optimal dynamic policy.
 
\paragraph{Consequences of offering larger subsets.} Consider an adaptive policy $A$ for the maximum load assortment problem with respect to the universe $U$.  We denote by $\obj^A(U)$ the expected  maximum load achieved by this policy. Additionally, for each  $t=1,\ldots,T$, we make use of $S^A_t$ to designate the subset of $U$ offered by $A$ to customer $t$. This subset is clearly random, since it depends on the random selections made by previously arriving customers.
    
Now,  consider two adaptive policies, $A$ and $B$, such that for any $t \in [T]$, the assortment offered by policy $A$ to customer $t$ is almost surely a subset of the one offered by policy $B$ to this customer. However, we assume that the difference in total preference weight between these two assortment is almost surely upper-bounded by some $\epsilon \geq 0$. The next lemma, whose proof is included in Appendix \ref{apx:augment}, gives a lower bound on the ratio between the expected maximum loads of the two policies as a function of $\epsilon$. 

\begin{lemma}\label{lem:augment}
Let $A$ and $B$ be two adaptive policies with respect to the universe $U$. 
For every $t \in [T]$, suppose that $S^A_t\subseteq S^B_t $ and $v( S^B_t \setminus S^A_t) \leq \epsilon  $ almost surely. Then, $\obj^B(U)\geq \frac1{1+\epsilon}\cdot\obj^A(U)$.
\end{lemma}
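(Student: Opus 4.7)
The plan is to prove the bound via a coupling of the two executions, combined with a conditioning argument that adapts the trick from the proof of Lemma~\ref{lem:probs} to the dynamic setting. The starting observation is that the hypothesis $v(S^B_t\setminus S^A_t)\le \eps$ forces the ratio $\alpha_t := (1+v(S^A_t))/(1+v(S^B_t))$ to be at least $1/(1+\eps)$ almost surely, since $v(S^B_t)\le v(S^A_t)+\eps$ and $v(S^A_t)\ge 0$. Moreover, $\alpha_t$ is precisely the multiplicative factor by which each MNL probability shrinks when the offered set is enlarged from $S^A_t$ to $S^B_t$: for every $i\in S^A_t\cup\{0\}$, one has $v_i/(1+v(S^B_t))=\alpha_t\cdot v_i/(1+v(S^A_t))$.

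This identity suggests the following coupling. At each step $t$, conditional on the joint history through period $t-1$, draw $Y_t\sim \phi(S^A_t)$, draw $Z_t$ from the weight-proportional distribution on $S^B_t\setminus S^A_t$, and toss an independent coin $U_t\sim\mathrm{Bernoulli}(\alpha_t)$. Set $X^A_t = Y_t$; set $X^B_t = Y_t$ when $U_t = 1$ and $X^B_t = Z_t$ otherwise. A direct marginal check confirms that $X^A_t$ and $X^B_t$ follow $\phi(S^A_t)$ and $\phi(S^B_t)$, so both policy executions live on this joint space. Whenever $Y_t = i$ for a product $i\ne 0$ (forcing $i\in S^A_t\subseteq S^B_t$ and hence $Z_t\ne i$), the event $\{U_t=1\}$ implies $X^B_t=i$. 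Denoting by $L^A_i$ and $L^B_i$ the product-level loads, we therefore obtain the almost-sure bound $L^B_i\ge \sum_{t=1}^T U_t\,\mathbbm{1}\{Y_t=i\}$ for every product $i$.

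Now let $i^* := \argmax_{i\in U} L^A_i$, so that $L^A_{i^*}=M^A$. Because $A$'s execution depends only on its own choices $(Y_t)_t$, both $i^*$ and each $\mathbbm{1}\{Y_t=i^*\}$ are $\sigma(Y_{1:T})$-measurable. The key fact is that $\E[U_t\mid Y_{1:T}]\ge 1/(1+\eps)$ almost surely: conditional on the entire $A$-execution, the $U_t$'s are still random through their dependence on $B$'s history, but by construction each $U_t$ is $\mathrm{Bernoulli}(\alpha_t)$ given the preceding history, with $\alpha_t\ge 1/(1+\eps)$ almost surely, while $Y_{t+1:T}\perp U_t\mid Y_{1:t}$ because $A$'s state process ignores $U$ and $Z$. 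Linearity then yields
\[
\E\bigl[L^B_{i^*}\,\big|\,Y_{1:T}\bigr] \,\ge\, \sum_{t=1}^T \mathbbm{1}\{Y_t=i^*\}\cdot \E[U_t\mid Y_{1:T}] \,\ge\, \frac{L^A_{i^*}}{1+\eps} \,=\, \frac{M^A}{1+\eps},
\]
and taking expectations together with $M^B\ge L^B_{i^*}$ concludes that $\obj^B(U)\ge \frac{1}{1+\eps}\,\obj^A(U)$.

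The main subtlety is the conditional expectation $\E[U_t\mid Y_{1:T}]$: because $\alpha_t$ depends on $B$'s past via $S^B_t$, it remains random even given $Y_{1:T}$, so we deliberately avoid computing it exactly. The argument instead relies only on the deterministic lower bound $\alpha_t\ge 1/(1+\eps)$ supplied by the hypothesis on $v(S^B_t\setminus S^A_t)$, which is precisely what makes the dynamic extension of the Lemma~\ref{lem:probs} trick go through.
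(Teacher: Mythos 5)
Your proposal is correct and follows essentially the same route as the paper: a per-customer coupling in which policy $B$ replicates $A$'s selection with probability $\alpha_t = \frac{1+v(S^A_t)}{1+v(S^B_t)} = \phi_i(S^B_t)/\phi_i(S^A_t) \geq \frac{1}{1+\eps}$, followed by conditioning on $A$'s full trajectory and applying the bound to the (random) argmax product. The only differences are cosmetic (a single acceptance coin independent of $Y_t$, and a weight-proportional rather than MNL residual draw on $S^B_t\setminus S^A_t$, both of which yield the correct marginals), and your explicit treatment of the measurability and conditional-independence structure is, if anything, slightly more careful than the paper's.
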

	
\paragraph{Subadditivity.} Finally, we prove that $\opt^{\dpp}(\cdot)$ is a subadditive function, as formally stated below.

\begin{lemma} \label{lem:subadd}
For any $U_1,U_2\subseteq \Nc$, we have $\opt^{\dpp}(U_1\cup U_2) \leq \opt^{\dpp}(U_1) + \opt^{\dpp}(U_2)$.
\end{lemma}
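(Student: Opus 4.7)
My approach hinges on the pointwise inequality
\[
\max_{i \in U_1 \cup U_2} L_i \;\leq\; \max_{i \in U_1} L_i \;+\; \max_{i \in U_2} L_i,
\]
applied to the random load vector induced by an optimal adaptive policy $\pi^*$ on $U_1 \cup U_2$. Let $L_i^{\pi^*}$ denote the (random) load of product $i$ at the end of the horizon under $\pi^*$. Taking expectations above yields
\[
\opt^{\dpp}(U_1 \cup U_2) \;=\; \E\Bigl[\max_{i \in U_1 \cup U_2} L_i^{\pi^*}\Bigr] \;\leq\; \E\Bigl[\max_{i \in U_1} L_i^{\pi^*}\Bigr] \;+\; \E\Bigl[\max_{i \in U_2} L_i^{\pi^*}\Bigr],
\]
so it suffices to establish, for each $j \in \{1,2\}$, the inequality $\E[\max_{i \in U_j} L_i^{\pi^*}] \leq \opt^{\dpp}(U_j)$.

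To this end, I would construct a randomized adaptive policy $\pi_j$ on $U_j$ that pointwise dominates $\pi^*$ on every product of $U_j$. The policy $\pi_j$ internally simulates $\pi^*$: whenever $\pi^*$ would offer the assortment $S_t \subseteq U_1 \cup U_2$, the policy $\pi_j$ offers the restricted assortment $S_t \cap U_j$ to the real customer. The key observation is that, for every $i \in S_t \cap U_j$, the MNL probability $v_i/(1+v(S_t \cap U_j))$ of selecting $i$ under $\pi_j$ is at least $v_i/(1+v(S_t))$, the corresponding probability under $\pi^*$, since $v(S_t \cap U_j) \leq v(S_t)$. Consequently, the two customers' choices can be coupled so that whenever the $\pi^*$-customer selects some $i \in U_j$, the $\pi_j$-customer also selects $i$, while if the $\pi^*$-customer selects a product in $U_j^c$ or the no-purchase option, the $\pi_j$-customer is drawn from an appropriate residual distribution over $(S_t \cap U_j) \cup \{0\}$ that makes the marginals match the genuine MNL law. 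The residual probabilities are non-negative precisely because of the weight inequality above.

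Under this coupling, every customer who chooses $i \in U_j$ under $\pi^*$ also chooses $i$ under $\pi_j$, and some customers whose $\pi^*$-choice falls in $U_j^c \cup \{0\}$ may select an element of $U_j$ under $\pi_j$. This yields $L_i^{\pi_j} \geq L_i^{\pi^*}$ almost surely for every $i \in U_j$, and hence $\max_{i \in U_j} L_i^{\pi_j} \geq \max_{i \in U_j} L_i^{\pi^*}$ pointwise; taking expectations gives $\opt^{\dpp}(U_j) \geq \E[\max_{i \in U_j} L_i^{\pi_j}] \geq \E[\max_{i \in U_j} L_i^{\pi^*}]$, and summing over $j \in \{1,2\}$ completes the proof.

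The main technical obstacle is verifying that $\pi_j$ qualifies as a legitimate randomized adaptive policy in the sense of Section~\ref{subsec:DMLA}. Although the construction relies on an internal simulation of $\pi^*$, including the fictitious outcomes of customers on $U_j^c$, all of this simulated information is generated from the real observations together with $\pi_j$'s internal randomness. A careful bookkeeping argument has to confirm that, at each step, $\pi_j$'s offered assortment is a deterministic function of its own observed load vector on $U_j$ and independent internal randomness, so that $\pi_j$ is indeed an admissible adaptive policy on the universe $U_j$ whose expected maximum load lower-bounds $\E[\max_{i \in U_j} L_i^{\pi^*}]$.
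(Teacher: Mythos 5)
Your proposal is correct and follows essentially the same route as the paper: both reduce the claim to the pointwise bound $\max_{i\in U_1\cup U_2}L_i\leq \max_{i\in U_1}L_i+\max_{i\in U_2}L_i$ under the optimal policy on the union, and then couple that policy with the restricted policies offering $S_t\cap U_j$, using the fact that $\phi_i(S_t\cap U_j)\geq \phi_i(S_t)$ to make the restricted customer's choice dominate on every product of $U_j$. The only cosmetic difference is that the paper's coupling maps the no-purchase outcome directly to no-purchase and only re-samples when the original customer picks a product outside $U_j$, whereas you fold no-purchase into the residual draw; both yield the same marginals and the same almost-sure domination.
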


The proof of this result relies on a coupling argument involving three dynamic policies: (1)~A policy that offers only products from the universe $U_1$; (2)~A policy offering only products from $U_2$; and (3)~An optimal policy that offers products from the combined universe, $U_1\cup U_2$. Within this coupling, we demonstrate that for at least one of the policies (1) and (2), its maximum load is almost surely at least as large as that of policy (3) with respect to the constructed coupling. The complete proof is provided in Appendix \ref{apx:subadd}.
 
\subsection{Proof of Theorem \ref{thm:adaptivitygap} }\label{subsec:proofadaptivity}

\paragraph{The easy regime: ${v(\Nc)< 1}$.} In this case, recalling that $v_1\geq \cdots\geq v_n$, we simply make use of the static policy $A$ where all products in $\Nc$ are offered to every customer $t \in [T]$. We will show that the expected maximum load of this policy is at least $\opt^{\dpp}(\Nc)/2$. To this end, focusing on an optimal dynamic policy $A^*$, let $S_t^{ A^* }$ be the random assortment it offers to customer $t$. We have $S_t^{ A^* } \subseteq \Nc$ and $v(\Nc \setminus S_t^{ A^* } )\leq 1$, since $v(\Nc)<1$ by the case hypothesis. 
Therefore, Lemma~\ref{lem:augment} implies that the expected maximum load $\obj^A(\Nc)$ of our static policy is at least $\obj^{A^*}(\Nc) / 2$. In other words, $\E (M(\Nc)) \geq \opt^{\dpp}(\Nc)/2$. Clearly, $\Nc$ is also preference-weight ordered, meaning that $\opt^{\ord}(\Nc)\geq \opt^{\dpp}(\Nc)/2$.

\paragraph{Overview of the difficult regime: ${v(\Nc) \geq 1}$.} 
Let $k$ be the minimal integer for which $\sum_{i=1}^kv_i\geq 1$, and consider the assortment $U=\{1,\ldots,k\}$. In the following, we argue that by statically offering this assortment to all customers, the expected maximum load is at least $\opt^{\dpp}(\Nc)/4$. In other words, $\E(M(U))\geq \opt^{\dpp}(\Nc)/4$. Since $U$ is weight-ordered, the latter bound would imply that $\opt^{\ord}(\Nc)\geq \opt^{\dpp}(\Nc)/4.$

For this purpose, by Lemma \ref{lem:subadd}, we know that 
$\opt^{\dpp}(\cdot)$ is a subadditive function, meaning in particular that
\begin{equation}\label{eq:subadd}
\opt^{\dpp}(\Nc) \leq \opt^{\dpp}(U)+\opt^{\dpp}(\Nc\setminus U).
\end{equation}
Now, let $(\widehat L_1,\ldots,\widehat L_k)$ be a Multinomial vector with $T$ trials and probability vector  $( p_1,\ldots,p_k)$, where $p_i = \frac{ v_i }{ v(U) }$ for every $i=1,\ldots,k$. In the next two lemmas, whose proofs appear in the sequel, we show that both $\opt^{\dpp}(U)$ and $\opt^{\dpp}(\Nc\setminus U)$ are upper bounded by $\E(\widehat M)$, where $\widehat M = \max_{i=1,\ldots,k}\widehat L_i$.

\begin{lemma}\label{lem:1}       
$\opt^{\dpp}(U) \leq \E(\widehat M)$.
\end{lemma}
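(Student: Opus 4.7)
The plan is to establish Lemma~\ref{lem:1} through a product-wise coupling between the run of an optimal dynamic policy on $U$ and the reference Multinomial vector $(\widehat L_1,\ldots,\widehat L_k)$. The first step is to exploit the minimality of $k$: since $\sum_{i=1}^{k-1} v_i < 1$, we obtain $v(U) < 1 + v_k$, and because the products are indexed so that $v_1 \geq \cdots \geq v_k$, this yields $v(U) \leq 1+v_j$ for every $j \in U$. Rearranging gives $p_j = v_j/v(U) \geq v_j/(1+v_j)$, and since the right-hand side is precisely the maximal MNL choice probability that product $j$ can attain under any assortment $S \subseteq U$ (attained when $S=\{j\}$), we obtain the key domination $p_j \geq \phi_j(S)$ for every $j \in U$ and every $S \subseteq U$, with the case $j \notin S$ being trivial.

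Next, I would drive both experiments with a single stream of i.i.d.\ uniform random variables $\xi_1,\ldots,\xi_T$ on $[0,1]$, and partition the unit interval into consecutive segments $I_1,\ldots,I_k$ of lengths $p_1,\ldots,p_k$. In the Multinomial experiment, customer $t$ is assigned to the product whose segment contains $\xi_t$. For the optimal dynamic policy, given its assortment $S_t^*$ (measurable with respect to the history of purchases and hence independent of $\xi_t$), I would label the leftmost sub-segment of $I_j$, of length $\phi_j(S_t^*) \leq p_j$, as the event ``dynamic chooses $j$''; the remainder of each $I_j$ (together with the full segment $I_j$ when $j \notin S_t^*$) corresponds to the no-purchase event. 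A routine verification confirms that the dynamic policy's choice at time $t$ follows the correct MNL distribution conditional on $S_t^*$, while the Multinomial marginals $\widehat L_j$ retain their prescribed joint distribution.

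By construction, whenever the dynamic policy selects product $j$ at time $t$, the Multinomial also selects $j$. Consequently $L_j^* \leq \widehat L_j$ almost surely for every $j \in U$, so $\max_j L_j^* \leq \widehat M$ almost surely; taking expectations yields $\opt^{\dpp}(U) \leq \E(\widehat M)$, as claimed. Note that we cannot simply invoke Lemma~\ref{lem:multinom} as a black box, since its hypothesis $\min_i p_i \geq \max_{j\in U} v_j/(1+v_j)$ can fail here even under the minimality of $k$ (e.g., when the weights in $U$ are unequal); the point is that, with the Multinomial components indexed \emph{by the products of} $U$, a product-by-product domination $p_j \geq v_j/(1+v_j)$ already suffices.

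The main subtlety I anticipate is rigorously validating the coupling against an \emph{adaptive} policy: one must check that driving both experiments with the same $\xi_t$'s does not disturb the joint law of $S_t^*$ and the previous load vectors. Since each $\xi_t$ is drawn independently of the past and $S_t^*$ is history-measurable, this reduces to a conditional-independence verification rather than a substantive obstacle, but it is the only non-mechanical step in the argument.
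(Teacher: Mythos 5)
Your proof is correct and follows essentially the same route as the paper: the identical chain of inequalities $\phi_j(S) \leq v_j/(1+v_j) \leq v_j/v(U) = p_j$ (derived from the minimality of $k$ and the ordering of the weights), followed by a product-wise coupling yielding $\widetilde L_j \leq \widehat L_j$ almost surely for every $j$, hence domination of the maxima. The paper merely asserts the existence of this "simple" coupling where you spell it out with nested uniform segments; your added remark on why Lemma~\ref{lem:multinom} cannot be invoked as a black box is accurate but not needed.
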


\begin{lemma}\label{lem:2}
$\opt^{\dpp}(\Nc\setminus U) \leq \E(\widehat M)$.
\end{lemma}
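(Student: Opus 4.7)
The strategy is to invoke Lemma~\ref{lem:multinom} with the universe $V = \Nc \setminus U$ and the Multinomial vector $(\widehat L_1,\ldots,\widehat L_k)$ already defined in the surrounding text, whose probability vector is $p_i = v_i / v(U)$. To apply the lemma, I need to verify the hypothesis
\[
\min_{i=1,\ldots,k} p_i \;\geq\; \max_{j\in \Nc\setminus U} \frac{v_j}{1+v_j},
\]
after which Lemma~\ref{lem:multinom} directly yields $\E(\widehat M) \geq \opt^{\dpp}(\Nc\setminus U)$, which is the desired conclusion. (If $\Nc\setminus U=\emptyset$, the inequality is trivial since $\opt^{\dpp}(\emptyset)=0$.)

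\textbf{Verifying the condition.} Since products are indexed in non-increasing order of preference weights, $\min_i p_i = p_k = v_k/v(U)$. On the right-hand side, because the function $x\mapsto x/(1+x)$ is increasing over $[0,\infty)$ and $v_j\leq v_k$ for every $j \in \Nc\setminus U$, the maximum is attained at $j=k+1$ and bounded by $v_k/(1+v_k)$. Hence it suffices to prove
\[
\frac{v_k}{v(U)} \;\geq\; \frac{v_k}{1+v_k},
\]
which is equivalent to $v(U)\leq 1+v_k$. This is exactly where the minimality of $k$ comes into play: by definition of $k$, we have $\sum_{i=1}^{k-1}v_i < 1$, so
\[
v(U) \;=\; \sum_{i=1}^{k-1}v_i + v_k \;<\; 1+v_k,
\]
as needed. (The edge case $k=1$ is handled identically, with the empty sum being $0<1$.)

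\textbf{Main obstacle.} The proof is short and mechanical once Lemma~\ref{lem:multinom} is in place; the only subtle point is recognizing that the ``crossover'' index $k$ was chosen precisely to make the ratio $v_k/v(U)$ large enough to dominate every MNL choice probability $v_j/(1+v_j)$ arising in the complementary universe $\Nc\setminus U$, regardless of which assortment an optimal dynamic policy over $\Nc\setminus U$ offers. The minimality of $k$ is used exactly once, to convert the inequality $\sum_{i=1}^{k-1}v_i < 1$ into $v(U)\leq 1+v_k$. No further analytical work is required.
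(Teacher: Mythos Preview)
Your proof is correct and follows essentially the same approach as the paper: both verify the hypothesis of Lemma~\ref{lem:multinom} by showing $\min_{i\in[k]} v_i/v(U) \geq \max_{j\in\Nc\setminus U} v_j/(1+v_j)$ via the inequality $v(U)<1+v_k$ (a consequence of the minimality of $k$), and then apply that lemma directly. Your presentation is slightly more streamlined in that you work with the minimizer $p_k$ from the start rather than bounding each $p_i$ separately, but the argument is the same.
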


On the other hand, we argue that the expected maximum load achieved by the static assortment $U$ is at least $\E(\widehat M)/2$.

\begin{lemma}\label{lem:3}
$\E (M(U)) \geq \E(\widehat M)/2$.
\end{lemma}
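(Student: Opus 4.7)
The plan is to apply Lemma~\ref{lem:probs} by comparing the load vector induced by statically offering $U$ against the auxiliary Multinomial vector $\widehat{\mathbf L}$. Both vectors have $T$ trials; offering $U$ yields choice probabilities $\phi_i(U) = v_i/(1+v(U))$ for $i \in U$ together with a no-selection probability $\phi_0(U) = 1/(1+v(U))$, while $\widehat{\mathbf L}$ uses $p_i = v_i/v(U)$ (with no no-selection component, which can be formally treated by padding with a 0-th entry of probability $0$).

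The key observation that drives the argument is that the ratio $\phi_i(U)/p_i = v(U)/(1+v(U))$ is identical for every $i \in U$, depending only on $v(U)$. By construction, $k$ is the minimal integer with $\sum_{i=1}^k v_i \geq 1$, and hence $v(U) \geq 1$. Plugging this in gives
\[
    \frac{\phi_i(U)}{p_i} \;=\; \frac{v(U)}{1+v(U)} \;\geq\; \frac12,
\]
so that $\phi_i(U) \geq (1/2) \cdot p_i$ for every $i \in U$.

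This inequality is precisely the hypothesis required to invoke Lemma~\ref{lem:probs} with $\epsilon = 1/2$, taking $\mathbf W = \mathbf L(U)$ and $\mathbf Y = \widehat{\mathbf L}$. The conclusion of that lemma then reads $\E(\max_{i=1,\ldots,k} L_i(U)) \geq (1/2) \cdot \E(\max_{i=1,\ldots,k} \widehat L_i)$, which is exactly $\E(M(U)) \geq \E(\widehat M)/2$.

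I do not anticipate any genuine technical obstacle here: the entire argument reduces to noticing that the ratio $v(U)/(1+v(U))$ is uniform across products and is bounded below by $1/2$ thanks to the defining property of $U$. The only minor bookkeeping is aligning the index sets of the two Multinomial vectors (one has an explicit no-selection coordinate, the other does not), which is resolved by the standard device of appending a zero-probability entry to $\widehat{\mathbf L}$, so that Lemma~\ref{lem:probs}'s statement applies verbatim without altering the maxima being compared.
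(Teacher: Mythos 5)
Your proof is correct and matches the paper's argument essentially verbatim: both invoke Lemma~\ref{lem:probs} with $\eps = 1/2$, using $v(U)\geq 1$ (from the minimality of $k$) to get $\frac{v_i}{1+v(U)}\geq \frac12\cdot\frac{v_i}{v(U)}$, and both handle the index mismatch by appending a zero-probability no-purchase coordinate to $\widehat{\mathbf L}$.
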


We are now ready to complete the proof of Theorem \ref{thm:adaptivitygap}. To this end, noting that the assortment $U$ is preference-weight-ordered, we know by Lemma~\ref{lem:3} that $\opt^{\ord}(\Nc)\geq \E(\widehat M)/2$. Consequently,
$$ \opt^{\ord}(\Nc)  \geq  \frac{ 1 }{ 2 } \cdot \E(\widehat M) \geq  \frac{ 1 }{ 4 } \cdot \left( \opt^{\dpp}(U) + \opt^{\dpp}(\Nc\setminus U) \right) 
 \geq \frac{ 1 }{ 4 } \cdot \opt^{\dpp}(\Nc),
$$
where the second inequality follows from Lemmas~\ref{lem:1}  and~\ref{lem:2}, and the third inequality is precisely~\eqref{eq:subadd}.

\paragraph{Proof of Lemma~\ref{lem:1}.} 
Let $(\widetilde L_1,\ldots,\widetilde L_k)$ be the random loads of the products when employing an optimal adaptive policy for the universe $U$, and let $\widetilde M=\max_{i\in U}\widetilde L_i$. By definition, $\opt^{\dpp}(U) = \E(\widetilde M)$, and our objective is to prove that $\E(\widetilde M) \leq \E(\widehat M)$.

For this purpose, we begin by observing that, for every $i\in [k]$,        \begin{equation}\label{eq:obs}
\frac{v_i}{v(U)} > \frac{v_i}{1+v_k}\geq \frac{v_i}{1+v_i},
\end{equation}
where the first and second inequalities hold respectively since $\sum_{j=1}^{k-1} v_j < 1$, by definition of $k$, and since $v_1 \geq \cdots\geq v_k$.
Looking into Equation~\eqref{eq:obs}, its left-hand side is exactly the  probability of component $i$, in the process of generating the random Multinomial vector $(\widehat L_1,\ldots,\widehat L_k)$. The right-hand side is the choice probability of product $i$ when the single-product assortment $\{i\}$ is offered, which is an upper bound on the choice probability of product $i$ at any step of generating $(\widetilde L_1,\ldots,\widetilde L_k)$. In other words, letting $S_t^{ A^* } \subseteq U$ be the random assortment offered by the optimal dynamic policy $A^*$ to customer $t \in [T]$, for every product $i \in [k]$, we have
\begin{equation}\label{eq:obs2}
\phi_i(S_t^{ A^* }) \leq \frac{v_i}{1+v_i} \leq  \frac{v_i}{v(U)}.
\end{equation}        
This observation implies the existence of a simple way to couple ${(\widetilde L_1,\ldots,\widetilde L_k)}$ and ${(\widehat L_1,\ldots,\widehat L_k)}$ such that ${\widetilde L_i\leq \widehat L_i}$, for every product $i \in [k]$. As a result, $\E(\widetilde M) \leq \E(\widehat M)$.


\paragraph{Proof of Lemma~\ref{lem:2}.} The key idea is to notice that $\frac{v_i}{v(U)} \geq \frac{v_i}{1+v_i} \geq \frac{v_j}{1+v_j}$, for every pair of products $1 \leq i \leq k$ and $k+1 \leq j \leq n$. Here, the first inequality is precisely Equation~\eqref{eq:obs}, and the second inequality holds since $v_1 \geq \cdots\geq v_n$. Therefore, $ \min_{i=1,\ldots,k}\frac{v_i}{v(U)}\geq\max_{j=k+1,\ldots,n} \frac{v_j}{1+v_j}$.
Recall that $v_i/v(U)$ is the probability of picking option $i$ in the Multinomial vector ${(\widehat L_1,\ldots,\widehat L_k)}$. Therefore, by applying Lemma \ref{lem:multinom}, we have $\E(\max{(\widehat L_1,\ldots,\widehat L_k)})\geq \opt^{\dpp}(\Nc\setminus U)$.
Finally, $\E(\widehat M)\geq \opt^{\dpp}(\Nc\setminus U)$, by definition of $\widehat M$.

\paragraph{Proof of Lemma~\ref{lem:3}.} Consider the static policy where we offer the weight-ordered assortment $U$ to every customer, and let $(L_0, L_1, \ldots, L_k)$ be the load vector associated with this policy. The latter vector follows a Multinomial distribution, where the choice probability of each product $i\in U$ is $\frac{ v_i }{ 1+v(U) }$ and the no-purchase option has probability $\frac{ 1 }{ 1+v(U) }$. On the other hand, consider the random vector $(\widehat L_1,\ldots,\widehat L_k)$, recalling that it has been defined as being Multinomial with $T$ trials and probabilities $( p_1,\ldots,p_k)$, where $p_i = \frac{ v_i }{ v(U) }$ for every $i=1,\ldots,k$. We complement this vector with $\widehat L_0$ that has a probability of $0$. 

We proceed by applying Lemma~\ref{lem:probs} to the Multinomial vectors $(L_0, L_1, \ldots, L_k)$ and $(\widehat L_0, \widehat L_1,\ldots,\widehat L_k)$. Specifically, since $v(U)\geq 1$, we have $\frac{v_i}{1+v(U)}\geq \frac12\cdot\frac{v_i}{v(U)}$,
implying that the conditions of this lemma are met with $\eps = 1/2$. It follows that ${\E(M(U))\geq \E(\widehat M)/2}$.

\subsection{Lower bound on the adaptivity gap}\label{subsec:lowbd}

While Theorem \ref{thm:adaptivitygap} attains an upper bound of $4$ on the adaptivity gap, we proceed to consider the opposite direction and provide a lower bound on this measure. In particular, we construct an instance of the maximum load assortment problem, demonstrating that the adaptivity gap of this setting is at least $4/3$. \blue{The upcoming  construction is motivated in Appendix \ref{apx:justif}, where we present a  numerical analysis of the adaptivity gap with respect to various problem parameters, in order to identify a regime where the largest adaptivity gaps are attained. In particular, we observe that high adaptivity gaps are attained when $T=2$ with uniform preference weights, which leads to our focus on such instances in the construction below. 
}

\begin{lemma}
The adaptivity gap of \ref{DMLA} is at least $4/3$.  
\end{lemma}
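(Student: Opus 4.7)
Motivated by the numerical evidence highlighted in Appendix~\ref{apx:justif}, which suggests that the largest adaptivity gaps arise for short horizons with uniformly weighted products, I would construct the one-parameter family of instances given by $T = 2$, $\Nc = \{1, \ldots, n\}$, and $v_i = 1$ for all $i \in \Nc$. The plan is to show that the ratio $\opt^{\dpp}/\opt^{\stat}$ for this family tends to $4/3$ as $n \to \infty$, which suffices because the adaptivity gap is defined as a supremum over all instances.

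For the static side, by the symmetry induced by identical preference weights, $\E(M(S))$ depends only on $k = |S|$. Each customer selects any given product in $S$ with probability $1/(k+1)$, so for $T = 2$ a direct case analysis on the load vector yields
\[
\E(M(S)) \;=\; 2 \cdot \frac{k}{(k+1)^2} + 1 \cdot \frac{k}{k+1} \;=\; \frac{k(k+3)}{(k+1)^2}.
\]
Treating the right-hand side as a function of a continuous variable, its derivative is proportional to $3 - k$, so the unique maximizer over $\{1, \ldots, n\}$ (for $n \geq 3$) is $k = 3$, giving $\opt^{\stat} = 9/8$.

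For the dynamic side, I would analyze the explicit adaptive policy $\pi$ that offers the full universe $\Nc$ to the first customer and then, conditional on customer $1$ having selected some product $i \in \Nc$, offers the singleton $\{i\}$ to the second customer. With probability $n/(n+1)$ customer $1$ picks some product; conditional on this event, customer $2$ selects the same product with probability $v_i/(1+v_i) = 1/2$, so the conditional expected maximum load is $3/2$. Discarding the non-negative contribution of the event where customer $1$ picks nothing yields the crude bound
\[
\opt^{\dpp} \;\geq\; \E(M(\pi)) \;\geq\; \frac{n}{n+1} \cdot \frac{3}{2}.
\]

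Combining both estimates gives $\opt^{\dpp}/\opt^{\stat} \geq 4n/\bigl(3(n+1)\bigr)$, which converges to $4/3$ as $n \to \infty$, yielding the claimed lower bound. The only real hurdle here is identifying the correct regime in which to probe for a lower bound; once $T = 2$ with uniform weights is chosen, the separation is driven by the simple structural fact that a static policy cannot simultaneously guarantee both that customer $1$ selects some product and that customer $2$ concentrates onto whichever product was selected, whereas an adaptive policy plainly can.
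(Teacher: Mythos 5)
Your proposal is correct and follows essentially the same route as the paper: the same instance ($T=2$, $n$ identical unit-weight products), the identical static computation $\E(M(S_k)) = k(k+3)/(k+1)^2 \le 9/8$, and the same adaptive policy, with the only cosmetic difference being that you lower-bound $\opt^{\dpp}$ by that policy's value rather than arguing it is exactly optimal (which is all the lemma requires). The resulting ratio $4n/(3(n+1)) \to 4/3$ matches the paper's conclusion.
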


\paragraph{Instance.} In what follows, we consider an instance defined over a universe of $n$ products, each with a preference weight of $1$. In addition, the number of customers is $T=2$. Let us compare the optimal adaptive policy against the performance of an optimal static assortment.

\paragraph{The optimal dynamic policy.} Since products have identical preference weights, it is easy to verify that the optimal dynamic policy starts by offering the whole universe of products to the first customer. With probability $n/(1+n)$, she selects some product, say $i$. In this event, the optimal policy will offer the assortment $\{i\}$ to the second customer, as adding any other product can only cannibalize product $i$. In the complementary event, the first customer selects the no-purchase option, and in this case, the optimal policy offers the whole universe of products to the second customer. Therefore, by conditioning on the choice of the first customer, the expected maximum load of the optimal dynamic policy is given by
\begin{equation} \label{eqn:bad_inst_opt}
\opt^{\dpp}(\Nc) = \frac{n}{1+n}\cdot \left(1+\frac{1}{2}\right) + \frac{1}{1+n}\cdot \frac{n}{1+n} = \frac{ 3 }{ 2 } \cdot \left( 1 - \frac{ 1 }{ n+1 } \right) + \frac{ n }{ (1+n)^2 } . 
\end{equation}

\paragraph{The optimal static assortment.} For every $k=1,\ldots,n$, we compute the expected maximum load achieved by statically offering an assortment $S_k$ consisting of $k$ products. For this assortment, the maximum load is $2$ if and only if the same product is selected by both customers, which happens with probability $\frac{ k }{(1+k)^2}$. Similarly, the maximum load is $0$ if and only if the no-purchase option is selected by both customers, which happens with probability $\frac{1} { (1+k)^2 }$. It follows that the maximum load is $1$ with probability $\frac{ k^2 + k }{ (1+k)^2 }$, and a simple calculation shows that 
\begin{equation} \label{eqn:bad_inst_static}
\E(M(S_k)) = \frac{k^2 + 3k}{(1+k)^2}.
\end{equation}
To bound the latter expectation, elementary calculus arguments show that the function $x \mapsto \frac{x^2 + 3x}{(1+x)^2}$ attains its maximum value over $[0,\infty)$ at $x = 3$, and therefore $\max_{k \in [n]} \E(M(S_k)) \leq 9/8$

\paragraph{Lower bound on the adaptivity gap.} By combining equations~\eqref{eqn:bad_inst_opt} and~\eqref{eqn:bad_inst_static}, we obtain an adaptivity gap of at least
\[ \lim_{n \to \infty} \frac{ \opt^{\dpp}(\Nc) }{ \max_{k \in [n]} \E(M(S_k)) } \geq \frac{ 8 }{ 9 } \cdot \lim_{n \to \infty} \left( \frac{ 3 }{ 2 } \cdot \left( 1 - \frac{ 1 }{ n+1 } \right) + \frac{ n }{ (1+n)^2 } \right) = \frac{ 4 }{ 3 } . \]
As a side note, due to considering the case of identical preference weights, by Theorem~\ref{thm:eqv}, we know that the adaptivity gap in this case is upper bounded by $2$. 
It is important to note that the choice of the instance presented above is not arbitrary. Rather, it results from numerically optimizing the number of customer $T$ and their (uniform) preference weights to obtain the highest lower bound on the adaptivity gap.

\red{
\subsection{Numerical insights on the adaptivity gap}

Beyond the theoretical results presented in Sections \ref{subsec:notationresult} and \ref{subsec:lowbd}, several key numerical insights can be drawn from the numerical analysis we conduct in Appendix \ref{apx:justif}. These insights offer a clearer understanding of the scenarios in which dynamic policies outperform static ones, highlighting the potential advantages of using more complex adaptive strategies compared to static approaches.

The first insight, observable in Table \ref{fig:tableT} of Appendix \ref{apx:justif}, suggests that an increase in the customer base diminishes the advantage of employing a dynamic policy over a static one. This trend is reflected in the reduction of the adaptivity gap as the value of $T$ grows. To better grasp the underlying reasons, it is essential to identify which problem instances effectively leverage adaptiveness, and which do not. Specifically, in cases with a large number of customers, the flexibility to utilize adaptive policies is limited. These instances often require early commitment to specific products or subsets of products, making the different sample paths of the dynamic process resemble a static policy.

Secondly, when examining instances with a smaller customer base, Table \ref{fig:tablevariance} reveals that a larger adaptivity gap is observed in cases with a greater number of products. The underlying intuition here is that expanding the product set $\Nc$ enhances the flexibility of a dynamic policy, allowing it to initially offer larger assortments and thereby increasing the likelihood of capturing customer demand before making a final commitment. In contrast, a static policy is constrained by its commitment to a fixed assortment from the outset. Expanding the product universe may have little to no impact on the actual assortment offered under a static policy.
}

\section{The Dynamic Setting: Quasi-Polynomial \texorpdfstring{$\boldsymbol{(1-\eps)}$}{(1-eps)}-Approximate Policy}\label{sec:dynamic}

In this section, we shift our focus towards designing a truly near-optimal policy for \ref{DMLA}. Specifically, for any $\eps > 0$, we propose a $(1-\epsilon)$-approximate adaptive policy, admitting a quasi-polynomial time implementation. Our approach involves exploring a carefully selected class of policies with distinct properties, allowing one to  dramatically reduce the search space of seemingly-intractable dynamic programming ideas. 
\blue{Formally, we say that an algorithm terminates in quasi-polynomial time when, for every instance $I$, its running time is $O(|I|^{{\rm polylog}|I|})$, where $|I|$ stands for the input size in its binary representation.}
We start by presenting our main result in Section \ref{subsec:maindynamic}, stating the existence of a near-optimal dynamic policy that can be implemented in quasi-polynomial time. Subsequently, Section \ref{subsec:highr} will provide a number of auxiliary lemmas and observations. In Section \ref{subsec:policy}, we present the specifics of our algorithmic approach \red{by formally constructing a near-optimal dynamic policy}.

\subsection{Main result}\label{subsec:maindynamic}

\paragraph{Main result.} As previously mentioned, our primary result consists of a quasi-polynomial time approximation scheme (QPTAS) for \ref{DMLA}. The next theorem describes this finding in greater detail, noting that $O_{\eps}( \cdot )$ simply hides polynomial dependencies in $1/\eps$. 

\begin{theorem}\label{thm:quasitime}
For any $\eps > 0$, we can compute an adaptive policy for \ref{DMLA} whose expected maximum load is within  factor $1 - \eps$ of optimal. This policy can be implemented in $O( n^{ O_{\eps}( \log^3 n) } )$.
\end{theorem}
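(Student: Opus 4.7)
My plan is to reduce the exponentially-large state space of the natural dynamic program~\eqref{DMLA} to a quasi-polynomial one while losing only a $(1-\eps)$-factor, and then solve each Bellman equation via a polynomial-time inner routine.

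For the state-space reduction, I would fix a threshold $L^{\star} = \log^{O(1)}(n,T,1/\eps)$ and prove a \emph{truncation lemma}: any adaptive policy can be modified so that, the first time some product's load reaches $L^{\star}$, it starts offering the empty assortment for the remainder of the horizon, and this modification costs at most a $(1-\eps)$-factor in the expected maximum load. The key estimate is that once the running maximum is at least $L^{\star}$, the best possible incremental gain from continuing is at most $\eps\cdot L^{\star}$, because even if the policy concentrated all remaining customers on a single leading product, the additional load added would be small relative to $L^{\star}$. Combined with a lower bound of the form $\opt^{\dpp}(\Nc)=\Omega(L^{\star})$ derived from the static PTAS (Theorem~\ref{thm:algo}) and the adaptivity-gap bound (Theorem~\ref{thm:adaptivitygap}), this yields the desired $\eps$-loss.

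Given the truncation, the number of reachable pre-truncation load vectors can be bounded as follows: every coordinate is at most $L^{\star}$, and at most $O_{\eps}(\log^{2} n)$ coordinates are non-zero before truncation, either by \emph{a priori} restricting to small-assortment policies (arguing separately that this restriction is near-optimal) or via a sparsity argument using that the total load across the non-truncated horizon is a controlled multinomial count. This yields $n^{O_{\eps}(\log^{3} n)}$ reachable states. For each such state, the Bellman update requires solving an assortment-selection subproblem of the form $\max_{S\subseteq\Nc}\bigl[\phi_0(S)\,c_0+\sum_{i\in S}\phi_i(S)\,c_i\bigr]$, where $c_0,\ldots,c_n$ are constants depending on previously computed values of $\emm_{t-1}(\cdot)$. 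As noted in the main text and detailed in Appendix~\ref{apx:proofquasitime}, this can be recast as unconstrained revenue maximization under the MNL model with ``revenues'' $r_i=c_i-c_0$, and hence solved in polynomial time via the revenue-ordered-assortment structure of \citet{talluri2004revenue}.

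The hard part is the joint calibration of $L^{\star}$: large enough for the truncation error to be at most $\eps\cdot\opt^{\dpp}(\Nc)$, yet small enough that the pre-truncation state space remains quasi-polynomial. The former requires concentration-type estimates for multinomial loads together with the static-to-dynamic anchor on $\opt^{\dpp}(\Nc)$; the latter requires showing that near-optimal adaptive policies can be taken to have ``sparse'' support, so that only a polylogarithmic number of coordinates of $\bl$ ever grow before truncation fires. Reconciling these two demands is what produces the $\log^{3} n$ factor in the exponent.
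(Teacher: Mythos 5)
Your skeleton (truncate the dynamic program at a polylogarithmic load threshold, solve a reduced Bellman recursion, and recast the inner assortment subproblem as unconstrained MNL revenue maximization) matches the paper's at a high level, and the inner-problem reduction is exactly right. But there are two genuine gaps. First, a polylogarithmic threshold only makes sense when $\opt^{\dpp}(\Nc)$ is itself polylogarithmic, and this fails in general: when $T\alpha = Tv_{\max}/(1+v_{\max})$ is large, the optimum is $\Theta(T\alpha)$, which can be polynomial in $n$, so truncating at $\log^{O(1)}$ would destroy the objective. The paper therefore splits into a high-weight regime ($T\alpha\geq 12\ln(nT)/\eps^3$ or $v_{\max}\geq 1/\eps$), where a Chernoff bound shows that statically offering the heaviest product is already $(1-\eps)$-optimal (Lemma~\ref{lem:high}), and a low-weight regime where $\opt^{\dpp}(\Nc)\leq \thr=O_{\eps}(\log^2(nT))$ (Lemma~\ref{lem:low}); only then is truncation at $\thr/\eps$ harmless, since the post-threshold gain is at most $\opt^{\dpp}(\Nc)\leq\thr=\eps\cdot(\thr/\eps)$. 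Note also that your calibration is backwards: what the truncation argument needs is an \emph{upper} bound $\opt^{\dpp}(\Nc)=O(\eps L^{\star})$, not a lower bound $\opt^{\dpp}(\Nc)=\Omega(L^{\star})$; the two requirements you state (incremental gain at most $\eps L^{\star}$, which forces $\opt^{\dpp}\leq\eps L^{\star}$, together with $\opt^{\dpp}=\Omega(L^{\star})$) are mutually inconsistent for small $\eps$.

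Second, and more seriously, your state-space count does not go through. There is no sparsity: with many products of small weight, a near-optimal policy must offer large assortments (restricting to assortments of polylogarithmic size loses almost everything when there are $n$ products of weight $1/n$ each), and along such a policy $\Theta(\min(n,T))$ coordinates of $\bl$ can become non-zero, each with load $1$, so ``at most $O_{\eps}(\log^2 n)$ non-zero coordinates before truncation'' is false. The paper's mechanism is different: after dropping products of weight below $\eps^2 v_{\max}/n$ (an $\eps$-loss via subadditivity of $\opt^{\dpp}(\cdot)$) and rounding the surviving weights to a geometric grid with $J=O_{\eps}(\log n)$ classes (with a separate stability lemma to convert the computed policy back to the original weights), products within a class are interchangeable, so a state is represented by the counts $N_{j,m}$ of class-$j$ products whose current load is exactly $m\leq\thr/\eps$; this yields $n^{O(J\cdot\thr/\eps)}=n^{O_{\eps}(\log^3 n)}$ states. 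Without this bucketing-plus-symmetry compression, your plan has no working mechanism for the quasi-polynomial bound.
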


It is worth mentioning that the term ``implementation'' in this specific context encompasses two important aspects. Firstly, it includes any preprocessing steps undertaken prior to the beginning of the customer arrival process. Secondly, it contains the additional procedures required to compute a personalized assortment that will be offered to each arriving customer. 
\blue{As stated above, the running time of our algorithm is $O(n^{O_{\epsilon}(\log^3 n)})$, meaning that it indeed qualifies as a QPTAS. This running time is not as efficient as a PTAS, in which the exponent would have been dependent only on $1/\eps$.}

\paragraph{Technical overview.} The fundamental challenge in addressing the dynamic setting arises from the exponential size of its dynamic programming state space (see Section~\ref{subsec:DMLA}). Thus, our initial focus lies in modifying the original instance, with the intent of arriving at a dramatically scaled-down state space. This alteration involves transforming the original universe of products $\Nc$ into a modified universe, where product weights are slightly altered. Additionally, we confine our exploration to a specific class of policies that truncates the arrival sequence once a predetermined threshold on the maximum load is reached, incurring at most an $\eps$-fraction loss in the objective function. While the idea of altering the space of products $\Nc$ helps mitigate the search space issue, it introduces a new source of complexity, due to the dissimilarity between the products in the new universe and our initial universe. Consequently, our second step consists of recovering a policy with respect to the original universe, while essentially preserving the expected maximum load.

\subsection{Useful claims}\label{subsec:highr}

In this section, we introduce several auxiliary claims that will be helpful in designing our near-optimal policy as well as in its analysis. For convenience, we assume without loss of generality that $T\geq 2$ and $n\geq 2$. Indeed, when $T = 1$, it is optimal to offer the whole universe of products. Similarly, the setting of $n=1$ corresponds to having a single product, in which case it is optimal to offer this product to every customer.

\paragraph{Two parametric regimes.} Let us start by explaining how any given instance can be classified into two possible regimes, referred to as high-weight and low-weight. For this purpose, let $\alpha = v_{\max}/(1+v_{\max})$, which is precisely the choice probability of the heaviest product by a single customer, when it is the only one offered. Consequently, $T\alpha$ represents the expected load of this product when it is being statically offered to all customers. Then, the high-weight regime captures problem instances where $T\alpha\geq 12\ln(nT)/\eps^3$ or $v_{\max}\geq 1/\eps$, in which case we will show that a very simple static policy achieves a $(1-\eps)$-approximation. As explained later on, the core difficulty lies within the low-weight regime, where $T\alpha< 12\ln(nT)/\eps^3$ and \blue{$v_{\max}<1/\eps$}, which will be the main focus of this section. 

\paragraph{High weight regime: $\boldsymbol{{T\alpha\geq 12\ln(nT)/\eps^3}}$ or $\boldsymbol{{v_{\max}\geq 1/\eps}}$.}  In this case, we prove that statically offering the heaviest product to all customers provides a $(1-\eps)$-approximation to \ref{DMLA}, as formally stated below. The proof of this result appears in Appendix \ref{apx:high}. 

\begin{lemma}\label{lem:high}
When $T\alpha \geq \frac{12\ln(nT)}{\epsilon^3}$ or $v_{\max}\geq 1/\eps$, the static policy that offers the heaviest product guarantees a $(1-\epsilon)$-approximation for  \ref{DMLA}.
\end{lemma}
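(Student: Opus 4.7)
The plan is to establish the lemma by observing that the static policy offering only the heaviest product $i^*$ attains expected maximum load exactly $T\alpha$, and then upper-bounding $\opt^{\dpp}(\Nc)$ by $T\alpha/(1-\eps)$ under each of the two hypotheses. Indeed, when only $\{i^*\}$ is offered, the load of $i^*$ is $\mathrm{Bin}(T,\alpha)$-distributed, so $\E(M(\{i^*\}))=T\alpha$; hence it suffices to establish the matching upper bound on the dynamic optimum.

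For the sub-case $v_{\max}\geq 1/\eps$, I would simply note that $\alpha = v_{\max}/(1+v_{\max})\geq (1/\eps)/(1+1/\eps) = 1/(1+\eps)\geq 1-\eps$. Since at most $T$ customers can ever pick any product, every adaptive policy trivially satisfies $M\leq T$, so $\opt^{\dpp}(\Nc)\leq T$. Combining these yields $\E(M(\{i^*\}))=T\alpha \geq (1-\eps)T\geq (1-\eps)\opt^{\dpp}(\Nc)$, as desired.

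For the sub-case $T\alpha\geq 12\ln(nT)/\eps^3$, the key observation is that, regardless of the assortment $S$ offered at any step, the MNL choice probability of each $i\in S$ satisfies $\phi_i(S)=v_i/(1+v(S))\leq v_i/(1+v_i)\leq v_{\max}/(1+v_{\max})=\alpha$, using monotonicity of $x\mapsto x/(1+x)$. This bound holds conditionally on every history, which allows a step-by-step coupling of the load $L_i(A)$ of any product $i\in \Nc$ under any adaptive policy $A$ with a variable $Z_i\sim\mathrm{Bin}(T,\alpha)$ so that $L_i(A)\leq Z_i$ almost surely. A standard Chernoff bound then yields $\Pr(Z_i\geq (1+\eps)T\alpha)\leq \exp(-\eps^2 T\alpha/3)\leq (nT)^{-4}$, by plugging in the lower bound on $T\alpha$. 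Union-bounding over the $n$ products, and using the deterministic inequality $M(A)\leq T$, we obtain
\[
\E(M(A))\;\leq\; (1+\eps)T\alpha + T\cdot\frac{n}{(nT)^4} \;\leq\; (1+\eps)T\alpha + \frac{1}{n^3T^3}\,.
\]
An elementary check shows that the right-hand side is at most $T\alpha/(1-\eps)$ under the hypothesis, since $T\alpha\cdot\eps^2\geq 12\ln(nT)/\eps$ easily dominates $1/(n^3T^3)$. Taking the supremum over $A$ yields $\opt^{\dpp}(\Nc)\leq \E(M(\{i^*\}))/(1-\eps)$.

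The main obstacle in this argument would be the stochastic dominance step, since the assortment offered to customer $t$ under an adaptive policy is a random function of the entire history of previous selections, so the per-step selection indicators are neither independent nor identically distributed. The fact that the conditional per-step probability bound $\phi_i(S_t)\leq\alpha$ holds uniformly over every history, however, permits a canonical step-by-step coupling — generating at each step an independent $\mathrm{Bernoulli}(\alpha)$ variable that dominates the true indicator of product $i$ being chosen — which cleanly sidesteps this technicality.
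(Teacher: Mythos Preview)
Your proposal is correct and follows essentially the same approach as the paper: both treat the $v_{\max}\geq 1/\eps$ case via the trivial bound $\opt^{\dpp}(\Nc)\leq T$, and both handle the $T\alpha$-large case by observing that every product's per-step choice probability is at most $\alpha$ regardless of history, coupling each load with $\mathrm{Bin}(T,\alpha)$, and combining a Chernoff bound with a union bound over the $n$ products to upper-bound $\E(M)$ by roughly $(1+\eps)T\alpha$. The only cosmetic differences are that the paper uses the threshold $(1+\eps/2)T\alpha$ and a slightly different Chernoff constant, yielding a tail of $(nT)^{-1/\eps}$ rather than your $(nT)^{-4}$, but the resulting arithmetic goes through either way.
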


Roughly speaking, the high-weight regime allows us to efficiently employ concentration bounds. These bounds demonstrate that, for any given policy, the event where its (random) maximum load exceeds $T\alpha$ by a non-negligible factor is highly improbable. Consequently, with the right choice of parameters, we will show that the optimal expected maximum load is upper-bounded by $(1+\eps)\cdot T\alpha$. As such, Lemma~\ref{lem:high} will follow by recalling that $T\alpha$ represents the expected maximum load of statically offering the heaviest product to all customers.


\paragraph{Low weight regime: $\boldsymbol{{T\alpha< 12\ln(nT)/\eps^3}}$ and $\boldsymbol{{v_{\max}< 1/\eps}}$.}
In this case, we establish a polylogarithmic upper bound on the optimal expected maximum load, which will be utilized in Section~\ref{subsec:policy} to prove the near-optimality of a specific class of policies. Intuitively, under the low weight regime, products are not associated with high enough weights to prompt frequent selections of the same product. By formalizing this intuition, we show that within the low weight regime, the expected maximum load is polylogarithmically bounded. The proof of this result is included in Appendix~\ref{apx:low}.

\begin{lemma}\label{lem:low}
When $T\alpha< \frac{12\ln(nT)}{\epsilon^3}$ and $v_{\max}< 1/\eps$, we have $\opt^{\dpp}(\Nc) \leq  \frac{300\ln^2(nT)}{\eps^6}$.
\end{lemma}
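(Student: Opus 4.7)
The plan is to bound the maximum load achievable by any adaptive policy via a stochastic-domination coupling to independent Binomial variables, and then to control the resulting maximum using a Chernoff-type tail inequality. The starting observation is that for any assortment $S$ containing a product $i$, the MNL choice probability satisfies $\phi_i(S) = v_i/(1+v(S)) \leq v_i/(1+v_i) \leq v_{\max}/(1+v_{\max}) = \alpha$. Crucially, this bound is path-wise: no matter how an adaptive policy selects the assortment $S_t$ offered to customer $t$ as a function of the history of previous choices, the conditional probability that $i$ is chosen at time $t$ is at most $\alpha$. Building on this uniform bound, I would introduce for each pair $(i,t)$ an independent Bernoulli$(\alpha)$ variable $Y_{it}$ and couple the dynamic process so that the indicator of the event ``customer $t$ chose product $i$'' is pointwise dominated by $Y_{it}$. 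Summing over $t$, this yields $L_i \leq \sum_{t \in [T]} Y_{it}$ almost surely, where the right-hand side is Binomial$(T,\alpha)$ with mean at most $T\alpha \leq 12\ln(nT)/\eps^3$ by the hypothesis of the lemma.

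Next, I would invoke the multiplicative Chernoff bound $\P(L_i \geq B) \leq (eT\alpha/B)^{B}$, valid for any $B \geq T\alpha$. Setting $B$ of order $300\ln^2(nT)/\eps^6$, a direct substitution shows that $B/(eT\alpha) = \Omega(\ln(nT)/\eps^{3})$, so $\log(B/(eT\alpha)) \geq 2$ in all non-trivial parameter regimes, and therefore $B\log(B/(eT\alpha)) \geq 2B$, which comfortably exceeds $\ln(nT)$. This calibration is precisely what is needed to ensure $nT\cdot \P(L_i \geq B) \leq 1$, so that a union bound over the $n$ products yields $\P(\max_i L_i \geq B) \leq 1/T$.

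Finally, I would decompose the expectation through its tail:
\[ \E\left[\max_i L_i\right] \;\leq\; B \;+\; \sum_{k=B+1}^{T} \P\left(\max_i L_i \geq k\right) \;\leq\; B + T\cdot \P\left(\max_i L_i \geq B+1\right) \;\leq\; B+1, \]
which stays below $300\ln^2(nT)/\eps^6$ after a slight slack in the choice of $B$, thereby yielding the stated upper bound on $\opt^{\dpp}(\Nc)$. The main conceptual obstacle is justifying the stochastic domination $L_i \leq \sum_t Y_{it}$ under adaptive assortment choice: the dependence of $S_t$ on past randomness could a priori obstruct a clean coupling. However, since the pointwise bound $\phi_i(S_t) \leq \alpha$ holds conditionally on every realization of the history, the coupling can be carried out inductively step by step through the customer sequence, at each time period lifting a Bernoulli trial of parameter at most $\alpha$ to the dominating Bernoulli$(\alpha)$ variable $Y_{it}$. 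Once this coupling is in place, the remaining work is routine Chernoff estimation combined with a union bound over products, and the rest of the calculation amounts to tuning the multiplicative constants so that the final estimate matches the target $300\ln^2(nT)/\eps^6$.
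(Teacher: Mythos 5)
Your proposal is correct and follows essentially the same route as the paper: both bound every conditional choice probability by $\alpha$, dominate each $L_i$ by a Binomial$(T,\alpha)$ variable via a step-by-step coupling, apply the Chernoff-type estimate $\left(eT\alpha/k\right)^k$ with a threshold of order $\ln^2(nT)/\eps^6$, take a union bound over products, and finish with a tail decomposition of the expectation. The only differences are cosmetic choices of the cutoff (the paper uses $(12\ln(nT)/\eps^3)^2$ and bounds $\P(L_i=k)$ termwise rather than $\P(L_i\geq B)$ directly), so the arguments are interchangeable.
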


\subsection{Constructing our policy}\label{subsec:policy}

According to Lemma \ref{lem:high}, statically offering the heaviest product to all customers achieves a $(1-\eps)$-approximation of the optimum under the high-weight regime. Therefore, in the remainder of this section, we focus on the low-weight regime. For convenience, let $\thr =  \frac{300\ln^2(nT)}{\eps^6}$ 
be the upper bound we obtained in Lemma \ref{lem:low} on the expected maximum load $\opt^{\dpp}(\Nc)$ of an optimal dynamic policy. The primary idea behind our adaptive policy is to only explore policies that:
\begin{itemize}
    \item Stop offering products as soon as the maximum load reaches a value of $\thr/\Teps$, assuming without loss of generality that the latter term is an integer. Namely, the empty assortment is offered to all remaining customers once we hit this threshold.

    \item Avoid offering products of ``tiny'' preference weight, upper-bounded by $\Teps^2 v_{\max}/n$.
\end{itemize}
We refer to such adaptive policies as {\em truncated} policies. The motivation behind this restriction is that it allows us to considerably shrink the search space, and in particular, to compute a near-optimal policy in quasi-polynomial time. An important question that remains to be answered is obviously centered around the performance guarantee of such policies. Our analysis will argue that, based on truncated policies, we indeed construct a near-optimal dynamic policy. 

\paragraph{Step 1: Dropping light products.} We start by considering a new universe of products $U$, where we drop all products whose preference weight is at most $\Teps^2\cdot v_{\max}/n$, i.e., $U= \{i\in \Nc\,\mid\, v_i>\Teps^2\cdot v_{\max}/n\}$. Clearly, any $U$-policy is also an $\Nc$-policy, with the restriction of not offering any products whose preference weight is at most $\Teps^2\cdot v_{\max}/n$. We assume without loss of generality that $U=\{1,\ldots,k\}$ for some $k\leq n$, and let $v_{\min}$ be the smallest weight among the products in $U$. By construction, $v_{\max}/v_{\min} \leq n/\Teps^2$.

\paragraph{Step 2: Creating weight classes.} We create a new universe of products $\widetilde U$ by modifying  $U$ as follows. First, we partition the interval $[v_{\min},v_{\max}]$ geometrically by powers of $1+\Teps$, into a collection of buckets $I_1,I_2,\ldots,I_J$, where $J = \lfloor\frac{\log(v_{\max}/v_{\min})}{\log(1+\Teps)}\rfloor = O_{\eps}(\log n)$. Formally, 
\[ I_j = \left[ v_{\min}\cdot(1+\Teps)^j,v_{\min}\cdot(1+\Teps)^{j+1} \right), \]
for $j=0,\ldots,J$. Now, we associate a product $\widetilde i$ to each product $i\in U$, whose weight  is the left endpoint of the bucket containing $v_i$. In other words, the universe of products $\widetilde U=\{\widetilde 1,\ldots,\widetilde k\}$ is created such that, for every product $i=1,\ldots,k$, we determine the interval $I_j$ where $v_i$ resides, and then set ${v}_{\widetilde i} = (1+\Teps)^j \cdot v_{\min}$. Consequently, the products in $\widetilde U$ take only $O(J)$ possible values.

\paragraph{Step 3: Solving a reduced dynamic program.} We proceed by explaining how to compute an optimal truncated policy $\widetilde A$ with respect to the universe $\widetilde U$. To this end, let us define {\em constrained} load vectors $\mathbf \bl=(\ell_1,\ldots,\ell_k)$ as those whose maximal component is at most our threshold, i.e., $\max_{i=1,\ldots,k}\ell_i \leq \thr/\Teps$. We denote by $\const$ the collection of all such vectors. While at a first glance, the number of constrained vectors is exponential in $n$, we present in \red{Appendix~\ref{apx:proofquasitime}} an efficient representation of these vectors, that effectively reduces their number to a quasi-polynomial magnitude.  Now, in order to compute an optimal truncated policy $\widetilde A$ with respect to $\widetilde U$, we solve the so-called reduced dynamic program, for every load vector $\bl \in \const$, given through the following recursive equations:   \begin{equation}\label{eq:reducedDP}
    		M_{t}(\mathbf{\bl})=\max_{S\subseteq \widetilde U}\left(M_{t-1}(\mathbf \bl)\cdot\phi_0(S)+\sum\limits_{i\in S}M_{t-1}(\mathbf{\bl}+\mathbf{e}_i)\cdot \phi_i(S)\right).
	   \end{equation}
However, we modify the boundary conditions of this program, such that $M_t(\bl) = \thr/\Teps$ for every vector $\bl$ with $\max_{i=1,\ldots,k}\ell_i=\thr/\Teps$.     
Note that the recursive equations here are identical to those characterizing \ref{DMLA} in Section \ref{subsec:DMLA}, as these two programs only differ in their boundary condition.

\paragraph{Step 4: Recovering the approximate policy.} 
\red{Our final step consists of converting the $\widetilde U$-policy $\widetilde A$ from Step~3 into an approximate policy $A$ with respect to $U$. To this end, in Appendix~\ref{apx:altering}, we introduce  the so-called $\eps$-tightness property of $U$ and $\widetilde U$, which stipulates that for every assortment $S\subseteq U$ and every product $i \in S$, we have $\phi_i(S) \geq (1-\eps)\cdot \phi_{\widetilde i}(\widetilde S)$. It is easy to see that this property is satisfied in our case, as we have $$
\phi_i(S) \geq \frac{v_{\widetilde i}}{1+\frac{1}{1-\Teps}\cdot \sum_{j\in S}v_{\widetilde j}}\geq (1-\Teps)\cdot \frac{v_{\widetilde i}}{1+\sum_{\widetilde j\in\widetilde S}v_{\widetilde j}}=(1-\Teps)\cdot \phi_{\widetilde i}(\widetilde S).
$$
Deferring the technical details to Appendix~\ref{apx:altering}, we show that Lemma~\ref{lem:alteruniverse} enables us to recover a policy $A$ whose expected maximum load is \begin{equation}\label{eq:lemmaequation}
    \obj^{A}\geq (1-\Teps)\cdot \obj^{\widetilde A}.
\end{equation}
In order to conclude the proof of Theorem \ref{thm:quasitime}, it remains to show that the policy described above can indeed be implemented in $O( n^{ O_{\eps}( \log^3 n) } )$ time, and that the computed policy attains the desired performance guarantee, namely achieving a $(1-\eps)$-approximation for \ref{DMLA}. We defer these technical proofs and thereby the conclusion of Theorem \ref{thm:quasitime} to Appendix \ref{apx:proofquasitime}.
}

\section{Numerical Studies}\label{sec:numerics}

In this section, we conduct numerical experiments in order to examine how optimal assortments for \ref{SMLA} behave with respect to relevant model primitives, namely, the number of customers $T$, and the preference weights $v_1,,\ldots,v_n$. These experiments are mostly intended to study the sensitivity of optimal assortments with respect to model primitives.
\subsection{Effect of the parameter \texorpdfstring{$\boldsymbol T$}{T}}\label{subsec:Teffect}
Here, we study how the number of customers $T$ affects the number of products in an optimal assortment of \ref{SMLA}. We consider the following experimental setup: We fix the number of products at $n=10$, and generate preference weights from the positive part of a normal distribution with mean $\mu$ and standard deviation $\mu/2$, varying $\mu$ in the range $\{0.05, 0.1, 0.3, 0.5\}$. We also vary the number of customers $T$ in the range $\{2, 3, \ldots, 12\}$. For $T=1$, offering the entire universe of products is clearly optimal, and we therefore exclude this case. For each pair $(T,\mu)$ in the specified ranges, we generate $1000$ \ref{SMLA} instances and determine the size of an optimal assortment by exhaustively enumerating all feasible assortments.
When the optimal assortment is not unique, we represent each assortment $S$  by an $n$-dimensional binary vector whose $i$-th entry is $1$ if and only if $i\in S$, and report the first optimal assortment in  lexicographical order. This choice does not affect our results, as we observe that the optimal assortment in these experiments is unique due to the randomness in generating preference weights.


\paragraph{Results.} Our results are summarized in Figure \ref{fig:assortmentsizeT}, where we generate a plot for each value of $\mu$ considered. Specifically, for every $T=2, \ldots, 10$ (depicted on the x-axis), we compute the size of the optimal assortment (depicted on the y-axis) for the $1000$ generated instances. Subsequently, we present a box plot highlighting the quartiles of the obtained optimal assortment sizes. In particular, for every value of $T$, the endpoints of the vertical line delimit the range of values taken by the optimal assortment sizes, excluding outliers, which are represented by the points outside the delimited region. The extremities of each box represent the first and third quartile, and the horizontal line inside this box represents the median. The dotted line inside the box represents the mean.
\begin{figure}[htbp!]
    \centering
    \includegraphics[scale=0.35]{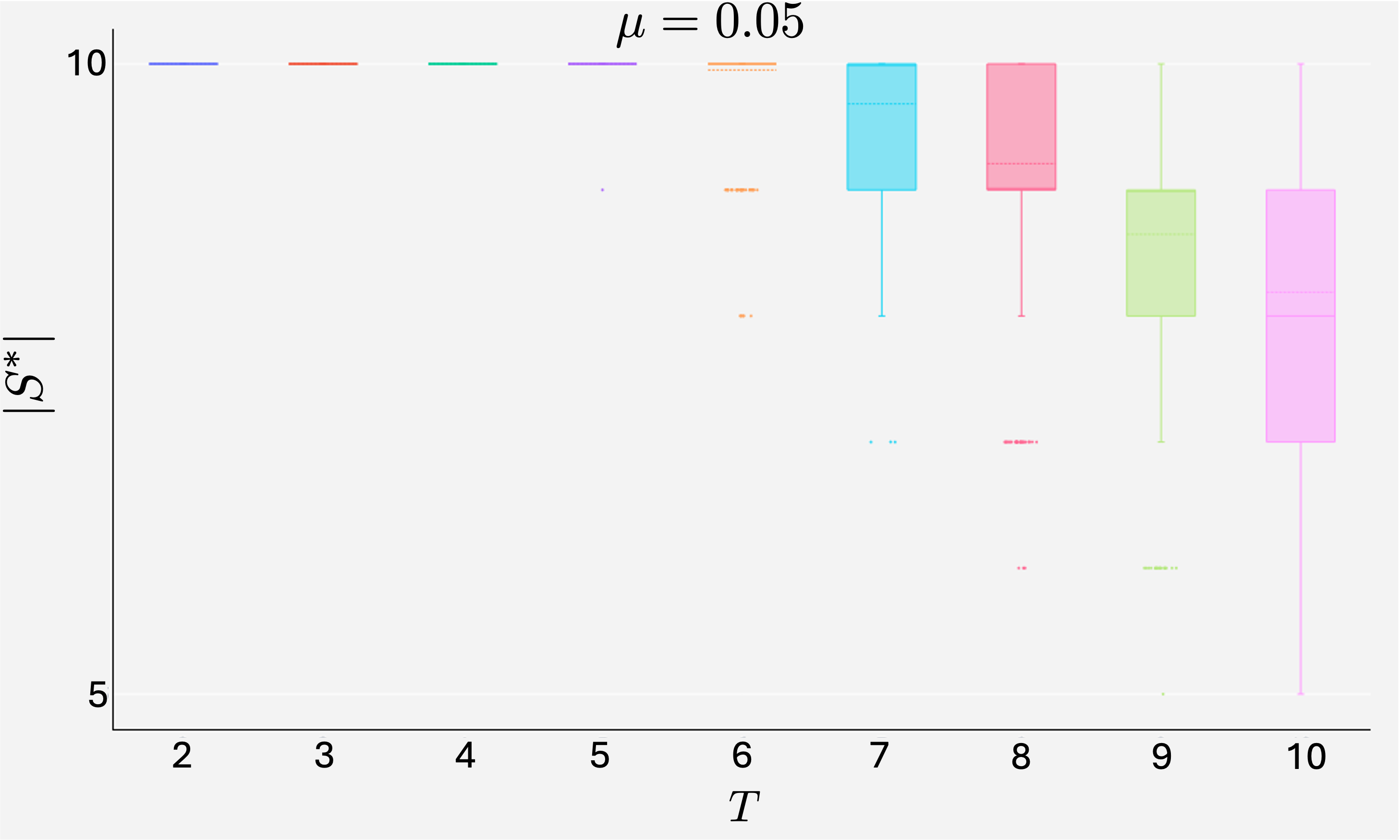}
    \includegraphics[scale=0.35]{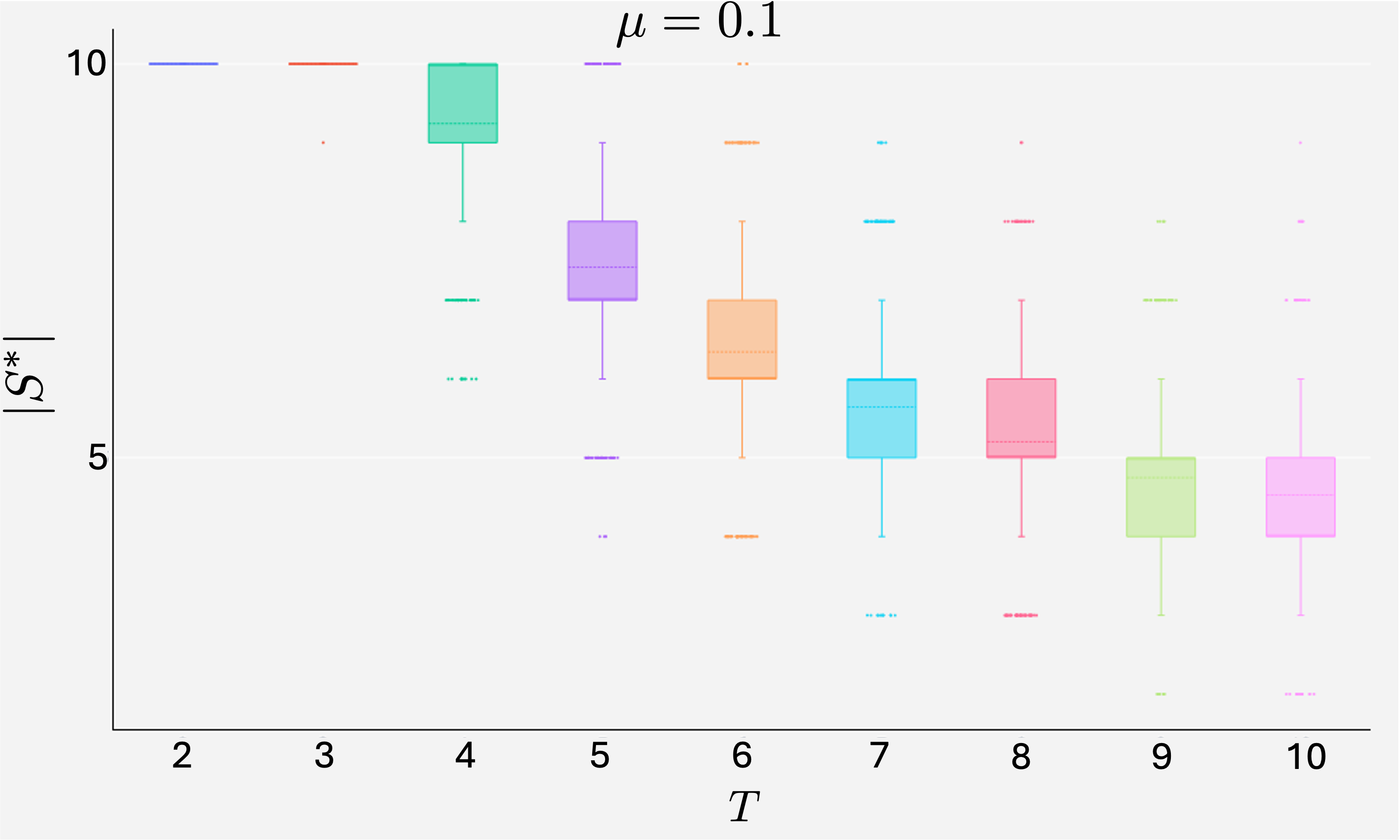}
    \includegraphics[scale=0.35]{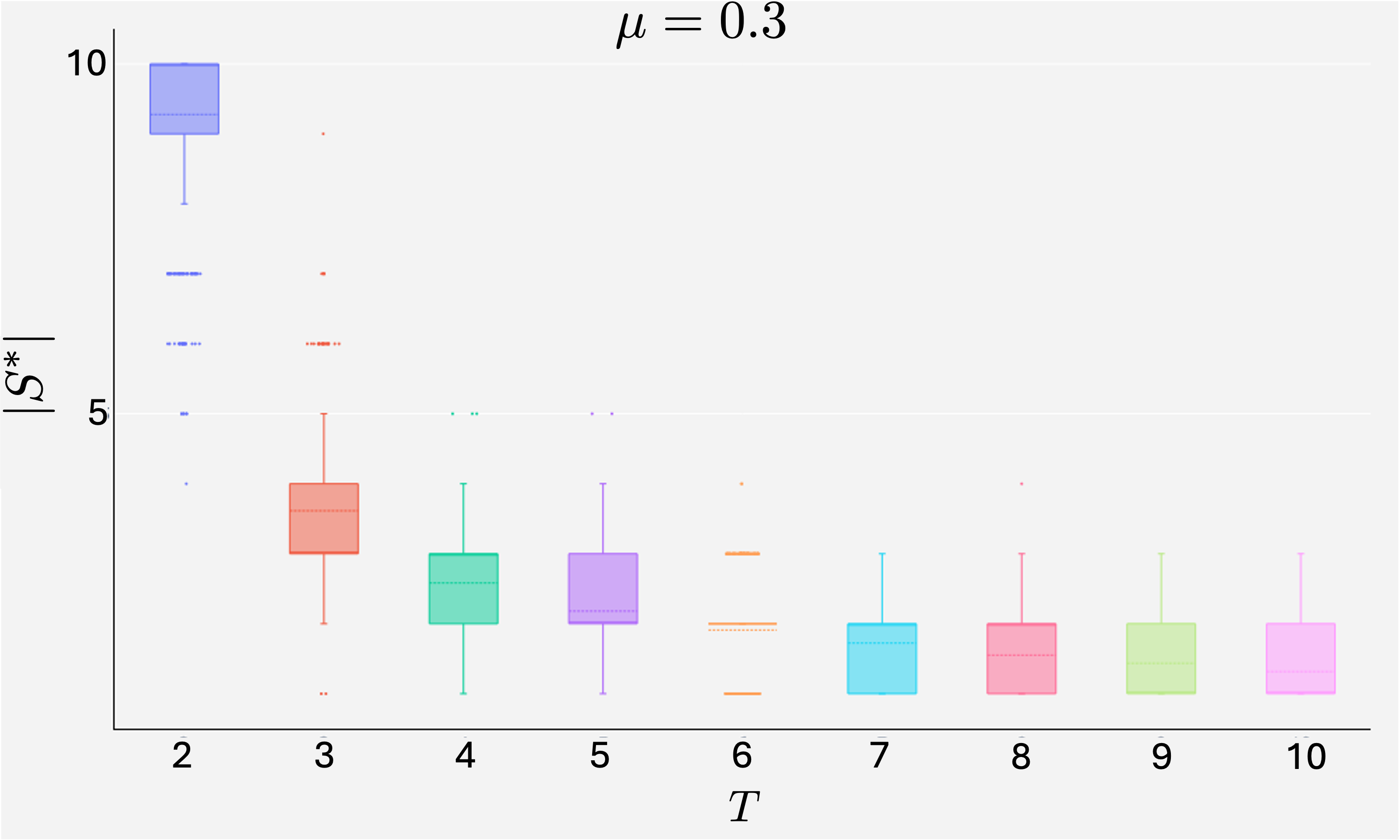}
    \includegraphics[scale=0.35]{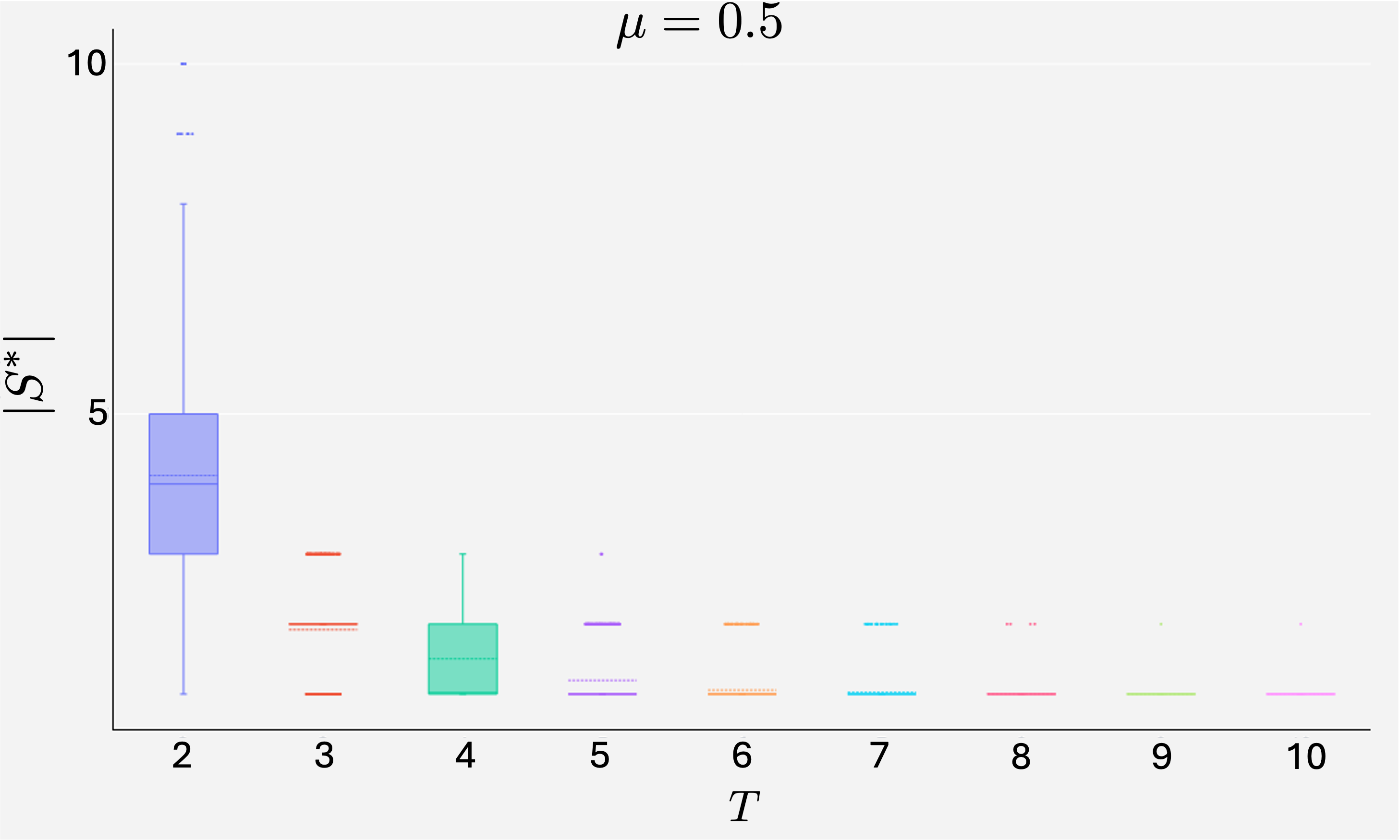}
    \caption{Optimal assortment size behaviour with respect to $\boldsymbol T$.}
    \label{fig:assortmentsizeT}
\end{figure}


We observe that for all values of $\mu$, the optimal assortment size tends to decrease as the number of customers $T$ grows. In particular, when $T$ is large enough, the optimal assortment ends up converging towards a single-product assortment, containing the heaviest product. Conversely, smaller values of $T$ result in larger assortments. \red{In particular, for $\mu = 0.05$ and $T\leq 5$, offering the whole universe of products is consistently optimal over all generated instances; then, the optimal assortment size starts decreasing for larger values of $T$}. The rate of decrease depends on the preference weights, as one can observe in the four plots of Figure \ref{fig:assortmentsizeT}.
To interpret this behavior, recall that the downside to offering a large number of products is the potential dispersal of demand across all offered options. However, when $\mu = 0.05$ and $T \leq 5$, the probability of capturing two or more customers is sufficiently small. Specifically, if there are either $0$ or $1$ captured customers, there is no demand to disperse. Consequently, the downside of offering more products does not play a meaningful role in this case, resulting in larger optimal assortment sizes. Moreover, by comparing the plots themselves, we notice that the settings with higher values for $\mu$ yield optimal assortments with smaller sizes. We further explore this trend in Section \ref{subsec:weighteffect}.

As expected, the phenomenon where the optimal assortment size shrinks as $T$ increases aligns with Lemma \ref{lem:largeT}, which states that offering only the heaviest product becomes optimal with sufficiently many customers. Let us provide an interpretation of this observation. When we offer an assortment $S$, the load vector of the products follows a multinomial distribution (see Section \ref{sec:SMLA}). In particular, these loads are negatively-correlated binomial random variables. By virtue of the concentration of binomial random variables around their mean, as $T$ grows, the loads of the less attractive products in $S$ (i.e., with lower preference weights) become less likely to surpass those of the more attractive products. In other words, the effect of cannibalization of the attractive products by the additional products is accentuated. Consequently, as $T$ grows, removing a greater number of unattractive products becomes optimal. This interpretation is the basic intuition behind the proof of Lemma \ref{lem:largeT}.

\subsection{Effect of the preference weights}\label{subsec:weighteffect}
In what follows, we study how the optimal assortment size behaves with respect to the preference weights. In the current experimental setup, the number of products is once again fixed at $n=10$, and the preference weights are drawn from the positive part of a normal distribution with mean $\mu$ and standard deviation $\mu/2$. The number of customers $T$ is varied in the range $\{2,5,8,10\}$, and the parameter $\mu$ is varied on an additive grid in $[0.1, 1]$ with a step size of $0.1$. For each pair $(T,\mu)$, we sample $1000$ \ref{SMLA} instances and compute the optimal assortment size through exhaustive enumeration. 



\paragraph{Results.} Our results are summarized in Figure \ref{fig:assortmentsizemu}, where the x-axis of each plot represents the values of $\mu$, and for each such value, we create a box plot describing the quartiles of the obtained optimal assortment sizes, similarly to Figure \ref{fig:assortmentsizeT}. We notice that the optimal assortment size generally decreases with respect to the parameter $\mu$. In other words, when the preference weights are larger on average, fewer products are included in an optimal static assortment. To better understand this behaviour, let us consider the two extreme cases, where the preference weights are either close $0$ or very large. In the former case, the probability that a customer purchases a given product is very small, and in particular, the probability that any product is purchased twice or more is very small. Consequently, when preference weights are small, the cannibalization effect is marginal, and adding products to our assortment guarantees a larger captured portion of customers. \red{Consequently, optimal assortments tend to increase in size for small values of preference weights, even reaching the whole universe of products for a number of instances with $\mu = 0.1$ and $T\in \{2,5,8\}$.} When preference weights are large, a considerable portion of customers is captured even with small-sized assortments. In particular, when there exists a product $i$ with $v_i \gg 1$, offering only this product would guarantee a load of nearly $T$ with high probability, due to basic concentration arguments. Therefore, adding products would only cannibalize this product. In other words, the cannibalization effect outweighs the marginal increase in the captured portion of customers. In the general case, adding products to any given assortment induces both a cannibalization effect and a marginal increase in the captured portion of customers. In our experiments, for greater values of the preference weights, the benefit of capturing a larger portion of customers is overshadowed by the loss incurred due to cannibalization. 

\begin{figure}[htbp!]
    \centering
    \includegraphics[scale=0.35]{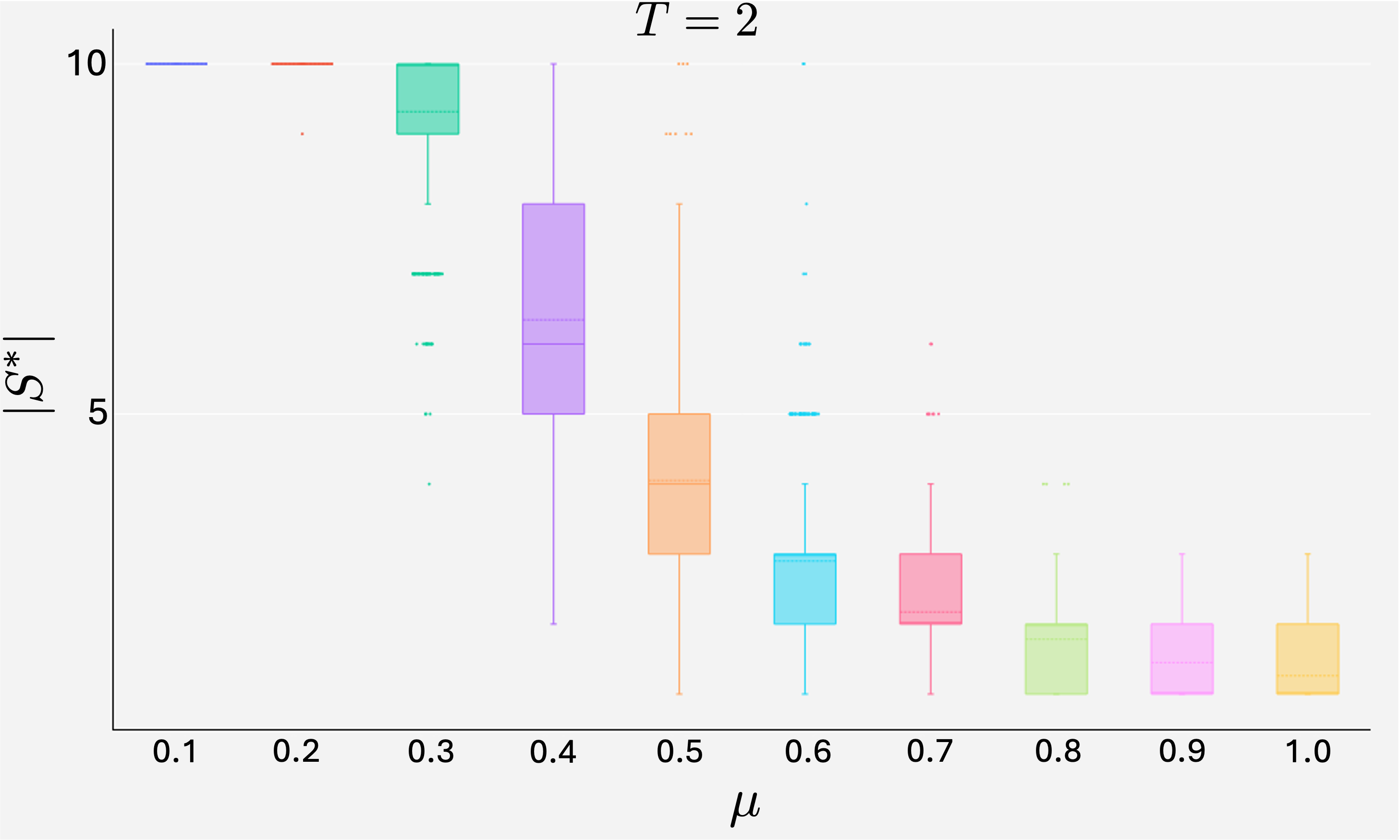}
    \includegraphics[scale=0.35]{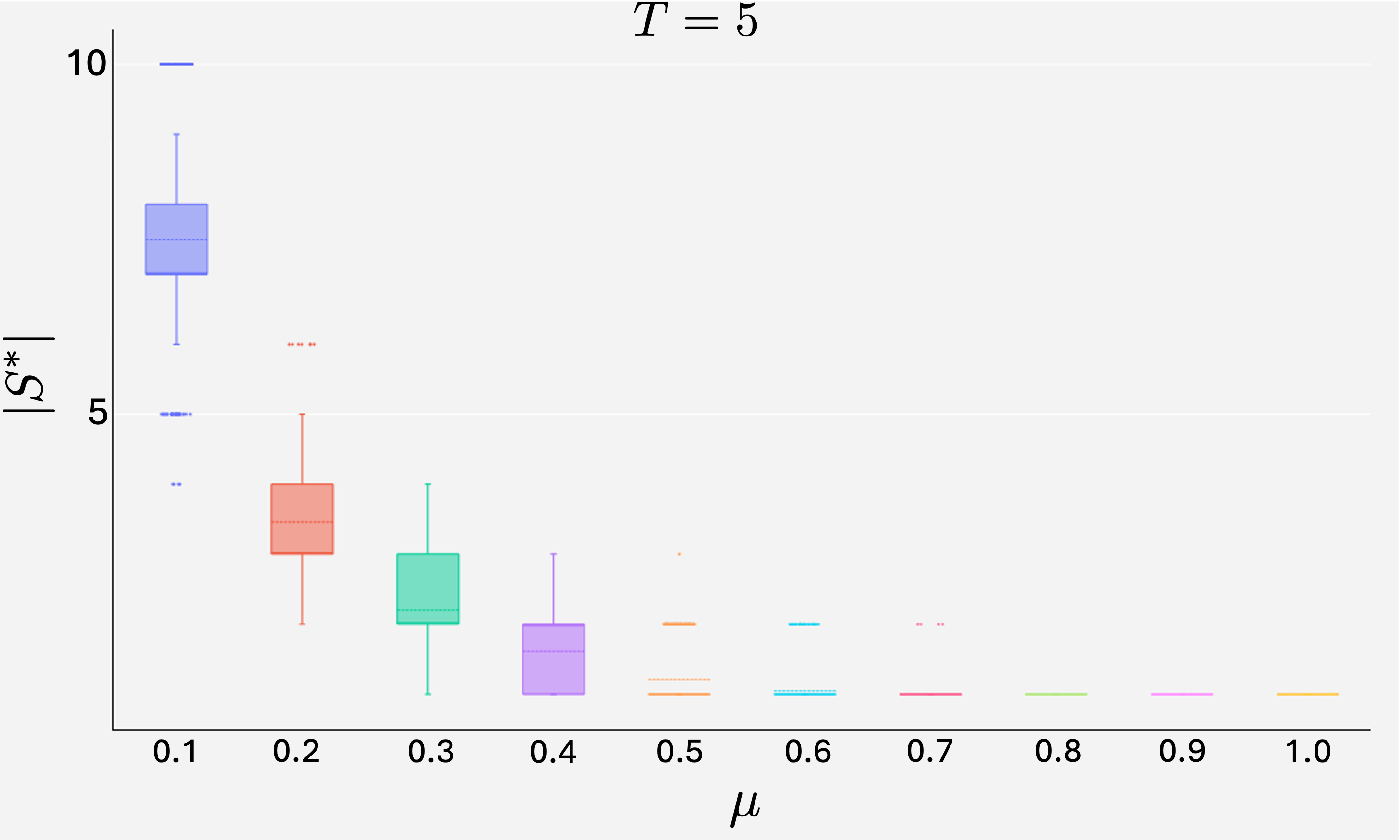}
    \includegraphics[scale=0.35]{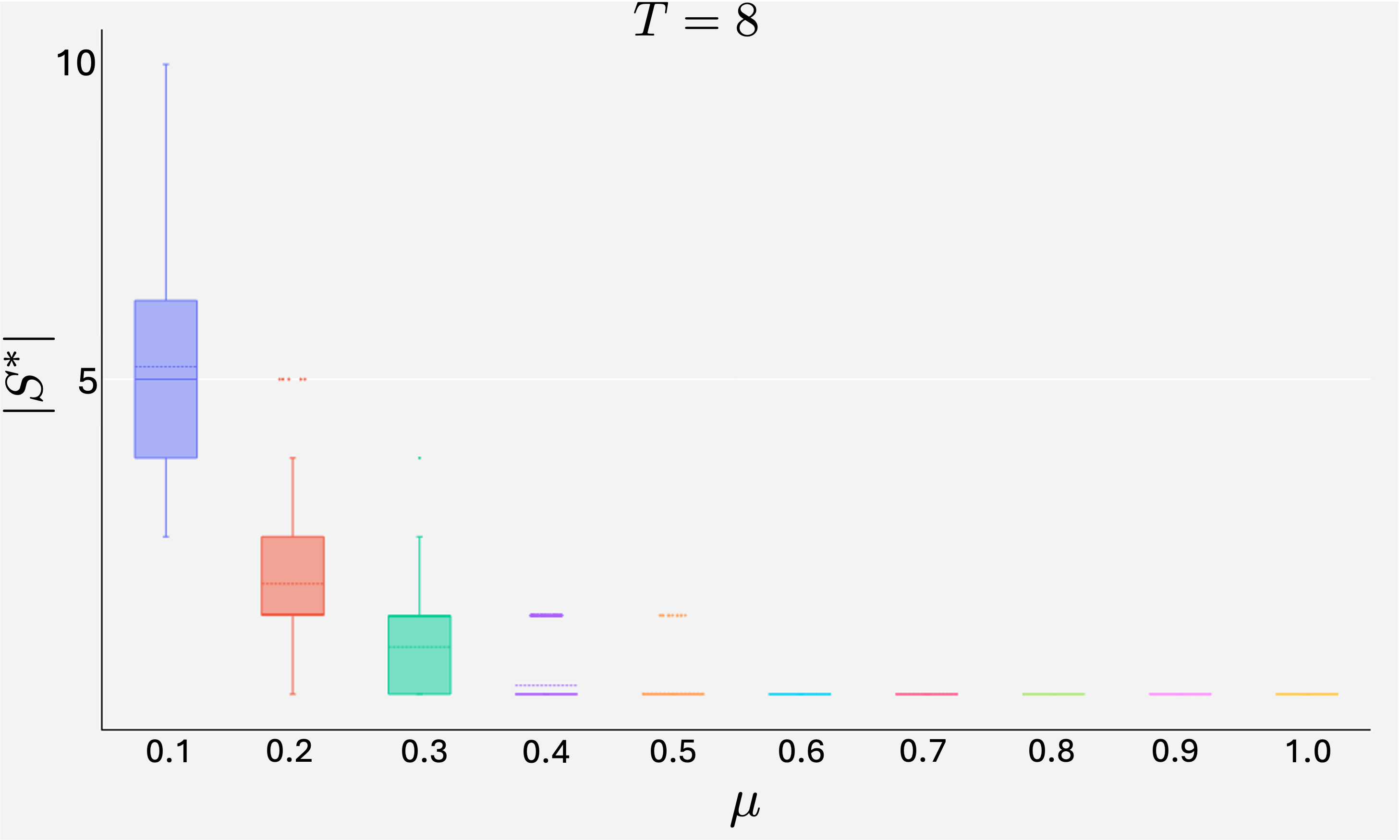}
    \includegraphics[scale=0.35]{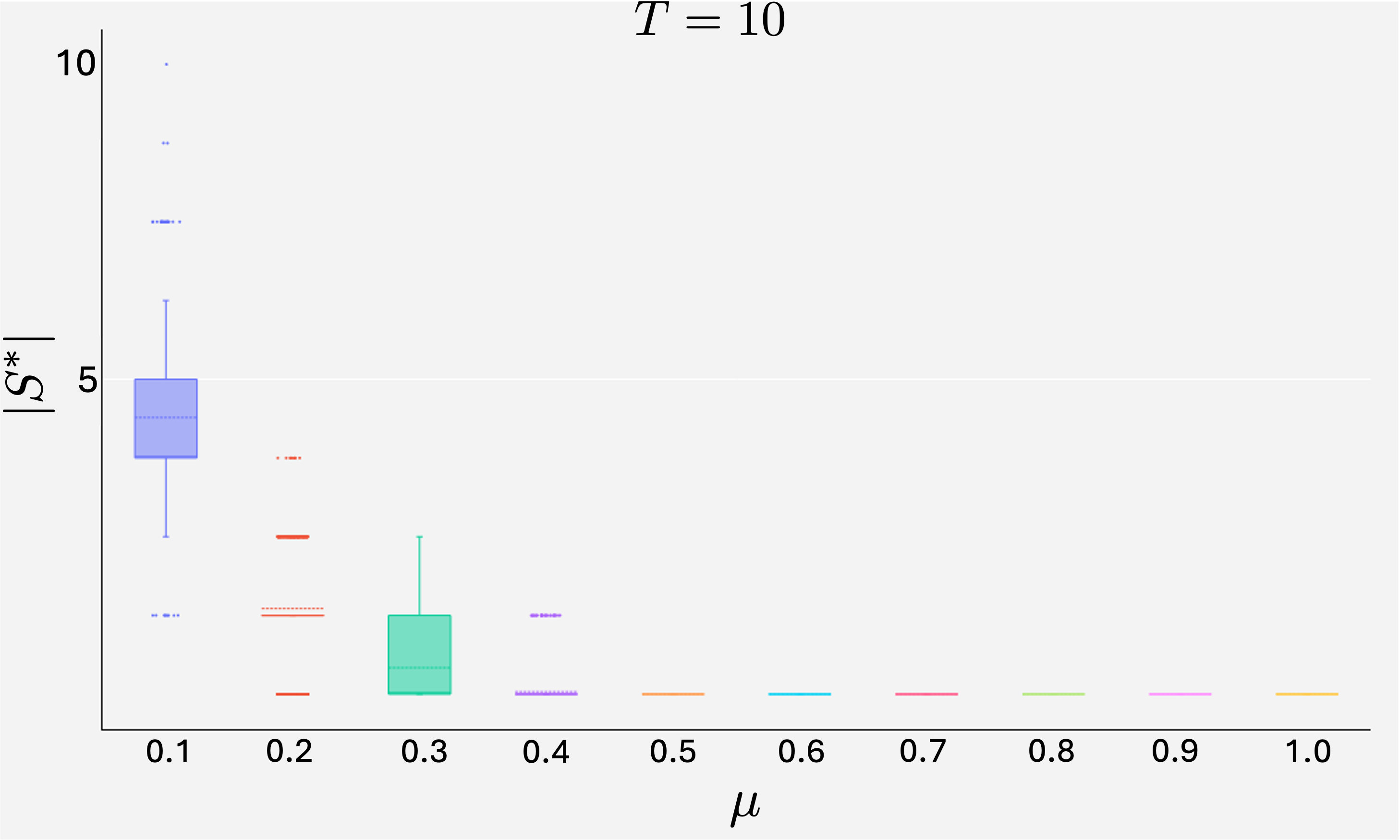}
    \caption{Optimal assortment size behaviour with respect to $\boldsymbol \mu$.}
    \label{fig:assortmentsizemu}
\end{figure}
\section{Concluding Remarks} \label{sec:conclusion}


This comprehensive study elucidates the potential of assortment optimization in manipulating customer choices towards maximum product selection, offering rigorous methods to address contemporary applications such as Attended Home Delivery and Preference-based Group Scheduling.
We believe that our work lays solid foundations for Maximum Load Assortment Optimization, potentially being the onset of further exploration. In what follows, we discuss several intriguing open questions, along with particularly appealing extensions of our modeling approach.

\red{\paragraph{Hardness of the static formulation?} Despite our best efforts, the computational complexity of the \ref{SMLA} problem remains an open question. Specifically, we still do not know whether this setting is NP-hard or whether optimal static assortments can be computed in polynomial time. This question is particularly challenging due to the unique problem structure, appearing to require either innovative optimization techniques or hardness proofs that are very different from what one typically meets in assortment optimization.

\paragraph{Dynamic formulation: Improved bounds and tightness of adaptivity gap?} In the dynamic setting,  devising a polynomial-time ($1-\eps$)-approximate policy poses a great technical challenge due to the inherent high-dimensional nature of this problem. Through new algorithmic techniques, we have been successful at attaining quasi-polynomial running times; however, further progress seem to necessitate yet-uncharted ideas. On a different front, even though we have established a lower bound of $4/3$ on the adaptivity gap of this problem, and an upper bound of $4$, there is still a meaningful room for improved constructions in this context, potentially bridging this gap and identifying the exact adaptivity gap.
}

\paragraph{Practical applications.}
Our work's practical implications bring forth captivating questions. In future research, it would be interesting to conduct data-driven case studies, examining the applicability of maximum load assortment optimization in real-world settings, thereby bridging the gap between theory and practice. Furthermore, exploring new domains and industries beyond Attended Home Delivery and Preference-based Group Scheduling could uncover novel challenges and untapped practical impact.

\paragraph{Extensions.} Along the above-mentioned lines, 
extending our problem formulation to additional families of choice models, such as the Markov Chain model \citep{blanchet2016markov,feldman2017revenue} or the non-parametric ranking-based model \citep{farias2013nonparametric, aouad2018approximability}, is an interesting direction for future research.
\red{While our evaluation oracle (see Section \ref{compute}) could, in theory, be extended to other choice models, most of our analysis relies on specific properties of the MNL model. For instance, the crucial merge and transfer operations depend on the invariance of choice probabilities for uninvolved products. Defining these operations in other models, such as Nested Logit or Markov Chain, is not straightforward. Moreover, in the dynamic setting, even solving a single step of the dynamic program can be NP-hard for models such as Mixture of Multinomial Logits. Extending our results to other choice models would therefore require new approaches, outside the scope of this work.}
Yet another fundamental question is that of exploring a wide array of constraints on the offered assortments, such cardinality, capacity, and matroid constraints. Finally, it would be interesting to investigate an extended formulation, where our goal is to optimize the expected summation of $k$-highest loads rather than solely focusing on the maximum load. At present time, this  objective function appears to be significantly more challenging to deal with.

\addcontentsline{toc}{section}{Bibliography}
\bibliographystyle{plainnat}
\bibliography{biblio}

\appendix

\section{Proofs from Section \ref{sec:SMLA}}\label{apx:static}

\subsection{Proof of Lemma \ref{lem:merge}}\label{apx:merge}

Assume without loss of generality that $S=\{1,\ldots,k\}$, and that we merge products $1$ and $2$ to obtain the assortment $\widetilde S$. Recall that  $\mathbf L(S)=(L_1(S),L_2(S),\ldots,L_k(S))$ is the random variable denoting the load vector when offering the assortment $S$. With the same notation for $\widetilde S$, it is easy to verify that $\mathbf L(\widetilde S)$ is equal in distribution to $(L_1(S)+L_2(S),L_3(S),\ldots,L_k(S))$. 
Clearly,
$$\max(L_1(S)+L_2(S),L_3(S),\ldots,L_k(S))\geq \max(L_1(S),L_2(S),\ldots,L_k(S)),$$
and by taking expectations we get
\begin{eqnarray*}
\E(M(\widetilde S)) & = & \E ( \max(L_1(S)+L_2(S),L_3(S),\ldots,L_k(S)) ) \\
& \geq & \E(\max (L_1(S), L_2(S),\ldots,L_k(S))) \\
& = & \E(M(S)).
\end{eqnarray*}

\subsection{Proof of Lemma \ref{lem:charging}}\label{apx:charging}

Assume without loss of generality that $S=\{1,\ldots,k\}$, and that we perform a $\delta$-weight transfer from product $2$ to product $1$, where $v_2 \leq v_1$ and $0 \leq \delta \leq v_2 $, obtaining the assortment $\widetilde S_\delta$. Let $v=(v_1+v_2)/2$. For any $\omega \in[0,v]$, we define $1_\omega$ and $2_\omega$ as virtual products with respective preference weights $v+\omega$ and $v-\omega$. Let $S_\omega$ be the assortment that results from $S$, after replacing products $1$ and $2$ with the virtual products $1_\omega$ and $2_\omega$, i.e., $S_\omega = \{1_\omega, 2_\omega \}\cup \{3, \ldots,k\}$. In order to prove Lemma \ref{lem:charging}, we establish the following claim in Appendix~\ref{app:proof_claim_apx_mono}.

\begin{claim} \label{claim:apx:mono}
The function $\omega\mapsto \E(M(S_\omega))$ is monotonically non-decreasing across the interval $[0,v]$.
\end{claim}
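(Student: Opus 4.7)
My plan is to reduce the claim to a purely binomial statement, and then to settle that statement via a discrete-derivative argument. The first key observation is that the total preference weight of $S_\omega$ is independent of $\omega$, since $v_{1_\omega}+v_{2_\omega}=2v$. Consequently, the no-purchase probability as well as the MNL choice probabilities of products $3,\ldots,k$ in $S_\omega$ do not depend on $\omega$. Using the natural two-stage coupling of customer choices (first decide whether to choose the no-purchase option, one of the products $3,\ldots,k$, or the ``merged'' alternative $\{1_\omega,2_\omega\}$; then, conditional on the merged alternative being chosen, split between $1_\omega$ and $2_\omega$), the joint distribution of the number $N$ of customers selecting the merged alternative together with the load vector $(L_0,L_3,\ldots,L_k)$ is independent of $\omega$. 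Conditional on $N$, the load $L_{1_\omega}$ follows $\mathrm{Bin}(N,p_\omega)$ with $p_\omega=(v+\omega)/(2v)\in[1/2,1]$, and $L_{2_\omega}=N-L_{1_\omega}$. Setting $m=\max_{i\geq 3}L_i$ and conditioning, it thus suffices to prove that for every $N\in\mathbb{N}$ and every integer $m\geq 0$, the function
\[ p\;\mapsto\;\E\left[\max(X,\,N-X,\,m)\right],\qquad X\sim\mathrm{Bin}(N,p), \]
is non-decreasing on $[1/2,1]$.

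To prove this one-parameter monotonicity, I will invoke the classical identity $\tfrac{d}{dp}\E[f(X)]=N\cdot\E[g(Y)]$ for $Y\sim\mathrm{Bin}(N-1,p)$, where $g(y)=f(y+1)-f(y)$ is the forward difference of $f$. Setting $f(x)=\max(x,N-x,m)$, three properties of $g$ will be crucial: (i)~$g$ takes values in $\{-1,0,1\}$, since $f$ is a pointwise maximum of affine integer-slope functions with slopes in $\{-1,0,1\}$; (ii)~$g$ is non-decreasing in $y$, because $f$ is convex on the integers; and (iii)~$g$ is antisymmetric around $(N-1)/2$, namely $g(N-1-y)=-g(y)$, which follows directly from the $x\mapsto N-x$ symmetry of $f$.

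Equipped with these three properties, the proof concludes as follows. At $p=1/2$, the variable $Y$ is symmetric around $(N-1)/2$, so combined with the antisymmetry of $g$ it yields $\E[g(Y)]=0$. For $p>1/2$, the variable $Y\sim\mathrm{Bin}(N-1,p)$ stochastically dominates its $p=1/2$ counterpart, and since $g$ is non-decreasing, $\E[g(Y)]\geq 0$. This shows that the derivative of $p\mapsto \E[\max(X,N-X,m)]$ is non-negative on $[1/2,1]$, which by the conditioning argument above yields the monotonicity of $\omega\mapsto \E(M(S_\omega))$ on $[0,v]$.

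The main technical obstacle, I expect, lies in verifying properties (i)--(iii) of $g$ through a careful case analysis based on which of the three terms $x$, $N-x$, and $m$ attains the maximum; everything else is either bookkeeping (the two-stage conditioning step) or a standard symmetry/stochastic-dominance argument. A secondary subtlety worth flagging is that the identity $\tfrac{d}{dp}\E[f(X)]=N\cdot\E[g(Y)]$ should be re-derived explicitly from $\E[f(X)]=\sum_{j=0}^N f(j)\binom{N}{j}p^j(1-p)^{N-j}$ using the $\binom{N}{k}k=N\binom{N-1}{k-1}$ identity, so that the reduction from $N$ to $N-1$ trials is transparent.
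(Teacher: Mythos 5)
Your proposal is correct, and it takes a genuinely different route from the paper's. Both arguments ultimately differentiate an expectation in the transfer parameter, but they diverge in where the differentiation happens and how non-negativity of the derivative is certified. You first collapse the problem to one dimension: since the total weight of $S_\omega$ does not depend on $\omega$, the multinomial factors into an $\omega$-independent outer layer (which customers pick the merged alternative, which pick $3,\ldots,k$, which leave) and an inner $\mathrm{Bin}(N,p_\omega)$ split, reducing everything to the monotonicity of $p\mapsto\E[\max(X,N-X,m)]$ on $[1/2,1]$; you then settle this via the forward-difference derivative identity, the convexity and $x\mapsto N-x$ symmetry of $f(x)=\max(x,N-x,m)$, and stochastic dominance of $\mathrm{Bin}(N-1,p)$ over $\mathrm{Bin}(N-1,1/2)$. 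The paper instead differentiates the full multinomial expression directly in the normalized variable $q$, identifies the derivative as $T\cdot(Q_1(q)-Q_2(q))$ where $Q_1,Q_2$ are expected maximum loads conditional on the last customer choosing $1_\omega$ or $2_\omega$, and finishes by comparing the probabilities that $1_\omega$ versus $2_\omega$ attains the maximum load after $T-1$ customers via a coupling. Your reduction buys a completely elementary and self-contained endgame (exact symmetry at $p=1/2$ plus monotone likelihood shift), whereas the paper's closing coupling step is left somewhat informal; the price is the extra bookkeeping of the two-stage conditioning, which is standard for multinomials. The three properties of $g$ that you flag as the main technical obstacle are in fact immediate ($f$ is a pointwise maximum of affine functions with integer slopes in $\{-1,0,1\}$, hence convex with bounded increments, and is invariant under $x\mapsto N-x$), and note that property (i) is not even needed for the argument --- only (ii) and (iii) enter.
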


Let us show how this claim implies the result stated in Lemma~\ref{lem:charging}. Let $\omega_1=(v_1-v_2)/2$ and $\omega_2= \omega_1+\delta$, noting that $0 \leq \omega_1 \leq \omega_2 \leq v$. Hence, Claim~\ref{claim:apx:mono} implies that  $\E(M( S_{\omega_2}))\geq \E(M(S_{\omega_1}))$. However, $v+\omega_1= v_1$ and $v-\omega_1=v_2$, which means that $S_{\omega_1} = S$. On the other hand, $v+\omega_2= v_1+\delta$ and $v-\omega_2=v_2-\delta$, which means that $S_{\omega_2} =  \widetilde S_\delta$. Therefore, $\E(M(\widetilde S_\delta))\geq \E(M(S))$.

\subsection{Proof of Claim~\ref{claim:apx:mono}} \label{app:proof_claim_apx_mono}

To facilitate our analysis, let us define $V=\sum_{i\in S}v_i$, which represents the total preference weights of all products in the assortment $S$. It is important to note that, for any value of $\omega$, the total of preference weights of all products in $S_\omega$ is equal to $V$ as well. Additionally, we define $p = v/(1+V)$ and $p_i=v_i/(1+V)$ for $i\in S$. 

Instead of working directly with the variable $\omega$, we perform the following change of variables, ${q=\omega/(1+V)}$. As such, by defining the function $f(q) = \E(M(S_{(1+V)\cdot q}))$, it suffices to prove that $f$ is monotonically non-decreasing across  the interval $[0,p]$. To this end, for any $\omega \in [0,v]$, when we offer assortment $S_{\omega}$, the MNL choice probability of product $1_{\omega}$ is given by $\frac{v+\omega}{1+V}=p+q$. Similarly, the MNL choice probability of product $2_{\omega}$ is given by $\frac{v-\omega}{1+V}=p-q$. For any other product $i \in \{3, \ldots, k \}$, its choice probability is $\frac{v_i}{1+V}=p_i$. Therefore, according to the closed-form expression of the maximum load in Equation~\eqref{eq:multinomial}, 
\[ f(q)=\sum_{\mathbf{x}\in\Delta_T}h(\mathbf{x},T)  \cdot (p+q)^{x_1}\cdot(p-q)^{x_2}\cdot \left(\prod_{i=3}^k p_i^{x_i}\right)\cdot p_0^{T-\sum_{i=1}^kx_i}\cdot   \max_{i=1,\ldots,k} x_i, \]
where  $h(\mathbf x,T)$ refers to multinomial coefficient and $\Delta_T$ is the support set of $\mathbf x$, i.e.,
		$$
		h(\mathbf x,T)\coloneqq \binom{T}{ x_1,\ldots,x_k,T-\sum_{i=1}^k x_i} \qquad \text{ and } \qquad 
		\Delta_T\coloneqq\left\{{{\mathbf x}\in\N^k\,\bigg\vert\,\sum_{i=1}^k x_i\leq T }\right\} .
		$$
		Since $f$ is a polynomial function of $q$, it is differentiable with respect to $q$. Therefore, by differentiating, we obtain $\frac{d}{dq}f(q)=T \cdot (Q_1(q)-  Q_2(q))$, where
  \begin{eqnarray*}				Q_1(q)&=&\sum_{\mathbf{x}\in\Delta_T,x_1 \geq 1 }h(\mathbf{x}-\mathbf{e}_1,T-1)\cdot (p+q)^{x_1-1} \cdot (p-q)^{x_2}\cdot \left(\prod_{i=3 }^k p_i^{x_i}\right)    \cdot p_0^{T-\sum_{i=1}^kx_i}\cdot\max\limits_{i=1,\ldots,k} x_i, \\  Q_2(q)&=&\sum_{\mathbf{x}\in\Delta_T,x_2 \geq 1}  h(\mathbf{x}-\mathbf{e}_2,T-1) \cdot (p+q)^{x_1} \cdot (p-q)^{x_2-1} \cdot \left(\prod_{i=3 }^k p_i^{x_i}\right) \cdot p_0^{T-\sum_{i=1}^kx_i}\cdot\max\limits_{i=1,\ldots,k} x_i.
		\end{eqnarray*}
   By examining $Q_1(q)$, we observe that it corresponds to the expected maximum load when we offer the assortment $S_{\omega}$, conditional on customer $T$ selecting product $1_{\omega} \in S_{\omega}$. Similarly, $Q_2(q)$ corresponds to the expected maximum load when we offer the assortment  $S_{\omega}$, conditional on  customer $T$ selecting  product $2_{\omega}\in S_{\omega}$. In other words,
$$
		Q_1(q)= \E\left(M( S_{\omega})\,\vert \,X_{1_{\omega},T}(S_{\omega})=1\right) \qquad \text{and} \qquad
		Q_2(q)=\E\left(M(S_{\omega})\,\vert \,X_{2_{\omega},T}(S_{\omega})=1\right),
$$		
where $\{X_{1_{\omega}T}(S_{\omega})=1\}$ is the event in which customer $T$ selects product $1_{\omega}$, and similarly $ \{ X_{2_{\omega}T}(S_{\omega})=1\}$ corresponds to customer $T$ selecting product $2_{\omega}$. 
  
In order to prove that $f$ is monotonically non-decreasing, it suffices to show that $Q_1(q)\geq Q_2(q)$. Let us define $Q(q)$ as the expected maximum load  when we offer the assortment $S_{\omega}$, conditional on customer $T$ selecting the no-purchase option, i.e.,
$$ Q(q)=   \E\left(M( S_{\omega})\,\vert \,X_{0,T}(S_{\omega})=1\right) . $$
It is sufficient to show that $Q_1(q)-Q(q)\geq Q_2(q)-Q(q)$. By examining the difference $\{ M( S_{\omega})\,\vert \,X_{1_{\omega},T}(S_{\omega})=1 \}-  \{ M( S_{\omega})\,\vert \,X_{0,T}(S_{\omega})=1 \}$ in the same probability space, we observe that this difference is $1$ if product $1_{\omega}$ has the highest load after $T-1$ customers; otherwise, the difference is $0$.  Therefore, $Q_1(q)-Q(q)$ is exactly the probability that product $1_{\omega}$ has the highest load  given $T-1$ customers. Similarly, $Q_2(q)-Q(q)$ is exactly the probability that product $2_{\omega}$ has the highest load  given $T-1$ customers. However, the choice probability of product $1_{\omega}$ is $p+q$, which is greater than the choice probability of product $2_{\omega}$, given by $p-q$. Hence, a straightforward coupling argument on the customers $1, \ldots, T-1$ implies that $Q_1(q)-Q(q)\geq Q_2(q)-Q(q)$.

\subsection{Proof of Lemma \ref{lem:probs}}\label{apx:probs}
    
Let us start by defining an intermediate Multinomial vector $\mathbf Z$ with parameters $(T,p_0^Z,\ldots,p_m^Z)$ where $p_i^Z = \min(p_i^Y,p_i^W)$ for all $i\in\{1,\ldots,m\}$, and $p_0^Z = 1-\sum_{i=1}^m p_i^Z$. By this definition, $p^W_i\geq p^Z_i$ for all $i\in\{1,\ldots,m\}$, and we can therefore easily couple $\mathbf W$ and $\mathbf Z$ such that $W_i\geq Z_i$ for all $i\in \{1,\ldots,m\}$, which implies that
\begin{equation}\label{eq:couple1}
\E\left(\max_{i=1,\ldots,m}W_i\right) \geq \E\left(\max_{i=1,\ldots,m}Z_i\right).
\end{equation}

Next, we introduce a coupling between $\mathbf Y$ and $\mathbf Z$. For every $i\in\{1,\ldots,m\}$ and for every $t\in[T]$, let $B_{i,t}$ be a Bernoulli random variable, with success probability $p^Z_i/p^Y_i$. These Bernoulli random variables are independent. Given the random variable $\mathbf Y$, we construct a new random vector $\mathbf{\widetilde Z}=(\widetilde Z_0,\ldots,\widetilde Z_m)$ as follows. If \blue{the outcome of a trial $t\in [T]$ is $0$} for $\mathbf{Y}$, then \blue{its outcome is also $0$} for $\mathbf{\widetilde Z}$. Otherwise, if the \blue{outcome of the trial is} some $i\in\{1,\ldots,m\}$ for $\mathbf Y$, we distinguish between two cases: When $B_{i,t} = 1$, the \blue{the outcome of the trial is} $i$ for $\mathbf{\widetilde Z}$; when $B_{i,t} = 0$, \blue{the outcome of this trial is} $0$. The first key idea to notice is that $\mathbf{\widetilde Z}$ is equal in distribution to $\mathbf{Z}$. Indeed, \blue{the outcome of a trial $t$ is $0$} for $\mathbf{\widetilde Z}$ if one of the following happens: (i)~\blue{Its outcome is} $0$ for $\mathbf Y$; or (ii)~\blue{Its outcome is some} option $i\in\{1,\ldots,m\}$ for $\mathbf Y$ and $B_{i,t}=0$. The probability of one of the two events happening is given by $p^Y_0+\sum_{i=1}^m p^Y_i\cdot (1-p_i^Z/p_i^Y)=p_0^Z.$ Similarly, \blue{the outcome of a trial $t$ is} some option $i\in\{1,\ldots,m\}$ in $\mathbf{\widetilde Z}$ if both of the following happen: (i) \blue{The outcome of trial $t$ is $i$} for $\mathbf{Y}$; and (ii) $B_{i,t}=1$. The probability of both these events happening is given by $p^Y_i\cdot p_i^Z/p_i^Y = p_i^Z$. 
        
Now, letting $\ayy$ be the random index of the highest-load option among $Y_1,Y_2,\ldots,Y_m$, i.e., $\ayy = \argmax_{i=1,\ldots,m}Y_i$, breaking ties by taking the smallest index, we have
\[ \E\left( \left. \max_{i=1,\ldots,m} \widetilde Z_i\,\right|\, \mathbf Y\right) \geq \E\left( \left. \widetilde Z_{\ayy}\,\right|\, \mathbf Y\right)            =\frac{p^Z_{\ayy}}{p^Y_{\ayy}} \cdot Y_{\ayy}. \]
Here, the latter equality follows from the construction of $\mathbf{\widetilde Z}$, since
\blue{if some trial's outcome is $I$ for $\mathbf{Y}$, then its outcome is $I$ for $\widetilde{ \mathbf{Z}}$ with probability ${p^Z_{\ayy}/p^Y_{\ayy}}$}. By the lemma's assumption,  ${p^Z_{\ayy}/p^Y_{\ayy}}\geq 1-\eps$, and therefore
\begin{equation*}
\E\left( \left. \max_{i=1,\ldots,m} \widetilde Z_i\,\right|\, \mathbf Y\right)\geq (1-\eps) \cdot Y_{\ayy} = (1-\eps)\cdot \E\left( \left. \max_{i=1,\ldots,m} Y_i\,\right|\, \mathbf Y\right).
\end{equation*}
Now by taking expectations in the previous inequality with respect to $\mathbf Y$ and applying the law of total expectation, we get 
        \begin{equation*}
            \E\left(\max_{i=1,\ldots,m} \widetilde Z_i\right)\geq (1-\eps)\cdot \E\left(\max_{i=1,\ldots,m} Y_i\right).
        \end{equation*}
        Since $\mathbf{\widetilde Z}$ is equal in distribution to $\mathbf Z$,                 \begin{equation}\label{eq:couple2}
            \E\left(\max_{i=1,\ldots,m} Z_i\right)\geq (1-\eps)\cdot \E\left(\max_{i=1,\ldots,m} Y_i\right).
        \end{equation}
        Finally, combining Equations \eqref{eq:couple1} and \eqref{eq:couple2}  gives the desired result, i.e., \begin{equation*}
            \E\left(\max_{i=1,\ldots,m}W_i\right)\geq (1-\eps)\cdot \E\left(\max_{i=1,\ldots,m} Y_i\right).
        \end{equation*}

\subsection{Proof of Lemma \ref{lem:weights}}\label{apx:weights}

To establish the desired claim, we will apply Lemma~\ref{lem:probs}. Using the notation of the latter claim, let ${\bf Y}=(Y_0,Y_1,\ldots,Y_m)$ and ${\bf W}=(W_0,W_1,\ldots,W_m)$ be Multinomial vectors, with parameters $(T,p^Y_0,\ldots,p^Y_m)$ and  $(T,p^W_0,\ldots,p^W_m)$, respectively, where
 for all $i\in\{1,\ldots,m\}$:  $$p_i^Y= \frac{ v^+_i }{ 1+\sum_{j=1}^m v^+_j} \qquad \text{and} \qquad p_i^W= \frac{ v^-_i }{ 1+\sum_{j=1}^m v^-_j }.$$ 
We also define ${p_0^Y = 1-\sum_{j=1}^mp^Y_j}$ and ${p_0^W = 1-\sum_{j=1}^mp^W_j}$. Therefore,
$\E(M(S^-))= \E(\max_{i=1,\ldots,m}W_i)$ and $ \E(\max_{i=1,\ldots,m} Y_i)= \E(M(S^+)\blue{)}$.
Moreover, for all $i\in\{1,\ldots,m\}$, we have

$$
             p_i^W = \frac{v^-_i}{1+\sum_{j=1}^m v^-_j}\\
                  \geq  \frac{v^-_i}{1+\sum_{j=1}^m v^+_j}\\
                  \geq (1-\eps) \cdot \frac{v^+_i}{1+\sum_{j=1}^m v^+_j} \\
                  =  (1-\eps) \cdot p_i^Y,
$$
where the first inequality is a consequence of the condition $v^-_i\leq v^+_i$, and the second inequality holds since $v^-_i\geq (1-\eps)\cdot v^+_i$. Therefore, applying Lemma~\ref{lem:probs} yields the desired result.

\subsection{Proof of Lemma \ref{lem:virtual}}\label{apx:virtual}

In what follows, we define a virtual assortment as a couple $(S,k)$, where $S\subseteq \Nc$  and $k$ is a virtual product with weight $v_k \leq \min_{i \in S} v_i$. In addition, we define the \fil operation as one that takes as input a virtual assortment $(S,k)$ and applies the following steps. First, if $S$ is preference-weight-ordered, then \fil simply returns $(S,k)$. Otherwise, let $h$ be the heaviest  product in $\Nc \setminus S$, i.e., $h=\argmax_{i \in \Nc\setminus S} v_i$, where $\argmax$ breaks ties by selecting the product with lowest index. In addition, let ${\cal T} = \{i \in S : i > h \}$, to which we refer as the collection of {\em tail products}. Since $S$ is not preference-weight-ordered, $ {\cal T} \neq \emptyset$. Note that $\{1,\ldots, h-1\}$ is the largest preference-weight-ordered assortment included in $S$ and that $S=\{1,\ldots, h-1\} \cup {\cal T}$. The \fil  operation proceeds by considering two cases.

\paragraph{Case 1: ${v_k+\sum_{i\in {\cal T}}v_i \leq v_h}$.} Here, the total weight of the tail products plus the virtual product $k$ is at most the weight of product $h$. In this case, we remove the products in ${\cal T}$ from $S$ and let  $\widetilde S$ be the resulting assortment, i.e., $\widetilde S = S\setminus \{i \in S : i >h \}$. Then, we merge the tail products along with the virtual product $k$ into a single virtual product, denoted by   $\widetilde k$, whose weight is given by $v_{\widetilde k} = v_k+\sum_{i\in {\cal T}}v_i$. \fil returns the virtual assortment $(\widetilde S,\widetilde k)$.
Clearly, the assortment $\widetilde S$ is preference-weight-ordered, since all tail products were removed. In addition, by the case hypothesis, ${v_{\widetilde k}\leq v_h \leq \min_{i\in \widetilde S}v_i}$.

\paragraph{Case 2: ${v_k+\sum_{i\in {\cal T}}v_i > v_h}$.} In this case, we will use a subset of $\cal T$ and the virtual product $k$ to create a replica of the missing product $h$. Formally, suppose that ${\cal T}=\{p_1,\ldots,p_m\}$, where  without loss of generality $v_{p_1}\leq \cdots \leq v_{p_m}$. Recall that $v_k$ is upper-bounded by the weight of every product in $S$, meaning in particular that the latter is upper-bounded by the weight of every tail product, and in turn that $v_k \leq v_h$. On the other hand, we have $v_k+\sum_{i\in \cal T}v_i > v_h$ by the case hypothesis.  Let $j$ be the unique index for which ${v_k+\sum_{i=1}^{j-1}v_{p_i} \leq v_h < v_k+\sum_{i=1}^{j}v_{p_i}}$.  The \fil operation starts by merging the products $p_1,\ldots,p_{j-1}$ and $k$, creating a virtual product $\widehat k$ with weight $v_{\widehat k} = v_k+ \sum_{i=1}^{j-1}v_{p_i}$. We proceed by considering two cases:
\begin{itemize}
    \item {\em When $v_{\widehat k}>v_{p_j}$:} We perform a $\delta$-weight transfer from product $p_j$ to the virtual product $\widehat k$, with $\delta = v_h-v_{\widehat k}$. This transfer is well defined since $v_{p_j} \geq  \delta \geq 0$, by definition of $j$. We have therefore created a replica of product $h$, as well as a virtual product $\widetilde k$ with weight $v_{\widetilde k} = v_{p_j}-\delta$. Finally, the \fil operation returns the virtual assortment $(\widetilde S, \widetilde k)$, where $\widetilde S = (S\cup \{h\})\setminus \{p_1,\ldots, p_j\}$. It is important to note that the virtual product $\widetilde k$ satisfies $v_{\widetilde k} \leq v_{p_j}\leq \min_{i\in \widetilde S}v_i$, and therefore $(\widetilde S, \widetilde k)$ is a virtual assortment.
    
    \item {\em When $v_{\widehat k} \leq v_{p_j}$:} We perform a $\delta$-weight transfer from the virtual product $\widehat k$ to product $p_j$ with $\delta = v_h-v_{p_j}$. This transfer is well defined since $  v_{\widehat k} \geq  \delta \geq 0$, where the first inequality follows from the definition of $j$ and the second inequality holds since $p_j$ is a tail product, and thus, lighter than $h$. We have therefore created a replica of product $h$, as well as a virtual product $\widetilde k$ with preference weight $v_{\widetilde k} = v_{\widehat k} -\delta$. The \fil operation returns the virtual assortment $(\widetilde S, \widetilde k)$, where $\widetilde S = (S\cup \{h\})\setminus \{p_1,\ldots,p_j\}$. Again, the virtual product $\widetilde k$ satisfies $v_{\widetilde k}\leq v_{\widehat k}\leq v_{p_j} \leq \min_{i\in \widetilde S}v_i$, meaning that $(\widetilde S, \widetilde k)$ is a virtual assortment.
\end{itemize}

Given these definitions, with respect to any assortment $S \subseteq \Nc$, we apply the \fil operation to the virtual assortment $(S,k)$, where initially $v_k=0$. If $1\notin S$, then the condition $v(S)\geq v_1$ guarantees that product $1$ is included in the resulting assortment. Otherwise, this product is already in the resulting assortment. We then repeat this operation until it returns a virtual assortment $(\widetilde{S}, \widetilde{k})$, where $\widetilde{S}$ is preference-weight-ordered. Such an assortment will eventually be obtained since, at each step, if $\widetilde{S}$ is not preference-weight-ordered, the \fil operation increases the size of the largest preference-weight-ordered assortment included in $\widetilde{S}$ by at least one product, as discussed in Case 2. 
Finally, since the \fil operation is a composition of a sequence of Merge and Transfer operations, as stated in Lemmas~\ref{lem:merge} and~\ref{lem:charging}, we know that the expected maximum load of the resulting assortment $\widetilde{S} \cup \{\widetilde{k}\}$ is lower-bounded by that of the initial assortment $S$.
	
\subsection{Proof of Lemma \ref{lem:drop}}\label{apx:drop}

Since $v_k\leq \min_{i\in S}v_i$, we can perform a weight transfer from product $k$ to any product in $S$ without decreasing the objective function. In the proof of this lemma, we start by successively performing a $\delta$-weight transfer from the virtual product $k$ to each of the products in $S$, with $\delta = v_k/|S|$. Eventually, the weight of product $k$ becomes $0$, whereas the weight of any product in $S$ increases by $\delta$. We therefore obtain an assortment $ S^+ = \{ i^+\,|\, i\in S\}$, where each product $i^+$ has weight $v_{i^+} = v_i+ v_k/|S|$. Moreover, since these weight transfers cannot decrease the expected maximum load to Lemma \ref{lem:charging}, we have
\begin{equation}\label{eq:firsteq}
\E\left(M\left(S^+\right)\right)\geq\E\left(M\left(S\cup\{k\}\right)\right).
\end{equation}
We proceed to show that the objective values of $\widetilde S$ and $S$ are within $|S|/(|S|+1)$ from each other, using Lemma \ref{lem:weights}. Indeed, for all $i\in S$, we have $v_{i^+} = v_i+\delta \geq v_i$. Moreover, we have $$
v_{i^+} = v_i+\frac{v_k}{|S|} \leq v_i+\frac{v_i}{|S|} = \frac{|S|+1}{|S|}\cdot v_i,$$
where the inequality above holds since $v_k\leq \min_{j\in S}v_j$. As a result, $v_i\geq \frac{|S|}{|S|+1}\cdot v_{i^+}$, and according to Lemma~\ref{lem:weights}, we have
\[ \E\left(M\left(S\right)\right)\geq \frac{|S|}{|S|+1}\cdot \E\left(M\left(S^+\right)\right) \geq \frac{|S|}{|S|+1}\cdot \E\left(M\left(S\cup \{k\}\right)\right), \]
where the last inequality follows from~\eqref{eq:firsteq}.

\subsection{Proof of Lemma \ref{lem:thmstep1}}\label{apx:thmstep1}

The proof of this lemma is similar in spirit to that of Lemma \ref{lem:virtual}. We remind the reader that, as defined in Section \ref{subsec:PTAS}, the $\eps$-hole of an assortment $S$ is given by $h_{\eps}(S) = \min\{j : j \geq \ost_{1/\eps}(S) \text{  and  } j\notin S\}$, where $I_{1/\eps}(S)$ is the $1/\eps$\blue{-th} heaviest product in $S$. Also, a virtual assortment was defined in Appendix \ref{apx:virtual} as a pair $(S,k)$, where $S\subseteq \Nc$  and $k$ is a virtual product  whose weight satisfies $v_k\leq \min_{i\in S} v_i$.
In addition, we define the \fiil operation as one that takes as input a virtual assortment  $(S,k)$ with  $|S|\geq 1/ \eps$, and returns a pair $(\widetilde S, \widetilde k)$ where $\widetilde S\subseteq \Nc$ and $\widetilde k$ is a virtual product. We proceed to explain how the latter operation is performed. 

Consider a virtual assortment $(S,k)$ with $|S|\geq 1/ \eps$.
Let $h$ be the $\eps$-hole of $S$, and let $S_0$ be the subset of $S$, consisting of all products whose weights are smaller than $\eps\cdot v_h$, i.e., $S_0 = \{i\in S\,\mid\, v_i < \eps\cdot v_h\}$. When $S$ is $\eps$-restricted, \fiil simply outputs $(S,k)$. Otherwise, we consider the next two cases:

\paragraph{Case 1: $S$ is not $\eps$-restricted and $v_k+\sum_{i\in S_0}v_i<v_{h}$.} In this case, we merge product $k$ and all products in $S_0$, creating a new virtual product $\widetilde k$ whose weight is $v_{\widetilde k} = v_k+\sum_{i\in S_0}v_i$. The \fiil operation then outputs the pair $(\widetilde S,\widetilde k)$, where $\widetilde S = S\setminus S_0$.
This pair satisfies two important properties: First, since the $\eps$-hole of $S$ is identical to that of $\widetilde S$ and we removed $S_0$, then $\widetilde S$ is $\eps$-restricted. Second, since  $v_k+\sum_{i\in S_0}v_i<v_{h}$,  product $\widetilde k$ is lighter than the $\eps$-hole of $\widetilde S$, i.e., $v_{\widetilde k}\leq v_{h}=v_{h_\eps(\widetilde S)}.$

\paragraph{Case 2: $S$ is not $\eps$-restricted and ${v_k+\sum_{i\in S_0}v_i\geq v_{h}}$.} Since our input $(S,k)$ is a virtual assortment, we have $v_k\leq \min_{i\in S}v_i$. Also, since $S$ is not $\eps$-restricted, $S_0\neq \emptyset$. Therefore, $\min_{i\in S}v_i < \eps\cdot v_h\leq v_h$, implying that $v_k\leq v_h$. In what follows, we employ the Merge and Transfer operations to create a replica of the $\eps$-hole of $S$. Suppose that $S_0=\{p_1,\ldots,p_m\}$, with $v_{p_1}\leq\cdots\leq v_{p_m}$, and let $j$ be the unique index for which ${v_k+\sum_{i=1}^{j-1}v_{p_i}\leq v_h< v_k+\sum_{i=1}^{j}v_{p_i}}$, which is well defined since $v_k\leq v_h$ and $v_k+\sum_{i\in S_0}v_i\geq v_h$. Then, the \fiil operation starts by merging products $1,\ldots,j-1$ and $k$, thereby creating a virtual product $\widehat k$ with weight $v_{\widehat k} = v_k+\sum_{i=1}^{j-1}v_{p_i}$. We then have two possibilities:
\begin{enumerate}
    \item {\em When $v_{\widehat k}>v_{p_j}$}: We perform a $\delta$-weight transfer from product $p_j$ to the virtual product $\widehat k$, with $\delta = v_h-v_{\widehat k}$. Since $v_h\geq v_k+\sum_{i=1}^{j-1}v_{p_i}$, we know that $\delta\geq 0$. Also, by the inequality $v_h< v_k+\sum_{i=1}^j v_{p_i}$, it is easy to see that $\delta \leq v_{p_j}$. We have therefore created a copy of product $h$, as well as a virtual product $\widetilde k$ with preference weight $v_{\widetilde k} = v_{p_j}-\delta$. Finally, the \fiil operation returns the pair $(\widetilde S, \widetilde k)$, where $\widetilde S = (S\cup \{h\})\setminus\{p_1,\ldots,p_j\}$. Note that the virtual product $\widetilde k$ satisfies $v_{\widetilde k} \leq v_{p_j}\leq \min_{i\in \widetilde S} v_i$, meaning that $(\widetilde S, \widetilde k)$ is a virtual assortment.
    
    \item {\em When $v_{\widehat k}\leq v_{p_j}$}: We perform a $\delta$-weight transfer from the virtual product $\widehat k$ to product $p_j$, with $\delta = v_h-v_{p_j}$. Since $v_{p_j} < \eps\cdot v_h$ by definition of $S_0$, we know that $\delta\geq 0$. Also, by the inequality $v_h<v_k+\sum_{i=1}^{j-1}v_{p_i}$, it is easy to see that $\delta\leq v_{\widehat k}$.  We have therefore created a copy of the $\eps$-hole $h$, as well as a virtual product $\widetilde k$ with preference weight $v_{\widetilde k} = v_{\widehat k}-\delta$. Finally, the \fiil operation returns the pair $(\widetilde S, \widetilde k)$, where $\widetilde S = (S\cup \{h\})\setminus\{p_1,\ldots,p_j\}$. Note that the virtual product $\widetilde k$ satisfies $v_{\widetilde k} \leq v_{\widehat k} \leq v_{p_j}\leq \min_{i\in \widetilde S} v_i$, implying  that $(\widetilde S, \widetilde k)$ is a virtual assortment.
    \end{enumerate}

To complete the proof, consider an assortment $S$ with $|S|\geq 1/\eps$. We apply the \fiil operation to the virtual assortment $(S,k)$, where $k$ is a virtual product with weight $0$. If $S$ is $\eps$-restricted, \fiil simply outputs $(\widetilde S, \widetilde k)=(S,k)$, which satisfies the conditions of our lemma. Otherwise,  if  we are in Case 1, then \fiil outputs a pair $(\widetilde S, \widetilde k)$ such that $\widetilde S$ is $\eps$-restricted and $v_{\widetilde k}\leq v_{h_{\eps}(\widetilde S)}$, again satisfying the required conditions. Finally, if we are in Case 2, then \fiil outputs a virtual assortment $(\widetilde S,\widetilde k).$ Here, \fiil will be reapplied to the pair $(\widetilde S,\widetilde k)$, so on and so forth, as long as we encounter Case 2. The main observation is that, in each such iteration, the $\eps$-hole of the assortment $S$ is included in $\widetilde S$, and is never removed in subsequent steps. Therefore, there are at most $n$ iterations of  Case 2.

Finally, since the \fiil operation is a composition of a sequence of Merge and Transfer operations, as stated in Lemma \ref{lem:merge} and Lemma \ref{lem:charging}, we know that the expected maximum load of the resulting assortment $\widetilde{S} \cup \{\widetilde{k}\}$ is lower-bounded by that of the initial assortment $S$.

\subsection{Proof of Lemma \ref{lem:thmstep2}}\label{apx:thmstep2}

Let $\widehat S\subseteq \Nc$ be an assortment with $|\widehat S|> 1/\eps$ and let $k$ be a virtual product with preference weight $v_k \leq v_{h_{\eps}(\widehat S)}$. Recall that, for an assortment $S$, its $\eps$-hole $h_\eps(S)$ is given by $h_{\eps}(S) = \min\{j : j \geq \ost_{1/\eps}(S) \text{  and  } j\notin S\}$, where $I_{1/\eps}(S)$ is the $1/\eps$ heaviest product in $S$. Therefore, product $k$ is lighter than each of the $1/\eps$\blue{-th} heaviest products in $S$. Let the subset of these $1/\eps$ products be  $\{p_1,\ldots,p_{1/\eps}\}$. 

We proceed by performing a weight transfer from product $k$ to each of the products $p_1,\ldots,p_{1/\eps}$. Specifically, for every $j=1,\ldots,1/\eps$, we successively perform a $\delta$-weight transfer from product $k$ to product $p_j$, with $\delta = \eps\cdot v_k$. At the end of these transfers, the virtual product $k$ is removed, whereas each product $p_j$ was replaced by a virtual product $p_j^{\eps}$ whose weight is $v_{p_j^{\eps}} = v_{p_j}+ \eps\cdot v_k$. Let $\widehat S_\eps$ be the resulting assortment, i.e., ${\widehat S_\eps = (\widehat S \cup \{p_1^\eps,\ldots,p_{1/\eps}^\eps\} )\setminus \{p_1,\ldots,p_{1/\eps}\}}$. According to Lemma \ref{lem:charging}, the transfer operation cannot decrease the expected maximum load, and therefore
\begin{equation} \label{eq:morocco1}
    \E\left(M\left(\widehat S_\eps\right)\right) \geq \E\left(M\left(\widehat S\cup \{k\}\right)\right).
\end{equation}
Moreover, for any $j=1,\ldots,1/\eps$, since $v_k\leq v_{p_j}$ and $v_{p_j^{\eps} }= v_{p_j}+ \eps\cdot v_k$, we get $ v_{p_j^{\eps}} \geq v_{p_j}\geq (1-\eps)\cdot v_{p_j^{\eps}}$.


Note that currently $\widehat S_{\eps}$ may not be block-based, as it contains virtual products. Hence, for each product in $\widehat S_\eps$, we define its counterpart in a block-based assortment as follows: 
\begin{itemize}
    \item For each product $p_j^\eps$, its counterpart is simply the product $p_j$. Let us denote this subset of products by $S_1=\{p_1,\ldots,p_{1/\eps}\}$.
    
    \item For every product $i$ with  $p_{1/\eps} < i < h_{\eps}(\widehat S)$, its counterpart is the product itself. We refer to this subset of products as $S_2$.

    \item Every remaining product is contained in one of the classes $C_1,\ldots, C_L$ that were introduced in Section \ref{subsec:PTAS} to define block-based assortments. Indeed, since $\widehat S$ is $\eps$-restricted, the weight of any product in $\widehat S$ is at least $\eps\cdot v_{h_\eps(\widehat S)}$. Therefore, each product of $\widehat S_\eps$ that is lighter than product $h_\eps({\widehat S})$ is contained in one of the classes $C_1,\ldots, C_L$, since their union contains every product whose weight resides within $[\epsilon \cdot v_{h_{\epsilon}(\widehat S)} ,v_{h_{\epsilon}(\widehat S)}) $. For every class $C_i$, let $\widehat R_i$ be the subset of $\widehat S_\eps$ comprised of the products in the class $C_i$, and let $N_i$ be the number of these products. In other words, $\widehat R_i = \widehat S_{\eps}\cap C_i$ and $N_i=|\widehat R_i|$. Then the counterpart of the products of $\widehat R_i$ are the $N_i$ lightest products of $C_i$. We denote the latter subset by $R_i$. It is important to note that, by definition  $C_1,\ldots,C_L$, the weights of any two products in the same class are within $1-\eps$ of each other. Let $S_3=\bigcup_{i=1}^L R_i$.
\end{itemize}
To summarize, for each product in $\widehat S_\eps$, we have defined a counterpart whose weight is within factor $1-\eps$. Therefore, letting $\widetilde S = S_1\cup S_2\cup S_3$ be our resulting assortment, by Lemma \ref{lem:weights}, we get
\begin{equation} \label{eq:morocco2}
    \E\left(M\left(\widetilde S\right)\right) \geq (1-\eps)\cdot  \E\left(M\left(\widehat S_\eps\right)\right) \geq (1-\eps)\cdot \E\left(M\left(\widehat S\cup \{k\}\right)\right),
\end{equation}
where the last inequality follows from \eqref{eq:morocco1}. By construction, $\widetilde S$ is a block-based assortment.

\subsection{Proof of Lemma \ref{lem:largeT}}\label{apx:largeT}
We show that for every assortment $S$, there exists a threshold value $T_S$ such that $\E(M(S)) \leq \E(M(\{1\}))$ for all $T\geq T_S$. Since the number of possible assortments is finite, taking $\bar T = \max_{S\subseteq \Nc}T_S$ suffices to conclude the proof. To this end, let us fix an assortment $S\subseteq \Nc$. First, if $|S| = 1$, then the claim is trivial since the maximum load when offering $S$ is a binomial random variable whose success probability is at most $v_1/(1+v_1)$, and we can take $T_S = 1$. In the remainder of this proof, we assume that $|S| \geq 2$. First, letting $j$ be the heaviest product in $S$, notice that
\begin{equation}\label{eq:largeT}
        \max_{i\in S}\phi_i(S) = \frac{v_j}{1+V(S)} = \frac{v_j}{1+v_j+V(S\setminus \{j\})} \leq \frac{v_j}{1+v_j+v_n} \leq \frac{v_1}{1+v_1+v_n}.
    \end{equation}
Here, the first inequality holds since the weight of any non-empty assortment is trivially lower bounded by $v_n$; note that $S\setminus \{j\}$ is non-empty when $|S| \geq 2$. The second inequality follows from the monotonicity of $x\mapsto x/(1+x+v_n)$.
    We denote the right-hand side of Equation \eqref{eq:largeT} by $q= v_1/(1+v_1+v_n)$, and let $p = v_1 / (1+v_1)$, which is the choice probability of product $1$ when offering the assortment $\{1\}$. Notice that $p>q$, since $v_n>0$, as stipulated in Section \ref{sec:form}. For simplicity of notation, let $A$ be the event ``there exists a product $i\in S$ with $L_i(S)\geq T(p+q)/{2}$", where we recall that $L_i(S)$ is the random load of product $i$ when offering the assortment $S$. Let $\Bar{A}$ be the complementary event of $A$.
    Then,\begin{align*}
        \E(M(S)) &= \E\left(M(S) | A\right)\cdot \P\left(A\right) +  \E\left(M(S) | \Bar{A}\right)\cdot \P\left(\Bar{A}\right)\\
        &\leq T\cdot \P(A) + T\cdot \frac{p+q}{2}.
    \end{align*}
    Let $\alpha =  \frac{3(p-q)^2}{4(5q+p)}$. We conclude the proof by establishing the following claim:
    \begin{claim}\label{cl:largeT}
        If $T\geq \frac{1}{\alpha}\log\frac{2n}{p-q}$, then $\P(A) \leq {(p-q)}/{2}$.
    \end{claim}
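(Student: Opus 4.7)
My plan is to establish a uniform tail bound $\P(L_i(S) \geq T(p+q)/2) \leq e^{-T\alpha}$ for each product $i \in S$ via a Bernstein-type concentration inequality, and then apply a union bound over the at most $n$ products in $S$. The setup is favorable: each $L_i(S) = \sum_{t=1}^T X_{it}(S)$ is a sum of $T$ independent Bernoullis with success probability $\phi_i(S)$, which by Equation~\eqref{eq:largeT} satisfies $\phi_i(S) \leq q$. Since $p > q$, the threshold $T(p+q)/2$ strictly exceeds the mean $T\phi_i(S) \leq Tq$, placing us firmly in the upper-tail regime of a binomial.

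For the concentration step, I would invoke Bernstein's inequality for sums of bounded variables, which gives
\[
\P\left(L_i(S) - T\phi_i(S) \geq s\right) \;\leq\; \exp\left(-\frac{s^2/2}{T\phi_i(S)(1-\phi_i(S)) + s/3}\right).
\]
Substituting $s = T((p+q)/2 - \phi_i(S))$ and clearing denominators, the exponent simplifies (up to a common factor of $T$) to $3((p+q)/2 - \phi_i(S))^2 / (4\phi_i(S) + p + q)$. The next step is a one-line calculus check: viewed as a function of $\phi_i(S)$ on $[0,q]$, this rational expression is monotonically decreasing, because increasing $\phi_i(S)$ both shrinks the numerator and enlarges the denominator. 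In particular its minimum on $[0,q]$ is attained at $\phi_i(S) = q$, where it evaluates to exactly $3(p-q)^2/[4(5q+p)] = \alpha$. This yields the desired uniform bound $\P(L_i(S) \geq T(p+q)/2) \leq e^{-T\alpha}$.

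A direct union bound then gives $\P(A) \leq |S|\cdot e^{-T\alpha} \leq n\cdot e^{-T\alpha}$, and the hypothesis $T \geq \frac{1}{\alpha}\log\frac{2n}{p-q}$ immediately implies $n\cdot e^{-T\alpha} \leq (p-q)/2$. The main obstacle I anticipate is selecting the ``correct'' concentration inequality: a naive Hoeffding bound would replace $\alpha$ by $(p-q)^2/2$, which is too weak in the regime where $p$ and $q$ are both small, i.e.\ the regime where the preference weights are themselves bounded. Bernstein's inequality exploits the binomial variance $\phi_i(S)(1 - \phi_i(S)) \leq \phi_i(S)$ rather than merely the $[0,1]$ range of each summand, and this variance-sensitive sharpening is exactly what produces the favorable $1/(5q+p)$ factor in $\alpha$.
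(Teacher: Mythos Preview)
Your proof is correct and essentially mirrors the paper's: both apply a union bound over $i\in S$ followed by a Bernstein/multiplicative-Chernoff tail bound that yields the exponent $\alpha T$. The only cosmetic difference is that the paper first reduces to $\phi_i(S)=q$ by coupling each $L_i(S)$ with a $\mathrm{Bin}(T,q)$ random variable (stochastic domination) before applying the Chernoff bound from \cite{doerr2020probabilistic}, whereas you apply Bernstein directly for arbitrary $\phi_i(S)\le q$ and then argue the resulting exponent is minimized at $\phi_i(S)=q$; both routes land on the same $\exp(-\alpha T)$ bound.
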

    As a result, by taking $T_S = \lceil\frac{1}{\alpha}\log\frac{p+q}{2n}\rceil$, we have $
        \E(M(S)) \leq Tp = \E(M(\{1\}))$ if $T\geq T_S$.
    
    {\bf Proof of Claim \ref{cl:largeT}.}
    Using a union bound we have $
        \P(A)\leq \sum_{i\in S}\P(L_i \geq T\cdot\frac{p+q}{2}).
    $
        Let $Z$ be a binomial random variable with $T$ trials and success probability $q$. Noticing that each $L_i$ is a binomial random variable with $T$ trials and a success probability of at most $q$, as shown in Equation~\eqref{eq:largeT}, we have 
            ${\P(Z \geq T \cdot \frac{p+q}{2}) \geq \P(L_i \geq T \cdot \frac{p+q}{2})}.$
        Therefore,
        \begin{align*}
            \P(A) &\leq n\P\left(Z \geq T\cdot\frac{p+q}{2}\right)\\
            &\leq n\exp\left(-\frac{(\frac{p-q}{2q})^2Tq}{2+\frac{p-q}{3q}}\right)\\
            &= n\exp\left(-\frac{3(p-q)^2T}{4(5q+p)}\right)\\
            & = n\exp(-\alpha T)\\
            & \leq \frac{p-q}{2}.
        \end{align*}
    Here, the second inequality uses the Chernoff bound of \citet[Theorem~1.10.1]{doerr2020probabilistic}, and the last inequality holds since $T \geq \frac{1}{\alpha}\log\frac{2n}{p-q}$.
    \endproof

\section{Proofs from Section \ref{sec:adaptivitygap}}\label{apx:adapt_gap}




\subsection{Proof of Lemma \ref{lem:multinom}}\label{apx:multinom}

To establish the desired claim, we construct a coupling between the Multinomial vector $(L_1,\ldots,L_k)$ and the load vector of an optimal dynamic policy with respect to the universe of products $U$. In the process of generating $(L_1,\ldots,L_k)$, we view the $T$ trials as if they occur sequentially, in the order $1, \ldots, T$, letting $L_{i,t}$ be the random value of component $i$ after $t$ trials. In addition, let $L_{i,t}^{\dpp}$ be the random load of product $i$ after $t $ customers in a fixed optimal dynamic policy. By convention, $L_{i,t}=0$ and $p_i = 0$ for $i>k$, and $L_{j,t}^{\dpp}=0$ for $j \notin U$. Let us initialize both $(L_{1,0},\ldots,L_{n,0})$ and  $(L_{1,0}^{\dpp}, \ldots,L_{n,0}^{\dpp})$ to be the zero vector. 

\paragraph{Sampling the Multinomial vector.} For $t \in [T]$, we sample the component for the $t$-th trial of the Multinomial vector as follows. Let $(\varphi_{t-1}(1) ,\ldots, \varphi_{t-1}(n))$ be the permutation of $\{1,\ldots,n\}$ for which  $L_{\varphi_{t-1}(1),{t-1}} \geq\cdots\geq L_{\varphi_{t-1}(n),{t-1}}$, breaking ties by order of increasing indices. 
We partition $(0,1]$ into a collection of pairwise-disjoint intervals $\{ I_{\varphi_{t-1}(i),t} \}_{i \in [n]}$, where
\[ I_{\varphi_{t-1}(i),t} = \left(\sum_{j=1}^{i-1}p_{\varphi_{t-1}(j)}, \sum_{j=1}^{i}p_{\varphi_{t-1}(j)}\right]. \]
We now sample a uniform random variable $U_t$ in $[0,1]$, and increment component $i$ by one if and only if $U_t \in I_{i,t} $. In other terms, we have $L_{i,t} = L_{i,t-1} + \mathbbm{1}(U_t\in I_{i,t})$.

\paragraph{Sampling the load vector.} For $t \in [T]$, we generate the choice of customer $t$ with respect to the optimal dynamic policy as follows. Let $(\psi_{t-1}(1), \ldots,\psi_{t-1}(n))$ be the permutation of $\{1,\ldots,n\}$ for which $L^{\dpp}_{\psi_{t-1}(1),{t-1}} \geq\cdots\geq L^{\dpp}_{\psi_{t-1}(n),{t-1}}$, again breaking ties in order of increasing product indices. Let $S_t$ be the assortment offered by the optimal adaptive policy to customer $t$. Note that this assortment is a-priori random, as it depends on the choices of customers $1,\ldots,t-1$; however, it is deterministic, conditional on the choices of customers $1,\ldots,t-1$. Let $p_{i,t}^{\dpp}=\phi_i(S_t)$ be the MNL choice probability of product $i$ with respect to this assortment; in particular, $p_{i,t}^{\dpp}=0$ when $i\notin S_t$. As before, we define the collection of pairwise-disjoint intervals $\{ J_{\psi_{t-1}(i),t} \}_{i \in [n]}$,  where
\[ J_{\psi_{t-1}(i),t}=\left(\sum_{j=1}^{i-1}p_{\psi_{t-1}(j),t}^{\dpp}, \sum_{j=1}^{i}p_{\psi_{t-1}(j),t}^{\dpp}\right]. \]
To generate the choice of customer $t$, we make use of exactly the same uniform random variable $U_t$ that was previously sampled, when
generating the Multinomial vector. Specifically, customer $t$ selects the product $i$ for which $U_t \in J_{i,t}$. When none of the intervals $\{ J_{\psi_{t-1}(i),t} \}_{i \in [n]}$ contains $U_t$, customer $t$ selects the no-purchase option. Formally, $L^{\dpp}_{i,t} = L^{\dpp}_{i,t-1} + \mathbbm{1}(U_t\in J_{i,t})$. It is worth emphasizing again that the same random variable $U_t$ is utilized to simulate the $t$-th Multinomial trial as well as the choice of customer $t$, consequently coupling the two vectors. 

\paragraph{Analysis.} Moving forward, for $i \in [n]$ and $t\in [T]$, let $\sumstat_{i,t}$ be the cumulative sum of the $i$ highest components of the Multinomial vector after $t$ trials, i.e., $\sumstat_{i,t} = \sum_{j=1}^iL_{\varphi_t(j),t}$. Similarly, let $\sumdyn_{i,t}$ be the cumulative sum of the $i$ highest loads of the load vector after $t$ customers have made their choice, i.e., $\sumdyn_{i,t} = \sum_{j=1}^iL^{\dpp}_{\psi_t(j),t}$. In both definitions, the cumulative sums are taken at the end of step $t$. The crux of our analysis resides in establishing the next relation between these two cumulative sums. The proof of this result is provided in Appendix~\ref{apx:invariant}.

\begin{claim} \label{cl:invariant}
For every $i \in [n]$ and $t\in  [T]$, we have $\sumstat_{i,t}\geq \sumdyn_{i,t}$.
\end{claim}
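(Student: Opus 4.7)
The plan is to proceed by induction on $t$, establishing the invariant $\sumstat_{i,t}\geq\sumdyn_{i,t}$ simultaneously for all $i\in[n]$. The base case $t=0$ is immediate since both sides vanish. The range $i\geq k$ is also trivial: $\sumstat_{i,t}$ equals the total of the Multinomial vector, namely exactly $t$ (as the probabilities $p_1,\ldots,p_k$ sum to one), whereas $\sumdyn_{i,t}$ is bounded above by the total number of purchases made by the dynamic policy over the first $t$ rounds, which cannot exceed $t$. The entire difficulty therefore lies in the range $i<k$, where I would rely on the following probability-majorization inequality, which drives the coupling argument:
\[
\sum_{j=1}^{i} p_{\varphi_{t-1}(j)} \;\geq\; \sum_{j=1}^{i} p^{\dpp}_{\psi_{t-1}(j),t}.
\]

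To establish this inequality, I would first observe that the tie-breaking rule (by increasing index) forces the top $k$ positions of the Multinomial ranking $\varphi_{t-1}$ to always be occupied by components in $[k]$: any zero-load component of $[k]$ has a smaller index than any component in $\{k+1,\ldots,n\}$, while positive-load components are contained in $[k]$ by definition. Consequently, for $i\leq k$, each term on the left-hand side satisfies $p_{\varphi_{t-1}(j)}\geq\min_{j'\in[k]}p_{j'}$, yielding a lower bound of $i\cdot\min_{j'\in[k]}p_{j'}$. On the right-hand side, each term $p^{\dpp}_{\psi_{t-1}(j),t}=\phi_{\psi_{t-1}(j)}(S_t)$ is at most $\max_{j'\in U}v_{j'}/(1+v_{j'})$, since $\phi_j(S)=v_j/(1+v(S))\leq v_j/(1+v_j)$ whenever $j\in S$ (and vanishes otherwise), and $x\mapsto x/(1+x)$ is monotone. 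The lemma's hypothesis $\min_{j\in[k]}p_j\geq\max_{j'\in U}v_{j'}/(1+v_{j'})$ then closes the inequality termwise.

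Equipped with this inequality, the shared randomness $U_t$ immediately yields an inclusion of events: whenever the dynamic policy increments a product that was previously ranked in its top $i$, the Multinomial process also increments a component previously ranked in its top $i$. When this event occurs, $\sumstat_{i,t}=\sumstat_{i,t-1}+1$, and combined with the inductive hypothesis and the bound $\sumdyn_{i,t}\leq\sumdyn_{i,t-1}+1$, we obtain $\sumstat_{i,t}\geq\sumdyn_{i,t}$. In the complementary event, $\sumdyn_{i,t}=\sumdyn_{i,t-1}$, and the inductive hypothesis passes through directly. The main subtlety I expect to confront is the corner case where an increment to a previously rank-$>i$ component enters the new top $i$ through a tie-break on equal loads; a careful analysis shows this can only happen when the incremented component's pre-update load equals the previous $i$-th ranked value, and a short extra argument verifies that the invariant is preserved in this case as well.
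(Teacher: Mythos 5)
Your overall strategy coincides with the paper's own proof: induction on $t$, a domination of prefix sums of interval lengths (the paper phrases this as the rank $a$ of the incremented Multinomial component being at most the rank $b$ of the selected product), and the resulting inclusion of events under the shared uniform $U_t$. The base case, the trivial range $i\geq k$, the termwise majorization inequality, and the case where the dynamic policy increments a product currently ranked in its top $i$ are all handled correctly and match the paper's Case~2a.

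The problem is the ``complementary event'' together with the corner case you defer at the end. Your claim that in the complementary event $\sumdyn_{i,t}=\sumdyn_{i,t-1}$ is false exactly in the tie-break situation you identify, and the ``short extra argument'' you promise there is in fact the bulk of the paper's proof (its Case~2b together with the chain of load equalities in Equation~\eqref{eq:claim} and the sub-induction establishing it). Concretely, suppose $\sumstat_{i,t-1}=\sumdyn_{i,t-1}$ (when the inequality is strict you are safe since both sums change by at most one per step, a case split you should make explicit), the dynamic policy increments a product at rank $b>i$ whose load ties the rank-$i$ load, so $\sumdyn$ increases by one, and the Multinomial increments a component at rank $a$ with $i<a\leq b$. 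You must then show that $\sumstat$ also increases by one, i.e., that $L_{\varphi_{t-1}(a),t-1}=L_{\varphi_{t-1}(i),t-1}$. This does not follow from the event inclusion; it requires invoking the induction hypothesis at indices other than $i$. Writing $\lambda$ for the common dynamic load at ranks $i,\ldots,b$, one combines $\sumstat_{i-1,t-1}\geq \sumdyn_{i-1,t-1}$ with the equality at $i$ to get $L_{\varphi_{t-1}(i),t-1}\leq \lambda$, and then uses $\sumstat_{c,t-1}\geq\sumdyn_{c,t-1}=\sumdyn_{i,t-1}+(c-i)\lambda$ for $c=i+1,\ldots,b$ together with monotonicity of the sorted loads to force $L_{\varphi_{t-1}(c),t-1}=\lambda$ for all such $c$, which closes the case. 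Without this chain-of-equalities argument the induction does not go through, so as written the proof is incomplete at its hardest step.
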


We conclude the proof by arguing that the latter claim indeed implies $\E(\max(L_1,\ldots,L_k)) \geq \opt^{\dpp}(U)$. For this purpose, we almost surely have
\[ \max(L_1,\ldots,L_k) = L_{\varphi_{T}(1),T} = \sumstat_{1,T}\geq \sumdyn_{1,T} =L_{\psi_{T}(1),T}^{\dpp} = \max_{i\in U}L^{\dpp}_{i,T}, \] 
where the inequality above is obtained by instantiating Claim~\ref{cl:invariant} with $i=1$ and $t=T$. By taking expectations, we indeed get $\E(\max(L_1,\ldots,L_k))\geq\E(\max_{i\in U}L^{\dpp}_{i,T}) = \opt^{\dpp}(U)$.
		
\subsection{ Proof of Claim \ref{cl:invariant}}\label{apx:invariant}

Our proof works by induction on $t$. Indeed, the result is trivial for $t=0$ since $\sumstat_{i,t} =  \sumdyn_{i,t} = 0$ for every $i \in [n]$. Now, for $t \geq 1$, suppose by induction that \blue{for all $i \in [n]$,
\begin{equation}\label{eq:inductionhyp}
    \sumstat_{i,t}\geq \sumdyn_{i,t},
\end{equation}
}
and let us prove that $\sumstat_{i,t+1}\geq \sumdyn_{i,t+1}$ by considering two cases. 

\paragraph{Case 1: ${\sumstat_{i,t}> \sumdyn_{i,t}}$.} Here, the sum of the $i$ highest loads in the dynamic policy is strictly smaller than the sum of the $i$ highest components of the Multinomial vector at step $t$. As only one customer arrives at each time step, both sums can increase by at most $1$. Therefore, we clearly have $\sumstat_{i,t+1}\geq \sumdyn_{i,t+1}$.

\paragraph{Case 2: ${\sumstat_{i,t}= \sumdyn_{i,t}}$.} In this case, the sum of the $i$ highest loads in the dynamic policy is equal to the sum of the $i$ highest components of the Multinomial vector. First, if  customer $t+1$ chooses the no-purchase option, then the invariant is trivially maintained, since we would have $\sumdyn_{i,t+1} = \sumdyn_{i,t}$ and $\sumstat_{i,t+1}\geq \sumstat_{i,t}$. Otherwise, suppose that $U_{t+1}\in I_{\varphi_{t}(a),t+1}$ and $U_{t+1}\in J_{\psi_{t}(b), t+1}$, for some $a$ and $b$, i.e., the $a$-th highest component of the Multinomial vector after $t$ trials is assigned to the $(t+1)$-th trial, and the $b$-th most loaded product after $t$ customers is selected by customer $t+1$. 

We first observe that $a\leq b$, advising the reader to consult Figure~\ref{fig:multinomcoupling} to better understand our next explanation. The key idea is to exploit the inequality ${\max_{i\in U} \frac{ v_i }{ 1+v_i }\leq \min_{i=1,\ldots,k}p_i}$, stating that any choice probability in the optimal dynamic policy is upper-bounded by the selection probability of each of the $k$ components in our Multinomial vector. As demonstrated in Figure~\ref{fig:multinomcoupling}, the length of each interval  \blue{$J_{j,t}$} is upper-bounded by the length of each interval \blue{$I_{i,t}$} for all $i,j\in\{1,\ldots,n\}$. Therefore, when the $(t+1)$-th trial is assigned to the $a$-th highest component, and the $(t+1)$-th customer selects the $b$-th most loaded product, we must have $a\leq b$. Our proof proceeds by considering two subcases, depending on the relation between $i$ and $b$.

\begin{figure}[htbp!]
    \centering
    \tikzset{every picture/.style={line width=0.75pt}} 

\begin{tikzpicture}[x=0.75pt,y=0.75pt,yscale=-1,xscale=0.9]

\draw   (14.47,42.79) -- (100.06,42.79) -- (100.06,131.09) -- (14.47,131.09) -- cycle ;
\draw   (100.06,61.44) -- (205.62,61.44) -- (205.62,131.09) -- (100.06,131.09) -- cycle ;
\draw   (205.62,61.44) -- (292.64,61.44) -- (292.64,131.09) -- (205.62,131.09) -- cycle ;
\draw   (292.64,93.94) -- (419.59,93.94) -- (419.59,131.09) -- (292.64,131.09) -- cycle ;
\draw   (419.59,93.98) -- (505.18,93.98) -- (505.18,131.07) -- (419.59,131.07) -- cycle ;
\draw   (14.47,183.83) -- (78.66,183.83) -- (78.66,253.48) -- (14.47,253.48) -- cycle ;
\draw   (78.66,183.83) -- (155.7,183.83) -- (155.7,253.48) -- (78.66,253.48) -- cycle ;
\draw   (155.7,201.44) -- (209.9,201.44) -- (209.9,253.48) -- (155.7,253.48) -- cycle ;
\draw   (209.97,218.85) -- (246.99,218.85) -- (246.99,253.48) -- (209.97,253.48) -- cycle ;
\draw   (246.99,218.85) -- (326.87,218.85) -- (326.87,253.48) -- (246.99,253.48) -- cycle ;
\draw   (326.87,235.1) -- (399.62,235.1) -- (399.62,253.67) -- (326.87,253.67) -- cycle ;
\draw    (590.77,131.07) -- (677.79,130.6) ;
\draw   (399.62,235.1) -- (433.86,235.1) -- (433.86,253.67) -- (399.62,253.67) -- cycle ;
\draw   (505.18,113.21) -- (590.77,113.21) -- (590.77,131.07) -- (505.18,131.07) -- cycle ;
\draw [draw opacity=1 ]   (433.86,253.5) -- (677.79,252.34) ;
\draw    (207.74,142.97) -- (290.64,142.97) ;
\draw [shift={(292.64,142.97)}, rotate = 180] [][line width=0.75]    (10.93,-3.29) .. controls (6.95,-1.4) and (3.31,-0.3) .. (0,0) .. controls (3.31,0.3) and (6.95,1.4) .. (10.93,3.29)   ;

\draw    (13.47,18.79) -- (14.47,253.48) ;
\draw    (677.79,130.6) -- (677.79,252.51) ;
\draw    (156.41,266.37) -- (207.9,266.33) ;
\draw [shift={(209.9,266.33)}, rotate = 179.96] [][line width=0.75]    (10.93,-3.29) .. controls (6.95,-1.4) and (3.31,-0.3) .. (0,0) .. controls (3.31,0.3) and (6.95,1.4) .. (10.93,3.29)   ;

\draw    (259.67,143) -- (207.52,143) ;
\draw [shift={(205.52,143)}, rotate = 360] [][line width=0.75]    (10.93,-3.29) .. controls (6.95,-1.4) and (3.31,-0.3) .. (0,0) .. controls (3.31,0.3) and (6.95,1.4) .. (10.93,3.29)   ;

\draw    (209.69,266) -- (157.54,266) ;
\draw [shift={(155.54,266)}, rotate = 360] [][line width=0.75]    (10.93,-3.29) .. controls (6.95,-1.4) and (3.31,-0.3) .. (0,0) .. controls (3.31,0.3) and (6.95,1.4) .. (10.93,3.29)   ;

\draw (34.1,18.4) node [anchor=north west][inner sep=0.75pt]    {\footnotesize $L_{\varphi_t(1),t}$};
\draw (129.66,37.37) node [anchor=north west][inner sep=0.75pt]    {\footnotesize$L_{\varphi_t(2),t}$};
\draw (10.57,264.79) node [anchor=north west][inner sep=0.75pt]    {\footnotesize${\textstyle 0}$};
\draw (672.46,264.09) node [anchor=north west][inner sep=0.75pt]    {$1$};
\draw (220.55,147.7) node [anchor=north west][inner sep=0.75pt]    {\footnotesize$I_{\varphi_t(3),t+1}$};
\draw (229.12,37.4) node [anchor=north west][inner sep=0.75pt]    {\footnotesize$L_{\varphi_t(3),t}$};
\draw (330.5,72.37) node [anchor=north west][inner sep=0.75pt]    {\footnotesize$L_{\varphi_t(4),t}$};
\draw (434.74,72.37) node [anchor=north west][inner sep=0.75pt]    {\footnotesize$L_{\varphi_t(5),t}$};
\draw (525.11,92.37) node [anchor=north west][inner sep=0.75pt]    {\footnotesize$L_{\varphi_t(6),t}$};
\draw (28.03,153.2) node [anchor=north west][inner sep=0.75pt]    {\footnotesize$L^{\dpp}_{\psi_t(1),t}$};
\draw (95.56,153.2) node [anchor=north west][inner sep=0.75pt]    {\footnotesize$L^{\dpp}_{\psi_t(2),t}$};
\draw (161.05,171.17) node [anchor=north west][inner sep=0.75pt]    {\footnotesize$L^{\dpp}_{\psi_t(3),t}$};
\draw (210.39,188.17) node [anchor=north west][inner sep=0.75pt]    {\footnotesize$L^{\dpp}_{\psi_t(4),t}$};
\draw (262.82,189.17) node [anchor=north west][inner sep=0.75pt]    {\footnotesize$L^{\dpp}_{\psi_t(5),t}$};
\draw (338.38,204.17) node [anchor=north west][inner sep=0.75pt]    {\footnotesize$L^{\dpp}_{\psi_t(6),t}$};
\draw (391.79,204.17) node [anchor=north west][inner sep=0.75pt]    {\footnotesize$L^{\dpp}_{\psi_t(7),t}$};
\draw (609.46,108.4) node [anchor=north west][inner sep=0.75pt]    {\footnotesize$L_{\varphi_t(7),t}$};
\draw (513.48,235.65) node [anchor=north west][inner sep=0.75pt]  [font=\small] [align=left] {no-purchase};

\draw (157.03,270.7) node [anchor=north west][inner sep=0.75pt]    {\footnotesize$J_{\psi_t(3),t+1}$};

\end{tikzpicture}
    \caption{\small Each of the bottom rectangles corresponds to a product $\boldsymbol j$ in $\boldsymbol U$. Its height corresponds to the load of this product $\boldsymbol j$, and its length corresponds to the associated interval $\boldsymbol{J_{\psi_t(j),t+1}}$. Similarly, each of the top rectangles corresponds to a component $\boldsymbol i$ in the Multinomial vector, its height is the number of trials assigned to this component $\boldsymbol i$, and its length corresponds to the associated interval $\boldsymbol{I_{\varphi_t(i),t+1}}$ .  The left-to-right order of these rectangles is by decreasing order of height.}
    \label{fig:multinomcoupling}
\end{figure}

\paragraph{Case 2a: ${b\leq i}$.} Here, the $b$-th highest load in the load vector was increased by $1$, and since $a\leq b\leq i$, the sum of the $i$ highest components in the Multinomial vector was increased by $1$ as well, i.e., $\sumstat_{i,t+1} = \sumstat_{i,t}+1$. Therefore,
\[ \sumstat_{i,t+1} = \sumstat_{i,t}+1 \geq \sumdyn_{i,t}+1 \geq \sumdyn_{i,t} \ , \]
where the first inequality follows from the induction hypothesis.

\paragraph{Case 2b: ${b>i}$.} In particular, according to the definition of $\psi_{t}$, we have, $L_{\psi_{t}(b),t}^{\dpp}\leq L_{\psi_{t}(i),t}^{\dpp}$. We consider two cases, depending on whether the latter inequality is strict or not:
\begin{itemize}
         \item When $L_{\psi_{t}(b),t}^{\dpp}<L_{\psi_{t}(i),t}^{\dpp}$: Then, when customer $t+1$ selects product $\psi_{t}(b)$, the load of that product is increased by exactly $1$, and therefore does not surpass $L_{\psi_{t}(i),t}^{\dpp}$. As such, the sum of the $i$ highest loads remains unchanged at step $t+1$. 
         Consequently, 
         $$
        \sumstat_{i,t+1}\geq \sumstat_{i,t} \geq \sumdyn_{i,t}=\sumdyn_{i,t+1},
     $$
     where the second inequality holds by the induction hypothesis. 
     
    \item When $L_{\psi_{t}(b),t}^{\dpp} = L_{\psi_{t}(i),t}^{\dpp}$: Here, the load of the $b$-th and $i$-th most loaded products are equal. It follows that all  products in between have the same loads, i.e., \begin{equation}\label{eq:equality}
            L_{\psi_{t}(i),t}^{\dpp}=L_{\psi_{t}(i+1),t}^{\dpp}=\cdots=L_{\psi_{t}(b),t}^{\dpp}.
    \end{equation}
    Therefore, when customer $t+1$ selects product $\psi_t(b)$, product $\psi_t(b)$ becomes more loaded than $\psi_t(i), \psi_t(i+1),\ldots,\psi_t(b-1)$. Consequently, the sum of the $i$ highest loads increases by $1$, i.e.,$$
        \sumdyn_{i,t+1}=\sum_{j=1}^i L_{\psi_{t+1}(j),t+1}^{\dpp}=\sum_{j=1}^i L_{\psi_{t}(j),t}^{\dpp}+1 = \sumdyn_{i,t}+1.
    $$
    It remains to show that the sum of the $i$ highest components of the Multinomial vector also increases by $1$. First, if $a\leq i$, this claim  is trivial, as the $a$-th highest component is increased by $1$, and therefore the sum of the $i$ highest components is also increased by $1$. Now suppose that $a>i$. We prove in the next paragraph that, for all $c\in \{i,i+1\ldots,b\}$, we have    \begin{equation}\label{eq:claim}
        \blue{L_{\psi_{t}(c),t}^{\dpp} = L_{\varphi_{t}(c),t}}.
    \end{equation}
    As a result, according to Equation \eqref{eq:equality}, 
    $$
    \blue{L_{\varphi_{t}(i),t} = L_{\varphi_{t}(i+1),t}=\cdots =L_{\varphi_{t}(b),t}}.
    $$
    Therefore, since $i<a\leq b$, when the $a$-th highest component is increased by $1$, it becomes strictly greater than each of the components $\varphi_t(i),\varphi_t(i+1),\ldots,\varphi_t(a-1)$, meaning that the sum of the $i$-th highest components also increases by $1$.
\end{itemize}

    \paragraph{\blue{Proof of Equation \eqref{eq:claim}.}} \blue{
    We prove this result by induction on $c$.
    
    \noindent {Base case}: For $c=i$, we have on the one hand $\sumstat_{i-1,t} \geq \sumdyn_{i-1,t}$, according to Equation \eqref{eq:inductionhyp}. On the other hand, we have $\sumstat_{i,t} = \sumdyn_{i,t}$ by the case hypothesis. Therefore, $\sumstat_{i,t}-\sumstat_{i-1,t} \leq \sumdyn_{i,t} - \sumdyn_{i-1,t}$, and it follows that $L_{\psi_{t}(i),t}^{\dpp} \geq L_{\varphi_{t}(i),t}$. Assume by contradiction that $L_{\psi_{t}(i),t}^{\dpp} > L_{\varphi_{t}(i),t}$, then
    $
        {L_{\psi_{t}(i+1),t}^{\dpp} = L_{\psi_{t}(i),t}^{\dpp} > L_{\varphi_{t}(i),t} \geq L_{\varphi_{t}(i+1),t}}
    $, where the equality follows from Equation \eqref{eq:equality}, and the second inequality follows from the definition of $\varphi_t$.
    Therefore, recalling that $\sumstat_{i,t} = \sumdyn_{i,t}$, we have
    $$
        \sumstat_{i+1,t}=\sum_{j=1}^{i+1} L_{\varphi_{t}(j),t} <  \sum_{j=1}^{i+1} L_{\psi_{t}(j),t}^{\dpp} = \sumdyn_{i+1,t},
    $$
    which contradicts Equation \eqref{eq:inductionhyp}. Therefore $L_{\psi_{t}(i),t}^{\dpp} = L_{\varphi_{t}(i),t}$, which concludes the case $c=i$.
    
    \noindent {Inductive step}: Let $c\in \{i,\ldots, b-1\}$, and assume by induction that $L_{\psi_{t}(d),t}^{\dpp} = L_{\varphi_{t}(d),t}$ for all $d\in \{i,\ldots,c\}$. First, since $\sumstat_{i,t} = \sumdyn_{i,t}$, by directly applying the induction hypothesis, we have \begin{equation}\label{eq:inductionstep1}
        \sumstat_{c, t} = \sumstat_{i,t} + \sum_{j=i+1}^c L_{\varphi_t(j),t} = \sumdyn_{i,t} + \sum_{j=i+1}^c L^{\dpp}_{\psi_t(j),t} = \sumdyn_{c,t}.
    \end{equation}
    In addition, $L^{\dpp}_{\psi_t(c+1),t}= L^{\dpp}_{\psi_t(c),t} = L_{\varphi_t(c),t} \geq L_{\varphi_t(c+1),t}$, 
    where the first equality follows from Equation \eqref{eq:equality}, and the second follows from the induction hypothesis, while the inequality follows from the definition of $\varphi_t$. Now, assume by contradiction that  $L^{\dpp}_{\psi_t(c+1),t} > L_{\varphi_t(c+1),t}$, then using Equation \eqref{eq:inductionstep1}, we have
    $$
        \sumstat_{c+1, t} = \sumstat_{c, t} + L_{\varphi_t(c+1),t} < \sumdyn_{c,t} + L^{\dpp}_{\psi_t(c+1),t} = \sumdyn_{c+1,t},
    $$
    which contradicts Equation \eqref{eq:inductionhyp}. Therefore, $L^{\dpp}_{\psi_t(c+1),t} = L_{\varphi_t(c+1),t}$, which concludes the induction.
    }

\subsection{Proof of Lemma \ref{lem:augment}}\label{apx:augment}

Let $A$ and $B$ be two adaptive policies for \ref{DMLA}. We will make use of $1^A,\ldots,T^A$ and $1^B,\ldots,T^B$ to denote the sequences of customers that will encounter these policies, respectively. We couple their choices as follows. 

\paragraph{Sampling the policies.} For any stage $t\in[T]$, we explain how to sample the choices of customers $t^A$ and $t^B$. First, the choice of customer $t^A$ is sampled according to policy $A$. In particular,  if $S^A_t$ is the assortment offered to this customer, each product $i\in U \cup \{0\}$ is chosen with the MNL probability $\phi_i(S^A_t)$. The choice of customer $t^B$ is sampled in a coupled manner:
\begin{enumerate}
    \item When customer $t^A$ selects the no-purchase option: In this case, the choice of customer $t^B$ is sampled using an MNL choice model, with respect to the assortment $S_t^B\setminus S_t^A$, i.e., each product $i$ is selected with probability $\phi_i(S_t^B\setminus S_t^A)$. 

    \item When customer $t^A$ selects some product $i\in S^A_t$: Here, with probability $\phi_i(S^B_t)/\phi_i(S^A_t)$, customer $t^B$ is assigned to product $i$. With probability $1-\phi_i(S^B_t)/\phi_i(S^A_t)$, as in item~1, the choice of customer $t^B$ is sampled using an MNL choice model with respect to the assortment $S_t^B\setminus S_t^A$, i.e., each product $j$ is selected with probability $\phi_j(S_t^B\setminus S_t^A)$. As a side note, we indeed have $\phi_i(S^B_t)/\phi_i(S^A_t) \leq 1$, since $S^A_t \subseteq S^B_t$.
\end{enumerate}

\paragraph{Equivalence with offering ${S_t^B}$.}
We will show that sampling the choice of customer $t^B$ as described above is equivalent to using an MNL choice model with respect to the assortment $S^B_t$. Indeed, for every $i\in S_t^A$, the probability that this product is selected by customer $t^B$ is given by\begin{equation}\label{eq:prob}
    \phi_i(S_t^A)\cdot     
\frac{\phi_i(S^B_t)}{\phi_i(S^A_t)}  = \phi_i(S^B_t).
\end{equation}
Now, for a product $i\in S_t^B\setminus S_t^A$ to be selected, we first have to be choosing using the MNL model with respect to the assortment $S_t^B\setminus S_t^A$, which happens with probability 
$$  \phi_0(S_t^A) + \sum_{j \in S_t^A }  \phi_j(S_t^A)  \left( 1 -   
\frac{\phi_j(S^B_t)}{\phi_j(S^A_t)} \right)
 =1-\sum_{j\in S_t^A}\phi_j(S_t^B)= 1- \frac{v(S_t^A)}{1+v(S_t^B)} = \frac{1+v(S_t^B) - v(S_t^A)}{1+v(S_t^B)}.$$
Conditional on this event, customer $t^B$  selects  product $i\in S_t^B\setminus S_t^A$ with probability $\phi_i(S_t^B\setminus S_t^A)$, and we indeed get
$$
    \frac{1+v(S_t^B) - v(S_t^A)}{1+v(S_t^B)} \cdot \phi_i(S_t^B\setminus S_t^A) = \frac{v_i}{1+v(S_t^B)} = \phi_i(S^B
_t).$$

\paragraph{Concluding the proof of Lemma \ref{lem:augment}.} For each product $i\in U$, let $L_i^A$ and $L_i^B$ be the (random) loads of product $i$ with respect to the policies $A$ and $B$. Based on the above-mentioned coupling, we will establish the next auxiliary claim, whose proof is deferred to the end of this section. 

\begin{claim}\label{cl:conditional}
$\E(L_i^B\,\mid\, L_i^A)\geq \frac{1}{1+\eps}\cdot L_i^A$, for every product $i\in U$,.
\end{claim}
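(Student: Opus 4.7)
The plan is to prove the stronger statement $\E(L_i^B \mid \mathcal{F}^A) \geq L_i^A/(1+\eps)$ almost surely, where $\mathcal{F}^A$ denotes the $\sigma$-algebra generated by the entire trajectory of policy $A$ under the coupling of Lemma~\ref{lem:augment}. Since $L_i^A$ is $\mathcal{F}^A$-measurable, the claim will then follow from the tower property, as
\[
\E(L_i^B \mid L_i^A) \;=\; \E\bigl(\E(L_i^B \mid \mathcal{F}^A) \,\big\vert\, L_i^A\bigr) \;\geq\; \frac{L_i^A}{1+\eps}.
\]

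The quantitative core of the argument will be the MNL identity and inequality
\[
\frac{\phi_i(S_t^B)}{\phi_i(S_t^A)} \;=\; \frac{1+v(S_t^A)}{1+v(S_t^B)} \;\geq\; \frac{1+v(S_t^A)}{1+v(S_t^A)+\eps} \;\geq\; \frac{1}{1+\eps},
\]
which holds almost surely thanks to the assumptions $S_t^A \subseteq S_t^B$, $v(S_t^B \setminus S_t^A) \leq \eps$, and the fact that $1+v(S_t^A) \geq 1$. Writing $L_i^B = \sum_{t=1}^T X_t^B$ with $X_t^B = \mathbbm{1}(\text{customer }t^B\text{ picks }i)$, I will first condition on the richer algebra $\mathcal{F}^A \vee \sigma(S_t^B)$: if $X_t^A = 0$ then $X_t^A/(1+\eps) = 0 \leq \E(X_t^B \mid \mathcal{F}^A, S_t^B)$ holds trivially, while if $X_t^A = 1$ the coupling rule dictates that customer $t^B$ selects $i$ with probability exactly $\phi_i(S_t^B)/\phi_i(S_t^A) \geq 1/(1+\eps)$ by the display above; in either case $\E(X_t^B \mid \mathcal{F}^A, S_t^B) \geq X_t^A/(1+\eps)$. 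Averaging out $S_t^B$ given $\mathcal{F}^A$ and then summing over $t \in [T]$ yields the stronger statement.

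The subtle point I expect to be the main obstacle is that $S_t^B$ is \emph{not} $\mathcal{F}^A$-measurable, since it is shaped by policy $B$'s own history $\{X_s^B\}_{s<t}$, which is driven jointly by $\mathcal{F}^A$ and the auxiliary randomness of the coupling. The reason the argument nonetheless goes through is that the lower bound $\phi_i(S_t^B)/\phi_i(S_t^A) \geq 1/(1+\eps)$ holds pathwise for \emph{every} realization of $S_t^B$, so averaging over this extra randomness can only preserve the inequality. Making this explicit via a careful two-stage use of the tower property, first over $\sigma(S_t^B)$ and then over $\sigma(L_i^A)$, is the delicate step to execute cleanly.
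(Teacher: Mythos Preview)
Your proposal is correct and follows essentially the same approach as the paper: both arguments exploit that, under the coupling, whenever customer $t^A$ selects product $i$ the corresponding customer $t^B$ selects $i$ with conditional probability $\phi_i(S_t^B)/\phi_i(S_t^A) \geq 1/(1+\eps)$, and then sum over $t$. Your treatment is in fact more rigorous than the paper's, since you explicitly handle the measurability issue that $S_t^B$ depends on auxiliary coupling randomness beyond $\mathcal{F}^A$ via the two-stage tower property, whereas the paper's proof glosses over this subtlety.
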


To derive Lemma~\ref{lem:augment}, let $I$ be the random product where the maximum load of policy $A$ is attained. In the case of ties, the product with smallest index is selected, i.e., $I = \min\{i\,\mid\, L_i^A = \max_{j\in U}L_j^A\}$. Then, by Claim~\ref{cl:conditional},
\[ \E\left(L^B_I\,\left|\,(L_i^A)_{i\in U} \right. \right)\geq \frac{1}{1+\eps}\cdot L_I^A = \frac1{1+\epsilon}\cdot \E\left(L^A_I\,\left|\,(L_i^A)_{i\in U} \right. \right). \]
Therefore,
$$
\E\left( \left. \max_{i\in U}L^B_i\,\right|\,(L_i^A)_{i\in U}\right)\geq \frac{1}{1+\eps}\cdot \E\left(L^A_I\,\left|\,(L_i^A)_{i\in U} \right. \right) = \frac{1}{1+\eps}\cdot\E\left( \left. \max_{i\in U}L^A_i\,\right|\,(L_i^A)_{i\in U}\right).
$$
The desired result now follows by introducing the expectation over $(L_i^A)_{i\in U}$ and using the tower property.


\paragraph{Proof of Claim  \ref{cl:conditional}.} We will show that $\E(L_i^B\,\mid\, L_i^A=\ell)\geq \frac{\ell}{1+\eps}$ for all $\ell\in\{0,\ldots,T\}$. To this end, it suffices to prove that, for each customer $t^A$ of the $\ell$ customers who selected product $i$ in the arrival sequence of policy $A$, its corresponding customer $t^B$ in the sequence of policy $B$ selects this product with probability at least $1/(1+\eps)$. In other words, $\P(t^B\text{ selects }i\,\mid\,t^A\text{ selects }i) \geq \frac{1}{1+\eps}$.

Suppose that customer $t^A$ selects product $i$. In particular, $i\in S_t^A$, and we therefore have
\begin{eqnarray*}
\P(t^B\text{ selects }i\,\mid\,t^A\text{ selects }i) & = & \frac{\P(t^B\text{ selects }i\text{ and }t^A\text{ selects }i)}{\P(t^A\text{ selects }i)} \\
& = & 
\frac{\P(t^B\text{ selects }i)}{\P(t^A\text{ selects }i)} \\
& = & \frac{\phi_{i}(S_t^B)}{\phi_{i}(S_t^A)} \ ,
\end{eqnarray*}
where the second equality holds since, by case~2 of our coupling, given that customer $t^B$ was assigned to product $i\in S_t^A$, we know that customer $t^A$ selected this product as well. We conclude the proof by noting that 
$$
    \phi_i(S^B_t)=\frac{v_i}{1+v(S^B_t)}\geq \frac{v_i}{ 1+\eps + v(S^A_t)} \geq \frac1{1+\epsilon}\cdot\phi_i(S^A_t),
$$
where the first inequality is obtained by recalling that $v(S_t^B)-v(S_t^A)\leq \eps$.

\subsection{Proof of Lemma \ref{lem:subadd}} \label{apx:subadd}


Letting $U_0=U_1\cup U_2$, we assume to have three sequences of $T$ customers each. Specifically, we denote the first sequence by ${\cal T}_0$, consists of the customers $1^{(0)},\ldots, T^{(0)}$. Similarly, we refer to the second and third sequences by ${\cal T}_1$ and ${\cal T}_2$, with customers $1^{(1)},\ldots, T^{(1)}$ and $1^{(2)},\ldots,T^{(2)}$, respectively. On one hand, the sequence ${\cal T}_0$ will encounter an optimal dynamic policy for the universe of products $U_0$. On the other hand, ${\cal T}_1$ and ${\cal T}_2$ will respectively encounter dynamic policies for $U_1$ and $U_2$. The latter two policies will not necessarily be optimal. In the following, we first start by describing our policies for ${\cal T}_1$ and ${\cal T}_2$. Second, we sample the choices of these sequences in a coupled fashion. 

\paragraph{Describing the policies.} Let ${\cal P}_0$ be an optimal dynamic policy for $U_0$. In this proof, we use the notation $S^{{\cal P}_0}_t$ to denote the random assortment offered by the policy ${\cal P}_0$ to customer $t^{(0)}$. As such, we define the policy ${\cal P}_1$, offering the assortment $S^{{\cal P}_1}_t = S^{{\cal P}_0}_t\cap U_1$ to its $t$-th customer. By definition, this policy only offers products from the universe $U_1$. We denote the expected maximum load of this policy by ${\cal E}^1$. Similarly, we define ${\cal P}_2$ as the policy that offers the assortment $S^{{\cal P}_2}_t = S^{{\cal P}_0}_t\cap U_2$ to its $t$-th customer, noting that only products from $U_2$ are offered. We denote the expected maximum load of this policy by ${\cal E}^2$. 

\paragraph{Sampling customer choices.}
Let $t\in \{1,\ldots,T\}$. First, we sample the choice of customer $t^{(0)}$ using an MNL choice model with respect to the assortment $S^{{\cal P}_0}_t$. Let us show how to sample the choices of customers $t^{(1)}$ and $t^{(2)}$ in a coupled fashion. For customer $t^{(1)}$, if customer $t^{(0)}$ selected some product $i\in S^{{\cal P}_1}_t\cup\{0\}$, then customer $t^{(1)}$ is also assigned to product $i$. Otherwise, $i\in S^{{\cal P}_0}_t\setminus S^{{\cal P}_1}_t$, and the choice of customer $t^{(1)}$ is decided according to an MNL choice model with respect to $S^{{\cal P}_1}_t$. Similarly, for customer $t^{(2)}$, if $t^{(0)}$ selected some product $i\in S^{{\cal P}_2}_t\cup\{0\}$, then $t^{(2)}$ is also assigned to product $i$. Otherwise, the choice of customer $t^{(2)}$ is decided according to a choice model with respect to $S^{{\cal P}_2}_t$. 

Next, we show that sampling the choices of customers $t^{(1)}$ and $t^{(2)}$ as described above is equivalent to simply offering the assortment $S^{{\cal P}_1}_t$ and $S^{{\cal P}_2}_t$, respectively. We explain why this is true for $t^{(1)}$, noting that the argument for for $t^{(2)}$ is symmetrical. First, customer $t^{(1)}$ can only select a product in $S^{{\cal P}_1}_t\cup \{0\}$. For every $i\in S^{{\cal P}_1}_t$, product $i$ is selected by customer $t^{(1)}$ if and only if one of the next two disjoint events occurs:
\begin{itemize}
    \item Product $i$ is selected by customer $t^{(0)}$. This happens with probability $v_i/(1+v(S^{{\cal P}_0}_t))$.  
    
    \item Customer $t^{(0)}$ selected some product in $S^{{\cal P}_0}_t\setminus S^{{\cal P}_1}_t$ and then product $i$ was selected by the MNL choice model when $S^{{\cal P}_1}_t$ was offered. This  happens with probability
    $$
        \frac{v(S^{{\cal P}_0}_t)-v(S^{{\cal P}_1}_t)}{1+v(S^{{\cal P}_0}_t)} \cdot 
        \frac{v_i}{1+v(S^{{\cal P}_1}_t)}.
    $$
\end{itemize}
Therefore, the overall probability that customer $t^{(1)}$ selects product $i$ is given by
\[     \frac{v_i}{1+v(S^{{\cal P}_0}_t)}+\frac{v(S^{{\cal P}_0}_t)-v(S^{{\cal P}_1}_t)}{1+v(S^{{\cal P}_0}_t)}\cdot\frac{v_i}{1+v(S^{{\cal P}_1}_t)}
 = \frac{v_i}{1+v(S^{{\cal P}_1}_t)} = \phi_i(S^{{\cal P}_1}_t). \]

\paragraph{Concluding the proof.} Let $(L_i^0\,\mid\,i\in U_0)$ be the load vector attained by applying the policy ${\cal P}_0$ for the arrival sequence ${\cal T}_0$.  Similarly,  $(L_i^1\,\mid\,i\in U_1)$ and $(L_i^2\,\mid\,i\in U_2)$ will be the load vectors corresponding to ${\cal P}_1$ and ${\cal P}_2$, applied for ${\cal T}_1$ and ${\cal T}_2$, respectively. The key observation is that, for every $t\in[T]$, when customer $t^{(0)}$ selects some product $i\in U_1$, then customer $t^{(1)}$ also selects this product. Therefore, for every $i\in U_1$, we have $L^0_i\leq L^1_i$. By analogy, for every $i\in U_2$, we have $L^0_i\leq L^2_i$. As a result, 
\begin{eqnarray}
    \max_{i\in U_0} L^0_i &= & \max\left(\max_{i\in U_1} L_i^0, \max_{i\in U_2}L_i^0\right)\nonumber\\
    &\leq & \max\left(\max_{i\in U_1} L_i^1, \max_{i\in U_2}L_i^2\right)\nonumber\\
    &\leq & \max_{i\in U_1} L_i^1+ \max_{i\in U_2}L_i^2.\label{eq:lasteq}
\end{eqnarray}
Therefore,
\begin{eqnarray*}
    \opt^{\dpp}(U_0) &= &\E\left(\max_{i\in U_0} L^0_i\right) \\
    &\leq& \E\left(\max_{i\in U_1} L_i^1\right) + \E\left(\max_{i\in U_2} L_i^2\right) \\
    &=& {\cal E}^1+{\cal E}^2 \\
    &\leq& \opt^{\dpp}(U_1)+\opt^{\dpp}(U_2). 
\end{eqnarray*}
Here, the first equality follows by recalling that  $(L^0_i\,\mid\,i\in U_0)$ is the load vector of an optimal dynamic policy for $U_0$. The next inequality is a consequence of Equation \eqref{eq:lasteq}.
The following equality follows from the definition of ${\cal E}^1$ and ${\cal E}^2$. The last inequality is obtained by noting that ${\cal P}_1$ and ${\cal P}_2$ are feasible dynamic policies with respect to the universes $U_1$ and $U_2$, respectively.

\subsection{Proof of Theorem \ref{thm:eqv}}\label{apx:eqv}

In what follows, we consider \ref{DMLA} instances where all  products have the same preference weight, which will be denoted by $v$. Our approach proceeds by distinguishing between three cases, depending on the magnitude of this parameter.

\paragraph{Case 1: ${v\geq 1}$.} In this case, we statically offer the same single product at each time step. The choice probability of this product is $v/(1+v)$, and its load is a Binomial random variable with $T$ trials and success probability $v/(1+v)$. This yields an expected maximum load of $Tv/(1+v)$. Therefore,
\[ \opt^{\ord}(\Nc)\geq T\cdot\frac{v}{1+v}\geq \frac T2\geq\frac{\opt^{\dpp}(\Nc)}2, \]
where the second inequality holds since $v\geq 1$, and the third inequality follows by noting that
the maximum load is always upper bounded by the total number of  customers $T$. 

\paragraph{{Case 2:} ${v<1/n}$.} Here, we  statically offer all products in  the universe $\Nc$ to each customer, arguing that this policy guarantees a $1/2$-approximation.  Using the notation of Lemma~\ref{lem:augment}, let $A$ be an optimal adaptive policy for the universe $\Nc$, and let $B$ be the static policy that offers the whole universe of products. In this case, the condition $S_t^A\subseteq S_t^B$ is trivially satisfied since $S_t^B=\Nc$. Moreover, $v(\Nc) < 1$ since $v < 1/n$, meaning that $v(S_t^B\setminus S_t^A) < 1$, for all customers $t \in [T]$. Therefore, by employing Lemma~\ref{lem:augment} with $\eps=1$, we have 
\[ \opt^{\ord}(\Nc)\geq  \mathbb E (M(\Nc))       \geq\frac{\opt^{\dpp}(\Nc)}2. 
 \]


\paragraph{{Case 3:}  ${1/n\leq v<1}$.}
In this case, let $ 2 \leq k\leq n$ be the unique integer for which $\frac1k\leq v< \frac{1}{k-1}$. In order to prove that $\opt^{\ord}(\Nc)\geq \frac{1}{2} \cdot \opt^{\dpp}(\Nc)$, we argue that
a $1/2$-approximation can be attained by statically offering the same set of $k$ products to all customers, say $S = \{ 1, \ldots, k \}$. To analyze the exact guarantee of this policy, let $ \widetilde S =  \{\Tilde 1,\ldots, \Tilde k\}$ be a collection of $k$ virtual products, where each product $\Tilde{i}\in \widetilde S$ has a preference weight of $1/k$. Also, let $(\widehat L_1, \ldots, \widehat L_k)$ be a Multinomial vector with $T$ trials and probabilities $1/k$ for each outcome, with $\widehat M = \max_{i=1,\ldots,k}\widehat L_i$. Our analysis is based on proving the next three claims.

\begin{lemma}\label{lem:eqv1}
$\E(M(S)) \geq \E(M(\widetilde S))$.
\end{lemma}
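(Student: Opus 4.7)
The plan is to reduce this comparison to a direct application of Lemma~\ref{lem:probs}. Both $S$ and $\widetilde S$ consist of exactly $k$ products and hence induce $k$-component Multinomial load vectors with $T$ trials. Writing out the MNL probabilities explicitly, each product in $S$ has choice probability $v/(1+kv)$, since all $k$ products in $S$ share weight $v$; on the other hand, since $v(\widetilde S) = k \cdot (1/k) = 1$, each product in $\widetilde S$ has choice probability $(1/k)/(1+1) = 1/(2k)$.

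The first step I would carry out is to verify that the per-product choice probability under $S$ pointwise dominates that under $\widetilde S$, namely that $v/(1+kv) \geq 1/(2k)$. Cross-multiplying, this rearranges to $2kv \geq 1+kv$, i.e., $kv \geq 1$, which is precisely the case hypothesis $v \geq 1/k$ that defines the integer $k$. Hence, for every product index $i \in \{1,\ldots,k\}$, the corresponding MNL probability under $S$ is weakly larger than that under $\widetilde S$.

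With this pointwise domination in hand, the plan is to invoke Lemma~\ref{lem:probs} with $\epsilon = 0$, taking $\mathbf{Y}$ to be the load vector of $\widetilde S$ and $\mathbf{W}$ to be the load vector of $S$ (both viewed as Multinomial with $T$ trials and $m=k$ product components, augmented by the no-purchase coordinate). The hypothesis $p_i^W \geq (1-\epsilon)\, p_i^Y$ for $i\in\{1,\ldots,k\}$ reduces exactly to the inequality verified in the previous step, and the resulting conclusion $\E(\max_i W_i) \geq \E(\max_i Y_i)$ is precisely $\E(M(S)) \geq \E(M(\widetilde S))$, as desired.

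There is no real obstacle here: the lemma is essentially a one-line consequence of the sensitivity result once the MNL probabilities are written down explicitly. The only point worth double-checking is that the required inequality $v \geq 1/k$ holds across the entire parameter range $1/k \leq v < 1/(k-1)$ defining Case~3, which is immediate from the definition of $k$.
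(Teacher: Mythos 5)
Your proof is correct and follows essentially the same route as the paper: both verify the pointwise domination $v/(1+kv) \geq 1/(2k)$, which reduces to the case hypothesis $v \geq 1/k$, and then conclude by a coupling of the two Multinomial load vectors. The only cosmetic difference is that you formalize the coupling by invoking Lemma~\ref{lem:probs} with $\epsilon = 0$ (whose proof is exactly such a coupling), whereas the paper constructs the coupling directly.
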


\begin{lemma}\label{lem:eqv2}
$\E(M(\widetilde S)) \geq \E(\widehat M)/2$.
\end{lemma}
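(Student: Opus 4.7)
\textbf{Proof plan for Lemma \ref{lem:eqv2}.} The plan is to recognize this as a direct application of the sensitivity result in Lemma~\ref{lem:probs}, mirroring the argument that was used in the proof of Lemma~\ref{lem:3}. First I would compute the relevant MNL choice probabilities when the virtual assortment $\widetilde S$ is offered: since each product $\tilde i \in \widetilde S$ has preference weight $1/k$ and $|\widetilde S| = k$, the total preference weight is $v(\widetilde S) = 1$, so $\phi_{\tilde i}(\widetilde S) = \frac{1/k}{1+1} = \frac{1}{2k}$, while $\phi_0(\widetilde S) = \frac{1}{2}$. Consequently, the load vector $(L_0(\widetilde S), L_1(\widetilde S), \ldots, L_k(\widetilde S))$ obtained from offering $\widetilde S$ to all $T$ customers follows a Multinomial distribution with parameters $(T; \tfrac{1}{2}, \tfrac{1}{2k}, \ldots, \tfrac{1}{2k})$.

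Next I would set up the two Multinomial vectors required by Lemma~\ref{lem:probs}. Let $\mathbf{W} = (L_0(\widetilde S), L_1(\widetilde S), \ldots, L_k(\widetilde S))$, and let $\mathbf{Y} = (Y_0, Y_1, \ldots, Y_k)$ be the Multinomial vector with parameters $(T; 0, \tfrac{1}{k}, \ldots, \tfrac{1}{k})$, so that by construction $Y_0 = 0$ almost surely and $(Y_1, \ldots, Y_k)$ is equal in distribution to $(\widehat L_1, \ldots, \widehat L_k)$. Comparing the two probability vectors component-wise for $i \in \{1, \ldots, k\}$ gives
\[
p^W_i \;=\; \frac{1}{2k} \;=\; \frac{1}{2} \cdot \frac{1}{k} \;=\; \frac{1}{2} \cdot p^Y_i,
\]
so the hypothesis of Lemma~\ref{lem:probs} is satisfied with $\epsilon = 1/2$.

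Finally, applying Lemma~\ref{lem:probs} with this choice of $\epsilon$ yields
\[
\E\!\left(\max_{i=1,\ldots,k} L_i(\widetilde S)\right) \;\geq\; \tfrac{1}{2} \cdot \E\!\left(\max_{i=1,\ldots,k} Y_i\right),
\]
which is precisely $\E(M(\widetilde S)) \geq \tfrac{1}{2} \cdot \E(\widehat M)$, as desired. There is no genuine obstacle here: the only subtlety is the bookkeeping required to cast the two distributions (one of which has no no-purchase coordinate) into the common Multinomial format demanded by Lemma~\ref{lem:probs}, which is resolved by padding $\widehat{\mathbf{L}}$ with a trivial zero-probability no-purchase component.
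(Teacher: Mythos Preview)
Your proof is correct. The approach differs slightly from the paper's own proof of this lemma: rather than invoking Lemma~\ref{lem:probs}, the paper builds the coupling from scratch, selecting for each customer $t$ a uniform component $I_t \in \{1,\ldots,k\}$ to drive $(\widehat L_1,\ldots,\widehat L_k)$ and an independent Bernoulli$(1/2)$ variable $Z_t$ to decide whether customer $t$ of $\widetilde S$ follows $I_t$ or goes to the no-purchase option; it then conditions on $(\widehat L_1,\ldots,\widehat L_k)$ and compares $M(\widetilde S)$ to $L_I(\widetilde S)$, where $I$ is the argmax component. Your route is more economical: you recognize that this situation is exactly the template of Lemma~\ref{lem:probs} with $\epsilon=1/2$, once the ``no no-purchase'' vector $(\widehat L_1,\ldots,\widehat L_k)$ is padded with a trivial $Y_0$ having probability zero. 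This is precisely the pattern the paper itself uses in the proof of the analogous Lemma~\ref{lem:3}, so you are in fact applying the paper's machinery more uniformly than the paper does. The only point worth noting is that Lemma~\ref{lem:probs} is stated for Multinomial vectors indexed from $0$, and its proof never divides by $p^Y_0$, so setting $p^Y_0 = 0$ causes no difficulty.
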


\begin{lemma}\label{lem:eqv3}
$\E(\widehat M) \geq \opt^\dpp(\Nc)$.
\end{lemma}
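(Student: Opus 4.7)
The plan is to derive Lemma~\ref{lem:eqv3} as a direct application of Lemma~\ref{lem:multinom}, which gives a general upper bound on $\opt^\dpp(U)$ in terms of the expected maximum of a Multinomial vector satisfying a certain probability condition. Recall that we are in Case~3, so all products share the common preference weight $v$ with $1/k \le v < 1/(k-1)$, and $(\widehat L_1,\ldots,\widehat L_k)$ is Multinomial with $T$ trials and uniform probability vector $(1/k,\ldots,1/k)$.

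First I would set $U = \mathcal{N}$ and $p_i = 1/k$ for every $i = 1, \ldots, k$ in the statement of Lemma~\ref{lem:multinom}. To invoke the lemma, I need to verify that $\min_{i=1,\ldots,k} p_i \geq \max_{i \in \mathcal{N}} v_i/(1+v_i)$. Since every product has preference weight $v$, the right-hand side equals $v/(1+v)$, while the left-hand side is $1/k$. Thus the required inequality reduces to $1/k \geq v/(1+v)$, equivalently $v(k-1) \leq 1$, which is exactly the defining upper bound $v < 1/(k-1)$ on $k$ in Case~3.

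Having verified the hypothesis of Lemma~\ref{lem:multinom}, the conclusion yields $\E(\max(\widehat L_1,\ldots,\widehat L_k)) \geq \opt^\dpp(\mathcal{N})$, i.e., $\E(\widehat M) \geq \opt^\dpp(\mathcal{N})$, which is precisely the statement of Lemma~\ref{lem:eqv3}. There is no real obstacle here: the work has already been done in establishing Lemma~\ref{lem:multinom}, and the only subtlety is matching the probability-vector condition to the case hypothesis $v < 1/(k-1)$, which is immediate. The proof is therefore essentially a one-line invocation preceded by the straightforward algebraic check above.
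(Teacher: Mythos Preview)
Your proposal is correct and follows essentially the same approach as the paper: a direct application of Lemma~\ref{lem:multinom} with $U=\mathcal{N}$ and $p_i=1/k$, verifying the probability condition $1/k\ge v/(1+v)$ via the Case~3 hypothesis $v<1/(k-1)$.
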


Consequently, by combining Lemmas~\ref{lem:eqv1}-\ref{lem:eqv3}, it follows that
\[ \opt^{\ord}(\Nc)\geq         \E(M(S)) \geq \E(M(\widetilde S))\geq \frac{ \E(\widehat M) }{ 2 } \geq \frac{ \opt^\dpp(\Nc) }{ 2 }. \]

\paragraph{Proof of Lemma~\ref{lem:eqv1}.} When statically offering $S$, the choice probability of each product is $p = v/(1+kv)$. Similarly, when statically offering, $\widetilde S$, the choice probability of each product is $\Tilde p = 1/2k$. The key idea is to notice that
\[ p = \frac{v}{1+kv} =  \frac{1}{1/v + k}\geq \frac{1}{2k} = \Tilde p, \]
where inequality above holds since $v\geq 1/k$. Hence, the choice probability of each product when offering $S$ is at least that of any product when offering $\widetilde S$. It is therefore easy to construct a coupling where $M(S) \geq M(\widetilde S)$, implying that $\E(M(S)) \geq \E(M(\widetilde S))$. 

\paragraph{Proof of Lemma~\ref{lem:eqv2}.} Let us first notice that, when statically offering $\widetilde S$, the choice probability of every product is $1/(2k)$, whereas the probability of every component of the Multinomial vector $(\widehat L_1,\ldots,\widehat L_k)$ is $1/k$. To couple between $M(\widetilde S)$ and $\widehat M$, for each customer $t\in [T]$, we select a component $I_t\in \{1,\ldots,k\}$ uniformly at random; the $t$-th trial of the Multinomial vector $(\widehat L_1, \ldots,\widehat L_k)$ is then assigned to component $I_t$. In order to simulate the selection of customer $t$ with respect to the load vector $\mathbf L(\widetilde S)$, we sample a Bernoulli random variable $Z_t$ with success probability $1/2$. If $Z_t=1$, then customer $t$ is assigned to product $I_t$. Otherwise, this customer is assigned to the no-purchase option. It is easy to verify that the constructed load vector is equal in distribution to $\mathbf{L}(\widetilde S)$. 

To complete the proof, let $I$ be the random variable specifying the index of the maximum component of the Multinomial vector $(\widehat L_1, \ldots,\widehat L_k)$. In the case of ties, we take the one with lowest index, i.e., $I=\min\{i : \widehat L_i = \widehat M\}$. Conditioning on the outcome of $(\widehat L_1,\ldots,\widehat L_k)$, we have
\begin{eqnarray*}
\E(M(\widetilde S)\,\mid\,\widehat L_1,\ldots,\widehat L_k) &\geq &\E(L_I(\widetilde S)\,\mid\,\widehat L_1,\ldots,\widehat L_k)\\
&=&\frac12\cdot\E(\widehat L_I\,\mid\,\widehat L_1,\ldots,\widehat L_k)\\
&=&\frac{1}{2}\cdot\E(\widehat M\,\mid\,\widehat L_1,\ldots,\widehat L_k).    
\end{eqnarray*}
Here, the first inequality comes from the fact that $M(\widetilde S)$ is by definition greater than or equal to all the loads of the vector $\mathbf{L}(\widetilde S)$. Finally, by taking expectations on both sides of this inequality and applying the tower property, it follows that $\E(M(\widetilde S)) \geq \E(\widehat M)/2$.

\paragraph{Proof of Lemma~\ref{lem:eqv3}.} We argue that this claim is a direct implication of Lemma \ref{lem:multinom}. Let us show that the conditions of the latter lemma are met. First, the probability of every component of the Multinomial vector $(\widehat L_1,\ldots,\widehat L_k)$ is exactly $1/k$. Second, since all products have the same preference weight, $v$, the right hand side of $\min_{i=1,\ldots,k}p_i\geq \max_{i\in U} \frac{v_i}{1+v_i}$ is simply $v/(1+v)$. In addition, since $v < \frac{ 1 }{k-1}$, we indeed have $\frac{ 1 }{ k } \geq \frac{ v }{ 1 + v }$. Therefore, by applying Lemma \ref{lem:multinom}, we conclude that $\E(\widehat M) \geq \opt^\dpp(\Nc)$.

\section{Numerical Study of the Adaptivity Gap}\label{apx:justif}
In this section, we present numerical experiments to study the effect  of  several model primitives on the adaptivity gap. Interestingly, this examination will  motivate our lower bound construction  for  the best possible adaptivity gap (see Section \ref{subsec:lowbd}). First, recall that the latter measure is defined as the worst case ratio between the expected maximum load of an optimal dynamic policy and that of an optimal static policy, over all possible instances, i.e., \begin{equation*}
    \max_{I\in {\cal I}} \frac{\opt^{\dpp}_I}{\opt^{\stat}_I}.
\end{equation*}
As such, proving that some constant $C>1$ forms an upper bound on the adaptivity gap requires as to show that $\opt^{\dpp}_I/\opt^{\stat}_I \leq C$ for all instances $I\in \cal I$.
A result of this nature is given by Theorem \ref{thm:adaptivitygap}, which shows that an optimal dynamic policy cannot exceed an optimal static policy by a factor greater than $C=4$, for any instance. Conversely, showing that some constant $D>1$ is a lower bound requires a simpler condition, namely, proving the existence of a single instance $I\in \cal I$ for which $
    \opt^{\dpp}_I/\opt^{\stat}_I \geq D. 
$
\subsection{Effect of the parameter  \texorpdfstring{$\bf T$}{T}}\label{subsec:Tregime}
\paragraph{Experimental setup.}
Here, our goal is to identify the regimes of the parameter $T$, which experimentally display the largest adaptivity gaps. 
We  vary the number of products in the range $\{2,5,10\}$. We generate the preference weights from the positive part of a normal distribution with parameters $\mu$ and $\sigma$, where $\mu$ varies in the range $\{0.01, 0.1, 0.5, 1, 5\}$ and $\sigma$ varies in $\{0, 0.1, 1\}$. The choice of the normal distribution (as opposed to an exponential distribution, for example) is meant to control both the mean and the variance of the sampled values, as we also wish to investigate the effect of variance in preference weights on the adaptivity gap in Appendix \ref{subsec:variancedaptivity}. The number of customers $T$ varies in the range $\{2,3,4,5,6,7,10,12,15, 16\}$. Finally, for each set of parameters $(n,T,\mu,\sigma)$, we generate $1000$ instances. For each instance $I$, let $A_I$ denote its corresponding adaptivity gap, i.e., the ratio between the objective values of an optimal dynamic policy and an optimal static policy. We solve each static instance through an exhaustive enumeration of all possible assortments. The optimal dynamic policy is obtained by solving the dynamic program \eqref{DMLA} with a top-down approach using memoization. 
For each problem instance, we compute the metric \begin{equation}\label{eq:aggregate}
    r_I = 100 \cdot \left(1- \frac{1}{A_I}\right),
\end{equation}
which refers to the percentage gained in the objective function when employing an optimal dynamic policy as opposed to a static one. Finally for every $T\in \{2,3,4,5,6,7,10,12,15, 16\}$, we pool together all generated instances with $T$ customers, and return several useful statistics on the metric $r_I$, namely, the mean, median, and  maximum. The statistic of most interest is the maximum, as our goal is to identify the highest possible adaptivity gap. The remaining statistics provide a more general overview of how the adaptivity gap behaves with respect to the number of customers $T$.

\paragraph{Analysis.}  Our results are summarized in Table \ref{fig:tableT}, demonstrating a clear decrease in adaptivity gap as $T$ increases. This  evidence suggests that the maximum gain in the expected maximum load when employing an optimal dynamic policy, as opposed to a static one, is attained when there are fewer customers. This observation aligns with our theoretical analysis in Lemma \ref{lem:high}, which indicates that large values of $T$ yield instances where statically offering the heaviest product is nearly optimal, which in turn signifies a smaller gain in employing a dynamic policy instead of a static one. In essence, for large values of $T$, a dynamic policy must commit early to offering products likely to attract a large number of customers, which are precisely the heavier products. Consequently, the impact of adaptivity is marginally felt, as committing to the offered products at an early stage renders each sample path of a dynamic policy similar to that of a static policy.

\begin{table}[htbp!]
        \centering
\begin{tabular}{||c|c|c|c|c|c|c|c|}
\hline
\rule{0pt}{12.5pt}{}    &  \multicolumn{3}{c|}{\begin{tabular}{l}$r_I$ (\%)\end{tabular}} & {} &\multicolumn{3}{c|}{\begin{tabular}{l}$r_I$ (\%)\end{tabular}} \\
\hline
{$T$} & {Median} & {Mean} & {Max} & {$T$} & {Median} & {Mean} & {Max} \\
\hline
  2  & 4.72 & 6.68 & 22.21 & 7  & 2.1 & 3.49 & 15.35 \\
\hline
  3  & 4.03 & 6.01 & 20.21 & 10 & 0.99 & 2.82 & 14.04 \\
\hline
  4  & 3.4 & 5.07 & 18.55 &   12 & 0.89 & 2.49 & 13.04 \\
\hline
  5  & 2.79 & 4.51 & 17.62 &  15  & 0.66 & 2.09 & 10.71 \\
\hline
  6  & 2.25 & 3.96 & 16.28 &  16 &  0.62 & 1.99 & 10.41 \\
\hline

\end{tabular}
        \caption{Comparison of the percentage gain in objective when employing an optimal dynamic policy instead of an optimal static policy.}
        \label{fig:tableT}
    \end{table}
    
\subsection{Effect of the preference weights}\label{subsec:variancedaptivity}

In what follows, we numerically analyze how preference weight values influence the adaptivity gap. Specifically, our focus is on determining the impact of variance in the preference weights. Is the adaptivity gap  higher when the preference weights are closer to each other, or when there is considerable variance among them? 
\paragraph{Experimental setup.} 
We generate our data set in a similar fashion to Appendix \ref{subsec:Tregime}. In light of that analysis, we focus on smaller values of $T$, and hence vary this parameter in the range $\{2, 3, 4\}$. We vary the parameter $\mu$ in $\{0.001, 0.01, 0.1, 0.5, 0.7, 1, 1.2, 1.5, 2, 3, 5\}$, whereas $\sigma$ varies in an evenly spaced grid of the interval $[0, \mu]$ with $20$ steps. We vary the number of products $n$ in the range $\{2,3,5,6,9,10\}$. For every value of $n$, and every triplet $(T,\mu,\sigma)$, we generate $1000$ instances using the same process described in Appendix \ref{subsec:Tregime}. For each generated instance, we compute $r_I$ as described by Equation \eqref{eq:aggregate}, and return the maximal $r_I$ obtained over all instances tested. Finally, we rank the triplets $(T,\mu,\sigma)$ for every fixed $n$ by the maximal obtained $r_I$.

\paragraph{Analysis.}
In Table \ref{fig:tablevariance}, for each value of $n$, we display the triplets $(T,\mu,\sigma)$ for which the top five highest adaptivity gaps were reached. The second column reports the ranking of the top five instances where we have obtained the highest values of $r_I$.
First, we observe that the highest values of $r_I$ in our numerical experiments are reached with a large number of products. 
Moreover, we observe from Table \ref{fig:tablevariance} that the highest adaptivity gaps are reached for lower values of the variance parameter $\sigma$. In fact, in most instances the highest values of $r_I$ were reached with $\sigma = 0$, i.e., when all preference weights are equal. Moreover, despite pooling together instances for different values of $T\in \{2,3,4\}$, the numerical evidence suggests that the highest adaptivity gaps are reached for $T=2$. 

\begin{table}[htbp!]
        \centering
\begin{tabular}{||c|c|c|c|c|c||c|c|c|c|c|c|}
\hline
{$n$} & {Rank} & {$T$} & {$\mu$} & {$\sigma$} & {$r_I(\%)$}& {$n$} & {Rank} & {$T$} & {$\mu$} & {$\sigma$} & {$r_I(\%)$}\\
\hline
   & 1 & 2 & 1.5 & 0.9 & 12.72 & & 1  & 2 & 1.2 & 0.0 & 20.59  \\
   & 2 &2 & 0.7 & 0.66 & 12.71 & & 2  & 2 & 1.3 & 0.0 & 20.46 \\
  2 & 3  & 2 & 1.5 & 0.97 & 12.7 & 6 & 3  & 2 & 1.2 & 0.18 & 20.42 \\
   & 4  & 2 & 1.5 & 0.15 & 12.69 & & 4 & 2 & 1.2 & 0.06 & 20.39  \\
   &  5  & 2 & 1.5 & 0.23 & 12.69 && 5  & 2 & 1.4 & 0.0 & 20.38  \\
\hline
   & 1  & 2 & 1.4 & 0.56 & 16.6 & & 1  & 2 & 1.2 & 0.0 & 22.08  \\
   & 2  & 2 & 1.5 & 0.07 & 16.55 & & 2  & 2 & 1 & 0.0 & 21.88  \\
  3 & 3  & 2 & 1.5 & 0.23 & 16.52 & 9 & 3  & 2 & 1.3 & 0.0 & 21.85 \\
   & 4  & 2 & 1.4 & 0.21 & 16.51 & & 4 & 2 & 1.2 & 0.06 & 21.78  \\
   &  5  & 2 & 1.4 & 0.28 & 16.51 && 5  & 2 & 1 & 0.05 & 21.75  \\
\hline
   & 1  & 2 & 1.2 & 0.0 & 19.66 & & 1  & 2 & 1.2 & 0.0 & 22.37  \\
   & 2  & 2 & 1.3 & 0.0 & 19.59 & & 2  & 2 & 1 & 0.0 & 22.21  \\
  5 & 3  & 2 & 1.5 & 0.0 & 19.58 & 10 & 3  & 2 & 1.3 & 0.0 & 22.12  \\
   & 4  & 2 & 1.4 & 0.0 & 19.57 & & 4 & 2 & 1.2 & 0.06 & 21.98  \\
   &  5  & 2 & 1.5 & 0.07 & 19.53 && 5  & 2 & 1 & 0.05 & 21.97  \\
\hline

\end{tabular}
        \caption{Comparison of the percentage gain in objective when employing an optimal dynamic policy instead of an optimal static policy.}
        \label{fig:tablevariance}
    \end{table}

In light of these observations, we focus in Section \ref{subsec:lowbd} on identifying the instance displaying the maximal adaptivity gap, by restricting our analysis to instances with $\sigma=0$, i.e., identical preference weights for all products, $T=2$, and a large number of products.

 \section{Proofs from Section \ref{sec:dynamic}}


\subsection{Proof of Lemma \ref{lem:high}}\label{apx:high}

First, when $v_{\max} \geq 1/\eps$, it is easy to verify that statically offering the heaviest product achieves an expected maximum load of at least $\frac{ v_{\max} }{ 1 + v_{\max} } \cdot T \geq (1-\eps) \cdot T$, and therefore yields the desired $(1-\eps)$-approximation. In the remainder of this proof, we consider the case where $T\alpha\geq 12\ln(nT)/\eps^3$.
Focusing on a fixed optimal dynamic policy, let $(L_1,\ldots,L_n)$ be its random load vector, and let $M = \max_{i\in \Nc}L_i$. In particular, we have $\E(M) = \opt^{\dpp}(\Nc)$. First, at every step, we observe that the choice probability of any product is at most $\alpha$, since for every assortment $S\subseteq \Nc$  and every product $i\in S$, 
\[ \phi_i(S) = \frac{v_i}{1+\sum_{j\in S}v_j}\leq \frac{v_i}{1+v_i}\leq \frac{v_{\max}}{1+v_{\max}}=\alpha. \]
Therefore, we can couple each random load $L_i$ with a Binomial random variable $Z_i\sim B(T,\alpha)$, such that $Z_i\geq L_i$ almost surely. As a result,
\begin{eqnarray*}
\P\left(L_i\geq \left(1+\frac \epsilon2\right)\cdot T\alpha\right) & \leq & \P\left(Z_i\geq\left(1+\frac\epsilon2\right)\cdot \E(Z_i)\right) \\
& \leq & \exp\left(-\frac{\epsilon^2}{12}T\alpha\right) \\
& \leq & \exp\left(-\frac{\ln(nT)}{\eps}\right) \\
& = &\left(\frac{1}{nT}\right)^{\frac{1}{\epsilon}}. 
\end{eqnarray*}
Here, the second inequality comes from the following Chernoff bound \citep[Sec.~1.10.1]{doerr2020probabilistic}, stating that when $X$ is a Binomial random variable and $\delta <1$, 
\begin{equation*}
 	\P(X\geq (1+\delta) \cdot \E(X)) \leq \exp\left(  -\frac{ \delta^2\E(X)}{3}  \right).
\end{equation*}
The third inequality follows from the case hypothesis, $T\alpha \geq \frac{12\ln(nT)}{\epsilon^3}$.
Using a union bound, we get
\begin{equation} \label{eq:imoback}
	\P\left(M\geq \left(1+\frac\epsilon 2\right) \cdot T\alpha\right)\leq \sum_{i=1}^n\P\left(L_i\geq \left(1+\frac\epsilon2\right) \cdot T\alpha\right) \leq n\cdot \left(\frac1{nT}\right)^{\frac1\eps}.
\end{equation}
By conditioning on the event $\{M\geq \left(1+\frac\epsilon 2\right) \cdot T\alpha\}$ and on its complement, we have
\begin{align*}
    \E(M) &\leq \P\left(M\geq \left(1+\frac\epsilon 2\right)T\alpha\right)\cdot T+\left(1+\frac\epsilon2\right) \cdot T\alpha\\
    & \blue{\leq n\cdot \left(\frac{1}{nT}\right)^{\frac{1}{\eps}} \cdot T + \left(1+\frac{\eps}{2}\right)\cdot T\alpha}\\
    &\leq (1+\eps)\cdot T\alpha
\end{align*}
The first inequality holds since $\E (M | M\geq \left(1+\frac\epsilon 2\right)T\alpha)$ is trivially bounded by the number of customers $T$ and since $\P (M < \left(1+\frac\epsilon 2\right)T\alpha) \leq 1$. In the second inequality, we substitute Equation \eqref{eq:imoback}. The last inequality follows Claim~\ref{cl:ineq} below. Consequently, we have just shown that $T \alpha \geq (1-\eps) \cdot \E(M)$. Thus, by statically offering the heaviest product to all customers, we secure at least a $(1-\epsilon)$-fraction of the optimal expected maximum load.

\begin{claim}\label{cl:ineq}
$(\frac1{nT})^{\frac1\eps-1}\leq \frac{\eps}{2}T\alpha$.
\end{claim}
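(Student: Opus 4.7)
} My plan is to turn the claim into an equivalent inequality on which the assumption of the high-weight regime directly bites, and then to verify it through elementary estimates. Rearranging the target bound, it suffices to show
\[
(nT)^{\frac{1}{\eps}-1} \;\geq\; \frac{2}{\eps\, T\alpha},
\]
so I would first dispose of the right-hand side using the regime hypothesis $T\alpha \geq 12\ln(nT)/\eps^3$, which yields $\frac{2}{\eps\, T\alpha} \leq \frac{\eps^2}{6\ln(nT)}$. Thus the only thing left to prove is that $(nT)^{1/\eps-1} \geq \eps^2/(6\ln(nT))$.

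The second step will then reduce to a simple monotonicity argument. Since $\eps \in (0,1)$, I have $1/\eps - 1 \geq 0$, so $(nT)^{1/\eps - 1} \geq 1$. Meanwhile, the standing assumption $n,T \geq 2$ (as argued at the beginning of Section~\ref{subsec:highr}) gives $nT \geq 4$, whence $6\ln(nT) \geq 6\ln 4 > 1 \geq \eps^2$, so the right-hand side $\eps^2/(6\ln(nT))$ is strictly less than $1$. Chaining these two observations yields $(nT)^{1/\eps - 1} \geq 1 > \eps^2/(6\ln(nT))$, completing the claim.

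The only delicate point I anticipate is making sure the two ``trivial'' assumptions I am relying on are indeed at my disposal at this stage of the paper: the condition $\eps < 1$ (which is stipulated by the PTAS/QPTAS setup) and the reduction $n,T \geq 2$ (which was explicitly justified at the start of Section~\ref{subsec:highr}). With both in hand, there is no intricate calculation to perform; the proof is essentially a two-line chain of inequalities.
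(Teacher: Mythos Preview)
Your proof is correct and uses the same two ingredients as the paper (the high-weight hypothesis $T\alpha \geq 12\ln(nT)/\eps^3$ and the standing assumption $n,T\geq 2$). Your execution is in fact slightly cleaner: the paper bounds each side separately and then reduces to checking that $x\mapsto x^2/4^{1/x}$ is nondecreasing on $(0,1]$, whereas you avoid that step entirely by observing directly that $(nT)^{1/\eps-1}\geq 1$ and $\eps^2/(6\ln(nT))<1$.
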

\begin{proof}
Since $n\geq 2$ and $T\geq 2$, as argued in the beginning of Section~\ref{subsec:highr}, we have 
\[ \frac{\eps}{2}T\alpha\geq \frac{\eps}{2}\cdot\frac{12\ln(nT)}{\eps^3}\geq \frac{6\ln(4)}{\eps^2} \geq \frac{4}{\eps^2}, \] where the first inequality holds by the case hypothesis, $T\alpha\geq 12\ln(nT)/\eps^3$.
On the other hand, $(\frac1{nT})^{\frac1\eps-1}\leq (\frac1{4})^{\frac1\eps-1}$. Therefore, it suffices to show that $\frac{4}{\eps^2}\geq \left(\frac1{4}\right)^{\frac1\eps-1}$, which is equivalent to  $\frac{\eps^2}{4^{1/\eps}}\leq 1$.
This inequality holds since the function $x\mapsto \frac{x^2}{4^{1/x}}$
is nondecreasing on $(0,1]$, reaching its its maximum at $x=1$. Therefore, $\frac{\eps^2}{4^{1/\eps}}\leq 1/4\leq 1$. 
\end{proof}


\subsection{Proof of Lemma \ref{lem:low}}\label{apx:low}

By recycling the notation of Appendix~\ref{apx:high}, for a fixed optimal dynamic policy, let $(L_1,\ldots,L_n)$ be its random load vector, and let $M = \max_{i\in \Nc}L_i$. We first argue that $\P(L_i=k) \leq   \frac{1}{nT^2}$, for every product $i \in \Nc$ and for every integer $k \in [(\frac{12\ln(nT)}{\epsilon^3})^2, T]$. To this end, since $\alpha = v_{\max}/(1+v_{\max})$ is an upper bound on the choice probability of any product with respect to any assortment, we have
\begin{equation} \label{eqn:light_mass_Li}
\P(L_i=k) \leq \binom{T}{k}\alpha^k (1 - \alpha)^{T-k}
\leq \left(\frac{eT\alpha}{k}\right)^k \leq \left(\frac{e}{\sqrt{k}}\right)^k \leq   \frac{1}{nT^2}. 
\end{equation}
Here, the second and third  inequalities hold since $\binom{T}{k}\leq (eT/k)^k$ and since $k \geq (\frac{12\ln(nT)}{\epsilon^3})^2 \geq (T\alpha)^2$, by the lemma's hypothesis. The final inequality is stated as the next claim, whose proof appears at the end of this section.

\begin{claim}\label{cl:bound}
$(\frac{e}{\sqrt{k}})^k\leq \frac{1}{nT^2}$.
\end{claim}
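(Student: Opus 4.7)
My plan is to reduce the inequality to a purely calculus-level estimate and then exploit the quantitative lower bound on $k$ that is built into the hypothesis of the claim. Specifically, taking natural logarithms on both sides, the target inequality $(e/\sqrt{k})^k\leq 1/(nT^2)$ is equivalent to
\[
k\left(\tfrac{1}{2}\ln k - 1\right) \;\geq\; \ln n + 2\ln T.
\]
So the whole question boils down to showing that the left-hand side is at least $\ln(nT^2)$, which is at most $2\ln(nT)$ since $T\geq 2$ (in particular $\ln n\leq \ln(nT)$).

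Next, I would unpack what the hypothesis $k\geq (12\ln(nT)/\eps^3)^2$ gives us. Writing $L=\ln(nT)$ for brevity, and recalling from the beginning of Section~\ref{subsec:highr} that we may assume $n\geq 2$ and $T\geq 2$, so that $L\geq \ln 4>1$, and that $\eps\in(0,1)$, the hypothesis yields $k\geq 144\, L^2$ and in turn $\sqrt{k}\geq 12L$. From the latter, $\ln\sqrt{k}\geq \ln 12 + \ln L$, and hence
\[
\tfrac{1}{2}\ln k - 1 \;=\; \ln\sqrt{k}-1 \;\geq\; \ln 12 - 1 + \ln L \;\geq\; \ln L,
\]
since $\ln 12>1$. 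This is the one place where the constants really have to work out, but the slack is comfortable.

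Putting the two ingredients together,
\[
k\left(\tfrac{1}{2}\ln k - 1\right) \;\geq\; 144\,L^2\cdot \ln L \;\geq\; 2L,
\]
where the last step uses $72\,L\ln L\geq 1$, which holds because $L\geq \ln 4$ gives $72L\ln L \geq 72(\ln 4)^2>1$. Since $\ln n + 2\ln T\leq 2L$, this delivers exactly the required inequality and hence the claim.

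I do not anticipate a real obstacle here; the only minor issue is confirming the two numerical checks ($\ln 12 > 1$ and $72L\ln L\geq 1$), both of which are immediate from $L\geq \ln 4$. The argument is robust to replacing $144$ by any constant at least $4$ or so, which is why the $12/\eps^3$ factor in the hypothesis is more than enough.
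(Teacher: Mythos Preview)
Your argument is correct and is essentially the same as the paper's: both exploit $k\geq 144\ln^2(nT)$ together with $\ln(nT)\geq \ln 4$ to show that $(e/\sqrt{k})^k$ decays at least like $e^{-144\ln^2(nT)}$, which easily dominates $1/(nT^2)$. One small slip to fix: in your last numerical check, from $L\geq \ln 4$ you only get $72L\ln L \geq 72(\ln 4)\ln(\ln 4)\approx 32.5$, not $72(\ln 4)^2$; the conclusion $72L\ln L\geq 1$ of course still holds.
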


By combining inequality~\eqref{eqn:light_mass_Li} and the union bound, \begin{equation}\label{eq:equation1000}
     \P\left(M\geq \left(\frac{12\ln(nT)}{\epsilon^3}\right)^2\right) \leq \sum_{i=1}^n \sum_{k \geq (\frac{12\ln(nT)}{\epsilon^3})^2}      \P\left(L_i = k\right) \leq \frac{ 1 }{ T }.
\end{equation}
We are now ready to derive the desired upper bound on $\opt^{\dpp}(\Nc) = \E(M)$. Specifically, by conditioning on the event $\{M\geq (\frac{12\ln(nT)}{\epsilon^3})^2\}$ and on its complement, 
\begin{eqnarray*}
		\E(M)&\leq& T\cdot \P\left(M\geq \left(\frac{12\ln(nT)}{\epsilon^3}\right)^2\right) + \left(\frac{12\ln(nT)}{\epsilon^3}\right)^2\cdot \P\left(M < \left(\frac{12\ln(nT)}{\epsilon^3}\right)^2\right)\\
		&\leq & 1 + \left(\frac{12\ln(nT)}{\epsilon^3}\right)^2\\
        & \leq & 2\cdot \left(\frac{12\ln(nT)}{\epsilon^3}\right)^2\\
        &\leq& \frac{300\ln^2(nT)}{\eps^6},
\end{eqnarray*}
where the second inequality follows from~\eqref{eq:equation1000} and third inequality holds since $T\geq 2$ and $n\geq 2$.

\paragraph{Proof of Claim \ref{cl:bound}.} 
To obtain the desired inequality, note that
\begin{eqnarray*}
    \left(\frac{e}{\sqrt{k}}\right)^k&\leq&\left(\frac{e}{12\ln(nT)}\right)^{k}\\
    &\leq & \left(\frac{e}{12\ln(4)}\right)^k\\
    &\leq & \left(\frac{1}{e}\right)^k\\
    &\leq & \left(\frac{1}{e}\right)^{144\ln^2(nT)}\\
    &\leq& \frac{1}{nT^2}.
\end{eqnarray*}
Here, the first and fourth inequalities hold since $k\geq (\frac{12\ln(nT)}{\epsilon^3})^2\geq 144\ln^2(nT)$. The second inequality is obtained by recalling that $n\geq 2$ and $T\geq 2$, as assumed without loss of generality in Section~\ref{subsec:highr}.

\subsection{Stability of policies with respect to weight alterations.}\label{apx:altering}
Here, we showcase the possibility of altering a universe of products by performing slight weight modifications, while still controlling the extent to which the expected maximum load is affected. For any universe of products $U \subseteq \Nc$, a dynamic policy that limits its offered assortments to products from  $U$ will be referred to as a $U$-policy. Let us introduce an auxiliary  universe $\widetilde U$, with a one-to-one correspondence to $U$, assuming without loss of generality that $U=\{1,\ldots,k\}$ and $\widetilde U=\{\widetilde 1,\ldots,\widetilde k\}$. 

We proceed by considering a technical condition on this pair of universes, stipulating that for every $i\in U$, the choice probabilities of the products $i$ and $\widetilde i$ are within factor $1-\eps$ of each other, with respect to any assortment. To formalize this condition, for any assortment $S\subseteq U$, we denote by $\widetilde S = \{\widetilde i  \in  \widetilde U  \,\mid\,i\in S\}$ its corresponding assortment in $\widetilde U$. Given $\delta \in [0,1)$, we say that the universes $U$ and $\widetilde U$ satisfy the $\delta$-tightness condition if, for every assortment $S\subseteq U$ and for every product $i\in S$, we have
\begin{equation}\label{eq:condbound}
\phi_{i}(S)\geq (1-\delta)\cdot\phi_{\widetilde i}(\widetilde S).
\end{equation}

In Lemma \ref{lem:alteruniverse}, we show that this condition is sufficient to prove that, for any $\widetilde U$-policy $\widetilde P$, there exists an analogous $U$-policy $P$ whose expected maximum load deviates only slightly from that of $\widetilde P$. For ease of notation, we designate the expected maximum loads of these policies by $\obj^P$ and $\obj^{\widetilde P}$. Interestingly, along the below proof of this claim, the implementation time of $P$ will be shown to match that of $\widetilde P$, up to factors that are polynomial in $n$ and $T$. 

\begin{lemma}\label{lem:alteruniverse}
Suppose that $U$ and $\widetilde U$ satisfy the  $\delta$-tightness condition. Then, for every $\widetilde U$-policy $\widetilde P$, there exists a $U$-policy $P$ such that $\obj^P\geq (1-\delta)\cdot \obj^{\widetilde P}$.
\end{lemma}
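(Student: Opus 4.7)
My plan is to define $P$ as a randomized $U$-policy that internally simulates $\widetilde P$ via shared random bits, and then to couple $P$'s run with $\widetilde P$'s shadow run so that $P$'s load vector closely tracks $\widetilde P$'s. Concretely, I will pre-sample $U_1,\ldots,U_T$ independently and uniformly on $[0,1]$; at each step $t$, I use $U_1,\ldots,U_{t-1}$ to reconstruct $\widetilde P$'s shadow history, obtain the associated action $\widetilde S_t$, and have $P$ offer the corresponding assortment $S_t = \{i \in U \mid \widetilde i \in \widetilde S_t\}$. One may assume without loss of generality that $\widetilde P$ is deterministic (by standard derandomization), so this construction yields a valid randomized $U$-policy.

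\paragraph{Step-wise coupling.} At each step $t$, I would partition $[0,1]$ into contiguous intervals $\{I_{\widetilde j}\}_{j \in \widetilde S_t \cup \{0\}}$ of lengths $\phi_{\widetilde j}(\widetilde S_t)$, so that $\widetilde \chi_t = \widetilde j$ precisely when $U_t \in I_{\widetilde j}$. Within each $I_{\widetilde i}$ with $i \in S_t$, I would designate a sub-interval of length $(1-\delta)\phi_{\widetilde i}(\widetilde S_t)$, which is feasible by the $\delta$-tightness bound $\phi_i(S_t) \geq (1-\delta)\phi_{\widetilde i}(\widetilde S_t)$, and set $\chi_t = i$ whenever $U_t$ lies in this sub-interval. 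The remaining mass of $[0,1]$ must then be distributed among the outcomes $\chi_t \in \{0,1,\ldots,k\}$ so that the marginal of $\chi_t$ matches the MNL distribution under $S_t$. The hard part will be verifying that this redistribution is always feasible, particularly when $\phi_0(S_t)$ is small; I expect this to follow from the inequality $\phi_0(S_t) \leq \phi_{\widetilde 0}(\widetilde S_t) + \delta\,(1-\phi_{\widetilde 0}(\widetilde S_t))$, itself a direct consequence of summing the $\delta$-tightness condition over $i \in S_t$, together with a short case analysis on whether $\phi_0(S_t)$ exceeds $\delta\,(1-\phi_{\widetilde 0}(\widetilde S_t))$ or not.

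\paragraph{Analysis and implementation.} By design, the coupling will guarantee $\P(\chi_t = i \mid \widetilde \chi_t = \widetilde i) \geq 1-\delta$ for every $i$ and every $t$, with mutual independence across $t$ conditional on the entire shadow trajectory. Setting $\widetilde L'_{\widetilde i} = |\{t \mid \chi_t = i \text{ and } \widetilde \chi_t = \widetilde i\}|$, we trivially have $L_i \geq \widetilde L'_{\widetilde i}$ almost surely, and $\widetilde L'_{\widetilde i}$ will be $\mathrm{Bin}(\widetilde L_{\widetilde i}, 1-\delta)$ conditional on the shadow, so $\E[\widetilde L'_{\widetilde i} \mid \text{shadow}] = (1-\delta)\,\widetilde L_{\widetilde i}$. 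Taking $I = \argmax_{\widetilde i} \widetilde L_{\widetilde i}$, chaining
\[
\obj^P \;=\; \E\bigl[\max_{i} L_i\bigr] \;\geq\; \E[L_I] \;\geq\; \E[\widetilde L'_I] \;=\; (1-\delta)\,\E\bigl[\max_{\widetilde i} \widetilde L_{\widetilde i}\bigr] \;=\; (1-\delta)\,\obj^{\widetilde P}
\]
will yield the claim. Finally, for the implementation remark, each decision of $P$ requires one step of $\widetilde P$'s computation plus $\mathrm{poly}(n)$ sampling overhead, so $P$'s overall running time will be $\mathrm{poly}(n,T)$ times that of $\widetilde P$.
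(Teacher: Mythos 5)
Your proposal is correct and follows essentially the same route as the paper's proof: both construct $P$ by running $\widetilde P$ as a shadow process, couple the real and shadow choices at each step so that the real customer matches the shadow's selection with conditional probability at least $1-\delta$ (exactly where the $\delta$-tightness bound enters), and then condition on the shadow trajectory so that the argmax index $I$ is deterministic and the per-product expectation bound transfers to the maximum load. The only difference is presentational — you build the coupling in the shadow-first direction via a common uniform, whereas the paper samples the real choice first, derives the shadow choice conditionally, and then separately verifies that the reversed construction is equal in distribution — so you should just add the remark that $P$ is implemented by observing the real choice and then sampling the shadow choice from its conditional law given that observation.
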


At a high level, our proof shows that given the policy $\widetilde P$, we can determine specific assortments of products from the universe $U$ to be offered at each step to the arriving customer, given the choices of all previous customers, thereby defining a new $U$-policy $P$. Using this elaborate form of simulation, we show that the achieved expected maximum load of the $U$-policy $P$ is at least $1-\delta$ times that of the $\widetilde U$-policy $\widetilde P$.

\begin{proof}
Let $(1^{ P}, \ldots, T^{ P})$ and $(1^{\widetilde P}, \ldots, T^{\widetilde P})$ be two sequences of customers. While the sequence $(1^P,\ldots,T^P)$ encounters the policy $P$ that will be designed below, we make use of the second sequence $(1^{\widetilde P},\ldots,T^{\widetilde P})$ to sample outcomes of the policy $\widetilde P$. In what follows, the load of each product $\widetilde i\in \widetilde U$ will be referring to the number of customers from the sequence $(1^{\widetilde P},\ldots,T^{\widetilde P})$ who selected this product.

\paragraph{Describing the policy ${P}$.}  In order to construct our policy $P$, at each time step $t=1,\ldots,T$, let us describe the assortment offered to customer $t^P$, given the choice outcomes of all previously-arriving customers. To this end, suppose that the choices of customers $1^P,\ldots,(t-1)^P$ and customers $1^{\widetilde P},\ldots,(t-1)^{\widetilde P}$ are already known. Let $\widetilde S_t$ be the assortment offered by the policy $\widetilde P$ to customer $t^{\widetilde P}$. Note that, conditional on the known choices of customers $1^{\widetilde P},\ldots,(t-1)^{\widetilde P}$, this assortment is deterministic. Then, the policy $P$ offers the assortment $S_t = \{i\,\mid\, \widetilde i\in \widetilde S_t\}$.

\paragraph{Simulating the outcome of ${\widetilde P}$.} After offering the assortment $S_t$ to customer $t^P$, we proceed to observe her choice according to the MNL model. Subsequently, in a coupled manner with the choice of customer $t^P$, we simulate the choice of customer $t^{\widetilde P}$, when offered $\widetilde S_t$. Specifically, let $S_t^\uparrow = \{i\in S_t\cup \{0\}\,\mid\,\phi_{\widetilde i}(\widetilde S_t)\geq \phi_i(S_t) \}$ and let $S_t^\downarrow = \{i\in S_t\cup \{0\}\,\mid\,\phi_{\widetilde i}(\widetilde S_t) < \phi_i(S_t) \}$. In addition, let ${\alpha_t = \sum_{i\in S_i^\downarrow} ( \phi_{i}(S_t) - \phi_{\widetilde i}(\widetilde S_t)}) \geq 0$. Now, suppose that product $i \in S_t \cup \{ 0 \}$ is the one  selected by customer $t^P$. If $i\in S_t^\uparrow$, then customer $t^{\widetilde P}$ selects product $\widetilde i$. Otherwise, $i\in S_t^\downarrow$, implying that $\alpha_t>0$. In this case, we proceed as follows:
\begin{itemize}
	\item With probability $\phi_{\widetilde i}(\widetilde S_t)/\phi_i(S_t)$, customer $t^{\widetilde P}$ selects product $\widetilde i$.
 
	\item With probability $1-\phi_{\widetilde i}(\widetilde S_t)/\phi_i(S_t)$, customer $t^{\widetilde P}$ randomly selects one of the products $\{ {\widetilde j} | j\in S_t^\uparrow \}$, where each product $\widetilde j$ is selected with probability $p_j = \frac{\phi_{\widetilde j}(\widetilde S_t)-\phi_j(S_t)}{\alpha_t} \geq 0$. It is worth noting that these terms indeed add up to $1$, since 
	\begin{eqnarray*}
		\sum_{j\in S_t^\uparrow}p_j & = & \frac{1}{\alpha_t} \cdot \sum_{j\in S_t^\uparrow} \left( \phi_{\widetilde j}(\widetilde S_t)- \phi_j(S_t)\right) \\
        & = & \frac{1}{\alpha_t} \cdot \left( \left( 1-\sum_{j\in S_t^\downarrow}\phi_{\widetilde j}(\widetilde S_t) \right) - \left( 1-\sum_{j\in S_t^\downarrow}\phi_j(S_t) \right)\right) \\
                & = & 1.
	\end{eqnarray*}
 \end{itemize}	

\paragraph{Correctness of the simulation.} In what follows, we show that for each product $\widetilde j\in \widetilde S_t\cup \{0\}$, the probability for customer $t^{\widetilde P}$ to select this product, via the simulation process described above, is exactly $\phi_{\widetilde j}(\widetilde S_t)$. For this purpose, we consider two cases:
\begin{itemize}
	\item When $j\in S_t^\downarrow$: In this case, if customer $t^P$ selected some product different from $j$, then customer $t^{\widetilde P}$ cannot select product $\widetilde j$. If customer $t^P$ selected product $j$, which happens with probability $\phi_{ j}(S_t)$, then customer $t^{\widetilde P}$ selects product $\widetilde j$ with probability $\phi_{\widetilde j}(\widetilde S_t)/\phi_{j}(S_t)$. Therefore, the overall probability for customer $t^{\widetilde P}$ to select product $\widetilde j$ is $\phi_{\widetilde j}(\widetilde S_t)$.
 
	\item When $j\in S_t^\uparrow$: There are three cases to examine:
	\begin{enumerate}
		\item[(i)] If customer $t^P$ selected product $j$, with probability $\phi_{j}(S_t)$, then customer $t^{\widetilde P}$ selects $\widetilde j$ with probability $1$.
  
		\item[(ii)] If customer $t^P$ selected product $i\in S_t^\uparrow\setminus\{j\}$, then customer $t^{\widetilde P}$ cannot select product $\widetilde j$ according to the described process.
  
		\item[(iii)] If customer $t^P$ selected some product $i\in S_t^\downarrow$, which happens with probability $\phi_{i}(S_t)$, then with probability $1-\phi_{\widetilde i}(\widetilde S_t)/\phi_i(S_t)$, some random product from $\{ {\widetilde j} | j\in S_t^\uparrow \}$ will be selected, and it will be product $\widetilde j$ with probability $p_j$.
	\end{enumerate}
	Therefore, the overall probability that customer $t^{\widetilde P}$ selects product $\widetilde j$ is given by 
    \[ \phi_{j}(S_t) + \sum_{i\in S_t^\downarrow}\phi_{i}(S_t)\cdot \left(1-\frac{\phi_{\widetilde i}(\widetilde S_t)}{\phi_i(S_t)}\right)\cdot p_j =  \phi_{ j}( S_t) + \alpha_t\cdot p_j =\phi_{\widetilde j}(\widetilde S_t), \]
    where the last equality holds since $p_j = \frac{\phi_{\widetilde j}(\widetilde S_t)-\phi_j(S_t)}{\alpha_t}$.
\end{itemize}

\paragraph{Approximation guarantee of ${P}$.} 
In the remainder of this proof, we show that $\obj^P\geq (1-\delta)\cdot \obj^{\widetilde P}$, i.e., the expected maximum load attained by the policy $P$ is at least $1-\delta$ times that of $\widetilde P$. Similarly to the notation introduced in Section~\ref{subsec:SMLA} for the static formulation, let $X_{it}$ be a Bernoulli random variable, indicating whether customer $t^P$ selects product $i$. This way, the load of each product $i\in U$ with respect to the policy $P$ is $L_i= \sum_{t=1}^TX_{it}$. Similarly, let $X_{\widetilde it}$ be a Bernoulli random variable, indicating whether customer $t^{\widetilde P}$ selects product $\widetilde i$. As such, the load of each product $\widetilde i\in\widetilde U$ with respect to the policy $\widetilde P$ is  $L_{\widetilde i} = \sum_{t=1}^TX_{\widetilde it}$. Finally, let $\widetilde I$ be the random index of the most loaded products in $\widetilde U$, namely, $\widetilde{I} = \argmax_{\widetilde{i} \in \widetilde{U}} L_{\widetilde{i}}$, breaking ties by taking the smallest index. The crucial invariant we establish is captured by the next claim, whose proof is provided in Appendix~\ref{apx:proofcomplement}.

\begin{claim}\label{cl:proofcomplement}
$\E(X_{It})\geq (1-\delta)\cdot\E(X_{\widetilde It})$, for all $t = 1,\ldots,T$.
\end{claim}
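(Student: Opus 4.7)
The plan is to condition on the sigma-algebra $\mathcal{F}_t$ generated by the joint history of both processes through time $t-1$, and to partition the events at time $t$ according to the random identity of $\widetilde I$. Writing $C^P_t$ and $C^{\widetilde P}_t$ for the products selected by customers $t^P$ and $t^{\widetilde P}$, I decompose
$$X_{It} \;=\; \sum_{\widetilde j \in \widetilde U} X_{jt}\,\mathbbm{1}(\widetilde I = \widetilde j), \qquad X_{\widetilde I t} \;=\; \sum_{\widetilde j \in \widetilde U} X_{\widetilde j t}\,\mathbbm{1}(\widetilde I = \widetilde j),$$
where $j$ denotes the product in $U$ corresponding to $\widetilde j$ under the natural bijection. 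It therefore suffices to prove the per-$\widetilde j$ estimate $\E[X_{jt}\,\mathbbm{1}(\widetilde I = \widetilde j)] \geq (1-\delta)\,\E[X_{\widetilde j t}\,\mathbbm{1}(\widetilde I = \widetilde j)]$ and sum the resulting bounds over $\widetilde j$.

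Two structural ingredients drive this per-$\widetilde j$ inequality. First, unfolding the two cases $j \in S_t^\uparrow$ versus $j \in S_t^\downarrow$ in the coupling rules yields the clean identity
$$\P\bigl(C^P_t = j,\ C^{\widetilde P}_t = \widetilde j \bigm| \mathcal{F}_t\bigr) \;=\; \min\{\phi_j(S_t),\,\phi_{\widetilde j}(\widetilde S_t)\},$$
which by the $\delta$-tightness assumption~\eqref{eq:condbound} is at least $(1-\delta)\,\phi_{\widetilde j}(\widetilde S_t) = (1-\delta)\,\P(C^{\widetilde P}_t = \widetilde j \mid \mathcal{F}_t)$. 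Second, I claim the conditional-independence identity $\P(\widetilde I = \widetilde j \mid \mathcal{F}_t,\ C^P_t,\ C^{\widetilde P}_t = \widetilde k) = \P(\widetilde I = \widetilde j \mid \mathcal{F}_t,\ C^{\widetilde P}_t = \widetilde k)$, stating that once the $\widetilde P$-history through time $t$ is pinned down, further conditioning on the $P$-side does not alter the law of $\widetilde I$. This can be proved by induction on $s > t$, establishing that the marginal law of each future choice $C^{\widetilde P}_s$ given the $\widetilde P$-history $(C^{\widetilde P}_1,\ldots,C^{\widetilde P}_{s-1})$ is exactly MNL on $\widetilde S_s$, a fact delivered by iterating the ``Correctness of the simulation'' computation already performed for time $t$.

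Once both ingredients are available, the conclusion is routine. Expanding $\E[X_{jt}\,\mathbbm{1}(\widetilde I = \widetilde j) \mid \mathcal{F}_t]$ as a sum over the value $\widetilde k$ taken by $C^{\widetilde P}_t$, retaining only the non-negative term $\widetilde k = \widetilde j$, and plugging in both ingredients yields
$$\E[X_{jt}\,\mathbbm{1}(\widetilde I = \widetilde j) \mid \mathcal{F}_t] \;\geq\; (1-\delta)\,\P(C^{\widetilde P}_t = \widetilde j \mid \mathcal{F}_t)\,\P(\widetilde I = \widetilde j \mid \mathcal{F}_t,\ C^{\widetilde P}_t = \widetilde j) \;=\; (1-\delta)\,\E[X_{\widetilde j t}\,\mathbbm{1}(\widetilde I = \widetilde j) \mid \mathcal{F}_t].$$
Taking an outer expectation over $\mathcal{F}_t$ and then summing over $\widetilde j$ produces the stated claim. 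I expect the principal obstacle to be the conditional-independence identity: although $\mathbf{C}^P$ and $\mathbf{C}^{\widetilde P}$ are coupled in a genuinely nontrivial way, the correct-marginals property of the simulation has to be invoked carefully so that events measurable with respect to $\mathbf{C}^{\widetilde P}$ remain insensitive to further conditioning on the $P$-side, thereby allowing the per-product inequality, which is elementary at time $t$ alone, to lift all the way to the end-of-horizon random index $\widetilde I$.
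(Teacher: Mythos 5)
Your proposal is correct, but it reaches the claim by a noticeably different route than the paper. The paper's proof re-generates the coupled pair in the \emph{reverse} order: it first samples the entire $\widetilde P$-trajectory $(\ex_{\widetilde i t})$, then samples the $P$-side choices conditionally, verifies via a case analysis that the joint law of the pair matches the original construction, and finally conditions on the full realization $\widetilde x$ of the $\widetilde P$-path --- under which $\widetilde I$ becomes deterministic --- so that the per-customer bound $\P(\ex_{It}=1 \mid \ex_{\widetilde I t}=1, \widetilde\ex = \widetilde x) \geq 1-\delta$ can be applied directly and summed over sample paths. You instead stay inside the forward construction, decompose over the value of the random index $\widetilde I$, package the per-step coupling as the identity $\P(C^P_t = j,\, C^{\widetilde P}_t = \widetilde j \mid \mathcal{F}_t) = \min\{\phi_j(S_t), \phi_{\widetilde j}(\widetilde S_t)\}$ (which is indeed what the two cases $j \in S_t^\uparrow$ versus $j \in S_t^\downarrow$ yield), and replace the paper's reversal-plus-distributional-equality step with an explicit conditional-independence lemma stating that the law of $\widetilde I$ given the $\widetilde P$-history is insensitive to further conditioning on the $P$-side. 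The two arguments rest on the same structural fact --- the $\widetilde P$-marginal process evolves autonomously, since $\widetilde S_s$ and hence $S_s$ are determined by the $\widetilde P$-history alone and the step-$s$ randomness is fresh --- but they exploit it differently: the paper makes it manifest by literally rebuilding the coupling with $\widetilde P$ sampled first (at the cost of the somewhat tedious joint-distribution check in its Equation~(17)), whereas you prove it as a lemma via the iterated ``correctness of the simulation'' computation, which is the genuine crux of your argument and is correctly identified as such; your sketch of it (induction on $s > t$ showing each $C^{\widetilde P}_s$ is MNL on $\widetilde S_s$ given only the $\widetilde P$-history) is the right one and would need to be carried out in full to close the proof.
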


Given this result, we conclude the proof by observing that 
\begin{eqnarray*}
\obj^P &= &\E\left(\max_{i\in U}L_i\right)\\
&\geq & \E\left(L_I\right)\\
&=& \sum_{t=1}^T\E\left(X_{It}\right)\\
&\geq& (1-\delta)\cdot \sum_{t=1}^T\E\left(X_{\widetilde It}\right)\\
&=& (1-\delta)\cdot \E\left(L_{\widetilde I}\right)\\
&=& (1-\delta)\cdot\obj^{\widetilde P},
\end{eqnarray*}
where the inequality above is a direct application of Claim \ref{cl:proofcomplement}.
\end{proof}

\subsection{Proof of Claim \ref{cl:proofcomplement}}\label{apx:proofcomplement}

Instead of directly working with  $(X_{it})_{i\in U,t\in[T]}$ and $(X_{\widetilde it})_{\widetilde i\in \widetilde U,t\in[T]}$, we propose a new construction of these random variables, $(\ex_{it})_{i\in U,t\in[T]}$ and $(\ex_{\widetilde it})_{\widetilde i\in \widetilde U,t\in[T]}$, such that
\[ 
    \left((X_{it})_{i\in U,t\in[T]},(X_{\widetilde it})_{\widetilde i\in \widetilde U,t\in[T]}\right) \stackrel{d}{=} \left((\ex_{it})_{i\in U,t\in[T]},(\ex_{\widetilde it})_{\widetilde i\in \widetilde U,t\in[T]}\right). \]
However, in this construction, the choices of customers $1^P,\ldots,T^P$ do not affect those of customers $1^{\widetilde P}, \ldots, T^{\widetilde P}$. In particular, we will first sample $(\ex_{\widetilde it})_{\widetilde i\in \widetilde U,t\in[T]}$, and only then sample  $(\ex_{it})_{i\in U,t\in[T]}$ in a coupled manner.

\paragraph{Stage 1: Constructing ${(\ex_{\widetilde it})_{\widetilde i\in \widetilde U,t\in[T]}}$.} First, in order to construct the policy $\widetilde P$, and hence the choices $(\ex_{\widetilde it})_{\widetilde i\in \widetilde U,t\in[T]}$ of customers $1^{\widetilde P},\ldots, T^{\widetilde P}$, upon the arrival of each customer $t^{\widetilde P}$, we observe the choices of all previous customers, and  use the policy $\widetilde P$ to determine the assortment $\widetilde S_t$ that will be offered to this customer. Then, we sample the choice $(\ex_{\widetilde it})_{\widetilde i\in \widetilde U}$ of customer $t^{\widetilde P}$ according to the MNL choice model, where each product $\widetilde{i} \in {\widetilde S}_t$ has a probability of $\phi_{\widetilde{i}}( {\widetilde S}_t )$ to be the one selected.  

\paragraph{Stage 2: Constructing ${(\ex_{it})_{i\in U,t\in[T]}}$.} Once $(\ex_{\widetilde it})_{\widetilde i\in \widetilde U,t\in[T]}$ have already been determined, let us describe how to construct the choices $(\ex_{it})_{i\in U,t\in[T]}$ of customers $1^P,\ldots, T^P$ according to the policy $P$. To this end, upon the arrival of each customer $t^{P}$, we determine her choice $(\ex_{it})_{i\in U}$ in a coupled fashion. As in Appendix~\ref{apx:altering}, we will make use of $S_t^\uparrow = \{i\in S_t\cup \{0\}\,\mid\,\phi_{\widetilde i}(\widetilde S_t)\geq \phi_i(S_t) \}$, $S_t^\downarrow = \{i\in S_t\cup \{0\}\,\mid\,\phi_{\widetilde i}(\widetilde S_t) < \phi_i(S_t) \}$, and ${\alpha_t = \sum_{i\in S_i^\downarrow} ( \phi_{i}(S_t) - \phi_{\widetilde i}(\widetilde S_t)}) \geq 0$. Now, suppose that product $\widetilde i \in \widetilde S_t \cup \{ 0 \}$ is the one selected by customer $t^{\widetilde P}$, meaning that $\ex_{\widetilde it} = 1$. If $i\in S_t^\downarrow$, then customer $t^{P}$ selects product $i$. Otherwise, $i\in S_t^\uparrow$, and we proceed as follows:
\begin{itemize}
	\item With probability $\phi_i(S_t)/\phi_{\widetilde i}(\widetilde S_t)$, customer $t^{P}$ selects product $i$, i.e., $\ex_{it} = 1$.
 
	\item With probability $1-\phi_i(S_t)/\phi_{\widetilde i}(\widetilde S_t)$, customer $t^{P}$ selects one of the products in $S_t^\downarrow$, where each product $j$ is selected with probability $q_j = \frac{\phi_j(S_t)-\phi_{\widetilde j}(\widetilde S_t)}{\alpha_t} \geq 0$. Similarly to Appendix~\ref{apx:altering}, it is easy to verify that these terms add up to $1$.
 \end{itemize}	

\paragraph{Proving equality in distribution.} We proceed by showing that $((\ex_{it})_{i\in U,t\in[T]},(\ex_{\widetilde it})_{\widetilde i\in \widetilde U,t\in[T]})$ and $((X_{it})_{i\in U,t\in[T]},(X_{\widetilde it})_{\widetilde i\in \widetilde U,t\in[T]})$ are indeed equal in distribution. In particular, we show that at each step, the joint choice probabilities of the customers $t^P$ and $t^{\widetilde P}$ are identical for both constructions. Formally, we argue that for all $i\in U$, $\widetilde j\in \widetilde U$, and $t\in[T]$, 
\begin{equation}\label{eq:equation219}    \P(\ex_{it}=1,\ex_{\widetilde jt}=1) = \P(X_{it}=1,X_{\widetilde jt}=1).
\end{equation}
Note that we do not need to consider events of the form $X_{it}=0$, since they can be written as a disjoint union of the events $X_{jt}=1$ for $j\neq i$, i.e., $\{X_{it}=0\} = \bigvee_{j\neq i}\{X_{jt}=1\}$. We prove Equation~\eqref{eq:equation219} via the following case analysis.
\begin{itemize}
    \item Case 1: $i=j$:\begin{itemize}
        \item If $i\in S_t^{\uparrow}$: Then $X_{it}=1$ implies $X_{\widetilde it}=1$. Therefore, $$ \P(X_{it}=1,X_{\widetilde it}=1)=\P(X_{it}=1)=\phi_i(S_t).$$
        On the other hand,
        \begin{equation*}
            \P(\ex_{it}=1,\ex_{\widetilde it}=1) = \P(\ex_{\widetilde it}=1)\cdot \P(\ex_{it}=1|\ex_{\widetilde it}=1) = \phi_{\widetilde i}(\widetilde S_t)\cdot \frac{\phi_{i}(S_t)}{\phi_{\widetilde i}(\widetilde S_t)} = \phi_i(S_t).
        \end{equation*}

        \item If $i\in S_t^{\downarrow}$: Then,
        $$            \P(X_{it}=1,X_{\widetilde it}=1)= \P(X_{it}=1)\cdot\P(X_{\widetilde it}=1|X_{it}=1) = \phi_i(S_t)\cdot \frac{\phi_{\widetilde i}(\widetilde S_t)}{\phi_i(S_t)} = \phi_{\widetilde i}(\widetilde S_t).
        $$
        On the other hand,  if $\ex_{\widetilde it}=1$ then $\ex_{it} =1$, and therefore
        $$            \P(\ex_{it}=1,\ex_{\widetilde it}=1) =\P(\ex_{\widetilde it}=1) =  \phi_{\widetilde i}(\widetilde S_t).
        $$
    \end{itemize}
    
    \item Case 2: $i\neq j$:
    \begin{itemize}
        \item If $i\in S_t^{\uparrow}$, then $X_{it}=1$ implies $X_{\widetilde it}=1$, and $\ex_{it}=1$ implies $\ex_{\widetilde it}=1$. Therefore, since $i \neq j$,
        $$        \P(X_{it}=1,X_{\widetilde jt}=1)=\P(\ex_{it}=1,\ex_{\widetilde jt}=1)=0.$$
        \item If $j\in S_t^{\downarrow}$, then for similar reasons, 
         $$
        \P(X_{it}=1,X_{\widetilde jt}=1)=\P(\ex_{it}=1,\ex_{\widetilde jt}=1)=0.$$
        \item If $i\in S_t^{\downarrow}$ and $j\in S_t^{\uparrow}$, then
            \begin{eqnarray*}
                \P(X_{it}=1,X_{\widetilde jt}=1)&= &\P(X_{it}=1)\cdot\P(X_{\widetilde jt}=1|X_{it}=1)\\
                &=&\phi_i(S_t)\cdot\left(1-\frac{\phi_{\widetilde i}(\widetilde S_t)}{\phi_i(S_t)}\right)\cdot p_j\\
                &=&\frac{1}{\alpha_t}\cdot (\phi_i(S_t) - \phi_{\widetilde i}(\widetilde S_t))\cdot(\phi_{\widetilde j}(\widetilde S_t)-\phi_j(S_t)).
            \end{eqnarray*}
            On the other hand, 
            \begin{eqnarray*}
                \P(\ex_{it}=1,\ex_{\widetilde jt}=1)&= &\P(\ex_{\widetilde jt}=1)\cdot\P(\ex_{it}=1|\ex_{\widetilde jt}=1)\\
                &=&\phi_{\widetilde j}(S_t)\cdot\left(1-\frac{\phi_j(S_t)}{\phi_{\widetilde j}(\widetilde S_t)}\right)\cdot q_i\\
                &=&\frac{1}{\alpha_t}\cdot (\phi_i(S_t) - \phi_{\widetilde i}(\widetilde S_t))\cdot(\phi_{\widetilde j}(\widetilde S_t)-\phi_j(S_t)).
            \end{eqnarray*}
    \end{itemize}
\end{itemize}

\paragraph{Concluding the proof.} In the construction we have just described, the choices of customers $1^P,\ldots,T^P$ in stage~2 obviously do not affect the policy $\widetilde P$ in stage~1. Thus, we can initially sample the choices $\widetilde \ex = (\ex_{\widetilde it})_{\widetilde i\in \widetilde U,t\in[T]}$ of customers $1^{\widetilde P}, \ldots, T^{\widetilde P}$, and then use this realization to sample the choices $(\ex_{it})_{i\in U,t\in[T]}$ of customers $1^P,\ldots, T^P$. The important observation is that, for every possible realization $\widetilde x$ of $\widetilde \ex$, we have
\begin{eqnarray}
        \E(\ex_{I t}\,|\,\widetilde \ex = \widetilde x) &=& \P(\ex_{I t}=1\,|\,\widetilde \ex = \widetilde x) \nonumber\\
    &=&\P(\ex_{I t}=1\,|\,\ex_{\widetilde I t}=1,\widetilde \ex = \widetilde x)\cdot \P(\ex_{\widetilde I t}=1\,|\,\widetilde \ex = \widetilde x)\nonumber\\
    && \mbox{}+ \P(\ex_{I t}=1\,|\,\ex_{\widetilde I t}=0,\widetilde \ex = \widetilde x)\cdot \P(\ex_{\widetilde I t}=0\,|\,\widetilde \ex = \widetilde x)\nonumber\\
    & \geq &\P(\ex_{I t}=1\,|\,\ex_{\widetilde I t}=1,\widetilde \ex = \widetilde x)\cdot \P(\ex_{\widetilde I t}=1\,|\,\widetilde \ex = \widetilde x) \nonumber\\    
    &\geq &(1-\delta)\cdot \P(\ex_{\widetilde I t}=1\,|\,\widetilde \ex = \widetilde x)\nonumber\\
    &=&(1-\delta)\cdot \E(\ex_{\widetilde I t}\,|\,\widetilde \ex = \widetilde x). \label{eqn:cond_sample_path}
\end{eqnarray}
Here, the second inequality holds since $\P(\ex_{I t}=1\,|\,\ex_{\widetilde I t}=1,\widetilde \ex = \widetilde x) \geq 1-\delta$.
This claim is a direct consequence of our simulation process. Indeed, conditional on $\widetilde \ex = \widetilde x$, the random index $\widetilde{I}$ of the most loaded product with respect to $\widetilde{P}$ is clearly deterministic. As such, given that customer $t^{\widetilde{P}}$ selects product $\widetilde{I}$ (i.e., $\ex_{\widetilde I t}=1$), there are two cases: Either $I \in S_t^\downarrow$, in which case $\ex_{it}=1$ almost surely, or $I\in S_t^{\uparrow}$, in which case, $\ex_{it}=1$ with probability $\phi_{I}(S_t)/\phi_{\widetilde I}(\widetilde S_t) \geq 1-\delta$, where the last inequality follows from our $\delta$-tightness condition.

As a consequence, by summing inequality~\eqref{eqn:cond_sample_path} over all possible sample paths $\widetilde x$, weighted by their probability, we have $    \E(\ex_{It}) \geq (1-\delta)\cdot \E(\ex_{\widetilde I t})$. Finally, since $((\ex_{it})_{i\in U,t\in[T]},(\ex_{\widetilde it})_{\widetilde i\in \widetilde U,t\in[T]})$ and $((X_{it})_{i\in U,t\in[T]},(X_{\widetilde it})_{\widetilde i\in \widetilde U,t\in[T]})$ are equal in distribution, we deduce that $\E(X_{It}) \geq (1-\delta)\cdot \E(X_{\widetilde I t})$.

\subsection{Concluding the proof of Theorem \ref{thm:quasitime}}\label{apx:proofquasitime}

In order to prove Theorem~\ref{thm:quasitime}, we first propose an efficient representation of constrained vectors, allowing us to implement our overall approach in $O( n^{ O_{\eps}( \log^3 n) } )$ time. Subsequently, we prove that the expected maximum load obtained upon utilizing the policy $A$, constructed in Section~\ref{subsec:policy}, is within factor $1-\eps$ of the optimal expected maximum load.   

\paragraph{Implementation and running time analysis.} In what follows, we will be assuming that, under the low-weight regime, the number of arriving customers $T$ is polynomial in $n$ and $1/\eps$. This is a consequence of the next claim, which we prove in Appendix~\ref{apx:polywlog}. 

\begin{claim}\label{cl:polywlog}
Under the low-weight regime, when  $T\geq 576 n^3/\eps^8$, by offering the whole universe of products to every customer we attain an expected maximum load of at least $(1-\eps) \cdot \opt^{\dpp}(\Nc)$.
\end{claim}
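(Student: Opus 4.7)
Plan:

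The cleanest approach is to apply Lemma~\ref{lem:augment} with $A$ being an optimal adaptive policy (so $\obj^A = \opt^{\dpp}(\Nc)$) and $B$ being the static policy that offers the entire universe $\Nc$ to every customer. Since $S_t^B = \Nc \supseteq S_t^A$ deterministically, the inclusion hypothesis holds with $v(S_t^B\setminus S_t^A)\leq v(\Nc)$ almost surely, and the lemma yields
\[ \obj^B \;\geq\; \frac{1}{1+v(\Nc)}\cdot\opt^{\dpp}(\Nc). \]
Since $1/(1+v(\Nc))\geq 1-\eps$ whenever $v(\Nc)\leq\eps$, the entire task reduces to showing $v(\Nc)\leq \eps$ under the claim's hypotheses.

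To bound $v(\Nc)\leq n v_{\max}$, I would extract $v_{\max}$ from the low-weight condition $T\alpha < 12\ln(nT)/\eps^3$, where $\alpha = v_{\max}/(1+v_{\max})$. For $T\geq 576 n^3/\eps^8$, rearranging immediately yields $\alpha < \ln(nT)\,\eps^5/(48 n^3)$. I would first rule out the edge case $\alpha > 1/2$ (equivalently $v_{\max} > 1$): if it held, then $T\alpha > T/2$ combined with low-weight would force $T < 24\ln(nT)/\eps^3$, which is incompatible with $T\geq 576 n^3/\eps^8$ via a short calculus check comparing polynomial and logarithmic growth in $nT$. With $\alpha\leq 1/2$ established, we have $v_{\max}\leq 2\alpha$ and therefore $v(\Nc)\leq 2n\alpha\leq \ln(nT)\,\eps^5/(24 n^2)$.

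The final step is to conclude $v(\Nc)\leq \eps$, which reduces to verifying $\ln(nT)\leq 24 n^2/\eps^4$. The constant $576$ in the hypothesis is calibrated precisely for this purpose: one checks that the function $T\mapsto T\eps^4 - 24 n\ln(nT)$ is positive at the threshold $T = 576 n^3/\eps^8$ and non-decreasing for all larger $T$, so the required inequality holds throughout the admissible range. The main technical obstacle is really just this bookkeeping around logarithmic terms (together with the small case-analysis ruling out $\alpha > 1/2$); once both calculus checks are done, the conclusion follows immediately from Lemma~\ref{lem:augment}.
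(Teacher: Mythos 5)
Your overall strategy is exactly the paper's: apply Lemma~\ref{lem:augment} with $A$ an optimal adaptive policy and $B$ the static policy offering $\Nc$, and reduce everything to showing $v(\Nc)\leq\eps$ under the low-weight regime and $T\geq 576n^3/\eps^8$. The issue is in your final arithmetic step. From $\alpha<\tfrac{12\ln(nT)}{T\eps^3}$ you substitute the lower bound on $T$ into the $1/T$ factor only, obtaining $\alpha<\tfrac{\ln(nT)\eps^5}{48n^3}$ and hence $v(\Nc)\leq\tfrac{\ln(nT)\eps^5}{24n^2}$; you then claim the conclusion ``reduces to verifying $\ln(nT)\leq 24n^2/\eps^4$.'' That reduction is to a statement that is \emph{false} on the admissible range: for fixed $n,\eps$, $\ln(nT)$ is unbounded as $T\to\infty$, so $\ln(nT)\leq 24n^2/\eps^4$ fails for sufficiently large $T$. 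Moreover, the calculus check you invoke (positivity and monotonicity of $T\mapsto T\eps^4-24n\ln(nT)$) does not establish that fixed bound for $T$ beyond the threshold; it only gives the growing bound $\ln(nT)\leq T\eps^4/(24n)$. The root of the problem is the premature partial substitution: the bound $\tfrac{\ln(nT)\eps^5}{48n^3}$ is a valid upper bound on $\alpha$, but it grows without limit in $T$ and therefore cannot be uniformly dominated by $\eps/(2n)$.

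The repair is immediate, and in fact your own calculus check is the right one if aimed at the right inequality: keep the unsubstituted bound $v(\Nc)\leq 2n\alpha<\tfrac{24n\ln(nT)}{T\eps^3}$ and observe that this is at most $\eps$ exactly when $T\eps^4\geq 24n\ln(nT)$, which is the inequality your function $T\eps^4-24n\ln(nT)$ certifies on all of $[576n^3/\eps^8,\infty)$. The paper takes a slightly slicker route that avoids both the monotonicity check and your case analysis on $\alpha>1/2$: it bounds $\ln(nT)\leq\sqrt{nT}$ first, so that $\alpha\leq 12\sqrt{n}/(\sqrt{T}\eps^3)$ is monotone decreasing in $T$ and the threshold substitution is legitimate, yielding $\alpha\leq\eps/(2n)\leq\tfrac{\eps/n}{1+\eps/n}$ and hence $v_{\max}\leq\eps/n$ directly by monotonicity of $x\mapsto x/(1+x)$, with no need to separately rule out $\alpha>1/2$. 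With the final step fixed as above, your argument is correct and essentially equivalent.
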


Let $\cal S$ be the collection of states considered by the reduced dynamic program in Step~$3$. Each such state corresponds to a pair $(t,\bl)$, where $t$ is the remaining number of customers, and $\bl \in \const$ is our current load vector. We remind the reader that $\const$ stands for the collection of constrained load vectors, namely, those where each product has a load of at most $\thr/\Teps$. We start by providing an efficient representation of each state $(t,\bl)\in \cal S$. To this end, for every $j\in \{0,\ldots, J\}$ and $m\in \{0,\ldots,\thr/\Teps\}$, let $N_{j,m}(\bl)$ be the number of products with weight $v_{\min}\cdot (1+\Teps)^j$, whose load with respect to $\bl$ is precisely $m$, i.e.,  
\[ N_{j,m}(\bl) = \left| \{\widetilde i\in \widetilde U\mid v_{\widetilde i} = v_{\min}\cdot (1+\Teps)^j\text{ and } \ell_i = m\} \right| . \]
Given this notation, we represent each vector $\bl\in\const$ by its corresponding vector $N(\bl)= (N_{j,m}(\bl)\,\mid\,j\in \{0,\ldots, J\}\text{ and } m\in \{0,\ldots,\thr/\Teps\})$. Clearly, the collection $\{N(\bl) \,\mid\, \bl\in\const\}$ consists of only $O(n^{O(J\thr/\Teps)})$ vectors. Therefore, by representing each state $(t,\bl)\in \cal S$ by its corresponding vector $(t,N(\bl))$, and recalling that
$\thr = O_{\eps}( \log^2(nT) )$, $J = O_{\eps}(\log n)$, and $T< 576 n^3/\eps^8$, our search space size becomes $O(T n^{O_{\eps}(J\thr)} ) = O( n^{ O_{\eps}( \log^3 n ) } )$.

As a side note, we remark that this representation is not injective. However, it encompasses all the information needed to solve our reduced dynamic program. Indeed, since all products with the same weight and load are interchangeable, it is easy to see that for each pair of constrained vectors $\bl^1$ and $\bl^2$ that share the same representation, we can transform $\bl^1$ into $\bl^2$ by performing a finite number of permutations on the names of the products that share the same weight and load.

To conclude that Steps~1-4 can be performed in $O( n^{ O_{\eps}( \log^3 n ) } )$ overall time, it remains to argue that the optimization problem in Equation~\eqref{eq:reducedDP} can be efficiently solved, as stated in the next claim. This result will be established via a reduction to an appropriately constructed revenue maximization problem under the MNL model, which is solvable in polynomial time (see, e.g., \cite{talluri2004revenue}). The details of this proof are included in Appendix \ref{apx:revenuemax}.

\begin{lemma}\label{cl:revenuemax}
Problem \eqref{eq:reducedDP} can be solved to optimality in $O(n)$ time.
\end{lemma}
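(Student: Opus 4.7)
The plan is to recast the inner maximization in equation~\eqref{eq:reducedDP} as an instance of unconstrained revenue maximization under the MNL model, for which polynomial-time (in fact, linear-time after sorting) algorithms are well-known. Fixing the state $(t,\bl)$, let $C = M_{t-1}(\bl)$, and for each $\widetilde i \in \widetilde U$, define a ``revenue'' $r_{\widetilde i} = M_{t-1}(\bl + \mathbf{e}_i)$. Writing out $\phi_0(S) = 1/(1+\sum_{\widetilde i \in S} v_{\widetilde i})$ and $\phi_{\widetilde i}(S) = v_{\widetilde i}/(1+\sum_{\widetilde j \in S} v_{\widetilde j})$, the inner objective becomes
$$\frac{C + \sum_{\widetilde i \in S} r_{\widetilde i}\, v_{\widetilde i}}{1 + \sum_{\widetilde i \in S} v_{\widetilde i}} \;=\; C \;+\; \frac{\sum_{\widetilde i \in S} (r_{\widetilde i} - C)\, v_{\widetilde i}}{1 + \sum_{\widetilde i \in S} v_{\widetilde i}},$$
so since $C$ is a constant with respect to $S$, the optimization is equivalent to maximizing the second term, which is precisely an unconstrained MNL revenue maximization problem with ``adjusted'' revenues $\widehat r_{\widetilde i} = r_{\widetilde i} - C$.

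The next step will be to observe that $\widehat r_{\widetilde i} \geq 0$ for every $\widetilde i \in \widetilde U$, by a straightforward induction showing that the value function $M_{t-1}(\cdot)$ is coordinate-wise non-decreasing. Indeed, the boundary condition is clearly monotone; if $M_{t-1}$ is monotone, then $M_t(\bl) \geq M_t(\bl + \mathbf{e}_i)$ follows by considering the suboptimal continuation that mimics the optimal actions from $\bl + \mathbf{e}_i$. With non-negative adjusted revenues in hand, we invoke the classical result of \cite{talluri2004revenue}: an optimal assortment for the resulting unconstrained revenue maximization is revenue-ordered. Hence, after indexing the products in $\widetilde U$ in non-increasing order of $\widehat r_{\widetilde i}$, there exists an optimal $S$ that is a prefix of this ordering.

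Consequently, it suffices to enumerate the $|\widetilde U|$ revenue-ordered prefixes and return the one of maximum objective value. By maintaining running sums of $\widehat r_{\widetilde i} v_{\widetilde i}$ and $v_{\widetilde i}$ across the ordering, each prefix can be evaluated in $O(1)$ amortized time, yielding the $O(n)$ running time once the ordering is available. The main subtlety lies in producing this ordering efficiently: a naive sort would incur $O(n \log n)$. To attain truly linear time, we can either rely on a linear-time selection procedure to peel off prefixes in order of decreasing adjusted revenue, or argue that a single amortized sort suffices across the recursive calls of the dynamic program. The complete implementation details are spelled out in Appendix~\ref{apx:revenuemax}.
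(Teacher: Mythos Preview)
Your approach is essentially identical to the paper's: both rewrite the inner maximization as $M_{t-1}(\bl)+\max_{S\subseteq\widetilde U}\sum_{i\in S}(M_{t-1}(\bl+\mathbf e_i)-M_{t-1}(\bl))\,\phi_i(S)$ and invoke \cite{talluri2004revenue} for the resulting unconstrained MNL revenue problem; the paper simply asserts $r_i\geq 0$ and polynomial solvability, while you spell out the monotonicity induction and the prefix enumeration. One slip: in your induction step you wrote $M_t(\bl)\geq M_t(\bl+\mathbf e_i)$, but the inequality you need (and your mimicking argument actually yields) is the reverse, $M_t(\bl+\mathbf e_i)\geq M_t(\bl)$.
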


\paragraph{Approximation guarantee.} In the remainder of this section, we show that the policy $A$, as introduced in Section \ref{subsec:policy}, yields the desired performance guarantee. In other words, we argue that $\obj^A\geq (1-4 \eps)\cdot \opt^{\dpp}(\Nc)$. To this end, we establish the following sequence of claims:
\begin{itemize}
    \item {\em Loss due to dropping tiny-weight products.} We remind the reader that the universe of products $U$ was created in Step~1 by eliminating all products whose preference weight is at most $\eps^2v_{\max}/n$. Our first claim is that this alteration leads to losing at most an $\Teps$-fraction of the optimal expected maximum load. The proof of the next lemma is included in Appendix~\ref{apx:quasi1}. 
    \begin{lemma}\label{lem:quasi1}
    $\opt^{\dpp}(U)\geq (1-\Teps)\cdot \opt^{\dpp}(\Nc)$.
    \end{lemma}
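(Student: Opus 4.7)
The goal is to show that deleting the tiny-weight products costs only an $\eps$-fraction of the optimum. The natural approach is to take an optimal adaptive policy $A^*$ for the full universe $\Nc$ and construct a companion policy $A$ for $U$ by simply dropping the tiny products from every offered assortment along the arrival sequence. Then I would invoke Lemma~\ref{lem:augment} to compare $A$ and $A^*$, reading the roles as $A$ playing the ``smaller-assortment'' policy and $A^*$ playing the ``larger-assortment'' policy.

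\textbf{Key steps.} First, for each $t \in [T]$, let $S_t^{A^*}$ denote the (random) assortment offered by $A^*$ to customer $t$, and define the induced $U$-policy $A$ by $S_t^A = S_t^{A^*} \cap U$; by construction $A$ never uses products from $\Nc \setminus U$, so it is a legitimate $U$-policy, and almost surely $S_t^A \subseteq S_t^{A^*}$. Second, I would bound the total preference weight of the products that get dropped: since $\Nc \setminus U$ consists only of products with $v_i \leq \Teps^2 v_{\max}/n$, we have
\[
v\bigl(S_t^{A^*} \setminus S_t^A\bigr) \;\leq\; |\Nc \setminus U| \cdot \frac{\Teps^2 v_{\max}}{n} \;\leq\; \Teps^2 v_{\max} \;\leq\; \Teps,
\]
where the last inequality is where the low-weight regime kicks in, namely $v_{\max} < 1/\Teps$. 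Third, applying Lemma~\ref{lem:augment} with $\epsilon = \Teps$ yields
\[
\obj^{A^*} \;\leq\; (1+\Teps)\cdot \obj^{A} \;\leq\; \frac{1}{1-\Teps}\cdot \obj^A,
\]
and hence $\obj^A \geq (1-\Teps)\cdot \obj^{A^*} = (1-\Teps)\cdot \opt^{\dpp}(\Nc)$. Since $A$ is a feasible $U$-policy, $\opt^{\dpp}(U) \geq \obj^A$, which is the desired inequality.

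\textbf{Anticipated obstacles.} There is essentially no serious obstacle here: all the technical work has already been encapsulated in Lemma~\ref{lem:augment}. The only care needed is to verify that we are indeed in the low-weight regime (so that $v_{\max} < 1/\Teps$ can legitimately be invoked to convert the cardinality-and-weight bound into the required $\Teps$-bound on $v(S_t^{A^*}\setminus S_t^A)$), which is already our working assumption throughout Section~\ref{subsec:policy}. The mild inequality $\tfrac{1}{1+\Teps} \geq 1-\Teps$ closes the gap between the statement of Lemma~\ref{lem:augment} and the $(1-\Teps)$ factor targeted in the current lemma.
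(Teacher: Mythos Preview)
Your application of Lemma~\ref{lem:augment} has the inequality going the wrong way. Recall the precise statement: if $S_t^A \subseteq S_t^B$ and $v(S_t^B \setminus S_t^A) \leq \epsilon$ almost surely, then $\obj^B \geq \tfrac{1}{1+\epsilon}\,\obj^A$. In words, the policy with the \emph{larger} assortments is guaranteed to be at least a $\tfrac{1}{1+\epsilon}$-fraction of the policy with the \emph{smaller} assortments (augmenting cannibalizes only mildly). In your setup, $A$ is the smaller-assortment policy and $A^*$ is the larger one, so the lemma yields $\obj^{A^*} \geq \tfrac{1}{1+\Teps}\,\obj^{A}$, i.e.\ $\obj^A \leq (1+\Teps)\,\obj^{A^*}$---exactly the opposite of the inequality $\obj^{A^*} \leq (1+\Teps)\,\obj^A$ that you wrote down. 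Lemma~\ref{lem:augment} simply does not control what happens when you \emph{remove} products: the loss there comes not from cannibalization but from possibly discarding the very product on which the optimal policy achieves its maximum load, and that requires a separate argument.

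The paper closes this gap via subadditivity (Lemma~\ref{lem:subadd}): one writes $\opt^{\dpp}(\Nc) \leq \opt^{\dpp}(U) + \opt^{\dpp}(\Nc \setminus U)$ and then shows $\opt^{\dpp}(\Nc\setminus U) \leq \Teps\cdot\opt^{\dpp}(\Nc)$ directly, by noting that the total weight of $\Nc\setminus U$ is at most $\Teps^2 v_{\max}$, bounding the expected number of purchases from that set by $T\cdot\tfrac{\Teps^2 v_{\max}}{1+\Teps^2 v_{\max}}$, and comparing against the trivial lower bound $\opt^{\dpp}(\Nc) \geq T\cdot\tfrac{v_{\max}}{1+v_{\max}}$ (the static single-product policy). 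Your weight estimate $v(S_t^{A^*}\setminus S_t^A) \leq \Teps^2 v_{\max} \leq \Teps$ is correct and would feed into such an argument; what is missing is the mechanism (here, subadditivity plus the single-product lower bound) that actually converts ``the dropped products have small total weight'' into ``the dropped products contribute at most $\Teps\cdot\opt^{\dpp}(\Nc)$ to the maximum load.''
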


    \item {\em Loss due to altering product weights.} Recall that in Step~2, each product $i \in U$ was replaced by a corresponding product $\widetilde{i} \in \widetilde U$ whose weight is the left endpoint of the bucket containing $v_i$. In Lemma \ref{lem:quasi2}, whose proof appears in Appendix \ref{apx:quasi2}, we show that the optimal expected maximum loads of  $U$ and $\widetilde U$ are within factor $1-\Teps$ of one another. 
    \begin{lemma}\label{lem:quasi2}
        $(1-\Teps)\cdot \opt^{\dpp}(U) \leq\opt^{\dpp}(\widetilde U) \leq \frac{1}{1-\Teps}\cdot \opt^{\dpp}(U)$.
    \end{lemma}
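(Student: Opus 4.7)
The proof will invoke Lemma~\ref{lem:alteruniverse} twice, once in each direction, exploiting the fact that each product $i \in U$ and its counterpart $\widetilde i \in \widetilde U$ have preference weights within a $(1-\Teps)$ factor. More precisely, by construction of $\widetilde U$ in Step~2, the weight $v_{\widetilde i}$ is the left endpoint of the geometric bucket $I_j$ that contains $v_i$, so $v_{\widetilde i} \leq v_i < (1+\Teps) v_{\widetilde i}$, which immediately yields $(1-\Teps) v_i \leq v_{\widetilde i} \leq v_i$.

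The upper bound $\opt^{\dpp}(\widetilde U) \leq \frac{1}{1-\Teps} \cdot \opt^{\dpp}(U)$ has essentially already been established in Step~4 of Section~\ref{subsec:policy}, where it is shown that
\[ \phi_i(S) \geq \frac{v_{\widetilde i}}{1+\frac{1}{1-\Teps}\sum_{j\in S}v_{\widetilde j}} \geq (1-\Teps)\cdot \phi_{\widetilde i}(\widetilde S) \]
for every $S \subseteq U$ and $i \in S$, certifying that $U$ and $\widetilde U$ satisfy the $\Teps$-tightness condition. Applying Lemma~\ref{lem:alteruniverse} to an optimal $\widetilde U$-policy $\widetilde A^\star$ produces a $U$-policy $A$ with $\obj^{A} \geq (1-\Teps)\cdot \obj^{\widetilde A^\star} = (1-\Teps)\cdot \opt^{\dpp}(\widetilde U)$, and since $\obj^{A} \leq \opt^{\dpp}(U)$, the claimed bound follows.

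The lower bound $(1-\Teps) \cdot \opt^{\dpp}(U) \leq \opt^{\dpp}(\widetilde U)$ follows symmetrically, by verifying the $\Teps$-tightness condition in the opposite direction, i.e., swapping the roles of $U$ and $\widetilde U$ in Lemma~\ref{lem:alteruniverse}. Using $v_{\widetilde i} \geq (1-\Teps)\cdot v_i$ together with $\sum_{j\in S} v_{\widetilde j} \leq \sum_{j\in S} v_j$, a direct computation gives
\[ \phi_{\widetilde i}(\widetilde S) = \frac{v_{\widetilde i}}{1+\sum_{j\in S}v_{\widetilde j}} \geq \frac{(1-\Teps)\, v_i}{1+\sum_{j\in S}v_j} = (1-\Teps)\cdot \phi_i(S). \]
Applying Lemma~\ref{lem:alteruniverse} with $U$ and $\widetilde U$ interchanged, an optimal $U$-policy can be converted into a $\widetilde U$-policy whose expected maximum load is at least $(1-\Teps)\cdot \opt^{\dpp}(U)$, yielding the desired inequality. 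No step presents a real obstacle here; the entire argument is a clean application of the stability result in Lemma~\ref{lem:alteruniverse}, with the only care point being to check the two tightness inequalities symmetrically.
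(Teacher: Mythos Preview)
Your proposal is correct and follows essentially the same approach as the paper: both verify the $\Teps$-tightness condition in each direction using the relation $(1-\Teps)v_i \leq v_{\widetilde i} \leq v_i$, and then invoke Lemma~\ref{lem:alteruniverse} twice, once starting from an optimal $U$-policy and once from an optimal $\widetilde U$-policy. The calculations and the logic match the paper's proof in Appendix~\ref{apx:quasi2} line by line.
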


    \item {\em Loss due to considering truncated policies.} In Step~3, we make use of our reduced dynamic program to compute an optimal truncated $\widetilde U$-policy, $\widetilde A$. In the following lemma, we prove that $\widetilde A$ is in fact a $(1-\Teps)$-approximate $\widetilde U$-policy. The proof of this result is deferred to Appendix~\ref{apx:quasi3}. 
    \begin{lemma}\label{lem:quasi3}
        $\obj^{\widetilde A}\geq (1-\Teps)\cdot \opt^{\dpp}(\widetilde U)$.
    \end{lemma}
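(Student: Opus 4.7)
The plan is to compare $\widetilde A$ against a specifically constructed truncated policy, which in turn is coupled with an optimal unrestricted dynamic $\widetilde U$-policy. Let $\widetilde A^*$ be any optimal unrestricted dynamic $\widetilde U$-policy, so $\obj^{\widetilde A^*}=\opt^{\dpp}(\widetilde U)$. I would define a comparison truncated policy $\widetilde A'$ that imitates $\widetilde A^*$ verbatim, step by step, until the first moment at which the running maximum load reaches the truncation threshold $K:=\thr/\Teps$; thereafter, $\widetilde A'$ offers the empty assortment to every remaining customer. By construction $\widetilde A'$ is a truncated policy, and since $\widetilde A$ is the optimal truncated policy (as the solution of the reduced dynamic program of Step~3), we have $\obj^{\widetilde A}\geq \obj^{\widetilde A'}$. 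It therefore suffices to establish $\obj^{\widetilde A'}\geq (1-\Teps)\cdot \obj^{\widetilde A^*}$.

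To relate these expectations, I would couple $\widetilde A^*$ and $\widetilde A'$ by sharing the same underlying uniform randomness driving all customer choices. Under this coupling, the two processes are indistinguishable---same offered assortments, same customer choices, same load vectors---up to the first time $\tau$ at which their common maximum load reaches $K$; for $t>\tau$, $\widetilde A'$'s maximum load freezes at $K$ while that of $\widetilde A^*$ may continue to grow. Since the max load is nondecreasing, this first-hitting time is well-defined, and the random final maximum loads satisfy $\widetilde M^{A'}=\min(\widetilde M^{A^*},K)$ almost surely, yielding
\begin{equation*}
    \obj^{\widetilde A^*}-\obj^{\widetilde A'} \;=\; \E\bigl[(\widetilde M^{A^*}-K)^+\bigr].
\end{equation*}

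The crux of the proof is to bound this truncation loss by $\Teps\cdot\obj^{\widetilde A^*}$. First, I would verify that $\widetilde U$ remains in the low-weight regime: because the weight-rounding in Step~2 of Section~\ref{subsec:policy} only decreases preference weights, we have $v_{\max}^{\widetilde U}\leq v_{\max}$ and $\alpha^{\widetilde U}\leq \alpha$, so the conditions $v_{\max}<1/\Teps$ and $T\alpha<12\ln(nT)/\Teps^3$ carry over from $\Nc$ to $\widetilde U$. Lemma~\ref{lem:low} then gives $\obj^{\widetilde A^*}\leq \thr$, and moreover its proof furnishes the Chernoff-type tail estimate $\P(L_i\geq k)\leq (eT\alpha/k)^k$ for each product $i\in \widetilde U$. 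Plugging in $k\geq K=300\ln^2(nT)/\Teps^7$ together with $T\alpha<12\ln(nT)/\Teps^3$ yields a base ratio $eT\alpha/k\leq O(\Teps^4/\ln(nT))\ll 1$. A union bound over products in $\widetilde U$ and a geometric summation of the resulting super-exponentially decaying tail will establish $\E[(\widetilde M^{A^*}-K)^+]\leq \Teps\cdot \obj^{\widetilde A^*}$, concluding the argument.

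The main obstacle lies in this final step, namely converting the absolute Chernoff bound into a multiplicative one relative to $\obj^{\widetilde A^*}$. For typical instances this is straightforward since the truncation loss is super-exponentially small in $K$ while $\obj^{\widetilde A^*}$ admits polynomial-scale lower bounds (for instance, via the naive policy that statically offers only the heaviest product in $\widetilde U$, achieving expected max load $Tv_{\max}^{\widetilde U}/(1+v_{\max}^{\widetilde U})$). Edge cases where $\obj^{\widetilde A^*}$ is vanishingly small are handled by noting that $\widetilde M^{A^*}$ is then heavily concentrated near zero, so truncation essentially never occurs and the absolute loss is dwarfed by any meaningful expression in the instance parameters, closing the multiplicative comparison.
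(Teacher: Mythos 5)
Your opening moves coincide exactly with the paper's: truncate an optimal $\widetilde U$-policy at the threshold $K=\thr/\Teps$, note that the truncation is itself a feasible truncated policy so that $\obj^{\widetilde A}\geq \obj^{\widetilde A'}$, and reduce everything to bounding the truncation loss $\E[(\widetilde M-K)^+]$, where $\widetilde M$ is the maximum load of the optimal $\widetilde U$-policy. Where you genuinely diverge is in how that loss is controlled. The paper never computes a tail probability: it uses a restart argument, $\E(\widetilde M\mid \widetilde M\geq K)-K\leq \opt^{\dpp}(\widetilde U)$ (once the threshold is hit, the expected further gain cannot exceed what an optimal policy earns over a fresh full horizon), and combines this with $\opt^{\dpp}(\widetilde U)\leq \thr/(1-\Teps)=\frac{\Teps}{1-\Teps}K$ (Lemmas~\ref{lem:quasi2} and~\ref{lem:low}) to get $\E(\min(\widetilde M,K))\geq \frac{K}{K+\E(\widetilde M)}\cdot\E(\widetilde M)\geq(1-\Teps)\cdot\E(\widetilde M)$ by pure algebra. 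That argument is multiplicative from the start, needs only the \emph{statement} of Lemma~\ref{lem:low}, and is indifferent to whether $\obj^{\widetilde A^*}$ is large or tiny. Your route instead reopens the \emph{proof} of Lemma~\ref{lem:low} to extract $\P(L_i\geq k)\leq (eT\alpha/k)^k$, union-bounds over products, and obtains an absolute, super-exponentially small bound on the loss --- stronger information in principle, but it leaves you owing a comparison between an absolute quantity and $\obj^{\widetilde A^*}$.

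That comparison is the one place where your proposal is not yet a proof. ``The absolute loss is dwarfed by any meaningful expression in the instance parameters'' is an assertion, not an argument, and it is precisely the step you yourself flag as the main obstacle. The good news is that it can be closed with the ingredients you already name: lower-bound $\obj^{\widetilde A^*}\geq T\alpha^{\widetilde U}\geq T\alpha/2$ (the heaviest product survives Steps~1--2 with weight at least $v_{\max}/(1+\Teps)$), upper-bound the loss by the geometric sum $2n(eT\alpha/K)^{K+1}$ (valid since $eT\alpha/K\leq \Teps^4/(9\ln(nT))<1/2$), and reduce the claim to $4n(e/K)^{K+1}(T\alpha)^{K}\leq \Teps$. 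This holds in both regimes --- when $T\alpha\leq 1$ the factor $(T\alpha)^K$ is at most $1$ and $4n(e/K)^{K+1}\leq\Teps$ because $K\geq 300\ln^2(nT)/\Teps^7$ beats both the $n$ and the $1/\Teps$; when $T\alpha>1$ one uses $(eT\alpha/K)^{K+1}\leq(\Teps^4/(9\ln(nT)))^{K+1}$ directly. So your approach is viable, but the decisive inequality must actually be written out; the paper's restart argument is worth internalizing precisely because it makes this entire case analysis unnecessary.
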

\end{itemize}

In conclusion,  it follows that the expected maximum load of the policy $A$ is 
\begin{eqnarray*}
\obj^A &\geq & (1-\Teps)\cdot \obj^{\widetilde A} \\
& \geq & (1-\Teps)^2\cdot \opt^{\dpp}(\widetilde U) \\
& \geq & (1-\Teps)^3\cdot \opt^{\dpp}(U) \\
& \geq & (1-\Teps)^4\cdot \opt^{\dpp}(\Nc) \\
& \geq & (1-4\eps)\cdot \opt^{\dpp}(\Nc).
\end{eqnarray*}
Here, first inequality is simply a restatement of Equation \eqref{eq:lemmaequation}. The second inequality follows from Lemma \ref{lem:quasi3}. In the third inequality, we plug in the result of Lemma \ref{lem:quasi2}. The fourth inequality is a consequence of Lemma \ref{lem:quasi1}. Finally, the last inequality follows from Bernoulli's inequality.

\subsection{Proof of Claim~\ref{cl:polywlog}}\label{apx:polywlog}

Let us first notice that
\[ \alpha\leq\frac{12\ln(nT)}{T\eps^3}\leq  \frac{12\sqrt{nT}}{T\eps^3} \leq \frac{\eps}{2n}\leq \frac{\eps/n}{1+\eps/n}. \]
Here, the first inequality holds since $T\alpha \leq 12\ln(nT)/\eps^3$, due to being in the low-weight regime. The second inequality comes from applying the identity $\ln x\leq \sqrt{x}$ for all $x>0$. Finally, the third inequality is obtained by recalling that $T\geq 576 n^3/\eps^8$, according to the claim's hypothesis. Consequently, since $\alpha = v_{\max}/(1+v_{\max})$, we must have $v_{\max}\leq \eps/n$, implying in turn that $v(\Nc)\leq \eps$.

Now, circling back to Lemma~\ref{lem:augment}, let $A$ be an optimal dynamic policy for the universe $\Nc$, and let $B$ being the policy that statically offers the whole universe to each arriving customer. Then, we almost surely have $S^A_t\subseteq \Nc = S^B_t $ and $v( S^B_t \setminus S^A_t) \leq v(\Nc) \leq \epsilon$, as shown above. It follows that both conditions of this lemma are satisfied, and therefore $\obj^B \geq \frac1{1+\epsilon}\cdot\obj^A \geq (1 - \eps) \cdot \opt^{\dpp}(\Nc)$, as desired.

\subsection{Proof of Lemma \ref{cl:revenuemax}} \label{apx:revenuemax}

Noting that $\phi_0(S) = 1-\sum_{i\in S}\phi_i(S)$ for any $S\subseteq \widetilde U$, the optimization problem~\eqref{eq:reducedDP} can be reformulated as:
\[ M_{t}(\mathbf{\bl})=M_{t-1}(\mathbf{\bl})+\max_{S\subseteq \widetilde U}\left(\sum\limits_{i\in S}\left(M_{t-1}(\mathbf{\bl}+\mathbf{e}_i)-M_{t-1}(\mathbf \bl)\right)\cdot \phi_i(S)\right). \]
Therefore, letting $r_i = M_{t-1}(\mathbf{\bl}+\mathbf{e}_i)-M_{t-1}(\mathbf \bl) \geq 0$ be the so-called price of each product $i\in \widetilde U$, we are left with computing an optimal solution to $\max_{S\subseteq \widetilde U}(\sum_{i\in S}r_i\cdot \phi_i(S))$. We have therefore obtained an instance of the revenue maximization problem under the Multinomial Logit model, which is well-known to be solvable in polynomial time (see, e.g., \cite{talluri2004revenue}).

\subsection{Proof of Lemma \ref{lem:quasi1}}\label{apx:quasi1} 

According to Lemma~\ref{lem:subadd}, we know that $\opt^{\dpp}(\cdot)$ is a subadditive function, implying in particular that    \begin{equation}\label{eq:quasisubadd}        \opt^\dpp(\Nc)\leq\opt^{\dpp}(U) + \opt^{\dpp}(\Nc \setminus U),
\end{equation}
and therefore, it suffices to show that $\opt^{\dpp}(\Nc\setminus U) \leq  \Teps\cdot \opt^{\dpp}(\Nc)$.

For this purpose, by definition of $U$, every product in $\Nc\setminus U$ has a preference weight of at most $\Teps^2\cdot v_{\max}/n$. Therefore, the random number of purchases across all products in $\Nc\setminus U$ is stochastically smaller than a Binomial random variable with $T$ trials and success probability $\frac{ v(\Nc\setminus U) }{ 1 + v(\Nc\setminus U) }$. Additionally, the  maximum load of any $\Nc\setminus U$-policy is upper-bounded by the total number of purchases. Consequently, 
\begin{eqnarray*}
    \opt^{\dpp}(\Nc\setminus U) & \leq & T\cdot \frac{ v(\Nc\setminus U) }{ 1 + v(\Nc\setminus U) } \\
    & \leq & T\cdot \frac{\Teps^2\cdot v_{\max}}{1+\Teps^2\cdot v_{\max}} \\
    & \leq & \Teps\cdot T\cdot \frac{v_{\max}}{1+v_{\max}} \\
    & \leq & \Teps\cdot \opt^{\dpp}(\Nc).
\end{eqnarray*}
Here, the second inequality holds since  $v(\Nc\setminus U) \leq \Teps^2v_{\max}$, as the weight of each product in $\Nc\setminus U$ is at most $\Teps^2v_{\max}/n$ and there are at most $n$ such products. For the third inequality, it is easy to verify that $\frac{\Teps}{1+\Teps^2\cdot v_{\max}} \leq \frac{1}{1+v_{\max}}$ when $\Teps < 1$ and $v_{\max}\leq 1/\Teps$. In the last inequality, we bound $Tv_{\max}/(1+v_{\max})$ by $\opt^{\dpp}(\Nc)$, since $Tv_{\max}/(1+v_{\max})$ corresponds to the expected maximum load when statically offering only the heaviest product in $\Nc$. The latter is obviously dominated by the expected maximum load attained by an optimal dynamic policy.

\subsection{Proof of Lemma \ref{lem:quasi2}}\label{apx:quasi2} 

We argue that this result is a direct consequence of Lemma \ref{lem:alteruniverse}. We start by proving the first inequality, $(1-\Teps)\cdot \opt^{\dpp}(U) \leq\opt^{\dpp}(\widetilde U) $. Let $P$ be an optimal policy for the universe of products $U$, meaning in particular that $\obj^{P}=\opt^{\dpp}(U)$. By definition of $\widetilde U$, we have $(1-\eps) \cdot v_{i}\leq v_{\widetilde i}\leq v_i$. Therefore, for any assortment $S\subseteq U$ and for any product $i\in S$,

$$
            \phi_{\widetilde i}(\widetilde S) = \frac{v_{\widetilde i}}{1+\sum_{\widetilde j\in \widetilde S}v_{\widetilde j}} \geq \frac{(1-\Teps)\cdot v_{i}}{1+\sum_{j\in S}v_j} = (1-\Teps)\cdot \phi_i(S),
        $$
where $\widetilde S = \{\widetilde j  \in \widetilde U   \mid j\in S\}$. 
Consequently, according to Lemma \ref{lem:alteruniverse}, there exists a policy $\widetilde P$ for the universe $\widetilde U$ such that $\obj^{\widetilde P}\geq (1-\Teps)\cdot \obj^{P}$. In turn, 
\[ {\opt^{\dpp}(\widetilde U)\geq \obj^{\widetilde P} \geq (1-\Teps)\cdot \obj^P= (1-\Teps) \cdot \opt^{\dpp}(U)}. \]

To derive the second inequality, $\opt^{\dpp}(\widetilde U) \leq \frac{1}{1-\Teps}\cdot \opt^{\dpp}(U)$, note that for every assortment $\widetilde S\subseteq U$ and for every product $\widetilde i\in \widetilde S$, we have

$$
        \phi_i(S) = \frac{v_i}{1+\sum_{j\in S}v_j}\geq \frac{v_{\widetilde i}}{1+\frac{1}{1-\Teps}\cdot\sum_{j\in \widetilde S}v_{\widetilde j}}  \geq (1-\Teps)\cdot \phi_{\widetilde i}(\widetilde S),
$$
where $S = \{j \in U \mid\widetilde j\in \widetilde S\}$. Therefore, the exact same argument as in the first inequality proves that ${\opt^{\dpp}( U) \geq (1-\Teps)\cdot\opt^{\dpp}( \widetilde U)}$.

\subsection{Proof of Lemma \ref{lem:quasi3}}\label{apx:quasi3} 

Let $\widetilde P$ be an optimal $\widetilde U$-policy; in particular, $\obj^{\widetilde P} = \opt^{\dpp}(\widetilde U)$.  Let $\widetilde B$ be the policy obtained by truncating the optimal dynamic policy $\widetilde P$, namely, the one that offers precisely the same assortments as $\widetilde P$ while the maximum load is smaller than $\beta = \thr/\Teps$. Once we hit this threshold, the empty set will be offered to all  remaining customers. Since $\widetilde B$ is a truncated policy and $\widetilde A$ is an optimal truncated policy, we trivially have $\obj^{\widetilde A}\geq \obj^{\widetilde B}$, meaning that it suffices to show that $\obj^{\widetilde B} \geq (1 - \eps) \cdot \obj^{\widetilde P}$.

For this purpose, let $\widetilde M$ be the random variable specifying the maximum load attained by employing the policy $\widetilde P$; in particular, we have $\obj^{\widetilde P} = \E(\widetilde M)$. Also, let $\widetilde M^{\comm} = \min(\beta, \widetilde M)$, noting that the latter random variable is exactly the maximum load 
attained by employing  $\widetilde B$, meaning that $\obj^{\widetilde B} = \E(\widetilde M^{\comm})$. Our analysis  will be based on the following auxiliary claims, whose proofs appear at the end of this section.

\begin{claim}\label{cl:upbd}
$\E(\widetilde M) \leq (\beta+\E(\widetilde M))\cdot \P(\widetilde M\geq \beta)+ \E(\widetilde M\mid \widetilde M< \beta)\cdot \P(\widetilde M< \beta)$.
\end{claim}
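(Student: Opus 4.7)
\textbf{Proof plan for Claim~\ref{cl:upbd}.} The plan is to rewrite the desired inequality in an equivalent form via the law of total expectation and then exploit the fact that, once policy $\widetilde P$ first reaches load $\beta$, the remaining arrivals contribute at most as much as a fresh optimal dynamic policy on a shorter horizon. Concretely, using the decomposition
\[ \E(\widetilde M) = \E(\widetilde M\mid \widetilde M\geq \beta)\cdot \P(\widetilde M\geq \beta)+ \E(\widetilde M\mid \widetilde M< \beta)\cdot \P(\widetilde M< \beta), \]
the claim is equivalent (assuming $\P(\widetilde M\geq\beta)>0$; otherwise it is trivial) to showing that
\[ \E(\widetilde M\mid \widetilde M\geq \beta) \leq \beta + \E(\widetilde M). \]

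To prove this conditional bound, I would introduce the hitting time $\tau = \min\{t\in [T] : \max_{\widetilde i\in\widetilde U} \widetilde L_{\widetilde i}^{\widetilde P}(t) \geq \beta\}$, where $\widetilde L_{\widetilde i}^{\widetilde P}(t)$ denotes the load of product $\widetilde i$ after $t$ customers under $\widetilde P$. Note that $\{\widetilde M\geq \beta\} = \{\tau\leq T\}$ and, since any load can grow by at most one per period, the maximum load at time $\tau$ equals exactly $\beta$ on this event. Using the subadditivity of maxima,
\[ \widetilde M = \max_{\widetilde i} \widetilde L_{\widetilde i}^{\widetilde P}(T) \leq \max_{\widetilde i} \widetilde L_{\widetilde i}^{\widetilde P}(\tau) + \max_{\widetilde i} \bigl( \widetilde L_{\widetilde i}^{\widetilde P}(T) - \widetilde L_{\widetilde i}^{\widetilde P}(\tau) \bigr) = \beta + Z, \]
where $Z$ denotes the maximum increment in any product's load across customers $\tau+1,\ldots,T$.

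It thus suffices to establish that $\E(Z \mid \widetilde M\geq\beta) \leq \E(\widetilde M)$. Conditioning on $\tau$ and on the full history up to time $\tau$, the continuation of $\widetilde P$ from customer $\tau+1$ onward is an adaptive rule that selects assortments over the remaining $T-\tau$ customers, and the increments of the product loads during this tail phase coincide in distribution with the load vector of some dynamic $\widetilde U$-policy run on a fresh horizon of length $T-\tau$. Consequently, $\E(Z\mid \tau,\text{history up to }\tau)\leq \opt^{\dpp}(\widetilde U;T-\tau)$, where $\opt^{\dpp}(\widetilde U;k)$ denotes the optimal expected maximum load for a $k$-customer horizon over $\widetilde U$.

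To conclude, I would invoke the monotonicity $\opt^{\dpp}(\widetilde U;k)\leq \opt^{\dpp}(\widetilde U;T)$ for every $k\leq T$, which holds because any policy for a $k$-customer horizon can be extended to a policy for $T$ customers by offering the empty assortment to the extra $T-k$ arrivals, without changing the expected maximum load. Since $\opt^{\dpp}(\widetilde U;T) = \E(\widetilde M)$, integrating over $\tau$ gives $\E(Z\mid \widetilde M\geq\beta) \leq \E(\widetilde M)$, and combining with the earlier pointwise bound yields $\E(\widetilde M\mid \widetilde M\geq\beta) \leq \beta + \E(\widetilde M)$, as required. The main conceptual hurdle is identifying the post-$\tau$ contribution $Z$ as a bona fide dynamic maximum-load process over a shorter horizon, so that monotonicity of $\opt^{\dpp}$ in $T$ can be applied; the rest is routine bookkeeping via the law of total expectation.
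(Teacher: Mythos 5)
Your proposal is correct and follows essentially the same route as the paper's proof: both decompose $\E(\widetilde M)$ by conditioning on $\{\widetilde M\geq\beta\}$, introduce the hitting time of the threshold $\beta$, and bound the post-hitting contribution by the optimal expected maximum load over the (shorter, hence dominated) remaining horizon, which equals $\E(\widetilde M)$. Your version merely makes explicit two steps the paper treats as immediate, namely the subadditivity of the maximum and the monotonicity of $\opt^{\dpp}$ in the horizon length.
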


\begin{claim}\label{cl:lowbd}
$\E(\widetilde M^{\comm}) \geq \frac{\beta}{\beta+\E(\widetilde M)}\cdot(
(\beta+\E(\widetilde M))\cdot \P(\widetilde M\geq \beta)+ \E(\widetilde M\mid \widetilde M< \beta)\cdot \P(\widetilde M< \beta)
        )$.
\end{claim}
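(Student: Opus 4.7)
The plan is to first establish, via an explicit coupling between the policies $\widetilde P$ and $\widetilde B$, the almost-sure identity $\widetilde M^{\comm} = \min(\beta,\widetilde M)$. Since $\widetilde B$ mirrors $\widetilde P$ until the maximum load hits $\beta$, I would couple the customer choices so that the two processes coincide on the prefix of the arrival stream preceding the truncation moment. On the event $\{\widetilde M \geq \beta\}$, truncation occurs, at which point $\widetilde B$ freezes its maximum load at $\beta$ by offering the empty assortment while $\widetilde P$'s load can only weakly increase, giving $\widetilde M^{\comm} = \beta \leq \widetilde M$. On the complementary event $\{\widetilde M < \beta\}$ the two sample paths never separate, so $\widetilde M^{\comm} = \widetilde M$. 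Either way, $\widetilde M^{\comm} = \min(\beta,\widetilde M)$ almost surely.

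Given this identity, I would decompose $\E(\widetilde M^{\comm})$ by conditioning on whether the threshold has been reached, obtaining
\[
\E(\widetilde M^{\comm}) \;=\; \beta \cdot \P(\widetilde M \geq \beta) \;+\; \E(\widetilde M \mid \widetilde M < \beta) \cdot \P(\widetilde M < \beta).
\]
The remainder of the argument is purely algebraic. Expanding the right-hand side of the claim yields
\[
\beta \cdot \P(\widetilde M \geq \beta) \;+\; \frac{\beta}{\beta + \E(\widetilde M)} \cdot \E(\widetilde M \mid \widetilde M < \beta) \cdot \P(\widetilde M < \beta),
\]
where the first term coincides exactly with its counterpart in the decomposition of $\E(\widetilde M^{\comm})$, whereas the second term carries a coefficient $\frac{\beta}{\beta + \E(\widetilde M)} \leq 1$. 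Subtracting matches this term by term and reduces the desired inequality to the trivial observation $\bigl(1 - \frac{\beta}{\beta + \E(\widetilde M)}\bigr)\cdot \E(\widetilde M \mid \widetilde M < \beta) \cdot \P(\widetilde M < \beta) \geq 0$, which holds since all three factors are nonnegative.

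There is no real technical obstacle here; the claim is engineered to dovetail with Claim~\ref{cl:upbd}, so that multiplying Claim~\ref{cl:upbd} through by $\frac{\beta}{\beta + \E(\widetilde M)}$ produces precisely the bound $\E(\widetilde M^{\comm}) \geq \frac{\beta}{\beta + \E(\widetilde M)} \cdot \E(\widetilde M)$, which is the form ultimately invoked to derive $\obj^{\widetilde B} \geq (1-\eps)\cdot \obj^{\widetilde P}$. The only subtlety worth spelling out carefully is the coupling justifying $\widetilde M^{\comm} = \min(\beta,\widetilde M)$; once that is in hand, the rest is an elementary manipulation of conditional expectations.
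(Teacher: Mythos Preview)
Your proposal is correct and follows essentially the same route as the paper: decompose $\E(\widetilde M^{\comm})$ by conditioning on $\{\widetilde M \geq \beta\}$ versus $\{\widetilde M < \beta\}$ to obtain the exact expression $\beta\cdot\P(\widetilde M\geq\beta) + \E(\widetilde M\mid\widetilde M<\beta)\cdot\P(\widetilde M<\beta)$, then observe that applying the factor $\frac{\beta}{\beta+\E(\widetilde M)}\leq 1$ to the second term yields the claimed lower bound. The only difference is that the paper \emph{defines} $\widetilde M^{\comm} = \min(\beta,\widetilde M)$ and merely asserts that this equals the maximum load of $\widetilde B$, so your coupling argument is slightly more explicit than what the paper records but not actually needed given the surrounding setup.
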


\begin{claim} \label{clm:beta_frac_eps}
$\frac{\beta}{\beta+\E(\widetilde M)} \geq 1-\Teps$.
\end{claim}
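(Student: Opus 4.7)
\textbf{Proof plan for Claim~\ref{clm:beta_frac_eps}.}
The plan is to turn the target inequality into a clean upper bound on $\E(\widetilde M)$ and then derive it by chaining three bounds already established in the paper. Specifically, the inequality $\frac{\beta}{\beta + \E(\widetilde M)} \geq 1 - \eps$ is algebraically equivalent to
\[
\E(\widetilde M) \;\leq\; \frac{\eps\,\beta}{1-\eps} \;=\; \frac{\thr}{1-\eps},
\]
since $\beta = \thr/\eps$ by definition. So the task reduces to showing that the expected maximum load of an optimal dynamic policy on the modified universe $\widetilde U$ is bounded above by $\thr/(1-\eps)$.

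To this end, first observe that $\E(\widetilde M) = \obj^{\widetilde P} = \opt^{\dpp}(\widetilde U)$ by the choice of $\widetilde P$ as an optimal $\widetilde U$-policy. Next, Lemma~\ref{lem:quasi2} directly gives $\opt^{\dpp}(\widetilde U) \leq \frac{1}{1-\eps}\cdot \opt^{\dpp}(U)$. In addition, since $U \subseteq \Nc$, every $U$-policy is trivially an $\Nc$-policy, so $\opt^{\dpp}(U) \leq \opt^{\dpp}(\Nc)$. Finally, because we are within the low-weight regime (the regime Section~\ref{subsec:policy} focuses on), Lemma~\ref{lem:low} provides $\opt^{\dpp}(\Nc) \leq \thr$. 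Concatenating these three inequalities yields $\E(\widetilde M) \leq \thr/(1-\eps)$, as needed.

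To close, I would then verify the algebraic equivalence at the start: plugging $\E(\widetilde M) \leq \thr/(1-\eps)$ and $\beta = \thr/\eps$ back in gives
\[
\frac{\beta}{\beta + \E(\widetilde M)} \;\geq\; \frac{\thr/\eps}{\thr/\eps + \thr/(1-\eps)} \;=\; \frac{1/\eps}{1/\eps + 1/(1-\eps)} \;=\; 1-\eps,
\]
which completes the proof. There is really no hard step here; the only thing to be careful about is remembering that $\opt^{\dpp}(U) \leq \opt^{\dpp}(\Nc)$ follows simply from the inclusion $U \subseteq \Nc$, so that the chain of inequalities goes through without circularity.
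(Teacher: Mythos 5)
Your proof is correct and follows essentially the same route as the paper: both reduce the claim to the bound $\E(\widetilde M)=\opt^{\dpp}(\widetilde U)\leq \thr/(1-\eps)$ and obtain it by chaining Lemma~\ref{lem:quasi2}, the trivial inclusion $\opt^{\dpp}(U)\leq\opt^{\dpp}(\Nc)$, and Lemma~\ref{lem:low}. The only difference is cosmetic — you rearrange the fraction into an explicit upper bound on $\E(\widetilde M)$ up front, while the paper rewrites it as $1/(1+\eps\cdot\opt^{\dpp}(\widetilde U)/\thr)$ — so there is nothing further to add.
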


We conclude by observing that these claims suffice to show that $\obj^{\widetilde B} \geq (1 - \eps) \cdot \obj^{\widetilde P}$, since
\[ \obj^{\widetilde B} = \E(\widetilde M^{\comm})\geq \frac{\beta}{\beta+\E(\widetilde M)}\cdot  \E(\widetilde M) \geq (1 - \eps) \cdot \E(\widetilde M) = (1 - \eps) \cdot \obj^{\widetilde P}, \]
where the first inequality follows by combining Claims~\ref{cl:upbd} and~\ref{cl:lowbd}, and the second inequality is obtained by plugging in Claim~\ref{clm:beta_frac_eps}.

\paragraph{Proof of Claim~\ref{cl:upbd}.} 
Our proof begins by arguing that 
\begin{equation}\label{eq:claim2}
        \E(\widetilde M\mid \widetilde M\geq \beta)-\beta\leq \E(\widetilde M),
\end{equation}
noting that $\E(\widetilde M\mid \widetilde M\geq \beta)-\beta$ represents the expected increase in the maximum load, starting from the point where the threshold $\beta$ is reached, all with respect to the policy $\widetilde P$. To bound the latter quantity, let ${\cal T}$ be the (random) index of the  customer following the one for which a load of $\beta$ is attained. In other words, ${\cal T}$ is the minimal stage index for which the current maximum load is at least $\beta$. As such, $  \E(\widetilde M\mid \widetilde M\geq \beta, {\cal T})-\beta$ is upper-bounded by the expected maximum load considering only customers ${\cal T}+1,\ldots,T$, which is trivially bounded by the optimal expected maximum load considering customers $1,\ldots,T$, i.e.,
$$
         \E(\widetilde M\mid \widetilde M\geq \beta, {\cal T})-\beta\leq \opt^{\dpp}(\widetilde U).
    $$
    By recalling that $\opt^{\dpp}(\widetilde U) = \E(\widetilde M)$, inequality~\eqref{eq:claim2} follows by introducing the expectation over ${\cal T}$ and using the tower property. Consequently,
    \begin{eqnarray*}
    \E(\widetilde M) & = & \E(\widetilde M\mid \widetilde M\geq \beta)\cdot \P(\widetilde M\geq \beta)+ \E(\widetilde M\mid \widetilde M< \beta)\cdot \P(\widetilde M< \beta) \\
    & \leq & (\beta+\E(\widetilde M))\cdot \P(\widetilde M\geq \beta)+ \E(\widetilde M\mid \widetilde M< \beta)\cdot \P(\widetilde M< \beta) .
    \end{eqnarray*}

\paragraph{Proof of Claim~\ref{cl:lowbd}.} The desired claim is obtained by noting that
\begin{eqnarray*}
\E(\widetilde M^{\comm}) & = & \E(\widetilde M^\comm \mid\widetilde M\geq \beta)\cdot \P(\widetilde M\geq \beta)+ \E(\widetilde M^\comm\mid \widetilde M< \beta)\cdot \P(\widetilde M< \beta) \\
& = & \beta\cdot \P(\widetilde M\geq \beta)+ \E(\widetilde M\mid \widetilde M< \beta)\cdot \P(\widetilde M< \beta)\\
&\geq & \frac{\beta}{\beta+\E(\widetilde M)}\cdot\left(             (\beta+\E(\widetilde M))\cdot \P(\widetilde M\geq \beta)+ \E(\widetilde M\mid \widetilde M< \beta)\cdot \P(\widetilde M< \beta) \right),
\end{eqnarray*}
where the second equality holds since $\E(\widetilde M^\comm \mid\widetilde M\geq \beta) = \beta$ and $\E(\widetilde M^\comm \mid\widetilde M\leq \beta) = \E(\widetilde M \mid\widetilde M\leq \beta)$, by definition of $\widetilde M^\comm$. 

\paragraph{Proof of Claim~\ref{clm:beta_frac_eps}.} We begin by observing that     
\[ \frac{\beta}{\beta+\E(\widetilde M)} = \frac{\thr/\Teps}{\thr/\Teps+\opt^{\dpp}(\widetilde U)} = \frac{1}{1+\Teps\cdot \frac{\opt^{\dpp}(\widetilde U)}{\thr}}, \]
and it therefore remains to show that $\opt^{\dpp}(\widetilde U)\leq \frac{\thr}{1-\Teps}$. For this purpose, note that
$$
        \opt^{\dpp}(\widetilde U)\leq \frac{1}{1-\Teps} \cdot \opt^{\dpp}(U)\leq \frac{1}{1-\Teps} \cdot\opt^{\dpp}(\Nc)\leq \frac{\thr}{1-\Teps},
    $$
where the first and third  inequalities follow from Lemmas~\ref{lem:quasi2} and~\ref{lem:low}, respectively.

\end{document}